\theoremstyle{plain}
\newcommand{\A}{{\mathbb A}}
\newcommand{\Q}{{\mathbb Q}}
\newcommand{\Z}{{\mathbb Z}}
\newcommand{\R}{{\mathbb R}}
\newcommand{\C}{{\mathbb C}}
\newcommand{\bs}{\backslash}
\newcommand{\p}{\mathfrak p}
\newcommand{\OF}{{\mathfrak o}}
\newcommand{\GL}{{\rm GL}}
\newcommand{\SL}{{\rm SL}}
\newcommand{\sgn}{{\rm sgn}}
\newcommand{\St}{{\rm St}}
\newcommand{\Mat}{{\rm M}}
\renewcommand{\P}{\mathfrak{P}}
\newcommand{\Hom}{{\rm Hom}}
\newcommand{\mat}[4]{{\setlength{\arraycolsep}{0.5mm}\left[
\begin{array}{cc}#1&#2\\#3&#4\end{array}\right]}}
\newcommand{\qed}{\hspace*{\fill}\rule{1ex}{1ex}}
\newcommand{\forget}[1]{}
\def\qdots{\mathinner{\mkern1mu\raise0pt\vbox{\kern7pt\hbox{.}}\mkern2mu
\raise3.4pt\hbox{.}\mkern2mu\raise7pt\hbox{.}\mkern1mu}}
\newenvironment{proof}{\vspace{1ex}\noindent{\it Proof.}\hspace{0.1em}}
	{\hfill\qed\vspace{2ex}}
\newtheorem{lemma}{Lemma}[section]
\newtheorem{theorem}[lemma]{Theorem}
\newtheorem{corollary}[lemma]{Corollary}
\newtheorem{proposition}[lemma]{Proposition}
\newtheorem{definition}[lemma]{Definition}
\newtheorem{remark}[lemma]{Remark}
\numberwithin{equation}{section}
\newcommand{\bmx}{\left[ \begin{matrix}}
\newcommand{\emx}{\end{matrix} \right] }
\newcommand{\vol}{\mathrm{vol}}
\newcommand{\alg}{\mathrm{alg}}
\newcommand{\fin}{\mathrm{fin}}
\renewcommand{\~}{\widetilde}
\newcommand{\rref}[1]{\romannumeral 0\ref{#1}\relax )}
\newcommand{\I}{{\rm I}}
\newcommand{\M}{{\mathrm M}}
\title{Test vectors and central $L$-values for GL(2)}
\author{Daniel File\footnote{dfile@muhlenberg.edu, Department
of Mathematics, Muhlenberg College, Allentown, PA 18104}, Kimball Martin\footnote{kmartin@math.ou.edu, Department
of Mathematics, University of Oklahoma, Norman, OK 73019} and Ameya Pitale\footnote{apitale@ou.edu, Department
of Mathematics, University of Oklahoma, Norman, OK 73019}}
\begin{document}

\maketitle

\begin{abstract}
We determine local test vectors for Waldspurger functionals for $\GL_2$, in the case where both the representation of $\GL_2$ and the character of the degree two extension are ramified, with certain restrictions. We use this to obtain an explicit version of Waldspurger's formula relating twisted central $L$-values of automorphic representations on $\GL_2$ with certain toric period integrals. As a consequence, we generalize an
average value formula of Feigon and Whitehouse, 
and obtain some nonvanishing results.\footnote{$2000$ Mathematics Subject Classification.
Primary $11F41$, Secondary $11F67, 11F70$} 
\end{abstract}
\setcounter{tocdepth}{1}
\tableofcontents

\section{Introduction}

\subsection{Global results}\label{global-res-sec}
Let $F$ be a number field and $\pi$ be a cuspidal automorphic representation of
$\GL_2(\A_F)$.  
Let $L/F$ be a quadratic extension and $\Omega$ an id\`ele class character of
$L^\times$ such
that $\Omega|_{\A_F^\times} = \omega_\pi$, the central character of $\pi$.  We are interested in
the central value of the
$L$-function
\[ L(s,\pi_L \otimes \Omega) = L(s, \pi \times \theta_\Omega), \]
where $\pi_L$ denotes the base change of $\pi$ to $\GL_2(\A_L)$ and
$\theta_\Omega$ denotes
the theta series on $\GL_2(\A_F)$ associated to $\Omega$.  Note this contains
the following 
interesting special case: when $\Omega$ is trivial, then
$L(s,\pi_L\otimes \Omega) = L(s,\pi)L(s,\pi \otimes \eta)$, where
$\eta=\eta_{L/F}$ denotes the
quadratic character of $\A_F^\times$ associated to $L$ via class field theory.
Assume that $\omega_\pi$ is trivial or $\eta$. Then $\epsilon(1/2, \pi_L \otimes
\Omega) = \pm 1$,
even though $\pi_L \otimes \Omega$ need not be self-dual (cf.\ \cite{JC}).  In
the case
when $\epsilon(1/2, \pi_L \otimes \Omega) =-1$, the central value $L(1/2, \pi_L
\otimes \Omega)=0$. 
Henceforth, assume $\epsilon(1/2, \pi_L \otimes \Omega) = +1$.

Let $D$ be a quaternion algebra over $F$ containing
$L$ such that $\pi$ has a Jacquet--Langlands transfer to an automorphic
representation $\pi'$
of $D^\times(\A_F)$. We allow for the possibility that $D=M_2(F)$ and $\pi'=\pi$,
so there is always
at least one such $\pi'$. Embed $L^\times$ as a torus $T$
inside $D^\times$.  
The period integrals we are interested in are
\begin{equation}\label{klm-PDdef}
 P_D(\phi) = \int\limits_{Z(\A_F) T(F) \backslash T(\A_F)} \phi(t) \Omega^{-1}(t) \,
dt, 
\end{equation}
where $\phi \in \pi'$ and $Z$ denotes the center of $D^\times$ (with $dt$
as in Section \ref{klm-lval-sec}).
If $F=\Q$ and $L$ is imaginary quadratic, then this period simplifies to a finite
sum over certain ``CM points.''

When $\omega_\pi$ is trivial, a beautiful theorem of Waldspurger \cite{wald}
states that 
\begin{equation} \label{intro-waldform}
\frac{|P_D(\phi)|^2}{(\phi,\phi)} = \zeta(2)\prod_v \alpha_v(L,\Omega,\phi) 
\frac{L(1/2, \pi_L \otimes \Omega)}{L(1, \pi, Ad)}
\end{equation}
for any $\phi \in \pi'$.  Here $( \cdot, \cdot )$ is a certain inner product on
$\pi'$
and the factors $\alpha_v(L,\Omega,\phi)$ are certain local integrals which
equal 1 at almost all places.  For all but one $D$, $P_D \equiv 0$ for local reasons.
Namely, the linear functional $P_D$ factors into a product of local linear functionals
$P_{D,v}$.  There is a unique $D \supset L$ for which all $P_{D,v} \ne 0$,
and this $D$ is determined by local epsilon factors in work of Tunnell \cite{tunnell}
and Saito \cite{saito}.
Fixing this $D$, one now gets the nonvanishing criterion: $L(1/2, \pi_L \otimes \Omega) \ne 0$ if
and only if $P_D \ne 0$.

It is useful to have a more explicit version of this formula for certain
applications like equidistribution, nonvanishing, subconvexity, $p$-adic $L$-functions, etc.\
(see, e.g.,
\cite{popa}, \cite{MW},
\cite{FW}, \cite{hsieh}).  
In particular, it is not even obvious from \eqref{intro-waldform}
that $L(1/2, \pi_L \otimes \Omega) \ge 0$, as predicted by the Grand Riemann
Hypothesis.  This positivity result
was subsequently shown by Jacquet and Chen \cite{JC} using a trace formula
identity.

Explicit versions of \eqref{intro-waldform} have been considered by many authors
under various assumptions,
e.g., Gross \cite{gross-svf}, Zhang \cite{zhang}, Xue \cite{xue}, Popa
\cite{popa}, the second author and Whitehouse \cite{MW}, Murase \cite{murase},
Hida \cite{hida-kyoto} and Hsieh \cite{hsieh}.
These explicit formulas rely on picking out a suitable {\em test vector} $\phi$
in \eqref{intro-waldform}.
All of these works rely on the theta correspondence (as did \cite{wald}), except for \cite{MW}
which uses the trace formula identity from \cite{JC}.  The only assumption in
\cite{MW} is that $\pi$ and
$\Omega$ have disjoint ramification, i.e., for any finite place $v$ of $F$, 
$\pi$ and 
$\Omega$ are not both ramified at $v$.  In this case one has a natural choice
for the test vector $\phi$ from  
the work of Gross and Prasad \cite{GP} on \emph{local} test vectors.
 In \cite{MW}, it was noted that this restriction of disjoint ramification is
not essential to the
method and could be removed if one had a reasonable way to define the test
vector $\phi$ in a more general
setting. 

The main local results of this paper (see Theorems \ref{intro-tv-split}, \ref{intro-tv-inert} below) are the existence and characterization of suitable local test vectors in the case of joint ramification under 
certain conditions.
This allows us to extend the formula of \cite{MW} to these cases.  To be precise,
for a finite place $v$ of $F$,
let $c(\pi_v)$ be the (exponent of) the conductor of $\pi_v$ and $c(\Omega_v)$
be the (exponent of) the 
``$F$-conductor'' of $\Omega$ (see \eqref{conductor-of-Omega}). 
Then we make the following assumption:

\begin{equation} \label{tameramcond}
\text{If $v < \infty$ is inert in $L$ and $c(\pi_v)$, $c(\Omega_v) > 0$, then we have $c(\Omega_v) \ge c(\pi_v)$.}
\end{equation}
In particular, if the level $N=\prod_{v < \infty} \varpi_v^{c(\pi_v)}$ of $\pi$
is squarefree, there is no condition on $\Omega$.
We note a consequence of our determination of test vectors is that assumption
\eqref{tameramcond} implies $D$ and $\Omega$ do not have joint ramification at
any finite place.

Theorems \ref{intro-tv-split} and \ref{intro-tv-inert} below give suitable local test
vectors $\phi_v$ under assumption \eqref{tameramcond},
which yields the desired global test vector $\phi$. 
Here suitable essentially means that the local test vectors can be described purely in terms of
ramification data, and do not require more refined information about local representations.  
This is crucial for global applications.
Note that it is not even \emph{a priori} clear if suitable test vectors should exist in general.

Let us now describe
the $L$-value formula more precisely.
Denote the absolute value of the discriminants of $F$ and $L$ by $\Delta$ and
$\Delta_L$.  Let $e(L_v/F_v)$
be the ramification degree of $L_v/F_v$.
Let $S_{\mathrm{inert}}$ be the set of places of $F$ inert in $L$.
Let $S(\pi)$
(resp.\ $S(\Omega)$)
be the set of finite places of $F$ where $\pi$ (resp.\ $\Omega$) is ramified,
$S_1(\pi)$ (resp. $S_2(\pi)$) the set of places in $S(\pi)$ where
$c(\pi_v)=1$ (resp. $c(\pi_v) \geq 2$), and $S_0(\pi) = S_2(\pi) \cup \{ v \in S_1(\pi) : L_v/F_v \text{ is ramified and }
\Omega_v \text{ is unramified} \}$.  Denote by $c(\Omega)$
the absolute norm of the conductor of $\Omega$.

\begin{theorem} \label{intro-lvalthm}
 Let $\pi$ be a cuspidal automorphic representation of $\GL_2(\A_F)$ with trivial
central character
and $\Omega$ a character of $\A_L^\times/L^\times \A_F^\times$.  Assume
$\epsilon(1/2,\pi_L \otimes \Omega)=1$ and that $\pi$ and $\Omega$ satisfy (\ref{tameramcond}).
Then, with the test vector $\phi \in \pi'$ defined in Section \ref{klm-testvec}
and archimedean factors
$C_v(L,\pi,\Omega)$  defined in Section \ref{klm-arch-fact},  we have
\begin{multline*}
 \frac{|P_D(\phi)|^2}{(\phi,\phi)} =
 \frac 12 \sqrt{ \frac{\Delta}{c(\Omega)\Delta_L } } 
 L_{S(\Omega)}(1,\eta) L_{S(\pi) \cup S(\Omega)}(1,\eta) 
 L_{S(\pi) \cap S(\Omega)}(1,1_F)  L^{S(\pi)}(2,1_F) \\
\times   \prod_{v \in S(\pi) \cap S(\Omega)^{c}} e(L_v/F_v)  \prod_{v | \infty}
C_v(L,\pi,\Omega) 
  \cdot  \frac{ L^{S_0(\pi)}(1/2,\pi_L \otimes \Omega) }{L^{S_0(\pi)}(1,\pi,Ad) }.
\end{multline*}
Here $( \cdot, \cdot )$ is the standard inner product on $\pi'$ with respect to
the measure on $D^\times(\mathbb A_F)$
which is the product of local Tamagawa measures.
\end{theorem}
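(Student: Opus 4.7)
The approach is to apply Waldspurger's formula \eqref{intro-waldform} to the test vector $\phi = \otimes_v \phi_v$ defined place by place in Section \ref{klm-testvec}, and then evaluate each local factor $\alpha_v(L,\Omega,\phi_v)$ explicitly. Since $\alpha_v = 1$ at almost every place, the left-hand side of \eqref{intro-waldform} reduces to a finite product of local integrals, and the theorem follows by multiplying these together with the global normalization factors coming from the choice of measures. A preliminary observation (consistent with the remark following (\ref{tameramcond})) is that under hypothesis \eqref{tameramcond} the quaternion algebra $D$ determined by Tunnell--Saito is split at every finite place where $\Omega_v$ is ramified, so every $\phi_v$ lives in an explicit $\GL_2$-model.

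At finite places $v \notin S(\pi) \cup S(\Omega)$ the vector $\phi_v$ is spherical and the classical Gross--Prasad calculation produces the unramified part of $L(1/2,\pi_L\otimes\Omega)/L(1,\pi,\mathrm{Ad})$ together with the unramified values of $L(1,\eta)$. At finite places where exactly one of $\pi_v$ or $\Omega_v$ is ramified, $\phi_v$ is the Gross--Prasad test vector of \cite{GP} and $\alpha_v$ has already been evaluated in \cite{MW}; packaging these contributions produces the partial $L$-factors $L_{S(\Omega)}(1,\eta)$, $L_{S(\pi) \cup S(\Omega)}(1,\eta)$, $L_{S(\pi)\cap S(\Omega)}(1,1_F)$ and $L^{S(\pi)}(2,1_F)$ of the statement, together with the ramification index $e(L_v/F_v)$ at those places where $L_v/F_v$ is ramified but $\Omega_v$ is not. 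This step is essentially bookkeeping of the calculations already done in \cite{MW}.

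The main obstacle, and the new ingredient of this paper, is the evaluation of $\alpha_v$ at finite places of joint ramification $v \in S(\pi) \cap S(\Omega)$. Here $\phi_v$ is the explicit vector constructed in Theorem \ref{intro-tv-split} (split case) or \ref{intro-tv-inert} (inert case), expressed in an appropriate Kirillov or Whittaker model, and $\alpha_v$ becomes a concrete integral of ramified characters against this model. The computation splits according to whether $L_v/F_v$ is split or inert, and whether $v \in S_0(\pi)$: at jointly ramified $v \notin S_0(\pi)$ the local Euler factor of $\pi_L\otimes\Omega$ survives and appears on the right, while at $v \in S_0(\pi)$ the relevant local $L$-factor is trivial, so that the calculation removes these places from both numerator and denominator and produces the ratio $L^{S_0(\pi)}(1/2,\pi_L\otimes\Omega)/L^{S_0(\pi)}(1,\pi,\mathrm{Ad})$ appearing in the theorem. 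The technical heart of the argument is precisely this family of local integral evaluations, where condition \eqref{tameramcond} is used to ensure that the integrals reduce to something tractable.

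Finally, the archimedean integrals $\alpha_v(L,\Omega,\phi_v)$ are packaged by definition into the factors $C_v(L,\pi,\Omega)$ of Section \ref{klm-arch-fact}, and the global constant $\tfrac12\sqrt{\Delta/(c(\Omega)\Delta_L)}$ arises from a careful comparison of local measures: $\Delta$ and $\Delta_L$ come from the self-dual normalization of additive Haar measures on $F$ and $L$ used in defining $P_D$ and $(\cdot,\cdot)$ against the product of local Tamagawa measures, and $c(\Omega)$ arises from normalizing $\Omega$ relative to its $F$-conductor as in \eqref{conductor-of-Omega}. Assembling the product of local $\alpha_v$'s with these constants and substituting into \eqref{intro-waldform} yields the stated identity.
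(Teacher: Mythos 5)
Your proposal takes a genuinely different route from the paper's. You apply Waldspurger's formula \eqref{intro-waldform} directly to the test vector $\phi$ and propose to evaluate the local factors $\alpha_v(L,\Omega,\phi_v)$ place by place. The paper instead proves Theorem \ref{intro-lvalthm} via the Jacquet--Chen trace formula identity (Theorem \ref{JC-thm}): one chooses the local test functions $f_v$ from Section \ref{klm-testvec} so that $\pi'(f)$ is orthogonal projection onto $\langle\phi\rangle$, obtains $J_{\pi'}(f) = |P_D(\phi)|^2/(\phi,\phi)$ from \eqref{klm-Jpif}, and then evaluates the local spectral distributions $\~J_{\pi'_v}(f_v)$ using \eqref{klm-jpif2} and Proposition \ref{klm-Jpif-prop}. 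The paper is explicit about this choice in the introduction: the works relying on the theta correspondence (as Waldspurger did) are contrasted with \cite{MW}, which uses the identity from \cite{JC}, and the present paper continues in the latter tradition precisely because it inherits the local calculations from \cite{MW} and sets up the machinery needed for the average-value results in Section \ref{klm-avg-sec}.

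The two routes are conceptually parallel because $\~J_{\pi'_v}(f_v)$ for the projection-onto-newline test function is, by \eqref{klm-jpif2}, a renormalization of the torus integral of the matrix coefficient, which is the same object entering the definition of $\alpha_v$. So the technical heart --- the explicit local integrals at jointly ramified places --- is essentially identical in either framework, and your identification of that as the new content, with condition \eqref{tameramcond} ensuring tractability, is on target. A couple of finer points worth noting. First, you attribute the evaluation of $\alpha_v$ at places of disjoint ramification to \cite{MW}; strictly speaking \cite{MW} computes $\~J_{\pi'_v}(f_v)$, not $\alpha_v$, and while translating between them is routine it is bookkeeping you would need to carry out. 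Second, the Jacquet--Chen identity has the advantage of delivering positivity of $L(1/2,\pi_L\otimes\Omega)$ for free (which \eqref{intro-waldform} alone does not obviously do), and is the identity that naturally feeds into the relative trace formula computations for Theorems \ref{intro-avgval-thm}--\ref{avgval-modp}; on the other hand, a direct Waldspurger-formula argument like yours can in principle avoid the restriction $\omega_\pi \in \{1,\eta\}$ inherited from \cite{JC}, so it is a legitimate alternative worth keeping in mind for extensions beyond trivial central character.
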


After our paper was originally completed, the paper \cite{CST} appeared, which gives a similar formula using a less explicit choice of test vector.

Note $\phi$ is specified up to a scalars, and the left hand side is invariant under scaling.
As in \cite{MW}*{Thm 4.2}, one can rewrite this formula using the Petersson norm  of
a normalized newform in $ \pi$ instead of $L(1, \pi, Ad)$.  See \eqref{ad-pet-eq} for when
$\pi$ corresponds to a holomorphic Hilbert modular form.
If $F=\Q$ and $\pi$ corresponds to a holomorphic new form of squarefree level $N$ with  $N|c(\Omega)$, then the above formula simplifies
 considerably:
 
  \begin{corollary} Let $f$ be a normalized holomorphic modular eigenform of
weight $k$ and
   squarefree level $N$.
    Let $S$ be the set of primes $p|N$ which split in $L$.
 Let $\Omega$ be any ideal class character of $L$ such that $N|c(\Omega)$ and 
 $\epsilon(1/2,f \times \Omega)=1$.   
 Then
 \[ \frac{|P_D(\phi)|^2}{(\phi,\phi)} =\frac{C_\infty(L,
f,\Omega)}{2^{k+1}\sqrt{c(\Omega)\Delta_L } } 
L_{S(\Omega)}(1,\eta)^2  \prod_{p|N}  (1+p^{-1})^{\epsilon_p}
  \times  \frac{ L^{S}(1/2, f \times \Omega) }{\langle f, f \rangle}, \]
  where $\epsilon_p$ is $+1$ if $p$ splits in $L$ and $-1$ otherwise, and
$\langle \cdot, \cdot \rangle$
  is the Petersson inner product. 
 \end{corollary}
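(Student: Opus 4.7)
The plan is to deduce the Corollary by specializing Theorem~\ref{intro-lvalthm} to $F = \Q$ with $\pi$ corresponding to $f$ and $\Omega$ satisfying $N \mid c(\Omega)$, and then simplifying each factor that appears. Because $N$ is squarefree, $c(\pi_p) \le 1$ at every finite prime $p$, so $S_2(\pi) = \emptyset$ and $S_1(\pi) = S(\pi)$. The hypothesis $N \mid c(\Omega)$ gives $S(\pi) \subseteq S(\Omega)$, which collapses $S(\pi) \cup S(\Omega) = S(\Omega)$, $S(\pi) \cap S(\Omega) = S(\pi)$, and $S(\pi) \cap S(\Omega)^{c} = \emptyset$; the last of these kills the product $\prod_{v \in S(\pi) \cap S(\Omega)^{c}} e(L_v/F_v)$. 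Moreover $\Omega_v$ is ramified at every $v \in S_1(\pi)$, so the definition of $S_0(\pi)$ reduces to $\emptyset$, and $L^{S_0(\pi)}$ becomes a full $L$-function.

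With $F = \Q$ we have $\Delta = 1$ and $L(s, 1_F) = \zeta(s)$, and a direct calculation gives
\[L_{S(\pi)}(1, 1_F) \cdot L^{S(\pi)}(2, 1_F) = \zeta(2)\prod_{p \mid N}(1 + p^{-1}).\]
Next I would invoke equation \eqref{ad-pet-eq}, the formula relating $L(1, \pi, Ad)$ to the Petersson norm $\langle f, f \rangle$ for a holomorphic newform of weight $k$ and squarefree level $N$. Combined with $\zeta(2) = \pi^{2}/6$, the archimedean Gamma-factors, and the factor $C_\infty(L, f, \Omega)$ from the Theorem, this substitution produces the Corollary's prefactor $C_\infty(L, f, \Omega)/(2^{k+1}\sqrt{c(\Omega)\Delta_L})$.

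The final step is to pass from $L(1/2, \pi_L \otimes \Omega)$ to $L^S(1/2, f \times \Omega)$. At every $p \mid N$ the local representation $\pi_p$ is Steinberg (up to an unramified twist), and since $\Omega$ is ramified at $p$ the local $L$-factor of $\pi_L \otimes \Omega$ at $p$ is trivial; hence $L(1/2, \pi_L \otimes \Omega)$ coincides with $L^S(1/2, f \times \Omega)$ once the (trivial) local factors at inert $p \mid N$ are accounted for. Collating the local adjoint $L$-factors absorbed through \eqref{ad-pet-eq} with the $\prod_{p \mid N}(1 + p^{-1})$ from the previous paragraph, and splitting into the cases $p \in S$ versus $p$ inert in $L$, yields exactly the factor $\prod_{p \mid N}(1 + p^{-1})^{\epsilon_p}$ with $\epsilon_p = +1$ at split primes and $\epsilon_p = -1$ at inert primes.

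The main obstacle is the bookkeeping in this last step: tracking which local factors (of $\pi_L \otimes \Omega$, of $L(s, \pi, Ad)$, and of the $\zeta(2)$ term) survive at each $p \mid N$ in the split and inert cases, and verifying that they combine to give precisely $(1 + p^{-1})^{\epsilon_p}$. The Petersson substitution \eqref{ad-pet-eq} also needs care so that the archimedean Gamma-factors and the various powers of $2$ and $\pi$ collate cleanly into the constant $2^{k+1}$.
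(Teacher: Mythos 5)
Your overall strategy --- specialize Theorem~\ref{intro-lvalthm} to $F=\Q$ with squarefree $N$ and $N\mid c(\Omega)$, observe that this forces $S_2(\pi)=\emptyset$, $S(\pi)\subseteq S(\Omega)$, $S(\pi)\cap S(\Omega)^c=\emptyset$ and $S_0(\pi)=\emptyset$, simplify $L_{S(\pi)}(1,1_F)L^{S(\pi)}(2,1_F)$, then substitute~\eqref{ad-pet-eq} and pass from $L(1/2,\pi_L\otimes\Omega)$ to the classical partial $L$-value --- is exactly the route the paper has in mind, and steps 1--4 are carried out correctly. (The paper itself states the Corollary without proof, pointing only to~\eqref{ad-pet-eq}, so there is no ``paper proof'' to compare against in detail.)

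However, the final step is not right as stated, and the gap is conceptual, not just a matter of omitted bookkeeping. You claim that the factor $\prod_{p\mid N}(1+p^{-1})^{\epsilon_p}$, with $\epsilon_p=+1$ at split $p$ and $\epsilon_p=-1$ at nonsplit $p$, arises by ``collating the local adjoint $L$-factors absorbed through~\eqref{ad-pet-eq} with the $\prod_{p\mid N}(1+p^{-1})$'' and then ``splitting into the cases $p\in S$ versus $p$ inert.'' This cannot be the mechanism: the adjoint $L$-function $L(s,\pi,\mathrm{Ad})$, the local factors $L_p(1,\pi_p,\mathrm{Ad})=L(2,1_{F_p})$ at $p\mid N$, the quantity $|\mathfrak N|=N$ appearing in~\eqref{ad-pet-eq}, and the Petersson norm $\langle f,f\rangle$ are all intrinsic to $f$ and completely independent of the quadratic field $L$. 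None of these objects ``know'' whether a given $p\mid N$ is split or inert, so no combination of them can produce a factor that distinguishes the two cases. The only ingredients in the formula that see the splitting behaviour at $p\mid N$ are the local quadratic-character factors $L(1,\eta_p)$ (which equal $(1-p^{-1})^{-1}$ at split $p$, $(1+p^{-1})^{-1}$ at inert $p$, and $1$ at ramified $p$), the local $L$-factors $L_p(1/2,\pi_{L}\otimes\Omega)$, and the ramification indices $e(L_p/F_p)$. A correct derivation must therefore trace the $\epsilon_p$ exponent back to these $\eta$-dependent quantities --- and do so while carefully matching the normalizations of $\langle f,f\rangle$ versus $(\mathbf f,\mathbf f)$ in~\eqref{ad-pet-eq}, the archimedean $L$-factors, and the various powers of $2$ and $\pi$, all of which you acknowledge as ``the main obstacle'' but do not actually carry out. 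As written, the proposal asserts the key identity rather than proving it, and assigns the split/inert dichotomy to the wrong source.
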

 
In the setting of corollary, $C_\infty(L,f,\Omega)$ is also easier to describe.  If $L$ is real quadratic, 
then $C_\infty(L,f,\Omega) = 2^k$.  If $L$ is imaginary quadratic, it is described by beta functions, and if we also assume $\Omega_\infty$ is trivial, then 
$C_\infty(L,f, \Omega) = \frac{(k/2-1)!^2}{\pi(k-1)!}$.
 
We prove Theorem \ref{intro-lvalthm} by computing local spectral distributions
appearing in the trace formula identity of \cite{JC},
just as in \cite{MW}.  For simplicity, we only do this when $\omega_\pi=1$,
though the
case of $\omega_\pi = \eta$ should be similar.  (One needs either $\omega_\pi=1$
or $\omega_\pi=\eta$ to use the identity from \cite{JC}.)  Note this formula is
considerably more general than the one in \cite{MW} (for trivial
central character) and one expects that it should generalize the applications of the
previously mentioned formulas.  For instance, we obtain the following
generalization
of an average value result of Feigon--Whitehouse \cite{FW}*{Thm 1.1} by computing
the 
geometric side of a certain trace formula.

\begin{theorem} \label{intro-avgval-thm} Let $F$ be a totally real number field with $d=[F:\Q]$.  
Let $\mathcal F(\mathfrak N, 2\mathbf k)$ be the set of cuspidal automorphic representations of $\GL_2(\A_F)$ associated to the
holomorphic Hilbert modular eigen newforms of weight $2 \mathbf k$ and level $\mathfrak N$, with
$\mathbf k = (k_1, \ldots, k_d) \ne (1, \ldots, 1)$ and $\mathfrak N$ squarefree.
Let $L$ be a totally imaginary quadratic extension of $F$, which is inert and unramified above each place $\p | \mathfrak N$.  
Fix a unitary character $\Omega$ of $\A_L^\times/L^\times \A_F^\times$, and let $\mathfrak C$ be the norm of its conductor in $F$.
Suppose $\mathfrak N=\mathfrak N_0 \mathfrak N_1$ and $\mathfrak C = \mathfrak C_0 \mathfrak N_1$
with $\mathfrak N_0$, $\mathfrak N_1$ and $\mathfrak C_0$ all coprime.
Assume $\mathfrak N_1$ is odd, and that the number of primes
dividing $\mathfrak N_0$ has the same parity as $d$. 
Further assume that for each infinite place $v$ of $F$, $k_v > |m_v|$ where $\Omega_v(z) =
(z/\bar z)^{m_v}$. 

 Then, if $|\mathfrak N_0 | > d_{L/F} ( |\mathfrak C_0|/|\mathfrak N_1|)^{h_F}$, where $h_F$ is the class number of $F$, we have
\begin{multline*} 
 \prod_{v | \infty}  \begin{pmatrix} 2k_v-2 \\ k_v-m_v-1 \end{pmatrix}
\sum_{\mathfrak N'} 
\sum_{\pi \in \mathcal F(\mathfrak N', 2\mathbf k)}
 \frac{ L(1/2,\pi_L \otimes \Omega) }{L^{S(\mathfrak N)}(1,\pi,Ad) } \\
= 2^{2-d} \Delta^{3/2} |\mathfrak N| 
L_{S(\mathfrak N_0)}(2,1_{F}) L_{S(\mathfrak N_1)}(1,1_F)L^{S(\mathfrak C_0)}(1,\eta),
\end{multline*}
where $\mathfrak N'$ runs over ideals dividing $\mathfrak N$ which are divisible
by $\mathfrak N_0$, and $S(\mathfrak J)$ denotes the set of all primes dividing $\mathfrak J$.
\end{theorem}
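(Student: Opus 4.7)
I would follow the strategy of Feigon--Whitehouse \cite{FW}, substituting Theorem \ref{intro-lvalthm} for their disjoint-ramification hypothesis so as to accommodate the joint ramification at primes of $\mathfrak N_1$. The parity condition relating $\#S(\mathfrak N_0)$ and $d$ ensures the existence of a totally definite quaternion algebra $D/F$ ramified at exactly the archimedean places and the primes dividing $\mathfrak N_0$. By Jacquet--Langlands, every $\pi \in \mathcal F(\mathfrak N',2\mathbf k)$ with $\mathfrak N_0 \mid \mathfrak N' \mid \mathfrak N$ transfers to an automorphic representation $\pi'$ of $D^\times(\A_F)$; since the primes of $\mathfrak N_1$ (and of $\mathfrak C_0$) are split for $D$, all such $\mathfrak N'$ contribute to the same Eichler-level spectrum on $D^\times$.

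Applying Theorem \ref{intro-lvalthm} termwise, with the local test vector at each finite place chosen as in Section \ref{klm-testvec} (using Theorems \ref{intro-tv-split}, \ref{intro-tv-inert} at the jointly ramified primes of $\mathfrak N_1$), each summand on the left-hand side becomes $|P_D(\phi_\pi)|^2/(\phi_\pi,\phi_\pi)$ times an explicit constant depending on $\mathfrak N'$ only through local factors such as $L_{S(\pi)\cap S(\Omega)}(1,1_F)$ and $L^{S(\pi)}(2,1_F)$. Because $L/F$ is inert at $v\mid\mathfrak N$, the factor $\prod_{v\in S(\pi)\cap S(\Omega)^c}e(L_v/F_v)$ equals $1$, and the local factors combine cleanly across $\mathfrak N'\supseteq\mathfrak N_0$; this reduces the $L$-value average to an explicit spectral sum
\[
\sum_{\mathfrak N_0\mid\mathfrak N'\mid\mathfrak N}\; C(\mathfrak N')\sum_{\pi\in\mathcal F(\mathfrak N',2\mathbf k)}\frac{|P_D(\phi_\pi)|^2}{(\phi_\pi,\phi_\pi)},
\]
which is itself a toric spectral average on $D^\times$ at Eichler levels between $\mathfrak N_0$ and $\mathfrak N$.

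These spectral sums I would evaluate by a relative trace formula on $D^\times$. Since $D$ is definite, the relevant automorphic spectrum (with trivial central character) is discrete and finite-dimensional, so the spectral sum equals the integral of the reproducing kernel on $T(\A_F)\times T(\A_F)$ against $\Omega^{-1}\otimes\Omega$. Expanding the kernel geometrically yields a sum of orbital integrals over $T(F)$-double cosets in $D^\times(F)$ of the characteristic function of the compact open cut out by our test vectors. Under the hypothesis $|\mathfrak N_0|>d_{L/F}(|\mathfrak C_0|/|\mathfrak N_1|)^{h_F}$, only the identity double coset survives: non-identity orbits parameterize optimal embeddings of $\EO$-orders of bounded conductor into the relevant Eichler order, and an Eichler-type embedding count bounds their total contribution by $d_{L/F}(|\mathfrak C_0|/|\mathfrak N_1|)^{h_F}$, strictly less than the identity-orbit weight $|\mathfrak N_0|$ supplied by the local test functions. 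The surviving identity-orbit term is an explicit product of local volumes and archimedean integrals which, after combination with the constants $C(\mathfrak N')$, yields the right-hand side of the theorem.

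The main technical obstacle is the orbital vanishing step: one must show that local matching fails for every non-identity $T$-double coset once the orbit parameter exceeds the stated bound, which requires carefully synthesizing the conductor constraints imposed by the Theorem \ref{intro-tv-inert} test vectors at primes of $\mathfrak N_1$ with the global conductor of $\Omega$ and the $h_F$-indexed global ambiguity coming from the class group of $F$. A secondary calculation is the archimedean factor $C_v(L,\pi,\Omega)$ of Section \ref{klm-arch-fact} for a holomorphic discrete series of weight $2k_v$ paired against the character $z\mapsto(z/\bar z)^{m_v}$; its explicit evaluation yields the binomial factor $\binom{2k_v-2}{k_v-m_v-1}$ and imposes the condition $k_v>|m_v|$ needed for the archimedean period to be non-zero.
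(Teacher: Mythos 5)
Your overall strategy matches the paper's: Jacquet's relative trace formula on the totally definite quaternion algebra $D^\times$ ramified exactly at infinity and $\mathfrak N_0$, spectral side converted to $L$-values via Theorem \ref{intro-lvalthm}, and geometric side reduced to the identity orbit under the stability hypothesis. The geometric vanishing you describe is handled in the paper largely by citing the Feigon--Whitehouse orbital-integral lemmas and adapting their Lemma 4.21, and your description of that step is essentially right, including the mechanism behind the bound $|\mathfrak N_0| > d_{L/F}(|\mathfrak C_0|/|\mathfrak N_1|)^{h_F}$.

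However, there is a genuine gap where you assert that ``the local factors combine cleanly across $\mathfrak N' \supseteq \mathfrak N_0$'' and that each summand is the period $|P_D(\phi_\pi)|^2/(\phi_\pi,\phi_\pi)$ times a constant depending on $\mathfrak N'$ only through local factors. The spectral side of the trace formula uses one fixed test function $f$ designed for level $\mathfrak N$, whose local components at $v \mid \mathfrak N_1$ are the characteristic functions of the orders from Theorem \ref{intro-tv-inert}. For an oldform $\pi \in \mathcal F(\mathfrak N', 2\mathbf k)$ with $\mathfrak N_0 \mid \mathfrak N' \subsetneq \mathfrak N$ and $v \mid (\mathfrak N')^{-1}\mathfrak N$, the local representation $\pi_v$ is unramified, $\pi_v(f_v)$ projects onto a two-dimensional space, and the local spectral distribution $\widetilde J_{\pi_v}(f_v)$ is not covered by Theorem \ref{intro-lvalthm}; it is a priori a function of the Satake parameter of $\pi_v$. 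The ratio $\widetilde J_{\pi_v}(f_v)/\widetilde J_{\pi_v}(f_v')$ to the Gross--Prasad test function contributes $L(1,\pi_v,\mathrm{Ad})/(L(2,1_{F_v})L(1,\eta_v))$, and the paper must explicitly compute $\widetilde J_{\pi_v}(f_v) = q_v^{-1}$ (via Kirillov-model inner products $(\phi_0,\phi_0')$ and a character-sum cancellation) to see that the $\pi_v$-dependence exactly cancels the local adjoint $L$-factor. That cancellation is what produces the partial $L^{S(\mathfrak N)}(1,\pi,\mathrm{Ad})$ in the final formula rather than the complete adjoint value, and the paper itself identifies this oldform computation as the primary difficulty in passing from Theorem \ref{intro-lvalthm} to Theorem \ref{intro-avgval-thm}. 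Your proposal treats this as bookkeeping, but without it the spectral sum is not of the clean form you write down.
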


The parity condition guarantees the sign
of the relevant functional equations $\epsilon(1/2, \pi_L \otimes \Omega)$ are 
$+1$ for $\pi \in \mathcal F(\mathfrak N, \mathbf k)$.
Without a condition to the effect that $\mathfrak N$ (or $\mathfrak N_0$) is
large, 
one does not expect a nice explicit formula, but rather just an asymptotic 
in $\mathfrak N$, which miraculously  stabilizes for $\mathfrak N$ large  (cf.
\cite{MR}, \cite{FW}).
Hence the condition above on the size of $\mathfrak N_0$  means we are in the {\em
stable range}.
The other assumptions in the theorem allow for simplifications of the trace
formula
we will use, but are not necessary to express such averages as the geometric
side of an
appropriate trace formula.

Theorem \ref{intro-avgval-thm} specializes to \cite{FW}*{Thm 1.1} in the case that $\mathfrak N$ and
$\mathfrak C$ are coprime, i.e., $\mathfrak N = \mathfrak N_0$.
This case $\mathfrak N = \mathfrak N_0$ is particularly nice  as one can
transfer the problem to a trace formula computation on a 
quaternion algebra that only picks up forms of exact level $\mathfrak N$. 
Additionally,  one can rewrite the formula in terms
of the complete adjoint $L$-value at 1, as in \cite{FW}.  However, this is
impossible to manage in general, and the primary difficulty in going from
Theorem \ref{intro-lvalthm} to Theorem \ref{intro-avgval-thm} is to determine
the contribution to the spectral side of the relevant trace formula
coming from the oldforms.
 (In general, it is not easy to isolate the newforms in such formulas---see,
e.g., \cite{KL} or
 \cite{nelson}---and the issue for us is that the contribution from the oldforms
is now weighted by local adjoint $L$-factors.) 

Still, one can use the above formula together with formulas for smaller levels
to get both explicit bounds and asymptotics for average values over just the
forms of exact level 
$\mathfrak N$.  We do this in the case $\mathfrak N_1$ is prime.
This immediately implies $L(1/2, \pi_L \otimes \Omega) \ne 0$ for some $\pi_L
\in \mathcal F(\mathfrak N, 2 \mathbf k)$.

\begin{theorem}\label{non-van-thm}
With assumptions as in Theorem \ref{intro-avgval-thm}, and further that $\mathfrak N_1 = \mathfrak
p$ is an odd prime
and $|\mathfrak N_0 | > d_{L/F} |\mathfrak C|^{h_F}$, we have
\[ |\mathfrak p| - \frac 1{1-2|\mathfrak p|^{-1}+|\mathfrak p|^{-2}}
 \le \Sigma(\mathfrak N) \le |\mathfrak p| - \frac 1{1+2|\mathfrak
p|^{-1}+|\mathfrak p|^{-2}},  \]
 where $\Sigma(\mathfrak N)$ is equal to
 \[ \frac {2^{d-2}}{\Delta^{3/2} |\mathfrak N_0| 
L(1,1_{F_{\mathfrak p}}) L_{S(\mathfrak N_0)}(2,1_F)  L^{S(\mathfrak C_0)}(1,\eta)}
 \prod_{v | \infty}  \begin{pmatrix} 2k_v-2 \\ k_v-m_v-1 \end{pmatrix}
\sum_{\pi \in \mathcal F(\mathfrak N, 2\mathbf k)} 
 \frac{ L(1/2,\pi_L \otimes \Omega) }{L(1,\pi,Ad) }. \]
 In particular, $\Sigma(\mathfrak N) \sim |\mathfrak p|-1 + O(|\mathfrak p|^{-1})$
as $|\mathfrak N_0 \mathfrak p| \to \infty$ such that $|\mathfrak N_0| > d_{L/F} | \mathfrak C
|^{h_F}$.
Furthermore, with $\mathfrak p$ fixed, we have
 \[ \lim_{|\mathfrak N_0| \to \infty} \Sigma(\mathfrak N) = |\mathfrak p| - 1. \]
In both of these asymptotics, $\mathfrak N_0$ travels along squarefree ideals
 coprime to $\mathfrak C$ which are products of unramified primes and satisfy
our previous
 parity assumption.
\end{theorem}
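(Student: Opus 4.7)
The plan is to apply Theorem \ref{intro-avgval-thm} at two different levels---the given $\mathfrak N = \mathfrak N_0 \mathfrak p$ and the smaller $\mathfrak N_0$---and take the difference to isolate newforms of exact level $\mathfrak N$, then use Ramanujan bounds together with termwise positivity to control the remaining oldform contribution.

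Applied with the given data, Theorem \ref{intro-avgval-thm} sums over ideals $\mathfrak N'$ with $\mathfrak N_0 \mid \mathfrak N' \mid \mathfrak N$; since $\mathfrak N_1 = \mathfrak p$ is prime the only possibilities are $\mathfrak N_0$ and $\mathfrak N$, yielding an identity of the form
\[
\sum_{\pi \in \mathcal F(\mathfrak N_0, 2\mathbf k)} \frac{L(1/2, \pi_L \otimes \Omega)}{L^{S(\mathfrak N)}(1, \pi, Ad)} + \sum_{\pi \in \mathcal F(\mathfrak N, 2\mathbf k)} \frac{L(1/2, \pi_L \otimes \Omega)}{L^{S(\mathfrak N)}(1, \pi, Ad)} = A,
\]
for an explicit $A$ (after moving the archimedean binomial weights to the right-hand side). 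I next apply Theorem \ref{intro-avgval-thm} at level $\mathfrak N_0$ alone, i.e.\ taking $\mathfrak N_1' = 1$ and $\mathfrak C_0' = \mathfrak C$; the strengthened hypothesis $|\mathfrak N_0| > d_{L/F} |\mathfrak C|^{h_F}$ is precisely the stable-range condition in this second application, the parity assumption is unchanged, and the result is a closed-form expression for $\sum_{\pi \in \mathcal F(\mathfrak N_0, 2\mathbf k)} L(1/2, \pi_L \otimes \Omega)/L^{S(\mathfrak N_0)}(1, \pi, Ad)$.

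For $\pi \in \mathcal F(\mathfrak N_0, 2\mathbf k)$ the component $\pi_\mathfrak p$ is unramified, so the two partial-$L$ denominators above differ exactly by the local adjoint factor $L_\mathfrak p(1, \pi_\mathfrak p, Ad) = [(1 - \alpha^2/|\mathfrak p|)(1 - |\mathfrak p|^{-1})(1 - \alpha^{-2}/|\mathfrak p|)]^{-1}$, with $\alpha, \alpha^{-1}$ the Satake parameters at $\mathfrak p$. Ramanujan (known for the holomorphic Hilbert modular forms in question) gives $|\alpha| = 1$, hence $(1 - \alpha^2/|\mathfrak p|)(1 - \alpha^{-2}/|\mathfrak p|) \in [(1 - |\mathfrak p|^{-1})^2, (1 + |\mathfrak p|^{-1})^2]$. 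Combined with the termwise positivity $L(1/2, \pi_L \otimes \Omega) \geq 0$ (Jacquet--Chen) and $L^{S(\mathfrak N_0)}(1, \pi, Ad) > 0$, this produces two-sided bounds on the level-$\mathfrak N_0$ contribution in the first identity in terms of the right-hand side of the second. Subtracting gives bounds on the newform sum weighted by $1/L^{S(\mathfrak N)}(1, \pi, Ad)$; converting these to the complete $1/L(1, \pi, Ad)$ weighting of $\Sigma(\mathfrak N)$ via the local Steinberg adjoint factors at each $v \in S(\mathfrak N)$ gives the bounds on $\Sigma(\mathfrak N)$ itself. The normalization constants in the definition of $\Sigma(\mathfrak N)$ are designed exactly to absorb the leftover local factors and produce the claimed clean form $|\mathfrak p| - (1 \pm |\mathfrak p|^{-1})^{-2}$.

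The first asymptotic $\Sigma(\mathfrak N) \sim |\mathfrak p| - 1 + O(|\mathfrak p|^{-1})$ then follows by expanding $(1 \pm |\mathfrak p|^{-1})^{-2} = 1 + O(|\mathfrak p|^{-1})$; the fixed-$\mathfrak p$ limit $\lim_{|\mathfrak N_0| \to \infty} \Sigma(\mathfrak N) = |\mathfrak p| - 1$ additionally requires that the average of $L_\mathfrak p(1, \pi_\mathfrak p, Ad)$ over $\mathcal F(\mathfrak N_0, 2\mathbf k)$ stabilizes to the specific value giving $|\mathfrak p| - 1$, which should follow from an equidistribution-type argument on Satake parameters in the stable range. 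Non-vanishing is then immediate: for $|\mathfrak p|$ sufficiently large the lower bound on $\Sigma(\mathfrak N)$ is strictly positive, forcing some $\pi \in \mathcal F(\mathfrak N, 2\mathbf k)$ to have $L(1/2, \pi_L \otimes \Omega) > 0$. The main obstacle is the careful bookkeeping of the various partial $L$-values at Steinberg and unramified places, while the essential analytic input is Ramanujan at $\mathfrak p$, which is what ultimately produces the $(1 \pm |\mathfrak p|^{-1})^{-2}$ in the final bounds.
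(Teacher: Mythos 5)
Your proposal reproduces the paper's own strategy for the two-sided bound: apply Theorem \ref{intro-avgval-thm} once at level $\mathfrak N = \mathfrak N_0\mathfrak p$ (giving the oldform-plus-newform sum) and once at level $\mathfrak N_0$ (i.e.\ $\mathfrak N_1'=1$, $\mathfrak C_0'=\mathfrak C$, with stable-range hypothesis $|\mathfrak N_0|>d_{L/F}|\mathfrak C|^{h_F}$ as you note), relate the two partial adjoint $L$-weightings through the factor $L(1,\pi_\mathfrak p,Ad)$, bound that factor using the Hecke/Ramanujan estimate $(1-|\mathfrak p|^{-1})^2 \le (1-\alpha^2|\mathfrak p|^{-1})(1-\alpha^{-2}|\mathfrak p|^{-1}) \le (1+|\mathfrak p|^{-1})^2$, use Jacquet--Chen nonnegativity of $L(1/2,\pi_L\otimes\Omega)$ together with positivity of the adjoint factors to pass from pointwise bounds to bounds on the sum, and subtract; the first asymptotic then falls out by expanding $(1\pm|\mathfrak p|^{-1})^{-2}=1+O(|\mathfrak p|^{-1})$. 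Your bookkeeping remark about converting $L^{S(\mathfrak N)}(1,\pi,Ad)$ to $L(1,\pi,Ad)$ via the Steinberg factors $L(1,\pi_v,Ad)=L(2,1_{F_v})$ for $v\mid\mathfrak N$ is also correct.

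The one genuine gap is the fixed-$\mathfrak p$ limit. You write that the convergence of the average of $L(1,\pi_\mathfrak p,Ad)$ over $\mathcal F(\mathfrak N_0,2\mathbf k)$ ``should follow from an equidistribution-type argument on Satake parameters,'' but this is not an argument: a vertical Sato--Tate statement by itself does not immediately give convergence of the $L$-value-weighted average (the weights $L(1/2,\pi_L\otimes\Omega)/L^{S(\mathfrak N_0)}(1,\pi,Ad)$ vary with $\pi$, and you would need to decouple them from the Satake parameter at $\mathfrak p$). What the paper actually does is invoke Feigon--Whitehouse Theorem 1.2, which gives an asymptotic for the precise weighted sum
\[
\sum_{\pi\in\mathcal F(\mathfrak N_0,2\mathbf k)}\frac{L^{\mathfrak p}(1/2,\pi_L\otimes\Omega)}{L^{\mathfrak p}(1,\pi,Ad)}
\]
as $|\mathfrak N_0|\to\infty$ (here $L^\mathfrak p(1/2,\pi_L\otimes\Omega)=L(1/2,\pi_L\otimes\Omega)$ since $\Omega$ is ramified at $\mathfrak p$). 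Subtracting that asymptotic for $\Sigma_{\mathfrak N}(\mathfrak N_0)$ from the exact identity for $\Sigma_{\mathfrak N}(\mathfrak N_0)+\Sigma_{\mathfrak N}(\mathfrak N)$ produces $\Sigma(\mathfrak N)\to|\mathfrak p|-1$. To close your argument you should cite this result (or prove a weighted average asymptotic of that kind) rather than appealing to equidistribution in the abstract.
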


Note the above theorem implies the nonvanishing of $\Sigma(\mathfrak N)$, and therefore at
least one of these central values, provided $|\mathfrak p| > \frac{3+\sqrt 5}2$
and $|\mathfrak N_0| > d_{L/F} | \mathfrak C|^{h_F}$, or $\mathfrak p$ is
arbitrary and $|\mathfrak N_0|$ is sufficiently large.

We remark that the bounds come from having to estimate the $p$-th
Hecke 
eigenvalues $\{ a_p, a_p^{-1} \}$ of the oldforms of level $\mathfrak N_0$.  The latter
asymptotic
comes from an asymptotic for a weighted analogue of Theorem
\ref{intro-avgval-thm}
in the case of disjoint ramification (see \cite{FW}*{Thm 1.2}) 
to pick off the contribution from the old forms.  One should be able to prove a
version of
Theorem \ref{intro-avgval-thm} involving weighting by Hecke eigenvalues (namely,
extend \cite{FW}*{Thm 6.1} to the case of joint ramification) whereby one could
inductively
obtain asymptotics for the average values $\Sigma(\mathfrak N)$ in the case where
$\gcd(\mathfrak N, \mathfrak C)$
has an arbitrary number of prime factors.
(We remark Sugiyama and Tsuzuki \cite{ST} have recently obtained asymptotics for weighted averages
using a different relative trace formula approach when $\Omega$ is trivial, but $\mathfrak N$
need not be squarefree.)

Note that in previous studies of such averages, $\mathfrak N$ is typically required to
be prime (e.g., \cite{RR}) or have an odd number of prime factors (e.g.,
\cite{FW}) to force
the sign of the functional equation to be $+1$ if, say, $d$ is odd.  However, allowing for joint ramification we can treat levels $\mathfrak N$ with an arbitrary number of
prime divisors, though we do not always get an exact formula in this situation.

Lastly, we include another application of Theorem \ref{intro-avgval-thm} when
$\mathfrak N= \mathfrak N_0$ (i.e., \cite{FW}*{Thm 1.1}).  Here, having an exact
formula for the average value
over newforms allows us to deduce the nonvanishing mod $p$ of the algebraic
part 
$L^\alg(1/2, \pi_L \otimes \Omega)$ (see \eqref{lvalalg-def})
of the central value for $p$ suitably large.  

\begin{theorem} \label{avgval-modp}
With notation and assumptions as in Theorem \ref{intro-avgval-thm}, suppose
 $|\mathfrak N | > d_{L/F} |\mathfrak C| ^{h_F}$,
$\mathfrak N$ is coprime to $\mathfrak C$, and, for each $v | \infty$, $m_v$ is even.
Let
 $p$ be an odd rational prime satisfying $p > q+1$ for all primes $q \in
S(\Omega)$, and
  $\mathcal P$  a prime of $\bar {\mathbb Q}$ above $p$. 
Then  there exists $\pi \in \mathcal F(\mathfrak N, 2\mathbf k)$
such that
\[ L^{\alg}(1/2, \pi_L \otimes \Omega) \not \equiv 0 \mod \mathcal P. \]
\end{theorem}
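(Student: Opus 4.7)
The plan is to apply Theorem~\ref{intro-avgval-thm} in the special case $\mathfrak N = \mathfrak N_0$ (so $\mathfrak N_1 = 1$ and $\mathfrak C = \mathfrak C_0$), obtaining the exact average value identity
\[ \prod_{v|\infty} \binom{2k_v-2}{k_v-m_v-1} \sum_{\pi \in \mathcal F(\mathfrak N, 2\mathbf k)} \frac{L(1/2,\pi_L\otimes\Omega)}{L^{S(\mathfrak N)}(1,\pi,Ad)} = 2^{2-d}\Delta^{3/2}|\mathfrak N|\, L_{S(\mathfrak N)}(2,1_F)\, L^{S(\mathfrak C)}(1,\eta), \]
and then argue that when both sides are written in terms of the algebraic $L$-value $L^\alg(1/2,\pi_L\otimes\Omega)$ defined in \eqref{lvalalg-def}, the right hand side is a nonzero element of $\bar{\mathbb Q}$ whose $\mathcal P$-adic valuation is controlled. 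Since the identity then takes the shape $\sum_\pi c_\pi L^\alg(1/2,\pi_L\otimes\Omega) = A$ with $A,c_\pi \in \bar{\mathbb Q}$ and with $A\not\equiv 0 \pmod{\mathcal P}$ and each $c_\pi$ a $\mathcal P$-adic integer, at least one summand must be nonzero mod $\mathcal P$.

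Concretely, I would first use \eqref{ad-pet-eq} to replace $L^{S(\mathfrak N)}(1,\pi,Ad)$ with a Petersson norm $\langle f_\pi,f_\pi\rangle$ of the normalized Hilbert newform, picking up explicit powers of $\pi$ (the number), Gamma values, and a factor involving $|\mathfrak N|$; together with the archimedean factor $\prod_v \binom{2k_v-2}{k_v-m_v-1}$ and the definition of $L^\alg(1/2, \pi_L\otimes \Omega)$, these transcendental quantities are designed to cancel, yielding a purely algebraic identity of the form
\[ \sum_{\pi \in \mathcal F(\mathfrak N, 2\mathbf k)} c_\pi\, L^\alg(1/2,\pi_L\otimes\Omega) = 2^{a}\Delta^{b}|\mathfrak N|^{c}\, L_{S(\mathfrak N)}(2,1_F)\, L^{S(\mathfrak C)}(1,\eta) \]
for explicit exponents $a,b,c$ and explicit constants $c_\pi$ (the latter arising from local factors in \eqref{ad-pet-eq} at places in $S(\mathfrak N)$). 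The parity hypothesis that each $m_v$ is even is what guarantees that this algebraic normalization is compatible with Deligne's critical-value framework and that the archimedean binomials $\binom{2k_v-2}{k_v-m_v-1}$ appear as genuine integers rather than introducing half-integer powers.

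With this identity in hand, the next step is to verify that the right side is a unit (or at least a nonzero element) modulo $\mathcal P$. The factor $2^a$ is a unit since $p$ is odd. The factor $L_{S(\mathfrak N)}(2,1_F)=\prod_{\mathfrak q|\mathfrak N}(1-|\mathfrak q|^{-2})^{-1}$ and the partial value $L^{S(\mathfrak C)}(1,\eta)$ contain expressions of the form $(|\mathfrak q|\pm 1)$ for $\mathfrak q\in S(\mathfrak N)\cup S(\Omega)$, so one uses the hypothesis $p>q+1$ for all $q\in S(\Omega)$ (and the coprimality $\gcd(\mathfrak N,\mathfrak C)=1$ together with standard rationality/integrality of $L^{S(\mathfrak C)}(1,\eta)$ via the class number formula, controlled by $\Delta_L$) to conclude $\mathcal P$-adic nonvanishing. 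On the left, each coefficient $c_\pi$ is a ratio of local factors at $S(\mathfrak N)$ whose $\mathcal P$-adic valuation is controlled since $\mathcal P\nmid |\mathfrak N|$ primes are disjoint from $S(\Omega)$ only in so far as needed.

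The main obstacle is the bookkeeping in Step 2: isolating precisely the transcendental periods so that $L^\alg$ really does appear on the left, and then tracking which local factors at primes $\mathfrak q\in S(\mathfrak N)$ land in the coefficients $c_\pi$ versus the global constant on the right. The condition $p>q+1$ for $q\in S(\Omega)$ is calibrated exactly to make the $L^{S(\mathfrak C)}(1,\eta)$ factor (which contains factors $(1\pm q^{-1})$) a $\mathcal P$-adic unit, but one must also verify that the local adjoint correction factors at $S(\mathfrak N)$ (which are disjoint from $S(\Omega)$ by hypothesis) do not contribute any $\mathcal P$-divisibility. Once these $\mathcal P$-integral estimates are in place, the mod-$\mathcal P$ nonvanishing of the right hand side forces at least one $\pi\in\mathcal F(\mathfrak N,2\mathbf k)$ to satisfy $L^\alg(1/2,\pi_L\otimes\Omega)\not\equiv 0\pmod{\mathcal P}$.
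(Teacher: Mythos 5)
Your plan is the right one and matches the paper's proof: take $\mathfrak N=\mathfrak N_0$ in Theorem~\ref{intro-avgval-thm}, substitute via~\eqref{ad-pet-eq} and the definition~\eqref{lvalalg-def}, check that the transcendental pieces cancel, and read off a mod-$\mathcal P$ nonvanishing statement. But there is one point you flag as needing verification — ``one must also verify that the local adjoint correction factors at $S(\mathfrak N)$ \ldots do not contribute any $\mathcal P$-divisibility'' — that you leave unresolved, and without resolving it the argument does not close, because the hypotheses of the theorem give no control of $p$ relative to the primes dividing $\mathfrak N$.

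The resolution is a clean cancellation that you should make explicit. Since $\mathfrak N$ is squarefree and the sum runs over newforms of \emph{exact} level $\mathfrak N$, each $\pi_v$ for $v\mid\mathfrak N$ is an unramified twist of Steinberg with trivial central character, so $L(1,\pi_v,Ad)=L(2,1_{F_v})$. Consequently $L^{S(\mathfrak N)}(1,\pi,Ad)=L(1,\pi,Ad)/L_{S(\mathfrak N)}(2,1_F)$ with the local factor \emph{independent of $\pi$}, and this $L_{S(\mathfrak N)}(2,1_F)$ cancels exactly against the $L_{S(\mathfrak N)}(2,1_F)$ on the geometric side. After this cancellation, combining~\eqref{ad-pet-eq} with the identity
\[
\binom{2k_v-2}{k_v-m_v-1}\,\Gamma(k_v+m_v)\,\Gamma(k_v-m_v)=(2k_v-2)!
\]
gives the purely algebraic identity
\[
2^{3d-4|\mathbf k|-1}\,\Delta\, h_F \prod_{v\mid\infty}(2k_v-2)!\;\sum_{\pi\in\mathcal F(\mathfrak N,2\mathbf k)} L^{\alg}(1/2,\pi_L\otimes\Omega)\;=\;\frac{1}{L_{S(\Omega)}(1,\eta)},
\]
with a coefficient in front of the sum that is the \emph{same} for every $\pi$ and is (up to a power of $2$) a rational integer. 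The right side is the finite product $\prod_{q\in S(\Omega)}(1-\eta_q(q)/q)$, whose numerator and denominator are products of $q$ and $q\pm1$ over $q\in S(\Omega)$; this is a $\mathcal P$-adic unit precisely under the hypothesis $p>q+1$ for $q\in S(\Omega)$ (together with $p$ odd). No class-number-formula input is needed: the $L(1,\eta)$ in~\eqref{lvalalg-def} cancels the archimedean and unramified parts of $L^{S(\mathfrak C)}(1,\eta)$ directly, leaving only the finite Euler product. Your invocation of rationality of $L^{S(\mathfrak C)}(1,\eta)$ via the class number formula is misleading — that value is transcendental; it is only the ratio $L^{S(\mathfrak C)}(1,\eta)/L(1,\eta)=L_{S(\mathfrak C)}(1,\eta)^{-1}$ that is rational, and that is the quantity that survives. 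Once the coefficient and the right side are both seen to be $\mathcal P$-integral with the right side a unit, the conclusion follows as you describe.
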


This generalizes a theorem of Michel and
Ramakrishnan \cite{MR} on the case $F=\Q$ and $\mathfrak N=N$ is prime.
The parity condition on $m_v$ ensures that $\Omega$ is algebraic and that the above central
value is critical.

As in \cite{FW}, one should be able to use Theorem \ref{intro-lvalthm} to get
estimates on more general averages of $L$-values, and apply this
to subconvexity and equidistribution problems, but we do not address this here.
Theorem \ref{intro-lvalthm} has also been used in very recent works of Hamieh \cite{hamieh}
and Van Order \cite{VO}, respectively on valuations of Rankin--Selberg $L$-values in 
anticyclotomic towers and on constructing $p$-adic $L$-functions.

We remark that similar $L$-value formulas have been recently proven
in certain cases of joint ramification
with $L$ totally imaginary,
namely in Hida \cite{hida-kyoto} for $F=\Q$ and in Hsieh \cite{hsieh} for
Hilbert modular forms of square free level
(these works have some additional conditions, but they do not assume trivial
central character).
In general, when the joint ramification does not satisfy \eqref{tameramcond}, this problem appears
considerably more 
complicated. 

\subsection{Local results}\label{local-res}
Now, we pass to the local situation and discuss the local test vectors in some
detail.

Let $F$ be a $p$-adic field and $L$ a quadratic separable extension of $F$
(either a field or $F \oplus F$).   We may then embed $L^\times$ as a torus
$T(F)$ of $\GL_2(F)$.  All such embeddings are conjugate in $\GL_2(F)$, so the choice
of embedding will be merely one of convenience.
Consider an (infinite-dimensional) irreducible  admissible representation $\pi$ of 
$\GL_2(F)$.  We do not assume that the central character $\omega_\pi$ is trivial. A basic question to ask is the following: which characters of $T(F)$ appear as
quotients in $\pi|_{T(F)}$? Let $\Omega$ be a character of $T(F)$.   If $\Omega$
is an irreducible constituent of  $\pi|_{T(F)}$, i.e., if 
\[ \mathrm{Hom}_{T(F)}(\pi, \Omega) \ne 0, \]
then we must have $\Omega|_{Z(F)} = \omega_\pi$, where $Z$ denotes the center of
$\GL_2$.
Hence we will assume $\Omega|_{Z(F)} = \omega_\pi$.

Let $D$ be the unique quaternion division algebra
over $F$, and let $\pi'$ be the Jacquet--Langlands transfer to $D^\times(F)$
when it exists.
If $\pi'$ exists and $T(F)$ embeds into $D^\times(F)$, put $A(\pi) = \{ \pi,
\pi' \}$.  Otherwise, put
$A(\pi) = \{ \pi \}$.  From \cite{wald}, one knows that
\[  \sum_{\tau \in A(\pi)} \dim_{\C}  \mathrm{Hom}_{T(F)}(\tau, \Omega) = 1. \]
In other words, $\Omega$ is a constituent of $\pi|_{T(F)}$ if and only 
if it does not occur in that of $\pi'|_{T(F)}$ (when this makes sense), and it
occurs with multiplicity
at most one.  Further, Tunnell \cite{tunnell} and Saito \cite{saito} gave a
local $\epsilon$-factor criterion:
\begin{equation*}\mathrm{dim_\C \, Hom}_{T(F)} ( \pi,
\Omega)=\frac{1+\varepsilon(1/2,\pi_{L} \otimes \Omega)\omega_\pi(-1)}{2}.\end{equation*}

Applications to a global $L$-value formula (discussed in Section \ref{global-res-sec}) require finer information than this. 
Namely, suppose 
$\dim_{\C}  \mathrm{Hom}_{T(F)}(\pi, \Omega) = 1$ and let 
$\ell \in \mathrm{Hom}_{T(F)}(\pi, \Omega)$ be nonzero.  Then one would like to
 have a
{\em test vector} for $\ell$, i.e., an element $\phi \in \pi$ such that
$\ell(\phi) \ne 0$.  For the applications, we will need $\phi$ to satisfy two further conditions.
\begin{enumerate}
\item $\phi \in V_\pi^K$ for a compact subgroup $K$ of $\GL_2(F)$ with ${\rm dim}(V_\pi^K) = 1$.

\item The compact subgroup $K$ above depends only on the ramification data attached to $\pi$ and $\Omega$.
\end{enumerate}
Let us note that, if $\ell \neq 0$, then some translate of the new vector of $\pi$ is always a test vector for $\ell$. Hence, we can always find a test vector satisfying the first condition above. Under some restriction on the conductors of $\pi$ and $\Omega$, we will obtain a test vector satisfying the second condition as well.

Specifically, let $\mathfrak o$ be the ring of integers of $F$, $\mathfrak p$ its
maximal ideal and $\varpi$ a uniformizer. Let $c(\pi)$ be the exponent of the conductor
of $\pi$ as defined in Section \ref{conductors}, and let
\[ K_1(\mathfrak p^{c(\pi)}) = \{ \mat{a}{b}{c}{d} \in \GL_2(\mathfrak o) : c \in
\mathfrak p^{c(\pi)},  \, d \in 1+\mathfrak p^{c(\pi)} \}. \]  
Let 
$c(\Omega)$ be the conductor of 
$\Omega$ as defined in (\ref{conductor-of-Omega}).  Gross and Prasad
\cite{GP}
 determine a test vector when $c(\pi)=0$ ($\pi$ is
unramified) or $c(\Omega)=0$ ($\Omega$ is unramified).  In particular, when $c(\pi)=0$ so
$A(\pi) = \{ \pi \}$, the vector they obtain can be described
as a translate of the new vector.

We will now describe test vectors when $\pi$ and $\Omega$ are both ramified. We will distinguish the split and field case. 

\subsubsection{The split case} 

\begin{theorem} \label{intro-tv-split} Suppose $L=F \oplus F$ and let $T(F) \cong L^\times$ 
be the diagonal torus in $\GL_2(F)$.
 Let $\pi$ be any infinite dimensional, irreducible, admissible representation of $\GL_2(F)$ with
central character $\omega_\pi$ and conductor $\p^{c(\pi)}$, $c(\pi) \geq 0$. 
Let $\Omega(\mathrm{diag}(x,y)) = \Omega_1(x) \Omega_2(y)$ be a character of $T(F)$ such that
$\Omega_1 \Omega_2 = \omega_\pi$.
Without loss of
generality, assume that $c(\Omega_1) \geq c(\Omega_2)$. Write $\Omega_1= | \cdot
|^{1/2-s_0}\mu$ for some $s_0 \in \C$ and some unitary character $\mu$ of
$F^\times$ such that $\mu(\varpi) = 1$. 
Then $\dim_{\C} \mathrm{Hom}_{T(F)}(\pi, \Omega) = 1$, and for nonzero $\ell \in \mathrm{Hom}_{T(F)}(\pi, \Omega)$, the subgroup $h K_1(\mathfrak p^{c(\pi)}) h^{-1}$ fixes a 1-dimensional space of $\pi$ consisting of test vectors for $\ell$, where
$$h = \begin{cases} \mat{1}{\varpi^{-c(\Omega)}}{0}{1}  & \text{ if } c(\mu) = 0 \text{ or } L(s, \pi \otimes \mu^{-1}) \text{ does not have a pole at } s = s_0;\\
w \mat{1}{\varpi^{-c(\Omega)}}{0}{1} & \text{ if } c(\mu) > 0 \text{ and } L(s, \pi \otimes \mu^{-1}) \text{ has a pole at } s = s_0, \\
& \text{ but } L(1-s, \tilde\pi \otimes \mu) \text{ does not have a pole at } s = s_0.
\end{cases}$$
In particular, if both $\Omega$ and $\pi$ are unitary, then we are always in the first case above.
\end{theorem}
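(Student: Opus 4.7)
Since $L = F \oplus F$, the only quaternion algebra containing the split torus is $M_2(F)$ itself, and the Tunnell--Saito dichotomy forces $\dim_{\C} \Hom_{T(F)}(\pi, \Omega) = 1$ unconditionally. The plan is to construct the functional explicitly via the Whittaker model: fix a nontrivial additive character $\psi$ of $F$, and for $\phi \in \pi$ with Whittaker function $W_\phi \in \mathcal{W}(\pi, \psi)$, set
\[\ell(\phi) := \int_{F^\times} W_\phi\!\mat{a}{0}{0}{1} \Omega_1^{-1}(a)\, d^\times a,\]
interpreted by analytic continuation where necessary. A direct calculation using the transformation law of Whittaker functions under the diagonal torus together with $\Omega_1\Omega_2 = \omega_\pi$ verifies that $\ell$ is $(T(F), \Omega)$-equivariant and hence spans $\Hom_{T(F)}(\pi, \Omega)$. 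Writing $\Omega_1 = |\cdot|^{1/2-s_0}\mu$, this is the Jacquet--Langlands zeta integral $Z(s_0, W_\phi, \mu^{-1})$.

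Let $\phi_{\new}$ denote the newvector of $\pi$. The one-dimensional space of vectors fixed by $hK_1(\p^{c(\pi)})h^{-1}$ is spanned by $\pi(h)\phi_{\new}$, so the claim reduces to showing $\ell(\pi(h)\phi_{\new}) \neq 0$. In the first case, $h = \mat{1}{\varpi^{-c(\Omega)}}{0}{1}$ acts on the right of $W_{\new}$ and produces an additive twist $\psi(a\varpi^{-c(\Omega)})$ in the integrand, balancing the oscillation of $\mu^{-1}$ against the support of $W_{\new}$ on the diagonal torus. Using the classical explicit formulas for $W_{\new}\!\mat{a}{0}{0}{1}$ (treating principal series, Steinberg, and supercuspidal cases separately), I would evaluate the resulting integral in closed form and show it equals a nonzero constant times $L(s_0, \pi \otimes \mu^{-1})$, which is nonzero at $s_0$ by hypothesis. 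When both $\pi$ and $\Omega$ are unitary, $L(s, \pi \otimes \mu^{-1})$ is automatically regular on the critical line, so this case always applies, yielding the final assertion of the theorem.

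The second case addresses the failure of this construction precisely when $L(s, \pi \otimes \mu^{-1})$ has a pole at $s_0$, which can force the previous integral to vanish. Here I would invoke the local functional equation
\[Z(1-s,\, \pi(w)W,\, \mu)\;=\;\gamma(s,\, \pi\otimes\mu^{-1},\, \psi)\, Z(s, W, \mu^{-1})\]
and take the test vector to be $\pi(wh)\phi_{\new}$. Since $\pi(w)\phi_{\new}$ realizes (up to a unit) the newvector of the contragredient $\tilde\pi$, the analogous computation identifies $\ell(\pi(wh)\phi_{\new})$ with a nonzero multiple of $L(1-s_0, \tilde\pi \otimes \mu)$, nonzero by hypothesis. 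The main obstacle I anticipate is the case-by-case evaluation of the twisted newvector zeta integral: one must verify that the translate $\pi(h)\phi_{\new}$ (respectively $\pi(wh)\phi_{\new}$) lies in the correct Kirillov support for the integral not to vanish identically, and track how the pole of $L(s, \pi \otimes \mu^{-1})$ at $s_0$ cancels against a zero of the $Z/L$-ratio when case 1 fails. Once this calculation is in hand for each type of $\pi$, the remaining normalizer check that $\pi(h)\phi_{\new}$ is fixed by $hK_1(\p^{c(\pi)})h^{-1}$ with multiplicity one is routine.
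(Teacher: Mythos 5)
Your high-level strategy---realizing $\ell$ via the zeta integral in the Whittaker model and invoking the local functional equation in the second case---is the same one the paper uses, and the computation you gesture at (translating the Whittaker newform by $h$ and evaluating the zeta integral against $\mu^{-1}$) is exactly the content of Proposition~\ref{zeta-int-prop}. But two points are off in a way that matters.

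First, defining $\ell(\phi) = Z(s_0, W_\phi, \mu^{-1})$ ``interpreted by analytic continuation'' does not always produce a finite linear functional: the meromorphic continuation of $Z(s,W,\mu^{-1})$ is $L(s,\mu^{-1}\otimes\pi)$ times a Laurent polynomial in $q^{-s}$, so its value at $s_0$ diverges whenever $L$ has a pole there (which includes part of your Case~1, namely $c(\mu)=0$ with a pole at $s_0$). The correct definition, as in the paper, is $\ell(W)=Z(s_0,W,\mu^{-1})/L(s_0,\mu^{-1}\otimes\pi)$, which is always finite because the ratio $Z/L$ is a Laurent polynomial. Second, your claimed identity $Z(s_0,\pi(h)W_0,\mu^{-1})=c\cdot L(s_0,\pi\otimes\mu^{-1})$ with $c\neq0$ holds only when $c(\mu)=0$. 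When $c(\mu)>0$, Proposition~\ref{zeta-int-prop}~ii) shows the translated zeta integral is a nonzero \emph{constant} independent of $s$, so that $\ell(\pi(h)W_0)=\mathrm{const}/L(s_0,\mu^{-1}\otimes\pi)$ and this vanishes precisely when $L$ has a pole at $s_0$. That vanishing---not any nonvanishing of $L(s_0)$; local $L$-factors never vanish, and your phrase ``nonzero at $s_0$ by hypothesis'' conflates a pole with a zero---is what forces the case distinction and the use of the functional equation in Case~2. With the normalization corrected there is no pole/zero cancellation to track: one simply checks whether the Laurent polynomial $Z/L$ vanishes at $s_0$, and the representation-type casework you anticipate is unnecessary since the Whittaker newform values of Table~\ref{whit-table} feed a uniform computation of the translated zeta integral.
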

Here, $w = \mat{0}{1}{-1}{0}$.
The proof of the above theorem uses the theory of zeta integrals for $\GL_2$ representations given by their Whittaker models. The zeta integral $Z(s_0, \ast, \mu^{-1})$ (defined in (\ref{zeta-int-defn})) divided by the $L$-value $L(s_0, \pi \otimes \mu^{-1})$ gives a concrete realization of a nonzero $\ell \in \mathrm{Hom}_{T(F)}(\pi, \Omega)$.  One checks that the newform in the Whittaker model translated by the matrix $h$ in the statement of the above theorem is a test vector for $\ell$. 

Note that we do not give a compact that fixes a $1$-dimensional space of $\pi$ consisting of test vectors for $\ell$ when both $L(s, \pi \otimes \mu^{-1})$ and $L(1-s, \tilde\pi \otimes \mu)$ have a pole at $s=s_0$.

\subsubsection{The field case}

\begin{theorem} \label{intro-tv-inert} Suppose $L$ is a field. Let $\pi$ be any infinite dimensional, irreducible, admissible representation of $\GL_2(F)$ with
central character $\omega_\pi$ and conductor $\p^{c(\pi)}$. Let $\Omega$ be a character on
$L^\times$  such that $\Omega |_{F^\times} = \omega_\pi$. Assume that $c(\Omega) \geq c(\pi) > 0$.  
Embed $L^\times$ as a torus $T(F)$  in $\GL_2(F)$ as in Section \ref{torus-notation}. Then 
$\dim_{\C}  \mathrm{Hom}_{T(F)}(\pi, \Omega) = 1$, and for a nonzero $\ell \in \mathrm{Hom}_{T(F)}(\pi, \Omega)$, the subgroup
\[\mat{\OF^\times}{\p^{c(\Omega)}}{\p^{c(\pi)-c(\Omega)}}{1+\p^{c(\pi)}} \cap \GL_2(F) =  h K_1(\mathfrak p^{c(\pi)}) h^{-1}, \quad h = \mat{\varpi^{c(\Omega)-c(\pi)}}{0}{0}{1}w,  \]
fixes a 1-dimensional space of $\pi$ consisting of test vectors for $\ell$.
\end{theorem}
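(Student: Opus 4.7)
The plan is to proceed in three stages. First I would verify the multiplicity-one assertion $\dim_\C \Hom_{T(F)}(\pi,\Omega)=1$ using the Tunnell--Saito $\epsilon$-factor criterion recalled in Section \ref{local-res}. Under the hypothesis $c(\Omega) \geq c(\pi) > 0$, a direct computation of $\epsilon(1/2, \pi_L \otimes \Omega)$ (exploiting the stability of epsilon factors under sufficiently ramified twists) should yield $\omega_\pi(-1)\epsilon(1/2,\pi_L\otimes\Omega) = +1$, confirming both that the Hom space is one-dimensional and that the nonzero functional lives on $\pi$ rather than on its Jacquet--Langlands transfer. This is consistent with the global observation following (\ref{tameramcond}) that $D$ and $\Omega$ cannot have joint ramification.

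Second, I would construct an explicit nonzero $\ell \in \Hom_{T(F)}(\pi,\Omega)$. A convenient realization uses the Whittaker/Kirillov model of $\pi$: set
\[
\ell(v) \;=\; \int_{Z(F)\backslash T(F)} W_v(t)\, \Omega^{-1}(t)\, dt,
\]
where $W_v$ is the Whittaker function attached to $v \in \pi$, suitably regularized if convergence fails. When $\pi$ is supercuspidal one can alternatively use a matrix-coefficient integral against a well-chosen auxiliary vector. The nonvanishing of some such $\ell$ after normalization is guaranteed by stage one.

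Third, I would evaluate $\ell$ on the proposed test vector $\pi(h)v_\pi$, where $v_\pi$ is the newform fixed by $K_1(\mathfrak p^{c(\pi)})$. The Atkin--Lehner-type element $w$ inside $h$ converts the Whittaker newform into a vector whose support is controlled on the opposite Bruhat cell, while the diagonal part $\mat{\varpi^{c(\Omega)-c(\pi)}}{0}{0}{1}$ rescales the variable of integration so that the support of the translated Whittaker function matches the conductor of $\Omega$. Expanding $\ell(\pi(h)v_\pi)$ as a sum (or integral with compact support) over the discrete part of $Z(F)\backslash T(F)$ via the embedding of Section \ref{torus-notation}, the conductor condition $c(\Omega) \geq c(\pi)$ ensures that exactly one orbital contribution survives, and the explicit values of the Whittaker newform on the diagonal make that contribution nonzero.

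The main obstacle is the third stage. The non-split torus embedding is significantly more rigid than in the split case of Theorem \ref{intro-tv-split}: one loses the clean factorization via split zeta integrals, and one must handle principal series, special, and supercuspidal $\pi$ in parallel, each with its own description of the Whittaker newform. Verifying that the particular $h$ in the theorem produces a test vector---rather than one on which $\ell$ accidentally vanishes---amounts to showing that the arithmetic matching of $c(\Omega)$ with $c(\pi)$ forces precisely one term in the $T(F)$-orbit sum to be nonzero. The rigidity of this matching is both what makes the statement true and what explains the necessity of the hypothesis $c(\Omega)\geq c(\pi)$.
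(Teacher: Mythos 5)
Your proposal takes a genuinely different route from the paper, so let me first contrast them and then point to where the gaps are.

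The paper does not argue by a single uniform computation. It proves the theorem case by case: for irreducible principal series it works in the induced model, defining $\ell(f) = \int_{Z(F)\backslash T(F)} f(t)\Omega^{-1}(t)\,dt$ and establishing nonvanishing of $\ell$ on a translate of the newform via an explicit Iwasawa-type decomposition of $t\mat{\varpi^s}{}{}{1}$ (Lemma \ref{decomp-lemma} and Proposition \ref{A-nonvanish-lemma}); for $\chi\St$ with $\chi$ unramified it builds the newform in the Waldspurger model directly from the double coset decomposition $\bigsqcup_r T(F)\mat{\varpi^r}{}{}{1}\GL_2(\OF)$; and for supercuspidals it uses compact induction and Mackey theory. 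Your plan replaces all of this with a single Whittaker-period integral $\ell(v)=\int_{Z(F)\backslash T(F)} W_v(t)\Omega^{-1}(t)\,dt$.

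There are three concrete difficulties with that plan.

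\textbf{Nonvanishing of $\ell$ is not a consequence of Stage 1.} You write that ``the nonvanishing of some such $\ell$ after normalization is guaranteed by stage one,'' but Stage 1 only shows $\dim_\C \Hom_{T(F)}(\pi,\Omega)=1$. That tells you a nonzero functional exists; it does not tell you that the particular integral $\int_{Z(F)\backslash T(F)} W_v(t)\Omega^{-1}(t)\,dt$ is not identically zero. To close this, Stage 3 must actually \emph{compute} the period of $\pi(h)v_\pi$ and show it is nonzero; only then does Stage 1 let you conclude that your integral spans the Hom space and hence that any nonzero $\ell$ agrees with it up to scalar. As written, the logic of Stages 2--3 is circular unless the Stage 3 computation is carried out from scratch.

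\textbf{The Whittaker function off the diagonal torus is not given by ``explicit values on the diagonal.''} When $L$ is a field, $T(F)$ is \emph{not} the diagonal torus, so evaluating $W_0(th)$ requires first decomposing $th=bk$ with $b$ upper triangular and $k\in\GL_2(\OF)$ via Iwasawa, and then handling the various $(2,1)$-entry valuations of $k$; for the newform this is only immediate when $k\in K_1(\p^{c(\pi)})$, and the other Bruhat cells contribute. This is precisely the content of the decomposition lemma the paper needs (Lemma \ref{decomp-lemma}), and the resulting integrals turn out to depend on the type of $\pi$ (compare Table \ref{whit-table} and Lemma \ref{klm-whit-val}), so the ``parallel'' treatment you anticipate does collapse back into a case analysis. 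You acknowledge the obstacle but do not address what makes the computation go through; the paper's resolution---the fact that for $c(\Omega)\geq c(\pi)$ the map $y\mapsto\Omega(1+y\beta)$ is an additive character of the exactly right conductor (see the discussion after the theorem statement in Section \ref{local-res} and its use in the proof of Proposition \ref{A-nonvanish-lemma})---is the key analytic input you would have to reproduce in the Whittaker setting.

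\textbf{Minor but telling: no regularization is needed.} When $L$ is a field, $Z(F)\backslash T(F)$ is compact, so the period integral converges absolutely. The hedge ``suitably regularized if convergence fails'' suggests you are transporting intuition from the split case (where $Z\backslash T$ is noncompact and the zeta integral genuinely needs analytic continuation); the non-split case is cleaner in this respect but harder in the sense of item two, since you lose the identification of $T$ with the diagonal torus that makes the Kirillov model transparent.

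Finally, for Stage 1 itself, invoking ``stability of epsilon factors under sufficiently ramified twists'' is not obviously justified by $c(\Omega)\geq c(\pi)$ alone; stability for $\GL_2$ twists typically requires the twisting character to be ramified well beyond the conductor of $\pi$, and the hypothesis here gives no such margin. The paper sidesteps this: for principal series it proves existence directly (Lemma \ref{non-vanishing-int-lemma}) with \emph{no} restriction on $c(\Omega)$; for Steinberg it uses Waldspurger's criterion on the quaternionic side together with the observation $c(\Omega)\geq c(\pi)>c(\chi\circ N_{L/F})$; and for supercuspidals it constructs the intertwining explicitly via Mackey theory. Your epsilon-factor computation would need to be made precise, at which point it would likely become a case-by-case analysis anyway.
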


If $\pi$ has trivial central character, then we can replace the compact subgroup in the statement of the above theorem by $\mat{\varpi^{c(\Omega)}}{}{}{1} \mat{\OF^\times}{\OF}{\p^{c(\pi)}}{\OF^\times} \mat{\varpi^{-c(\Omega)}}{}{}{1}$, since the Atkin Lehner element normalizes the group $\mat{\OF^\times}{\OF}{\p^{c(\pi)}}{\OF^\times}$.

The proof of the theorem breaks up into several cases depending on the type of the representation $\pi$. Although the proofs are quite different in all cases, it turns out that one of the key ingredients of the proof is that a function roughly of the form $x \mapsto \Omega(1+x\beta)$ (see Section \ref{L-def-section} for details on notation) is an additive character of $\OF$ of a specific conductor. The condition $c(\Omega) \geq c(\pi)$ is required to make this key ingredient work. Also, in certain cases we obtain test vectors for more general situations than the one mentioned above.

\subsubsection*{Principal series}
If $\pi$ is a principal series representation, then we realize it in its induced model and explicitly define a linear functional $\ell(f) = \int_{Z(F)\backslash T(F)} f(t) \Omega^{-1}(t) dt$. It is easy to see that $\ell \in \mathrm{Hom}_{T(F)}(\pi, \Omega)$. We are able to show, for any $c(\pi), c(\Omega) \geq 0$, that $\ell \neq 0$. See (\ref{the-test-vector}) and (\ref{non-van-A}) for details. It is not clear if the explicit test vector for $\ell$ obtained in (\ref{the-test-vector}) belongs to a one-dimensional subspace of $\pi$ of vectors right-invariant under a compact subgroup.
 It is also not clear how to obtain a component that is right invariant under a conjugate of $K_1(\mathfrak p^{c(\pi)})$. To obtain a test vector with the right invariance mentioned in the statement of the theorem, we evaluate $\ell$ at a translate of the newform of $\pi$ by $h$ and show that that is nonzero. For this, we need $c(\Omega) \geq c(\pi) > 0$. If we replace the $h$ in the statement of the theorem by $h=\mat{\varpi^s}{}{}{1}$, $s = c(\pi) - c(\Omega) -v({\bf a})$, where ${\bf a}$ depends on a particular embedding of $T(F)$ in $\GL_2(F)$, then we can extend the result to the case $c(\Omega) \geq 2c(\chi_1)$ (see Proposition \ref{A-nonvanish-lemma}). Here, $\tau = \chi_1 \times \chi_2$ and $c(\chi_1) \leq c(\chi_2)$. 

\subsubsection*{Twists of Steinberg representation}
If $\pi$ is a twist of the Steinberg representation by a ramified character $\chi$, then realizing it as a sub-representation of the reducible induced representation $\chi |\,\,|^{1/2} \times \chi |\,\,|^{-1/2}$, we see that we get the same linear functional and the same nonvanishing of the translate of newform as in the irreducible principal series case. 

If $\pi$ is a twist of the Steinberg representation by an unramified character $\chi$, then we use the fact that such representations are characterized by the existence of a unique (up to constant) vector that is right invariant under the Iwahori subgroup $\I$ and is an eigenvector of the Atkin-Lehner operator with eigenvalue $-\chi(\varpi)$. If we assume that $c(\Omega) \geq c(\pi)$, then \cite{wald} implies the existence of a nonzero $\ell \in \mathrm{Hom}_{T(F)}(\pi, \Omega)$. As in Section \ref{wald-model}, we can then realize $\pi$ as a sub-representation of the space of smooth functions $B : \GL_2(F) \rightarrow \C$ satisfying $B(tg) = \Omega(t) B(g)$. In this latter space, we look for a vector $B$ with three properties: one that is right invariant under $\I$, is zero when averaged over $\GL_2(\OF)/\I$, and is an eigenvector for the Atkin-Lehner operator with eigenvalue $-\chi(\varpi)$. Using a double coset decomposition for $T(F) \backslash \GL_2(F)/\I$, we obtain in Lemma \ref{various-values-lem} the explicit values of such a $B$ for all $g \in \GL_2(F)$. This gives us $B(h) \neq 0$, for $h$ defined in the statement of the theorem. The advantage of the above method is two-fold. It gives us the explicit values of the newform in the Waldspurger model and it also gives another proof of the uniqueness of the Waldspurger model. One can also obtain an independent proof of existence using the methods of \cite{P}, but we do not do that here.

\subsubsection*{Supercuspidal representations}
In the case that $\pi$ is an irreducible supercuspidal representation we may
appeal to Mackey theory. We begin with the explicit construction of
supercuspidal representations of $\GL_2(F)$ by induction from an open subgroup that is
compact modulo the center. Suppose that $J$ is such a
subgroup and $\pi = c\mbox{--}\mathrm{Ind}_{J}^{\GL_2(F)} \rho$. We first describe the situation when $\pi$ is minimal, i.e.\ when the conductor of $\pi$ cannot be lowered upon twisting by a character.

We say that $\rho$ and $\Omega$ intertwine on $T(F) g J$ if $\mathrm{Hom}_{J
\cap g^{-1}T(F)g}(\rho , \Omega^g) \neq 0$. Understanding $\mathrm{Hom}_{T(F)} (
\pi, \Omega)$ reduces to understanding the double cosets $T(F) \backslash
\GL_2(F) / J$ on which $\rho$ and $\Omega$ intertwine. We do this in two steps.
The first step is to consider a larger subgroup $K_\mathfrak{A} \supseteq J$
where $K_\mathfrak{A}$ is one of two subgroups depending on $J$. There is a
unique double coset $T(F) h_0 K_\mathfrak{A}$ that depends only on $c(\pi)$ and
$c(\Omega)$ containing a $T(F)\backslash \GL_2(F) / J$ double coset on which $\rho$ and $\Omega$ can possibly intertwine. This double
coset decomposes as the disjoint union of finitely many $T(F)\backslash \GL_2(F)
/ J$ double cosets 
\begin{equation*}T(F) h_0 K_\mathfrak{A} =
\bigsqcup\limits_{i} T(F) h_i J.\end{equation*}
When $c(\Omega) > \lfloor c(\pi)/2 \rfloor$,
we describe this decomposition explicitly, show that one may choose the
representatives $h_i$ to be diagonal matrices, and show for each $i$ 
\begin{equation*}
(J \cap h_i^{-1} T(F) h_i) \ker \rho / Z(F) \ker \rho \cong (J \cap \overline{N}) / (\ker \rho \cap \overline{N}),
\end{equation*}
where $\overline{N}$ is the subgroup of lower triangular unipotent matrices. It suffices to examine $\rho |_{J \cap \overline{N}}$ which decomposes as a direct sum of characters.
We show that there is a unique $i_0$ so that $\rho$ and $\Omega$ intertwine on $T(F)h_{i_0} J$. We conclude that there exists a nonzero linear functional $\ell \in \mathrm{Hom}_{T(F)}(\pi , \Omega)$.  We describe the translate of the newvector in the induced model explicitly, and show that this translate is a test vector.

Finally, we deal with the case of an irreducible supercuspidal representation $\tau$ that is not minimal. In this case $\tau \cong \pi \otimes \chi$ where $\pi$ is a minimal supercuspidal representation and $\chi$ is a character of $F^\times$.  We construct a vector $\varphi_\chi \in \pi$ so that $\varphi_\chi \otimes \chi$ is a translate of the newvector in $\tau$. Using the results of the minimal case, we show that $\varphi_\chi$ is a test vector for $\Omega \otimes \chi^{-1}$.

Similar to the irreducible principal series case, if we replace $h$ in the statement of the theorem by $h=\mat{\varpi^s}{}{}{1}, s = c(\pi) - c(\Omega) -v({\bf a})$, then in the {\it minimal} supercuspidal case, we can extend the result to the case $c(\Omega) \geq \lfloor 3c(\pi)/4 \rfloor + 1$.

\subsubsection{Relation to test vectors of Gross--Prasad} 

We recall some results
of Gross--Prasad \cite{GP}.  For simplicity assume $\omega_\pi =1$, $L/F$ is unramified
 and $\dim_{\C}  \mathrm{Hom}_{T(F)}(\pi, \Omega) = 1$.
  For an order $R$ of $M_2(F)$, let $d(R)$ be the
exponent of its reduced discriminant and
$c(R)$ be the smallest $c \ge 0$ such that $\mathfrak o + \varpi^c \mathfrak o_L
\subset R$.  It is clear that
$R^\times$ can only fix a test vector if $c(R) \ge c(\Omega)$.  Moreover, if we want
$R^\times$  to fix a line in $\pi$,
it is reasonable to try $R$ with $d(R) = c(\pi)$.  Thus one might consider orders
with $c(R) = c(\Omega)$ and $d(R) = c(\pi)$.  
If either $c(\Omega)=0$ or $c(\pi)=0$, then there is a unique-up-to-$L^\times$-conjugacy
order $R$ with 
$c(R) = c(\Omega)$ and $d(R) = c(\pi)$, and \cite{GP} shows that $R^\times$ fixes a line
consisting of test vectors.  If $c(\pi)=0$ then $R$ is a maximal order, but in general $R$ is not an Eichler order.

When $c(\Omega) > 0$ and $c(\pi) > 0$, the invariants $c(R)$ and $d(R)$
no longer specify $R$ uniquely up to
conjugacy by $L^\times$.   However, with above assumptions, Theorem \ref{intro-tv-inert} can be
interpreted as follows:  when $c(\Omega) \ge c(\pi)$,
there is an Eichler order $R$ with $c(R) =c(\Omega)$ and $d(R) = c(\pi)$ such that $R^\times$
fixes a line in $\pi$ which consists of
test vectors.  Moreover, this $R$ can be described uniquely up to
$L^\times$-conjugacy as
the intersection of two maximal ideals $R_1$ and $R_2$ with $c(R_1) = c(\Omega)$ and
$c(R_2) = c(\Omega)-c(\pi)$ which are maximal possible
distance apart in the Bruhat--Tits tree, i.e., $d(R_1, R_2) = c(\pi)$. 
This provides an
intrinsic description of our test vectors, i.e., one without reference to a specific embedding of $L^\times$ in $\GL_2(F)$.
   It would be interesting to know whether other Eichler orders $R$ satisfying 
$c(R)=c(\Omega)$ and $d(R)=c(\pi)$ also pick out test vectors.

Note that if $c(\pi) > 2c(\Omega)$, there is no Eichler order
with $c(R) = c(\Omega)$ and $d(R) = c(\pi)$, which suggests
that the case when $\pi$ is highly ramified, in comparison with $\Omega$, is more complicated
than the reverse situation.

\subsection{Outline}
Our paper consists of two parts, one local and one global.

In the first (local) part of the paper we prove our results on local test
vectors, which we treat in three separate cases.  Section
\ref{besselsubgroupsec} contains our local notation and embedding of $L^\times$
into $\GL_2(F)$.
Then in Section \ref{split-tv-sec} we treat the case where $L/F$ is split, using
zeta integrals.  This proves Theorem \ref{intro-tv-split}.    Now assume $L/F$ 
is inert.  In Section \ref{ps-tv-sec}, we treat the case of principal series and
Steinberg representations.  In Section \ref{sc-tv-sec}, we treat the case of
supercuspidal
representations.  These two sections complete Theorem \ref{intro-tv-inert}.
 Finally, in Section \ref{klm-sd-sec} we compute certain local spectral
distributions associated to our local test vectors.

The global part of the paper consists of two sections.  In Section
\ref{klm-lval-sec}, we use the local spectral calculations of Section
\ref{klm-sd-sec} to prove our $L$-value formula (Theorem
\ref{intro-lvalthm}).  
In Section \ref{klm-avg-sec}, we deduce our results on average values
and nonvanishing (Theorems \ref{intro-avgval-thm},  \ref{non-van-thm} and \ref{avgval-modp}).

\medskip
{\bf Acknowledgements.}  We greatly appreciate the referee for a thorough reading of the 
paper,  whose comments and suggestions improved the manuscript and eliminated some
errors.  We would also like to thank Brooke Feigon, Andrew Knightly,
 Phil Kutzko, Ralf Schmidt and David Whitehouse for helpful discussions.
The second author was partially supported by Simons Foundation Collaboration Grant 
240605.  The third author was partially supported by the National Science Foundation 
grant DMS 1100541.

\section{Local setup}\label{besselsubgroupsec}

Let $F$ be a nonarchimedean local field of characteristic zero, $\OF$ its ring
of integers, $\p$ the maximal ideal of $\OF$, and $\varpi$ a generator of $\p$. Let $q$ be the size of the residue field and we let $v$ be the normalized valuation map on $F$.

For a character $\chi$ of $F^\times$, let $c(\chi)$ be the exponent of its conductor, i.e., $c(\chi) \ge 0$ is minimal
such that $\chi$ is trivial on $(1+\p^{c(\chi)})\cap \OF^\times$.

\subsection{Subgroups and representations of $\GL_2$}\label{gl2-rep-info} \label{conductors}
We will be using the following compact subgroups of $\GL_2(F)$.  
Put $K_1(\OF) = K_2(\OF) = \GL_2(\OF)$.  For $n > 0$, put
\begin{equation}\label{K_1}
K_1(\p^n) = 
\mat{\OF^\times}{\OF}{\p^n}{1+\p^n}, 
\end{equation}
\begin{equation}\label{K_2}
K_2(\p^n) =
\mat{1+\p^n}{\OF}{\p^n}{1+\p^n}.
\end{equation}
For $s \in \Z, n \geq 0$,
\begin{equation}\label{K_1-conj}
K_1^{(s)}(\p^n) = \mat{\varpi^s}{}{}{1} K_1(\p^n) \mat{\varpi^{-s}}{}{}{1}.
\end{equation}
We also have the Iwahori subgroup
\begin{equation}\label{Iwahori-sub}
\I = \mat{\OF}{\OF}{\p}{\OF} \cap \GL_2(\OF).
\end{equation}

Let $(\pi, V)$ be an infinite-dimensional, irreducible, admissible  representation of $\GL_2(F)$. For
$n \geq 0$, denote by $V^n$ the subspace of $K_1(\p^n)$-fixed vectors.
By \cite{JPSS},  one knows $V^n \neq 0$ for some $n$. Further, if
$c(\pi)$ is the minimal $n$ such that $V^n \neq 0$, then ${\rm
dim}(V^{c(\pi)}) = 1$. Call the ideal $\p^{c(\pi)}$ the {\it conductor} of
$\pi$.  If $c(\pi)=0$, then $\pi$ is unramified.  

Then $\pi$ is a principal series, twist of Steinberg (special), or supercuspidal representation.
Let $\chi_1, \chi_2$ be two characters of $F^\times$. The representation $\pi = \chi_1 \times \chi_2$ is the standard induced representation of $\GL_2(F)$ consisting of locally constant functions $f : \GL_2(F) \rightarrow \C$ such that
\begin{equation} \label{principal-series-def}
f(\mat{a}{b}{}{d} g) = \chi_1(a) \chi_2(d) |ad^{-1}|^{1/2} f(g), \qquad \text{ for all } g \in \GL_2(F), a, d \in F^\times, b \in F.
\end{equation} 
This is irreducible if and only if $\chi_1 \chi_2 \neq | \cdot |^{\pm1}$, in which case we say $\chi_1 \times \chi_2$
is a principal series representation.
For a character $\chi$ of $F^\times$, the twist of the Steinberg representation by $\chi$, which we denote by $\chi \St_{\GL_2}$, is the unique
irreducible subrepresentation of the induced representation $\chi |\cdot |^{1/2} \times \chi | \cdot |^{-1/2}$.
The supercuspidal representations are described in Section \ref{sc-tv-sec}.

\subsection{The degree-two extension}\label{L-def-section}

As in \cite{Fu}, we fix three elements $\mathbf{a},\mathbf{b},\mathbf{c}\in F$
such that $\mathbf{d}=\mathbf{b}^2-4\mathbf{a}\mathbf{c}\neq0$. We let
$L=F(\sqrt{\mathbf{d}})$ if $\mathbf{d}\notin F^{\times2}$, and $L=F\oplus F$
otherwise. In the latter case we consider $F$ diagonally embedded in $L$. Let
$z\mapsto\bar z$ be the obvious involution on $L$ whose fixed point set is $F$.
We define the Legendre symbol as
\begin{equation}\label{legendresymboldefeq}
 \Big(\frac L\p\Big)=\begin{cases}
                      -1&\text{if $L/F$ is an unramified field extension},\\
                      0&\text{if $L/F$ is a ramified field extension},\\
                      1&\text{if }L=F\oplus F.
                     \end{cases}
\end{equation}
We will make the following assumptions:
\begin{itemize}
 \item $\mathbf{a},\mathbf{b}\in\OF$ and $\mathbf{c}\in\OF^\times$.
 \item If $\mathbf{d}\notin F^{\times2}$, then $\mathbf{d}$ is a generator of
the discriminant of $L/F$.
 \item If $\mathbf{d}\in F^{\times2}$, then $\mathbf{d}\in\OF^\times$.
\end{itemize}
We define elements $\beta$ and $\xi_0$ of $L$ by
\begin{equation}\label{alphadefeq}
 \beta=\left\{\begin{array}{l@{\qquad\text{if }L}l}
 \displaystyle\frac{\mathbf{b}+\sqrt{\mathbf{d}}}{2\mathbf{c}}&\text{ is a
field},\\[2ex]
 \displaystyle\Big(\frac{\mathbf{b}+\sqrt{\mathbf{d}}}{2\mathbf{c}},\frac{
\mathbf{b}-\sqrt{\mathbf{d}}}{2\mathbf{c}}\Big)&=F\oplus F.
 \end{array}\right.
\end{equation}
\begin{equation}\label{xi0defeq}
 \xi_0=\left\{\begin{array}{l@{\qquad\text{if }L}l}
 \displaystyle\frac{-\mathbf{b}+\sqrt{\mathbf{d}}}{2}&\text{ is a field},\\[2ex]
 \displaystyle\Big(\frac{-\mathbf{b}+\sqrt{\mathbf{d}}}{2},\frac{-\mathbf{b}
-\sqrt{\mathbf{d}}}{2}\Big)&=F\oplus F.
 \end{array}\right.
\end{equation}
If $L$ is a field, let $\OF_L$ be its ring of integers, $\varpi_L$ a uniformizer, and $v_L$
the normalized valuation.  If $L = F \oplus F$, put $\OF_L = \OF \oplus \OF$ and $\varpi_L = (\varpi,1)$.
By Lemma 3.1.1 of \cite{PS1}, in either case,
\begin{equation}\label{integralbasiseq}
 \OF_L = \OF +\OF\beta=\OF+\OF\xi_0.
\end{equation}

\begin{lemma}\label{val-lemma} Suppose $L$ is a field.
The possible valuations of $\beta$ and
$\mathbf{a}$ are the following:
\begin{align}
& v_L(\beta)=v(\mathbf{a})=0\qquad\text{ if } \Big(\frac L\p\Big) = -1, \label{inertavaleq} \\
& v_L(\beta)=v(\mathbf{a})\in\{0,1\}\qquad\text{ if } \Big(\frac L\p\Big) = 0. \label{ramavaleq}
\end{align}
\end{lemma}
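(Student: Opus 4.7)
The plan is to exploit the fact that $\beta$ satisfies the quadratic $\mathbf{c}x^2 - \mathbf{b}x + \mathbf{a} = 0$ (verified by substituting $\beta = (\mathbf{b}+\sqrt{\mathbf{d}})/(2\mathbf{c})$), whose two roots in $L$ are $\beta$ and $\bar\beta$. Vieta's formulas then give $\beta\bar\beta = \mathbf{a}/\mathbf{c}$. Since $\mathbf{c} \in \OF^\times$, $v_L(\beta\bar\beta) = v_L(\mathbf{a}) = e(L/F)\,v(\mathbf{a})$. Because $L/F$ is Galois, $v_L$ is Galois invariant, so $v_L(\bar\beta) = v_L(\beta)$, yielding
\[
2\,v_L(\beta) \;=\; e(L/F)\,v(\mathbf{a}).
\]

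Consequently $v_L(\beta) = v(\mathbf{a})/2$ in the inert case and $v_L(\beta) = v(\mathbf{a})$ in the ramified case, and both are non-negative since $\beta \in \OF_L$ by \eqref{integralbasiseq}. To finish, I would bound $v(\mathbf{a})$ using the standing assumption that $\mathbf{d} = \mathbf{b}^2 - 4\mathbf{a}\mathbf{c}$ generates the discriminant of $L/F$: so $v(\mathbf{d}) = 0$ in the inert case and $v(\mathbf{d})=1$ in the (tamely) ramified case.

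In the inert case, if $v(\mathbf{a}) \geq 1$ then (residue characteristic odd) $4\mathbf{a}\mathbf{c} \in \p$, so $\mathbf{d} \equiv \mathbf{b}^2 \pmod{\varpi}$ is a square in the residue field; Hensel's lemma would then give $\mathbf{d} \in (F^\times)^2$, contradicting $L = F(\sqrt{\mathbf{d}})$ being a field. Hence $v(\mathbf{a}) = 0$, and therefore $v_L(\beta) = 0$. In the ramified case, a short case analysis on $v(\mathbf{b})$, together with the observation that $v(\mathbf{b}^2) = 2\,v(\mathbf{b})$ is always even, rules out $v(\mathbf{a}) \geq 2$ given $v(\mathbf{d})=1$; so $v(\mathbf{a}) \in \{0,1\}$ and $v_L(\beta) = v(\mathbf{a}) \in \{0,1\}$. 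The only mild subtlety is the implicit use of odd residue characteristic in the Hensel step; in residue characteristic $2$ the discriminant exponent can jump and the statement would require reformulation, but the paper appears to be working in the tame setting throughout.
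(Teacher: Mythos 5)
Your opening step is exactly the paper's: the key identity $\beta\bar\beta = \mathbf{a}/\mathbf{c}$ from \eqref{alphaavaleq}, combined with Galois-invariance of $v_L$, which forces $v_L(\beta) = e(L/F)\,v(\mathbf{a})/2$. That part matches the paper and is fine. However, there is a genuine gap in how you close out both cases, and your closing remark that ``the paper appears to be working in the tame setting throughout'' is not correct. The paper explicitly allows residue characteristic $2$ (see the comment immediately after \eqref{u0defeq}, where the case $q$ even is treated separately), and the global Theorem \ref{intro-avgval-thm} does not exclude dyadic primes. So a proof of Lemma \ref{val-lemma} must work in residue characteristic $2$, where both your Hensel step in the inert case and your assumption $v(\mathbf{d}) = 1$ in the ramified case break down.

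The paper's own (terse) proof avoids the characteristic issue entirely by leaning on the integral basis fact \eqref{integralbasiseq}, $\OF_L = \OF + \OF\beta$, which you cite but do not fully exploit. In the inert case, if $v_L(\beta) > 0$ then $\OF + \OF\beta$ reduces into $\OF/\p \subsetneq \OF_L/\p_L$, contradicting that $1, \beta$ is an integral basis (equivalently: the reduction of $\beta$ must generate the degree-$2$ residue extension, which is exactly the point of ``$\mathbf{d}$ is a non-square unit''). In the ramified case, if $v_L(\beta) \geq 2$ then $\OF + \OF\beta$ contains no element of $L$-valuation $1$ (since $v_L$ on $\OF$ takes only even values), so it cannot equal $\OF_L$; hence $v_L(\beta) \in \{0,1\}$. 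These arguments are characteristic-free. You should replace the Hensel and $v(\mathbf{d})=1$ steps with this integral-basis reasoning; otherwise the proof as written does not actually establish the lemma in the generality the paper requires.
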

\begin{proof}
Consider the identity
\begin{equation}\label{alphaavaleq}
 \frac{\mathbf{b}+\sqrt{\mathbf{d}}}{2\mathbf{c}}\cdot\frac{\mathbf{b}-\sqrt{
\mathbf{d}}}{2\mathbf{c}}=\frac{\mathbf{a}}{\mathbf{c}}.
\end{equation}
If $\Big(\frac L\p\Big)=-1$, we get the result by observing that $d$ is a non-square unit. If $\Big(\frac L\p\Big)=0$, we get the result since $1,\beta$ is an integral basis.
\end{proof}

Fix the ideal in $\OF_L$ given by
\begin{equation}\label{ideal defn}\renewcommand{\arraystretch}{1.3}
 \P_L := \p\OF_L = \left\{
                  \begin{array}{l@{\qquad\text{if }}l}
                    \p_L & \big(\frac L{\p}\big) = -1,\\
                    \p_L^2 & \big(\frac L{\p}\big) = 0,\\
                    \p \oplus \p & \big(\frac L{\p}\big) = 1.
                  \end{array}
                \right.
\end{equation}
Here $\p_L$ is the maximal ideal of $\OF_L$ when $L$ is a field. 
We have $\P_L^n\cap\OF=\p^n$ for all $n\geq0$.

Under our assumptions it makes sense to consider the quadratic equation
$\mathbf{c}u^2+\mathbf{b}u+\mathbf{a}=0$ over the residue class field $\OF/\p$.
The number of solutions of this equation is $\big(\frac L\p\big)+1$. In the
ramified case we will fix an element $u_0\in\OF$ such that
\begin{equation}\label{u0defeq}
 \mathbf{c}u_0^2+\mathbf{b}u_0+\mathbf{a}\in\p;
\end{equation}
see Lemma 3.1.1 of \cite{PS1}.
Further, note that in the ramified case we have 
\begin{equation}
\mathbf{b} + 2 \mathbf{c}u_0 \in \p.
\end{equation}
This follows from the fact that $u_0$ is a double root of $\mathbf{c}u^2+\mathbf{b}u+\mathbf{a}$ over $\OF/\p$.

\subsection{The torus}\label{torus-notation}
We will now specify an embedding of $L^\times$ as a torus in $\GL_2$ for convenience of
calculations. 
With $\mathbf{a},\mathbf{b},\mathbf{c}$ as above, let
$$
 S=\mat{\mathbf{a}}{\frac{\mathbf{b}}2}{\frac{\mathbf{b}}2}{\mathbf{c}},\qquad
 \xi=\mat{\frac{\mathbf{b}}2}{\mathbf{c}}{-\mathbf{a}}{\frac{-\mathbf{b}}2}.
$$
Then $F(\xi)=F \cdot I_2 +F \cdot \xi$ is a two-dimensional $F$-algebra isomorphic to
$L$. If $L$ is a field, then an isomorphism is given by $x+y\xi\mapsto
x+y\frac{\sqrt{\mathbf{d}}}2$. If $L=F\oplus F$, then an isomorphism is given by
$x+y\xi\mapsto(x+y\frac{\sqrt{\mathbf{d}}}2,x-y\frac{\sqrt{\mathbf{d}}}2)$. The
determinant map on $F(\xi)$ corresponds to the norm map on $L$. Let
\begin{equation}\label{TFdefeq}
 T(F)=\{g\in\GL_2(F):\:^tgSg=\det(g)S\}.
\end{equation}
One can check that $T(F)=F(\xi)^\times$. Note that $T(F)\cong L^\times$ via the
isomorphism $F(\xi)\cong L$. Under the same isomorphism the group
$T(\OF):=T(F)\cap\GL_2(\OF)$ is isomorphic to $\OF_L^\times$. Note that $T(F)$
consists of all matrices
\begin{equation}\label{TFgeq}
t(x,y)=\mat{x+y\frac{\mathbf{b}}2}{\mathbf{c}y}{-\mathbf{a}y}{x-y\frac{\mathbf
{b}}2},\qquad x, y \in F, \,
\det(g)=x^2-\frac14y^2(\mathbf{b}^2-4\mathbf{a}\mathbf{c})\neq0.
\end{equation}
We will give a useful structural lemma here.

\begin{lemma}\label{toric-decomp-lemma}
Let $L/F$ be a field extension. For any $m, n \geq 0$, we have 
\begin{equation}\label{toric-decomp-eqn}
T(F) \mat{\varpi^{-m}}{}{}{1} K_1(\p^n) = T(F) \mat{\varpi^{m-v({\bf a})}}{}{}{1} w K_1(\p^n).
\end{equation}
\end{lemma}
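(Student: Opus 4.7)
The plan is to prove the double coset equality by exhibiting a single factorization: produce $t\in T(F)$ and $k\in K_1(\p^n)$ with
\[
 t\mat{\varpi^{m-v(\mathbf{a})}}{}{}{1}w\,k=\mat{\varpi^{-m}}{}{}{1}.
\]
Any such factorization immediately yields the inclusion $\subseteq$ in \eqref{toric-decomp-eqn}, and the reverse follows automatically since both sides are $T(F)$-$K_1(\p^n)$ double cosets and thus coincide as soon as they share a common element.

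To find $(t,k)$, I would parametrize a general torus element via the formula $t=t(x,y)=xI+y\xi$ from \eqref{TFgeq} and then compute $k=\mat{\varpi^m}{}{}{1}\,t\,\mat{\varpi^{m-v(\mathbf{a})}}{}{}{1}w$ directly. The requirement $k\in K_1(\p^n)$ forces the $(2,2)$-entry to lie in $1+\p^n$ and the $(2,1)$-entry to lie in $\p^n$; making these equal $1$ and $0$ respectively pins down
\[
 y=-\mathbf{a}^{-1}\varpi^{v(\mathbf{a})-m},\qquad x=y\mathbf{b}/2,
\]
giving
\[
 t=y\bigl(\tfrac{\mathbf{b}}{2}I+\xi\bigr)=y\mat{\mathbf{b}}{\mathbf{c}}{-\mathbf{a}}{0}\in T(F)
 \quad\text{and}\quad
 k=\mat{\mathbf{c}\mathbf{a}^{-1}\varpi^{v(\mathbf{a})}}{-\mathbf{a}^{-1}\mathbf{b}\varpi^{m}}{0}{1}.
\]

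Three of the four conditions defining $K_1(\p^n)$ are then immediate: the bottom row of $k$ is exactly $(0,1)$, and the $(1,1)$-entry is the product of the units $\mathbf{c}$ and $(\mathbf{a}\varpi^{-v(\mathbf{a})})^{-1}$. The one delicate check, which I expect to be the only real obstacle, is integrality of the $(1,2)$-entry, equivalent to $v(\mathbf{b})+m\geq v(\mathbf{a})$. When $v(\mathbf{a})=0$ this is automatic from $\mathbf{b}\in\OF$. In the ramified field case with $v(\mathbf{a})=1$ one invokes Lemma~\ref{val-lemma}: then $L/F$ is ramified, so $\mathbf{d}\in\p$, and the identity $\mathbf{b}^2=\mathbf{d}+4\mathbf{a}\mathbf{c}$ combined with $\mathbf{a}\in\p$ gives $v(\mathbf{b}^2)\geq 1$, hence $v(\mathbf{b})\geq 1=v(\mathbf{a})$, which suffices for every $m\geq 0$. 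A direct $2\times 2$ multiplication then confirms the displayed identity $t\mat{\varpi^{m-v(\mathbf{a})}}{}{}{1}w=\mat{\varpi^{-m}}{}{}{1}k$ with these choices, closing the proof.
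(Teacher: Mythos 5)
Your proof is correct and matches the paper's: both pick the same torus element $t(x,y)$ with $y$ a scalar multiple of $\varpi^{-m}$ and $x=\mathbf{b}y/2$, produce the same upper-triangular $k$ (up to taking an inverse, and up to absorbing the scalar $-\varpi^{v(\mathbf a)}/(\mathbf a\varpi^m)$ into $t$ rather than writing it separately), and reduce integrality of the $(1,2)$-entry to the observation that $v(\mathbf b)\ge 1$ whenever $v(\mathbf a)=1$. The paper simply asserts this last fact, whereas you supply the one-line justification via Lemma~\ref{val-lemma} and the identity $\mathbf b^2=\mathbf d+4\mathbf a\mathbf c$; otherwise the two arguments are the same.
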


\begin{proof}
Set  $y = \varpi^{-m}$ and $x = {\bf b} y/2$. Then 
$$\mat{\varpi^{-m}}{}{}{1}  = \frac{-1}{{\bf a}\varpi^{-v({\bf a})}}  t(x,y) \mat{\varpi^{m-v({\bf a})}}{}{}{1} w k,$$
with 
$$k = \mat{\frac{{\bf a}\varpi^{-v({\bf a})}}{\bf c}}{\frac{{\bf b}\varpi^{m-v({\bf a})}}{\bf c}}{}{1} \in K_1(\p^n), \quad \text{ for all } n \geq 0,$$
since $v(\mathbf b) \ge 1$ whenever $v(\mathbf a) = 1$.
\end{proof}

\subsection{The Waldspurger model}\label{wald-model}
Let $\Omega$ be any character of $L^\times$, which we may view as a character of the torus $T(F)$.  Define
\begin{equation}\label{conductor-of-Omega}
c(\Omega) := \text{ min }\{m \geq 0 : \Omega |_{(1+\P_L^m) \cap \OF_L^\times}
\equiv 1\}.
\end{equation}
Note that this is the (exponent of the) conductor of $\Omega$ only in the case $L/F$ is an unramified field extension. Let $\mathcal{B}(\Omega)$ be the space of all locally constant functions $B :
\GL_2(F) \rightarrow \C$ satisfying 
\begin{equation}\label{Wald-trans-prop}
B(tg) = \Omega(t) B(g) \qquad \text{ for all } t \in T(F), g \in \GL_2(F).
\end{equation}
Let $(\pi, V)$ be any infinite dimensional, irreducible, admissible representation of $\GL_2(F)$. We
say that $\pi$ has an {\it $\Omega$-Waldspurger model} if $\pi$ is isomorphic to
a subrepresentation of $\mathcal{B}(\Omega)$. We call a linear functional $\ell$
on $\pi$ an {\it $\Omega$-Waldspurger functional} if it satisfies
\begin{equation}\label{Wald-fnal-defn}
\ell(\pi(t)v) = \Omega(t) \ell(v) \qquad \text{ for all } t \in T(F), v \in V.
\end{equation}
If $\pi$ has an $\Omega$-Waldspurger model then we obtain an $\Omega$-Waldspurger
functional $\ell$ by $\ell(B) = B(1)$. On the other hand, if $\pi$ has an
$\Omega$-Waldspurger functional $\ell$, we obtain an $\Omega$-Waldspurger model for
$\pi$ by the map $v \mapsto B_v$, where $B_v(g) = \ell(\pi(g)v)$. Observe that a necessary condition for an $\Omega$-Waldspurger model or functional to exist is that $\Omega |_{F^\times} = \omega_\pi$, the central character of $\pi$.

If $\pi$ has an $\Omega$-Waldspurger functional $\ell$, we say that $v \in
V$ is a {\it test vector} for $\ell$ if $\ell(v) \neq 0$. From the discussion
above, this is equivalent to $B_v(1) \neq 0$. Suppose $B_0$ is the newform in an $\Omega$-Waldspurger model of $\pi$. Lemma \ref{toric-decomp-lemma} states that, in the field case, for $m \ge 0$, the vector $\pi(\mat{\varpi^{m-v({\bf a})}}{}{}{1} w)B_0$ is a test vector if and only if $\pi(\mat{\varpi^{-m}}{}{}{1})B_0$ is also a test vector. This will be used in the proof of Theorem \ref{intro-tv-inert} below.

Criteria for existence of Waldspurger functionals, which must be unique up to scalars, are given in Section \ref{local-res}.

\section{Zeta integrals and test vectors for split Waldspurger models}
\label{split-tv-sec}

In this section we will show that any irreducible admissible representation
$\pi$ of $\GL_2(F)$ has a split $\Omega$-Waldspurger model for every character
$\Omega$ of $L^\times = F^\times \oplus F^\times$. Under certain restriction on the poles of the $L$-function of $\pi$, we will also determine test vectors for the Waldspurger functional that are right invariant under certain conjugates of the compact group $K_1(\p^{c(\pi)})$. The conjugating elements will depend only on $c(\pi)$ and $c(\Omega)$.

Let $\pi$ be any irreducible, admissible representation of $\GL_2(F)$ with
central character $\omega_\pi$ (not assumed to be trivial).  Let $\pi$ be given
by its Whittaker model $\mathcal{W}(\pi, \psi)$, where $\psi$ is a nontrivial character of $F$ with conductor
$\OF$. For any $W \in
\mathcal{W}(\pi, \psi)$ and a unitary character $\mu$ of $F^\times$, define the
zeta integral
\begin{equation}\label{zeta-int-defn}
Z(s, W, \mu^{-1}) := \int\limits_{F^\times} W(\mat{x}{}{}{1})|x|^{s-1/2}
\mu^{-1}(x) \, d^\times x,
\end{equation}
where $d^\times x$ is the Haar measure on $F^\times$ giving $\mathfrak o^\times$
volume $1-q^{-1}$. Since $\mu$ is unitary, there is an $r \in \R$ not depending
on $\mu$, so that $Z(s, W, \mu^{-1})$ converges absolutely for $\Re(s) > r$. 
 By the theory of $L$-functions, we have
\begin{equation}\label{defn-L-fn}
\frac{Z(s, W, \mu^{-1})}{L(s, \mu^{-1} \otimes \pi)} \in \C[q^{-s}, q^s]
\end{equation}
 and the functional equation
\begin{equation}\label{local-fnal-eqn}
 \frac{Z(1-s,\pi(w)W,\mu\omega_\pi^{-1})}{L(1-s,\mu\otimes\tilde\pi)}
=\varepsilon(s,\mu^{-1}\otimes\pi,\psi)\frac{Z(s,W,\mu^{-1})}{L(s,\mu^{-1}
\otimes\pi)}
\end{equation}
for any $W \in \mathcal{W}(\pi, \psi)$.  Here $w = \mat{}{1}{-1}{}$.
Please refer to Theorem 6.12 of \cite{G} for details.

Let $W_0$ be the unique $K_1(\p^{c(\pi)})$-right invariant vector in
$\mathcal{W}(\pi, \psi)$ such that $W_0(1) = 1$. The formula for
$W_0(\mat{x}{}{}{1})$ in various cases is given in Table \ref{whit-table} (see, e.g., \cite{Sc}). 
\begin{table}
\caption{Whittaker newform values}
$$\begin{array}{|c|c|}
\hline
\pi & W_0(\mat{x}{}{}{1}) \\ \hline \hline
\chi_1 \times \chi_2, \text{ with }
\chi_1, \chi_2 \text{ unramified, }
\chi_1 \chi_2^{-1} \neq |\,\,|^{\pm 1} & |x|^{1/2}
\big(\sum\limits_{k+l=v(x)}\chi_1(\varpi^k) \chi_2(\varpi^l)\big) 1_{\OF}(x) \\ \hline 
&\\
\chi_1 \times \chi_2, \text{ with }
\chi_1\text{ unramified, } \chi_2 \text{ ramified } & |x|^{1/2} \chi_1(x)
1_{\OF}(x) \\\hline
&\\
\chi \St_{\GL_2} \text{ with } \chi \text{ unramified } & |x| \chi(x) 1_{\OF}(x)
\\\hline
&\\
L(s, \pi) = 1 & 1_{\OF^\times}(x)\\\hline
\end{array}
$$
 \label{whit-table}
\end{table}

\begin{proposition}\label{zeta-int-prop} Let $\pi$ be any irreducible,
admissible representation of $\GL_2(F)$ with central character $\omega_\pi$ and
conductor $\p^{c(\pi)}$. Let $W_0$ be the newform in the Whittaker model
$\mathcal{W}(\pi, \psi)$ of $\pi$ such that $W_0(1) = 1$. Let $\mu$ be a unitary character of
$F^\times$.

\begin{enumerate}
\item If $c(\mu) = 0$ then, for any $\pi$, we have
$$Z(s,\pi(\mat{1}{\varpi^{-c(\mu)}}{}{1}) W_0,\mu^{-1}) = (1 - \frac 1q) L(s, \mu^{-1} \otimes \pi).$$

\item Let $c(\mu) > 0$. Then
$$Z(s,\pi(\mat{1}{\varpi^{-c(\mu)}}{}{1}) W_0,\mu^{-1}) = q^{-c(\mu)/2} \mu(\varpi^{-c(\mu)}) \varepsilon(1/2, \mu, \psi).$$

\end{enumerate}
\end{proposition}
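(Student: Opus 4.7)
The plan is to directly unfold the zeta integral using the explicit newform values in Table \ref{whit-table}. The starting observation is that because $\mat{x}{}{}{1}\mat{1}{\varpi^{-c(\mu)}}{}{1} = \mat{1}{x\varpi^{-c(\mu)}}{}{1}\mat{x}{}{}{1}$ and $W_0$ transforms by $\psi$ under the upper unipotent, one has
\begin{equation*}
Z\big(s,\pi(\mat{1}{\varpi^{-c(\mu)}}{}{1}) W_0,\mu^{-1}\big) = \int_{F^\times} W_0(\mat{x}{}{}{1}) \, \psi(x\varpi^{-c(\mu)}) \, |x|^{s-1/2} \mu^{-1}(x) \, d^\times x.
\end{equation*}

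When $c(\mu) = 0$, Table \ref{whit-table} shows that $W_0(\mat{x}{}{}{1})$ is supported on $x \in \OF$, where $\psi$ is trivial, so the extra factor $\psi(x)$ equals $1$ throughout the support. The integral therefore collapses to the standard $Z(s, W_0, \mu^{-1})$. I would compute this case by case from Table \ref{whit-table}: substituting $x = \varpi^n u$ with $u \in \OF^\times$ and summing a geometric series in $q^{-s}$ verifies in each of the four rows that this equals $(1 - q^{-1}) L(s, \mu^{-1} \otimes \pi)$, yielding the first part.

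When $c(\mu) > 0$, I would decompose $F^\times = \bigsqcup_n \varpi^n \OF^\times$, so that the contribution from the $n$-th annulus is
\begin{equation*}
W_0(\mat{\varpi^n}{}{}{1}) \, q^{-n(s-1/2)} \mu^{-1}(\varpi)^n \int_{\OF^\times} \mu^{-1}(u) \, \psi(u\varpi^{n-c(\mu)}) \, d^\times u,
\end{equation*}
and argue that only the $n=0$ term survives. For $n < 0$, $W_0$ vanishes by Table \ref{whit-table}. For $n \ge c(\mu)$, the $\psi$ factor equals $1$ on $\OF^\times$ while $\mu^{-1}$ is a nontrivial character of $\OF^\times$, so the inner integral vanishes. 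For $1 \le n < c(\mu)$, the character $u \mapsto \mu^{-1}(u) \psi(u\varpi^{n-c(\mu)})$ of $\OF^\times$ is nontrivial on $1 + \p^{c(\mu)-1}$: there the $\psi$ factor is trivial (since $n \ge 1$ forces $y\varpi^{n-c(\mu)} \in \p^{n-1} \subseteq \OF$ for $y \in \p^{c(\mu)-1}$), while $\mu^{-1}$ is not, so the integral again vanishes. Using $W_0(\mat{1}{}{}{1}) = 1$, only the Gauss sum $\int_{\OF^\times} \mu^{-1}(u) \psi(u\varpi^{-c(\mu)}) \, d^\times u$ remains. Changing variables $u \mapsto u\varpi^{c(\mu)}$ and invoking the standard epsilon factor identity then yields the stated value $q^{-c(\mu)/2} \mu(\varpi^{-c(\mu)}) \varepsilon(1/2, \mu, \psi)$.

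The one delicate point is matching the final Gauss sum against the specific normalization of $\varepsilon(1/2, \mu, \psi)$ and the chosen Haar measure on $F^\times$ (here $\OF^\times$ has volume $1 - q^{-1}$), so as to extract precisely the factor $q^{-c(\mu)/2}$; this requires consistent bookkeeping but is otherwise mechanical. Everything else in the argument is elementary.
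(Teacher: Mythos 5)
Your proof is correct and follows essentially the same route as the paper's: unfold the left-translation to produce the extra factor $\psi(x\varpi^{-c(\mu)})$, decompose $F^\times$ into annuli $\varpi^n\OF^\times$, and evaluate the resulting inner integrals over $\OF^\times$ using Table \ref{whit-table}, orthogonality when $\mu^{-1}$ is non-trivial, and the Gauss-sum definition of $\varepsilon(1/2,\mu,\psi)$ for the $n=0$ term. The only cosmetic imprecision is calling $u\mapsto\mu^{-1}(u)\psi(u\varpi^{n-c(\mu)})$ a ``character'' of $\OF^\times$ (it is not multiplicative); what you actually use, correctly, is that it is invariant under $1+\p^{c(\mu)-1}$ up to the non-trivial character $\mu^{-1}|_{1+\p^{c(\mu)-1}}$, which forces the integral to vanish on each coset.
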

\begin{proof} If $c(\mu) = 0$, then the values of the newform $W_0$ from Table \ref{whit-table} and the normalization of the measure gives us i). We have, for any $k \in \Z$ and any $\pi$,
\begin{align*}
 Z(s, \pi(\mat{1}{\varpi^k}{}{1}) W_0,\mu^{-1})
 &=\int\limits_{F^\times}\psi(a\varpi^k)W_0(\mat{a}{}{}{1})|a|^{s-1/2}\mu^{-1}
(a)\,d^\times a\\
  &=\sum_{j\in\Z}\;\int\limits_{\OF^\times}\psi(a\varpi^{j+k})W_0(\mat{a\varpi^j}
{}{}{1})|a\varpi^j|^{s-1/2}\mu^{-1}(a\varpi^j)\,d^\times a\\
 &=\sum_{j\in\Z}q^{-j(s-1/2)}\mu^{-1}(\varpi^j) W_0(\mat{\varpi^j}{}{}{1})
\int\limits_{\OF^\times}\psi(a\varpi^{j+k})\mu^{-1}(a)\,d^\times a.
 \end{align*}
 If $c(\mu) > 0$, then, by the definition of the epsilon factor for $\mu$ (see
\cite{Sc}* {Eqn.\ (5)}),
 we have
 \begin{equation}\label{eps-defn-cond}
 \int\limits_{\OF^\times}\psi(a\varpi^{j+k})\mu^{-1}(a)\,d^\times a =
\begin{cases} q^{-c(\mu)/2} \mu(\varpi^{j+k}) \varepsilon(1/2, \mu, \psi) &
\text{ if } j+k = -c(\mu),\\
 0 & \text{ if } j+k \neq -c(\mu).\end{cases}
 \end{equation}
Now the
proposition follows since $W_0(1)=1$.
\end{proof}

\noindent
{\bf Proof of Theorem \ref{intro-tv-split}}: For any $W \in \mathcal{W}(\pi, \psi)$
define
\begin{equation}
\ell(W) := \frac{Z(s_0, W, \mu^{-1})}{L(s_0, \mu^{-1} \otimes \pi)}.
\end{equation}
The well-definedness of $\ell$ for all $s_0$ and $\mu$ follows
from (\ref{defn-L-fn}). By Theorem 6.12 of \cite{G}, $\ell$ is non-zero. The definition of the zeta integral and $\Omega_1 \Omega_2 = \omega_\pi$ gives us the transformation property 
$$\ell(\pi(\mat{x}{}{}{y})W) = \Omega_1(x) \Omega_2(y) \ell(W), \qquad x, y \in
F^\times.$$
Hence, we get $\mathrm{Hom}_{T(F)}(\pi, \Omega) \neq 0$. The one-dimensionality follows from \cite{wald}. Note that, if $c(\mu) = 0$ or $L(s, \mu^{-1} \otimes \pi)$ does not have a pole at $s=s_0$,  then
we have $\ell(\pi(\mat{1}{\varpi^{-c(\Omega)}}{}{1})W_0) \neq 0$ by  Proposition
\ref{zeta-int-prop}.  If $c(\mu) > 0$ and $L(s, \mu^{-1} \otimes \pi)$ has a pole at $s=s_0$, then $\ell(\pi(\mat{1}{\varpi^{-c(\Omega)}}{}{1})W_0) = 0$ by  Proposition
\ref{zeta-int-prop}. In this case, if we assume that $L(1-s, \mu \otimes \tilde\pi)$ does not have a pole at $s=s_0$, then we can use  the local functional equation
(\ref{local-fnal-eqn}), which  gives us the test vectors for $\ell$. The uniqueness statement follows from the uniqueness of $W_0$. If $\Omega$ and $\pi$ are unitary, then $s_0 = 1/2$ and one can check that $L(s, \mu^{-1} \otimes \pi)$ does not have a pole at $s=1/2$. \qed
\section{Non-supercuspidal representations}
\label{ps-tv-sec}

Here we will assume that $L$ is a field and prove Theorem \ref{intro-tv-inert} when $\pi$ is not supercuspidal.

Let us define Haar measures $dg$ on $\GL_2(F)$ such that $\GL_2(\OF)$ has volume $1$,  
$d^\times x$ on $F^\times = Z(F)$, the center of $\GL_2(F)$, such that $\OF^\times$ has volume $1$ (note this is different from Section \ref{split-tv-sec}), and $dt$ on $T(F) = L^\times$ such that the volume of $\OF_L^\times$ is $1$.

\subsection{Irreducible principal series representation}\label{irr-prin-section}
 Let $\pi$ be a {\it ramified} irreducible principal series representation of $\GL_2(F)$ given
by 
\begin{equation}\label{princ-ser-defn}
\pi = \chi_1 \times \chi_2, \quad \chi_1 \chi_2^{-1} \neq |.|^{\pm 1}, \quad
c(\chi_2) \geq c(\chi_1) \quad c(\pi) = c(\chi_1) + c(\chi_2) > 0, \quad
\omega_\pi = \chi_1 \chi_2.
\end{equation}
Recall that $\pi$ consists of locally constant functions $f$ on $\GL_2(F)$ satisfying
\eqref{principal-series-def}.
The unique, up to scalars, right $K_1(\p^{c(\pi)})$-invariant vector $f_0$ in
$\pi$ is given by the formula
\begin{equation}\label{new-form-formula}
f_0(g) = \begin{cases} |a/d|^{1/2}\chi_1(a) \chi_2(d) & \text{ if } g \in
\mat{a}{\ast}{}{d} \gamma_{c(\chi_2)} K_1(\p^{c(\pi)}),\\
0 & \text{ if } g \not\in B(F) \gamma_{c(\chi_2)} K_1(\p^{c(\pi)}), \end{cases}
\end{equation}
where $\gamma_{c(\chi_2)} = \mat{1}{}{\varpi^{c(\chi_2)}}{1}$ and $B(F)$ is the Borel subgroup of $\GL_2(F)$ consisting of upper triangular matrices. See \cite{Sc} for
details. 

Let $\Omega$ be a character of $L^\times$ such that $\Omega |_{F^\times} = \omega_\pi$.  
Let $\mathcal{B}(\Omega)$ be the space of all
locally constant functions $B : \GL_2(F) \rightarrow \C$ satisfying
(\ref{Wald-trans-prop}) defined in Section \ref{wald-model}. Define an intertwining operator $\mathcal{A} : \pi
\rightarrow \mathcal{B}(\Omega)$ by the formula
\begin{equation}\label{bessel-intertwiner-defn}
(\mathcal{A}(f))(g) := \int\limits_{Z(F) \backslash T(F)} f(tg) \Omega^{-1}(t)
dt, \qquad f \in \pi, \, g \in \GL_2(F).
\end{equation}
Since  $Z(F) \backslash T(F)$ is compact and $\Omega |_{F^\times} = \omega_\pi$, this integral
is well-defined and convergent.  Note $Z(F) \backslash T(F)$ is isomorphic to
$Z(\OF) \backslash T(\OF)$ if $\Big(\frac L{\p}\Big) = -1$, and $Z(\OF)
\backslash T(\OF) \sqcup \varpi_L \big(Z(\OF) \backslash T(\OF)\big)$ if
$\Big(\frac L{\p}\Big) = 0$. 

Next we will show that $\mathcal{A}$ is nonzero for all $\Omega$ and, assuming
$c(\Omega) \geq 2c(\chi_1)$, obtain a $g \in \GL_2(F)$ such that
$(\mathcal{A}(f_0))(g) \neq 0$. First observe that we can write $\GL_2(F) = M_2(F) T(F)$, where $M_2(F) = \{\mat{a}{b}{0}{1} : a, b \in F\} \cap \GL_2(F)$ is the mirabolic subgroup of $\GL_2(F)$ and $M_2(F) \cap T(F) = \{1\}$. Hence, the function $\hat{f}$ defined by
\begin{equation}\label{the-test-vector}
\hat{f}(\mat{a}{b}{0}{1} t) = |a|^{1/2}\chi_1(a) \Omega(t) \qquad \text{ for } \mat{a}{b}{0}{1} \in M_2(F), t \in T(F),
\end{equation}
is a well-defined element of $\pi$ and satisfies, for $t \in T(F), \pi(t) \hat{f} = \Omega(t) \hat{f}$, which implies 
\begin{equation}\label{non-van-A}
\mathcal{A}(\hat{f}) \neq 0.
\end{equation}
For the computation of $\mathcal{A}$ applied to the newvector $f_0$, we need to
know when the argument $tg$ of  $f_0$  is in the support of $f_0$ for certain elements $g \in \GL_2(F)$. We obtain that
information in the following lemma.

\begin{lemma}\label{decomp-lemma}
Let $t = t(x,y) \in T(F)$.
 For $s
\in \Z$, we have the following decomposition of $t\mat{\varpi^{s}}{}{}{1}$ as
$bk$ with $b \in B(F)$ and $k \in \GL_2(\OF)$.
\begin{enumerate}
\item If $x-{\bf b}y/2 \in \varpi^{-l}\OF^\times, l \geq 0, {\bf a}\varpi^{s+l}
y \in \OF$, then
$$t\mat{\varpi^{s}}{}{}{1} = \mat{\det(t)\varpi^s/(x-{\bf
b}y/2)}{\varpi^{-l}{\bf c}y/(x-{\bf b}y/2)}{0}{\varpi^{-l}} \mat{1}{0}{-{\bf
a}\varpi^{s+l} y}{\varpi^l(x-{\bf b}y/2)}.$$

\item If $x-{\bf b}y/2 \in \varpi^{-l}\OF^\times, l \geq 0, {\bf a}\varpi^{s+l}
y \not\in \OF$, then
$$t\mat{\varpi^{s}}{}{}{1} = \mat{\det(t)/({\bf a}y)}{-\varpi^s(x+{\bf
b}y/2)}{0}{{\bf a}\varpi^s y} \mat{0}{1}{-1}{(x-{\bf b}y/2)/({\bf
a}\varpi^sy)}.$$

\item If $x-{\bf b}y/2 \in \p, (x-{\bf b}y/2)/(\varpi^s {\bf a}y) \in \OF$, then
$$t\mat{\varpi^{s}}{}{}{1} = \mat{\det(t)/({\bf a}y)}{-\varpi^s(x+{\bf
b}y/2)}{0}{\varpi^s {\bf a}y} \mat{0}{1}{-1}{(x-{\bf b}y/2)/(\varpi^s {\bf
a}y)}.$$

\item If $x-{\bf b}y/2 \in \p, (x-{\bf b}y/2)/(\varpi^s {\bf a}y) \not\in \OF$,
then
$$t\mat{\varpi^{s}}{}{}{1} = \mat{\det(t)\varpi^s/(x-{\bf b}y/2)}{{\bf
c}y}{0}{x-{\bf b}y/2} \mat{1}{0}{-{\bf a}\varpi^sy/(x-{\bf b}y/2)}{1}.$$
\end{enumerate}
\end{lemma}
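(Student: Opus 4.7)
The statement is an explicit Iwasawa decomposition for the matrix
\[
g := t(x,y)\mat{\varpi^{s}}{}{}{1} = \mat{(x+\mathbf{b}y/2)\varpi^{s}}{\mathbf{c}y}{-\mathbf{a}\varpi^{s}y}{x-\mathbf{b}y/2},
\]
so my plan is to treat it as nothing more than unpacking the standard decomposition $\GL_2(F)=B(F)\GL_2(\OF)$ for this particular $g$, and then to record which shape of representative arises in each valuation regime. I will denote the entries by $\alpha,\beta,\gamma,\delta$ in the obvious way and use repeatedly that $\det g=\det(t)\varpi^{s}\neq 0$.

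The four cases will split along two independent axes. The first axis is whether $\delta=x-\mathbf{b}y/2$ has nonpositive valuation, say $v(\delta)=-l$ with $l\geq 0$ (cases i, ii), or $\delta\in\p$ (cases iii, iv); this governs whether I must absorb a factor of $\varpi^{l}$ into the bottom-right entry of $k$ or may leave it as $1$. The second axis is whether $\gamma/\delta$ lies in $\OF$, i.e.\ whether $v(-\mathbf{a}\varpi^{s}y)\geq v(\delta)$. When it does (cases i, iv) I will clear the bottom-left corner by right multiplying by a lower unipotent element, producing $k$ of the form $\mat{1}{0}{*}{*}$. When it does not (cases ii, iii) I will instead swap the columns by a Weyl element $w^{-1}=\mat{0}{1}{-1}{0}$, producing $k$ of the form $\mat{0}{1}{-1}{*}$; here Case iii accommodates the degenerate possibility $\delta=0$ by having the lower-right entry of $k$ specialize to $0$.

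The verification in each case is a direct matrix multiplication. The one nontrivial algebraic input I will need is the identity
\[
(x+\mathbf{b}y/2)(x-\mathbf{b}y/2)=x^{2}-\tfrac14\mathbf{b}^{2}y^{2}=\det t(x,y)-\mathbf{a}\mathbf{c}y^{2},
\]
which is exactly what forces the $(1,1)$-entry of $bk$ to simplify to $(x+\mathbf{b}y/2)\varpi^{s}$; the other three entries fall out trivially. After that I will check case by case that $b$ is upper triangular with nonzero diagonal and that $k$ lies in $\GL_2(\OF)$. For instance, in case i) the bottom-left entry $-\mathbf{a}\varpi^{s+l}y$ is in $\OF$ by hypothesis, while $\varpi^{l}(x-\mathbf{b}y/2)\in\OF^{\times}$ because $x-\mathbf{b}y/2\in\varpi^{-l}\OF^{\times}$, making $\det k$ a unit; the remaining three cases are analogous bookkeeping.

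There is no conceptual obstacle here: the content of the lemma is the bookkeeping of how the bottom row of $g$ behaves under the four combinations of valuation inequalities, and the usefulness lies purely in having the explicit formulas for $b$ and $k$ available for the coset calculations in Section~\ref{irr-prin-section}. The only mild care required is to ensure that the denominators appearing in $b$ are nonzero in each case; in cases i) and ii) this holds because $x-\mathbf{b}y/2\in\varpi^{-l}\OF^{\times}$, and in cases iii) and iv) it holds because $\mathbf{a}y\neq 0$, the latter being forced since otherwise $g$ would be singular, contradicting $\det g=\det(t)\varpi^{s}\neq 0$.
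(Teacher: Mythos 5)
Your proposal is correct and is essentially the same argument the paper gives: the paper's entire proof is the single sentence ``The lemma is obtained by direct computation,'' and you are spelling out exactly what that computation is, organized along the two valuation dichotomies you identify. The key identity $(x+\mathbf{b}y/2)(x-\mathbf{b}y/2)=\det t - \mathbf{a}\mathbf{c}y^2$ is indeed what makes the $(1,2)$-entries collapse in cases ii)--iv) and the $(1,1)$-entry collapse in case i), and the unit/integrality checks on $k$ are exactly the bookkeeping needed. One small imprecision: in cases iii) and iv) the hypothesis $\mathbf{a}y\neq 0$ is not ``forced by $\det g\neq 0$'' (if $y=0$ and $x\in\p\setminus\{0\}$ then $g$ is still nonsingular); rather it is built into the phrasing of the hypothesis, since the quotient $(x-\mathbf{b}y/2)/(\varpi^s\mathbf{a}y)$ must be defined for the case distinction to make sense. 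This does not affect the correctness of the decomposition formulas themselves.
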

\begin{proof} The lemma is obtained by direct computation.
\end{proof}

\begin{proposition}\label{A-nonvanish-lemma}
Let $c(\Omega) \geq 2 c(\chi_1)$ and set $s = c(\pi)-c(\Omega)-v({\bf a})$. Then
$(\mathcal{A}(f_0))(\mat{\varpi^s}{}{}{1}) \neq 0$.
\end{proposition}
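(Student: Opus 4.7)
The plan is to compute the toric integral $(\mathcal{A}(f_0))(\mat{\varpi^s}{}{}{1}) = \int_{Z(F)\backslash T(F)} f_0(t\mat{\varpi^s}{}{}{1}) \Omega^{-1}(t)\,dt$ directly and show it is nonzero.

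First, I would identify the contributing domain. Since $f_0$ is supported on $B(F)\gamma_{c(\chi_2)}K_1(\p^{c(\pi)})$, applying Lemma \ref{decomp-lemma} to $t(x,y)\mat{\varpi^s}{}{}{1} = bk$ reduces the question to determining when $k \in \gamma_{c(\chi_2)}K_1(\p^{c(\pi)})$. Cases (ii) and (iii) of the lemma produce $k$ with $(1,1)$-entry equal to $0$, which is incompatible with membership in $\gamma_{c(\chi_2)}K_1(\p^{c(\pi)})$ (whose elements have unit $(1,1)$-entries). Case (iv) has $v(x-\mathbf{b}y/2)\geq 1$, and scaling by $Z(F)$ (replacing $t$ by $z^{-1}t$ with $z = (x-\mathbf{b}y/2)\cdot u$ for suitable $u \in 1 + \p^{c(\pi)}$) brings the representative into case (i), so modulo $Z(F)$ only case (i) contributes.

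Next, in case (i) the conditions force $l = 0$, $\alpha := x - \mathbf{b}y/2 \in 1 + \p^{c(\pi)}$, and $\mathbf{a}\varpi^s y \equiv -\varpi^{c(\chi_2)} \pmod{\p^{c(\pi)}}$. Substituting $s + v(\mathbf{a}) = c(\pi) - c(\Omega)$ pins down $v(y) = c(\Omega) - c(\chi_1)$ (the boundary case $c(\chi_1)=0$ is handled analogously) and determines $y$ modulo $\p^{c(\Omega)}$. Parametrize the contributing domain modulo $Z(F)$ by $\eta := \mathbf{c}y/\alpha$ ranging over a coset $\eta^* + \p^{c(\Omega)}$ inside $\p^{c(\Omega)-c(\chi_1)} \setminus \p^{c(\Omega)-c(\chi_1)+1}$, with $t \equiv 1 + \eta\beta$ modulo $Z(F)$.

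Third, I would evaluate the integrand. From the explicit factorization in case (i), the $b$-part is $\mat{\det(t)\varpi^s/\alpha}{\ast}{0}{1}$, and combined with $\chi_1(\alpha) = 1$ (since $\alpha \in 1+\p^{c(\pi)}$ and $c(\chi_1) \leq c(\pi)$) and $|\det(t)|=1$, one obtains
\[
f_0(t\mat{\varpi^s}{}{}{1})\Omega^{-1}(t) = q^{-s/2}\chi_1(\varpi^s)\,(\Omega')^{-1}(t),\qquad \Omega' := \Omega\cdot(\chi_1\circ\mathrm{Nm}_{L/F})^{-1},
\]
where I used $\det(t) = \mathrm{Nm}_{L/F}(t)$.

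Finally, nonvanishing reduces to showing $\Omega'$ is constant on the coset parametrizing the domain. The inclusion $\mathrm{Nm}_{L/F}(1+\P_L^m)\subseteq 1+\p^m$ implies the conductor of $\chi_1\circ\mathrm{Nm}_{L/F}$ in the sense of \eqref{conductor-of-Omega} is at most $c(\chi_1)$; combined with the hypothesis $c(\Omega) \geq 2c(\chi_1)$, this forces $c(\Omega') \leq c(\Omega)$. A direct valuation estimate shows that for $\eta \in \eta^* + \p^{c(\Omega)}$ the quotient $(1+\eta\beta)(1+\eta^*\beta)^{-1}$ lies in $1 + \P_L^{c(\Omega)}$, on which $\Omega'$ is trivial. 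The integral then equals a nonzero constant times $(\Omega')^{-1}(1+\eta^*\beta)\cdot\mathrm{vol}(\text{coset})$, which is nonzero. The main obstacle is the bookkeeping in step one --- verifying that the case (iv) contributions are genuinely absorbed by case (i) under the $Z(F)$-action --- and sharpening the conductor analysis of $\chi_1 \circ \mathrm{Nm}_{L/F}$ so that the condition $c(\Omega) \geq 2c(\chi_1)$ is the exact assumption needed to make $\Omega'$ constant on the small coset of integration.
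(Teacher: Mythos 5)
Your proposal has a genuine gap at the heart of the computation, in the claim that the contributing domain is a single coset on which $\Omega'$ is constant, and the derivation that
$f_0(t\mat{\varpi^s}{}{}{1})\Omega^{-1}(t) = q^{-s/2}\chi_1(\varpi^s)\,(\Omega')^{-1}(t)$. When you write $t\mat{\varpi^s}{}{}{1} = bk$ using case~(i) of Lemma~\ref{decomp-lemma}, the correct membership test for the support of $f_0$ is $k \in B(\OF)\gamma_{c(\chi_2)}K_1(\p^{c(\pi)})$, i.e.\ the $(2,1)$-entry of $k$ has valuation exactly $c(\chi_2)$; this gives only the valuation constraint $v(y) = c(\Omega) - c(\chi_1)$ and leaves $\alpha = x - \mathbf b y/2 \in \OF^\times$ \emph{arbitrary}, not in $1 + \p^{c(\pi)}$. (Your criterion that elements of $\gamma_{c(\chi_2)}K_1(\p^{c(\pi)})$ have unit $(1,1)$-entry is not $B(\OF)$-invariant and so doesn't correctly characterize the support; the $(2,1)$-valuation is what's invariant.) More importantly: to evaluate $f_0(bk)$ you must further decompose $k = b'\gamma_{c(\chi_2)}k'$ with $b' \in B(\OF)$, and the $(1,1)$-entry of $b'$ is forced to be (up to units in $1+\p^{c(\pi)}$) equal to $-\alpha\varpi^{c(\chi_2)}/(\mathbf a\varpi^s y)$. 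Tracking this through $\chi_1$, the $\chi_1(\alpha)$-factors from $b$ and from $b'$ cancel, but you are left with a factor $\chi_1^{-1}(y)$ in the integrand that your simplification has dropped.

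This changes the structure of the final step. The contributing domain (after normalizing $\alpha=1$ using the $Z$-action) is the whole annulus $\{v(y) = c(\Omega) - c(\chi_1)\}$, not a single coset $\eta^* + \p^{c(\Omega)}$, and the integrand is
$$
\text{(const)}\cdot\chi_1^{-1}(u)\,\Omega^{-1}(1 + \mathbf c\varpi^{c(\Omega)-c(\chi_1)}u\,\beta)\,d^\times u,\qquad u \in \OF^\times.
$$
If the $\chi_1^{-1}(u)$ factor were absent, this integral would be nonzero trivially; but when $c(\chi_1) > 0$, $\chi_1^{-1}$ is a nontrivial ramified character of $\OF^\times$, and the integral vanishes \emph{unless} the $\Omega$-factor is a nontrivial additive character of $\OF$ of conductor exactly $c(\chi_1)$. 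This is precisely where the hypothesis $c(\Omega)\geq 2c(\chi_1)$ does its work in the paper's proof: it ensures $u\mapsto \Omega^{-1}(1 + \mathbf c\varpi^{c(\Omega)-c(\chi_1)}u\,\beta)$ is an additive character $\hat\psi$ of conductor $c(\chi_1)$, so the integral becomes (up to a nonzero constant) the Gauss sum $\epsilon(1/2,\chi_1,\hat\psi)\neq 0$. Your conductor estimate for $\chi_1\circ\mathrm{Nm}_{L/F}$ is not wrong per se, but it shows that $\Omega'$ agrees with $\Omega$ on the contributing annulus rather than that it is constant there --- which is consistent with the fact that the remaining oscillation must be cancelled against $\chi_1^{-1}(u)$, not absorbed into $\Omega'$.
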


\begin{proof}  
Since $\Omega |_{F^\times} = \omega_\pi$ and $c(\Omega) \geq 2 c(\chi_1)$, we have $c(\Omega) > 0$. 
Let us first compute the part of the integral $(\mathcal{A}(f_0))(
\mat{\varpi^{s}}{}{}{1})$ over $Z(\OF) \backslash T(\OF)$. The argument of $f_0$
is given by $t\mat{\varpi^{s}}{}{}{1}$, where $t = \mat{x+{\bf b}y/2}{{\bf
c}y}{-{\bf a}y}{x-{\bf b}y/2} \in T(\OF)$, i.e., $y, x-{\bf b}y/2 \in \OF$ and
$x^2-y^2{\bf d}/4 \in \OF^\times$. We write $t\mat{\varpi^{s}}{}{}{1}$ as $bk$
with $b \in B(F)$ and $k \in \GL_2(\OF)$ according to Lemma \ref{decomp-lemma}.
Since $t \in T(\OF)$, we must have $l = 0$ in parts i) and ii) of Lemma
\ref{decomp-lemma}, and $\mathbf a, y \in \OF^\times$ in parts iii) and iv) of
Lemma \ref{decomp-lemma}. 
The support of $f_0$ is $B(F) \gamma_{c(\chi_2)}K_1(\p^{c(\pi)})$ and an element
$k \in \GL_2(\OF)$ lies in the support if and only if the $(2,1)$ entry of $k$
has valuation $c(\chi_2)$ (respectively, $\geq c(\chi_2)$) if $c(\chi_1) > 0$ (if $c(\chi_1)=0$), which is strictly positive. Hence, the $k$ obtained in parts ii) and iii) of
Lemma \ref{decomp-lemma} is never in the support of $f_0$. Since $s <
c(\chi_2)-v({\bf a})$, the $k$ obtained in part iv) of Lemma \ref{decomp-lemma}
is not in the support of $f_0$ as well.  Hence, the only possibility is part i) of Lemma \ref{decomp-lemma}. First suppose that $c(\chi_1) > 0$. By successive change of variable $x \to x + {\bf b}y/2$ and $y \to xy$, we have
\begin{align}
 \int\limits_{Z(\OF) \backslash T(\OF)} & f_0(t\mat{\varpi^{s}}{}{}{1})
\Omega^{-1}(t) \, dt \label{part-of-int}\\
&= \int\limits_{y \in \varpi^{c(\chi_2)-s-v({\bf a})}
\OF^\times}f_0(\mat{\varpi^s(1+{\bf b}y+{\bf a}{\bf c}y^2)}{{\bf c}y}{0}{1}
\mat{1}{0}{-{\bf a}y\varpi^s}{1}) \Omega^{-1}(1+{\bf c}y \beta) \, dy \nonumber\\
&= q^{-s/2}\chi_1(\varpi^s) \int\limits_{y \in \varpi^{c(\chi_2)-s-v({\bf a})}
\OF^\times}\chi_1(1+{\bf b}y+{\bf a}{\bf c}y^2)f_0(\mat{1}{0}{-{\bf
a}y\varpi^s}{1}) \Omega^{-1}(1+{\bf c}y \beta) \, dy \nonumber\\
&= (1-q^{-1})q^{s/2-c(\chi_2)+v({\bf a})}\chi_1(\varpi^s)
\int\limits_{\OF^\times}\chi_1(1+{\bf b}\varpi^{c(\chi_2)-s-v({\bf a})}y+{\bf
a}{\bf c}\varpi^{2(c(\chi_2)-s-v({\bf a}))}y^2) \nonumber \\
& \qquad \qquad \times f_0(\mat{1}{0}{-{\bf a}y\varpi^{c(\chi_2)-v({\bf
a})}}{1}) \Omega^{-1}(1+{\bf c}\varpi^{c(\chi_2)-s-v({\bf a})} y \beta) \, d^\times
y \nonumber
\end{align}
We get the factor $(1-q^{-1})$ above by the normalization of measures. Now, we have 
$$\mat{1}{0}{-{\bf a}y\varpi^{c(\chi_2)-v({\bf a})}}{1} = \mat{-\varpi^{v({\bf
a})}/({\bf a} y)}{0}{0}{1} \gamma_{c(\chi_2)} \mat{-{\bf a} y \varpi^{-v({\bf
a})}}{}{}{1}.$$
Hence the integral (\ref{part-of-int}) is equal to 
\begin{align}\label{m<n2-v(a)}
& (1-q^{-1})q^{s/2-c(\chi_2)+v({\bf a})}\chi_1(\varpi^s)
\int\limits_{\OF^\times}\chi_1(1+{\bf b}\varpi^{c(\chi_2)-s-v({\bf a})}y+{\bf
a}{\bf c}\varpi^{2(c(\chi_2)-s-v({\bf a}))}y^2) \nonumber\\
& \qquad \qquad \times \chi_1(-\varpi^{v({\bf a})}/({\bf
a} y)) \Omega^{-1}(1+{\bf c}\varpi^{c(\chi_2)-s-v({\bf a})} y
\beta) \, d^\times y
\end{align}
Using $c(\chi_2)-s-v({\bf a}) = c(\Omega)-c(\chi_1)  \geq c(\chi_1)$, we get
$$(1-q^{-1})q^{(c(\chi_1)-c(\chi_2)-c(\Omega)+v({\bf
a}))/2}\chi_1(-\varpi^{c(\pi)-c(\Omega)}/{\bf a})
\int\limits_{\OF^\times}\chi_1^{-1}(y)
\Omega^{-1}(1+{\bf c}\varpi^{c(\Omega)-c(\chi_1)}y \beta) \, d^\times y.$$
Since $c(\Omega) \geq 2c(\chi_1)$ the map $y \to
\Omega^{-1}(1+{\bf c}\varpi^{c(\Omega)-c(\chi_1)}y \beta)$ is an additive character of
$\OF$ of conductor $c(\chi_1)$. This character extends to a character $\hat\psi$
of $F$ with conductor $c(\chi_1)$.

Hence, using (\ref{eps-defn-cond}), we get
\begin{align}\label{m0>0-eqn}
& \int\limits_{Z(\OF) \backslash T(\OF)}
f_0(t\mat{\varpi^{c(\pi)-c(\Omega)-v({\bf a})}}{}{}{1}) \Omega^{-1}(t) \, dt
\nonumber \\
& \qquad =(1-q^{-1})
q^{(-c(\chi_2)-c(\Omega)+v({\bf
a}))/2}\chi_1(-\varpi^{c(\chi_2)-c(\Omega)}/{\bf a}) \varepsilon(1/2, \chi_1,
\hat\psi)
\end{align}

If $c(\chi_1)=0$, the integral is much simpler. We get
\begin{align}\label{part-of-int2}
 \int\limits_{Z(\OF) \backslash T(\OF)}  f_0(t\mat{\varpi^{s}}{}{}{1})
\Omega^{-1}(t) \, dt &= \int\limits_{y \in \p^{c(\Omega)}}f_0(\mat{\varpi^s(1+{\bf b}y+{\bf a}{\bf c}y^2)}{{\bf c}y}{0}{1}) \Omega^{-1}(1+{\bf c}y \beta) \, dy \nonumber\\
& = \chi_1(\varpi^s)q^{-s/2-c(\Omega)}.
\end{align}

If $L/F$ is a ramified field extension then we also have to integrate over
$\varpi_L \big(Z(\OF) \backslash T(\OF)\big)$. Let $t = \mat{x+{\bf b}y/2}{{\bf c}y}{-{\bf a}y}{x-{\bf b}y/2} \in \varpi_L T(\OF)$. Hence, we have 
$$x^2-y^2{\bf d}/4 = (x+{\bf b}y/2)(x-{\bf b}y/2) + {\bf a}{\bf c}y^2 \in \varpi \OF^\times.$$ 
We claim that, for $r < c(\chi_2)-v({\bf a})$, the element $t\mat{\varpi^r}{}{}{1}$ is never in the support of $f_0$. We will look at the four possibilities from Lemma \ref{decomp-lemma}. We know that the values of $x, y$ satisfying conditions of parts ii) and iii) never give elements in the support of $f_0$. 
\begin{itemize}
\item Suppose $x-{\bf b}y/2 \in \varpi^{-l}\OF^\times$ with $l \geq 0$ and ${\bf a}y \varpi^{r+l} \in \OF$. To prove the claim, it is enough to show that $v(y) \leq -l$. Suppose $y \in \p^{-l+1}$. Then we have $\varpi^l y \in \p$. By assumption, we have $\varpi^l (x-{\bf b}y/2) \in \OF^\times$. Hence $\varpi^l (x+{\bf b}y/2) \in \OF^\times$. But then we get $\varpi^{2l}(x^2-y^2 {\bf d}/4) \in \OF^\times$, which is a contradiction.

\item Suppose $x-{\bf b}y/2 \in \p, (x-{\bf b}y/2)/(\varpi^r {\bf a}y) \not\in \OF$. To prove the claim, it is enough to show that $v(y) \leq 0$.
Suppose $y \in \p$. Then $x+{\bf b}y/2 \in \p$. But then we get $(x^2-y^2 {\bf d}/4) \in \p^2$, a contradiction.
\end{itemize}
Hence, for $r < c(\chi_2)-v({\bf a})$, we have 
\begin{equation}\label{no-supp-eqn}
\int\limits_{\varpi_L (Z(\OF) \backslash T(\OF))} f_0(t\mat{\varpi^r}{}{}{1}) \Omega^{-1}(t) dt = 0.
\end{equation}
This completes the proof of the proposition by observing that $s < c(\chi_2)-v({\bf a})$.
\end{proof}

Observe that in the above proof we have used $c(\Omega) \geq 2 c(\chi_1)$ at two crucial steps which simplifies the integral. In the case $c(\Omega) < 2 c(\chi_1)$, it is not clear if the statement of the proposition still remains valid. 

\medskip
\noindent
{\bf Proof of Theorem \ref{intro-tv-inert} for principal series representations}: By the definition (\ref{bessel-intertwiner-defn}) of $\mathcal A$ and (\ref{non-van-A}), the linear functional on $\pi$ given by $ \ell(f) = (\mathcal{A}(f))(1)$ is a nonzero
functional satisfying $\ell(\pi(t)f) = \Omega(t) \ell(f)$ for all $t \in T(F)$
and $f \in \pi$. Hence, $\mathrm{Hom}_{T(F)}(\pi, \Omega) \neq 0$. The one-dimensionality follows from \cite{wald}. Since $c(\Omega) \geq c(\pi)$, we can apply Lemma \ref{toric-decomp-lemma} together with Proposition \ref{A-nonvanish-lemma} to obtain the existence of the required test vector. The uniqueness follows from the uniqueness of $f_0$. \qed

\subsection{Steinberg representation}
Let $\pi = \chi |\cdot|^{1/2} \times \chi |\cdot|^{-1/2}$. 
Let $V_0$ be the unique invariant (infinite-dimensional) subspace of $\pi$, so $\pi|_{V_0}$
is the twisted Steinberg representation $\chi \St_{\GL_2}$.
If we set
$\chi_1 = \chi |\cdot|^{1/2}$ and $\chi_2 = \chi |\cdot|^{-1/2}$, then $V_0$ is
characterized as the kernel of the intertwining operator $M : \chi_1 \times
\chi_2 \to \chi_2 \times \chi_1$, given by
$$(M(f))(g) = \int\limits_F f(\mat{}{-1}{1}{} \mat{1}{x}{}{1} g) \, dx.$$

\subsection*{$\chi$ ramified}
If $\chi$ is a ramified character, then
$f_0$, defined as in (\ref{new-form-formula}), is in $V_0$ and is, in fact, the
unique (up to constant) newform in $\chi \St_{\GL_2}$ (see \cite{Sc}). Hence the proof of Proposition \ref{A-nonvanish-lemma} is valid in this case
without any modification.

\subsection*{$\chi$ unramified}
 If $\chi$ is unramified, then the vector $f_0$, defined as in
(\ref{new-form-formula}), is a spherical vector in $\chi_1 \times \chi_2$, hence
clearly not the newform of $\chi \St_{\GL_2}$, which has conductor $\p$. Any
vector in $\chi \St_{\GL_2}$ which is right $K_1(\p)$-invariant is also right
$\I$-invariant, where $\I$ is the Iwahori subgroup defined in
(\ref{Iwahori-sub}).  
It is known (see \cite{Sc}) that the newform in the induced model is given
by
\begin{equation}\label{new-form-unr-st}
f_0(g) = \begin{cases} |a/d| \chi(ad) q & \text{ if } g \in \mat{a}{\ast}{}{d}
\I, \\
-|a/d| \chi(ad) & \text{ if } g \in \mat{a}{\ast}{}{d} w \I.\end{cases}
\end{equation}

We can try to compute $(\mathcal A(f_0))(g)$ (defined in (\ref{bessel-intertwiner-defn}))
in this case for various values of $g$. But instead, we will use a double coset
decomposition and properties of the Steinberg representation to obtain the test
vector. This has the added advantage of obtaining a new proof of the uniqueness (up to constant) of the Waldspurger functional and also gives us the explicit formula for $B(g)$, where $B$ is the newform in the corresponding Waldspurger model and $g$ is any element of $\GL_2(F)$. By \cite{Su}, Lemma 2-4, there is the disjoint double coset
decomposition
\begin{equation}\label{toricdecompositioneq}
 \GL_2(F)=\bigsqcup_{r=0}^\infty T(F)\mat{\varpi^r}{}{}{1}\GL_2(\OF).
\end{equation}
Note that, by the Iwasawa decomposition of $\SL_2(\OF/\p)$, we have
\begin{equation}\label{single-coset-decomp}
\GL_2(\OF) = w \I \sqcup\bigsqcup_{u\in\OF/\p}\mat{1}{}{u}{1} \I, \qquad w =
\mat{}{1}{-1}{}.
\end{equation}
For $u \in \OF$ and $r \geq 0$ set $\beta_{u,r} := {\bf a}\varpi^{2r} + {\bf b}
\varpi^r u + {\bf c} u^2$. Arguing as in Lemma 3.1 of \cite{P}, we have
\begin{equation}\label{eqn2}
T(F) \mat{\varpi^r}{}{}{1} w \I = T(F) \mat{\varpi^r}{}{}{1} \mat{1}{}{u}{1} \I
\iff \beta_{u,r} \in \OF^\times.
\end{equation}
Lemma 3.2 of \cite{P} tells us exactly when $\beta_{u,r} \in \OF^\times$.
Putting everything together, we get
\begin{proposition}\label{Stein-doub-decomp-prop}
For $r > 0$, we have
$$T(F)\mat{\varpi^r}{}{}{1}\GL_2(\OF) = T(F)\mat{\varpi^r}{}{}{1} \I \sqcup
T(F)\mat{\varpi^r}{}{}{1} w \I.$$
For $r = 0, \Big(\frac L{\p}\Big) = -1$, we have
$$T(F) \GL_2(\OF) = T(F) \I = T(F) w \I.$$
For $r = 0, \Big(\frac L{\p}\Big) = 0$, we have
$$T(F) \GL_2(\OF) = T(F) w \I \sqcup T(F) \mat{1}{}{u_0}{1} \I,$$
where $u_0$ is chosen as in (\ref{u0defeq}).
\end{proposition}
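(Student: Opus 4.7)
The plan is to combine the two given decompositions: the Sun double coset decomposition \eqref{toricdecompositioneq} for $\GL_2(F)$ and the Iwasawa-style decomposition \eqref{single-coset-decomp} of $\GL_2(\OF)$ modulo $\mathcal{I}$. Substituting the second into the first yields
\begin{equation*}
T(F)\mat{\varpi^r}{}{}{1}\GL_2(\OF) = T(F)\mat{\varpi^r}{}{}{1}w\mathcal{I} \;\cup\; \bigcup_{u\in\OF/\p} T(F)\mat{\varpi^r}{}{}{1}\mat{1}{}{u}{1}\mathcal{I}.
\end{equation*}
The criterion \eqref{eqn2} then reduces the determination of the double cosets to deciding, for each $u \in \OF/\p$ and each $r \ge 0$, whether $\beta_{u,r}={\bf a}\varpi^{2r}+{\bf b}\varpi^r u+{\bf c}u^2$ lies in $\OF^\times$.

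For $r>0$, the first two terms of $\beta_{u,r}$ lie in $\p$, so $\beta_{u,r}\equiv{\bf c}u^2\pmod{\p}$; since $\mathbf{c}\in\OF^\times$, this is a unit exactly when $u\in\OF^\times$. Thus all cosets with $u\in(\OF/\p)^\times$ collapse onto $T(F)\mat{\varpi^r}{}{}{1}w\mathcal{I}$ by \eqref{eqn2}, while the single representative $u=0$ yields $T(F)\mat{\varpi^r}{}{}{1}\mathcal{I}$, which is distinct since $\beta_{0,r}={\bf a}\varpi^{2r}\in\p$. This gives the two-coset decomposition claimed for $r>0$.

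For $r=0$, $\beta_{u,0}={\bf c}u^2+{\bf b}u+{\bf a}$ is the reduction of the norm form from $L$. When $\big(\frac{L}{\p}\big)=-1$, the discriminant $\mathbf{d}$ is a non-square unit, so this quadratic has no roots modulo $\p$, and every $\beta_{u,0}$ (including $\beta_{0,0}=\mathbf{a}\in\OF^\times$ by Lemma~\ref{val-lemma}) is a unit. Hence every coset $T(F)\mat{1}{}{u}{1}\mathcal{I}$ coincides with $T(F)w\mathcal{I}$, which in turn equals $T(F)\mathcal{I}$ (take $u=0$). When $\big(\frac{L}{\p}\big)=0$, the same quadratic has the unique double root $u_0$ modulo $\p$ by \eqref{u0defeq}, so $\beta_{u,0}\in\OF^\times$ iff $u\not\equiv u_0\pmod{\p}$. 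Consequently every coset with $u\not\equiv u_0$ merges with $T(F)w\mathcal{I}$, while $T(F)\mat{1}{}{u_0}{1}\mathcal{I}$ remains a separate coset by the converse direction of \eqref{eqn2}.

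This analysis is mostly mechanical once \eqref{eqn2} is in hand; the only mildly delicate step is ensuring that representatives $u\equiv u'\pmod\p$ really produce the same Iwahori coset (immediate because $\mat{1}{}{u-u'}{1}\in\mathcal{I}$) and that the cosets asserted to be distinct really are, which is just the contrapositive of \eqref{eqn2} together with the computation that $\beta_{u,r}\notin\OF^\times$ in the relevant cases. I do not foresee any substantive obstacle beyond this bookkeeping.
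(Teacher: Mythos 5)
Your proof is correct and follows the same framework the paper uses: combine the double coset decomposition \eqref{toricdecompositioneq} with the mod-$\p$ Iwasawa decomposition \eqref{single-coset-decomp}, and then apply the unit criterion \eqref{eqn2}. The only difference is that the paper outsources the determination of when $\beta_{u,r}\in\OF^\times$ to Lemma 3.2 of \cite{P}, whereas you compute it directly, which is straightforward since $\beta_{u,r}\equiv\mathbf{c}u^2\pmod\p$ for $r>0$ and $\beta_{u,0}$ reduces to the quadratic $\mathbf{c}u^2+\mathbf{b}u+\mathbf{a}$ whose number of roots mod $\p$ is $\big(\tfrac{L}{\p}\big)+1$. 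Your handling of the $r=0$ cases (using Lemma~\ref{val-lemma} to see $\mathbf{a}\in\OF^\times$ when $\big(\tfrac{L}{\p}\big)=-1$, and the unique root $u_0$ when $\big(\tfrac{L}{\p}\big)=0$) is correct, and your observations that the biconditional in \eqref{eqn2} also gives distinctness, and that at most one non-unit representative $u$ arises mod $\p$ in each case, close the bookkeeping gaps cleanly.
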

The twisted Steinberg representation is characterized as the representation
$\pi$ with a newform $v_0$ which is invariant under $\I$ and satisfies the
following two conditions.
$$\sum\limits_{\gamma \in \GL_2(\OF)/\I}\pi(\gamma) v_0 = 0, \qquad
\pi(\mat{}{1}{\varpi}{})v_0 = -\chi(\varpi)v_0.$$
These conditions follow from the action of the Atkin-Lehner element and the fact
that $\pi$ does not have a vector invariant under $\GL_2(\OF)$. See Proposition 3.1.2 of \cite{Sc}.  Let $\Omega$ be
a character of $L^\times$ with $\Omega |_{F^\times} = \omega_\pi$.

Let $B : \GL_2(F) \rightarrow \C$ be a function that satisfies $B(tgk) =
\Omega(t)B(g)$ for all $t \in T(F)$, $g \in \GL_2(F)$, $k \in \I$ and 
\begin{align}
&\sum\limits_{u \in \OF/\p} B(g \mat{1}{}{u}{1}) = -B(gw),
\label{new-form-cond}\\
&B(g\mat{}{1}{\varpi}{}) = -\chi(\varpi)B(g) \label{AL-cond}
\end{align}
for all $g \in \GL_2(F)$.
If $\pi$ has a $\Omega$-Waldspurger model, then $B$ will precisely be the  unique
(up to scalars) newform of $\pi$ in the $\Omega$-Waldspurger model; otherwise $B$ will be 0.
\begin{lemma}\label{various-values-lem} 
\begin{enumerate}
\item If $c(\Omega) \geq 2$, then
\begin{equation}
 B(\mat{\varpi^r}{}{}{1}w) = 0 \qquad \text{ for } r \leq c(\Omega)-2.
\label{mw-tower-aut-van}
\end{equation}

\item For $r > 0$, we have
\begin{equation}\label{m-tower-inmw-tower}
B(\mat{\varpi^r}{}{}{1}) = \begin{cases} -q B(\mat{\varpi^r}{}{}{1}w) & \text{
if } r \geq c(\Omega),\\ 0 & \text{ if } r < c(\Omega).\end{cases}
\end{equation}

\item For $r \geq {\rm max}\{c(\Omega)-1, 0\},$ we have
\begin{equation}\label{up-down-tower}
B(\mat{\varpi^{r+1}}{}{}{1}w) = \frac{\chi(\varpi)}{q}
B(\mat{\varpi^r}{}{}{1}w).
\end{equation}

\item If $\Big(\frac L{\p}\Big) = 0$, then
\begin{equation}\label{ram-u0-cond}
B(\mat{1}{}{u_0}{1}) = \begin{cases} -q B(w) & \text{ if } c(\Omega) = 0,\\ 0 & \text{
if } c(\Omega) > 0.\end{cases}
\end{equation}

\item If $c(\Omega) = 0$ and $\Omega = \chi \circ N_{L/F}$,  then 
\begin{equation}\label{van-cond}
B(w) = 0.
\end{equation}
\end{enumerate}
\end{lemma}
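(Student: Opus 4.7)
The plan is to combine three defining properties of $B$---right Iwahori invariance, left toric covariance by $\Omega$, and the relations (\ref{new-form-cond}), (\ref{AL-cond}) characterizing the twisted Steinberg representation---with the double coset decomposition of Proposition \ref{Stein-doub-decomp-prop} and the unit criterion (\ref{eqn2}). Two recurring techniques do most of the work. The first is a \emph{stabilizer argument}: for $g \in \GL_2(F)$ the intersection $T(F) \cap g\I g^{-1}$ can be computed directly and identified, via $T(F) \cong L^\times$, with a subgroup of $\OF_L^\times$ controlled by the powers $1 + \P_L^k$; whenever $\Omega$ is nontrivial on it, the covariance $B(tg) = \Omega(t)B(g)$ forces $B(g) = 0$. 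The second is a \emph{coset expansion}: each translate $\mat{\varpi^r}{}{u}{1}$ or $\mat{1}{}{u}{1}$ appearing in (\ref{new-form-cond}) is reduced via (\ref{eqn2}) to $t_u g' k$ for a standard double coset representative $g'$, after which $B(t_u g' k) = \Omega(t_u) B(g')$.

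For (i), a direct computation gives $g\I g^{-1} = \mat{\OF}{\varpi^r\p}{\varpi^{-r}\OF}{\OF}$ at $g = \mat{\varpi^r}{}{}{1}w$; intersecting with $T(F)$ yields (via $T(F) \cong L^\times$) a subgroup containing $1+\P_L^{r+1}$ up to an $\OF^\times$ factor on which $\Omega$ is trivial, and $r \le c(\Omega)-2$ forces $B(g) = 0$. The same argument at $g = \mat{\varpi^r}{}{}{1}$ handles the $r < c(\Omega)$ half of (ii). For $r \ge c(\Omega)$ in (ii), apply (\ref{new-form-cond}) at $g = \mat{\varpi^r}{}{}{1}$: for each $u \in \OF^\times$, $\beta_{u,r}$ is a unit, so (\ref{eqn2}) places $\mat{\varpi^r}{}{u}{1}$ in $T(F)\mat{\varpi^r}{}{}{1}w\I$ with toric translate $t_u$ in the above stabilizer, hence $\Omega(t_u) = 1$; the $q$ summands collapse to $B(\mat{\varpi^r}{}{}{1}) + (q-1)B(\mat{\varpi^r}{}{}{1}w) = -B(\mat{\varpi^r}{}{}{1}w)$. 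Part (iii) then follows from (\ref{AL-cond}) at $\mat{\varpi^r}{}{}{1}w$, which gives $B(\mat{\varpi^{r+1}}{}{}{1}) = -\chi(\varpi)B(\mat{\varpi^r}{}{}{1}w)$ after simplifying $w\mat{}{1}{\varpi}{} = \mat{\varpi}{}{}{-1}$, combined with (ii) at level $r+1$.

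Part (iv) runs in parallel in the ramified case. Proposition \ref{Stein-doub-decomp-prop} and (\ref{eqn2}) sort the $q$ cosets $\mat{1}{}{u}{1}\I$ into $q-1$ lying in $T(F)w\I$ and one ($u \equiv u_0$) lying in $T(F)\mat{1}{}{u_0}{1}\I$; for $c(\Omega) = 0$ all $\Omega$-factors are trivial and (\ref{new-form-cond}) collapses to $(q-1)B(w) + B(\mat{1}{}{u_0}{1}) = -B(w)$. For $c(\Omega) > 0$, a stabilizer computation at $g = \mat{1}{}{u_0}{1}$---using the defining congruence $\mathbf{b} + 2\mathbf{c}u_0 \in \p$---identifies $T(F) \cap g\I g^{-1}$ with all of $\OF_L^\times$, on which $\Omega$ is nontrivial, so $B(g) = 0$.

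Finally, for (v) I work in the unramified case. The coset space $T(F)\backslash T(F)\GL_2(\OF)/\I$ is isomorphic to $\OF_L^\times/\OF^\times(1+\P_L) \cong (\OF_L/\P_L)^\times/(\OF/\p)^\times$, cyclic of order $q+1$, with representatives $\{\mat{1}{}{u}{1}\I\}_{u \in \OF/\p} \cup \{w\I\}$; a direct matrix calculation identifies the toric representative for $w$ with the class of $\xi_0 \in \OF_L^\times$. Writing each $B$-value as $\Omega([t])B(1)$, the relation (\ref{new-form-cond}) at $g = 1$ becomes
\[
B(1) \sum_{[t] \in \OF_L^\times/\OF^\times(1+\P_L)} \Omega([t]) = 0.
\]
For $\Omega = \chi \circ N_{L/F}$ with $\chi$ unramified, $\Omega|_{\OF_L^\times}$ is trivial (by surjectivity of $N : \OF_L^\times \to \OF^\times$ in the unramified case), so the character sum equals $q+1$ and $B(1) = 0$, whence $B(w) = \Omega(\xi_0)B(1) = 0$; the ramified case follows by using part (iv) to eliminate $B(\mat{1}{}{u_0}{1})$ and reduce to a single linear equation in $B(w)$. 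The main technical obstacle throughout will be executing the stabilizer and coset representative calculations uniformly across the inert and ramified settings (and across $v(\mathbf{a}) \in \{0,1\}$), and in (v) verifying that the $q+1$ representatives above exactly exhaust $(\OF_L/\P_L)^\times/(\OF/\p)^\times$ without repetition.
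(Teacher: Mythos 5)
Your strategy is the same as the paper's: the paper works out part (i) in detail by exhibiting an explicit pair $t(x,y) \in T(F)$, $k \in \I$ with $\mat{\varpi^r}{}{}{1}wk = t(x,y)\mat{\varpi^r}{}{}{1}w$ and $\Omega(t(x,y)) = \Omega(1+u+v\beta) \neq 1$, which is exactly your stabilizer argument made concrete, and then derives (ii)--(v) from \eqref{new-form-cond}, \eqref{AL-cond} and similar manipulations. Your outline of (ii)--(iv) recovers this. One small imprecision in (ii): the toric element $t_u$ relating $\mat{\varpi^r}{}{u}{1}$ to $\mat{\varpi^r}{}{}{1}w$ is not in the stabilizer of the latter (which, modulo $\OF^\times$, is $1+\P_L^{r+1}$); rather one checks directly (taking $y=\varpi^r$, $x = \mathbf cu$ in the $t(x,y)$-parametrization) that $t_u \in \OF^\times(1+\P_L^r)$, so $\Omega(t_u)=1$ holds precisely when $r \geq c(\Omega)$.

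There is, however, a genuine gap in your treatment of the ramified case of (v). For $c(\Omega)=0$ and $\bigl(\tfrac L\p\bigr)=0$, part (iv) says $B(\mat{1}{}{u_0}{1}) = -qB(w)$, but this \emph{is} the relation \eqref{new-form-cond} at $g=1$: substituting it back yields $0 = 0$, not a constraint that kills $B(w)$. The missing input is \eqref{AL-cond}. Applying \eqref{AL-cond} at $g = \mat{1}{}{u_0}{1}$ and computing (using $\mathbf b+2\mathbf cu_0 \in \p$ and $N(u_0+\beta) = (\mathbf cu_0^2+\mathbf bu_0+\mathbf a)/\mathbf c$, of valuation $1$) that $\mat{1}{}{u_0}{1}\mat{}{1}{\varpi}{} \in T(F)\mat{1}{}{u_0}{1}\I$ with toric factor a uniformizer of $L$, one gets $\Omega(\varpi_L)B(\mat{1}{}{u_0}{1}) = -\chi(\varpi)B(\mat{1}{}{u_0}{1})$; since $\Omega = \chi\circ N_{L/F}$ with $\chi$ unramified gives $\Omega(\varpi_L) = \chi(\varpi)$, this forces $B(\mat{1}{}{u_0}{1})=0$ and then (iv) gives $B(w)=0$. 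Your proposal mentions \eqref{AL-cond} in the preamble but does not use it in (v), so as written the ramified case doesn't close. (A smaller notational slip: the double coset space $T(F)\backslash T(F)\GL_2(\OF)/\I$ in the unramified case is a single point; what has $q+1$ elements is $\GL_2(\OF)/\I$ as a $T(\OF)$-set, isomorphic to $\OF_L^\times/\OF^\times(1+\P_L)$.)
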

\begin{proof} We will illustrate the proof of i) and ii) in detail here. Let $u, v \in \p^{c(\Omega)-1}$ such that $\Omega(1+u+v\beta) \neq 1$. Take
$y=v/\mathbf c, x=1+u+{\bf b}y/2$ and, for $r \leq c(\Omega)-2$, let 
$$k = \mat{1+u}{{\bf a}/{\bf c}v\varpi^r}{-\varpi^{-r}v}{1+u+{\bf b}/{\bf c}v}
\in \I.$$
Then
$$B(\mat{\varpi^r}{}{}{1}w) = B(\mat{\varpi^r}{}{}{1} wk) =
B(t(x,y)\mat{\varpi^r}{}{}{1} w) = \Omega(1+u+v\beta)
B(\mat{\varpi^r}{}{}{1}w).$$
This gives us (\ref{mw-tower-aut-van}) and completes the proof of i). 

Next, we will give the proof of ii). Substitute $g = \mat{\varpi^r}{}{}{1}$ in (\ref{new-form-cond}) to get
$$\sum\limits_{u \in \OF/\p} B(\mat{\varpi^r}{}{}{1} \mat{1}{}{u}{1}) = -B(\mat{\varpi^r}{}{}{1}w).$$
For $u \neq 0$, setting $x={\bf b}/2\varpi^r+{\bf c}u, y=\varpi^r$, we get
$$\mat{\varpi^r}{}{}{1} \mat{1}{}{u}{1} \mat{-{\bf c}}{{\bf b}\varpi^r+{\bf c}u}{}{-\beta_{u,r}} = t(x,y) \mat{\varpi^r}{}{}{1} w.$$
Since $r > 0$ by assumption, $\beta_{u,r} \in \OF^\times$. Hence, for $u \neq 0$ we have
$$B(\mat{\varpi^r}{}{}{1} \mat{1}{}{u}{1}) = \Omega(u+\varpi^r\beta) B(\mat{\varpi^r}{}{}{1}w).$$
This gives us 
$$B(\mat{\varpi^r}{}{}{1}) = -\Big(\sum\limits_{u \in (\OF/\p)^\times} \Omega(u+\varpi^r\beta) + \Omega(1)\Big) B(\mat{\varpi^r}{}{}{1}w).$$
Using (\ref{mw-tower-aut-van}) and the definition of $c(\Omega)$ we get the result for $r \geq c(\Omega)$ or $r \leq c(\Omega)-2$. For $r=c(\Omega)-1$, using Lemma 3.4 of \cite{P}, we see that the expression in the parentheses on the right hand side above is $0$. This completes the proof of ii).

Using (\ref{new-form-cond}), (\ref{AL-cond}), and similar calculations as above, we get the remaining results.
\end{proof}

\noindent
{\bf Proof of Theorem \ref{intro-tv-inert} for twists of Steinberg representations:} 
Let $D$ be the quaternion division algebra over $F$ and $N_{D/F}$
be the reduced norm.  Since $\pi = \chi \St$ corresponds to the one-dimensional
representation 
$\pi' = \chi \circ N_{D/F}$ of $D^\times(F)$, one knows by \cite{wald}
that $\pi$
has an $\Omega$-Waldspurger model if and only if $\Omega \ne \chi \circ
N_{L/F}$.  Since $c(\Omega) \geq c(\pi)$, this must be the case, i.e., $\dim_\C \mathrm{Hom}_{T(F)}(\pi, \Omega) =1$. The $\chi$ ramified case follows exactly as in the
principal series case. For $\chi$ unramified, the result follows from Lemma
\ref{various-values-lem}. \qed

\section{Supercuspidal representations}
\label{sc-tv-sec}

Throughout this section we continue to assume that $L/F$ is a field.

\subsection{Chain orders and strata}\label{chainorders}

This section contains a summary of the facts about chain orders and fundamental
strata that we will use to construct test vectors for the supercuspidal
representations $\pi$ of $\GL_2(F)$, all of which can be found in Chapter 4
of Bushnell--Henniart~\cite{BH}. 

Let $\mathfrak{A}$ be a chain order in $\M_2(F)$. Up to
$\GL_2(F)$-conjugacy, $\mathfrak{A}$ is either $\mathfrak{M}=\M_2(\mathfrak{o})$ or
$\mathfrak{J}=\begin{bmatrix} \mathfrak{o} & \mathfrak{o} \\ \mathfrak{p} &
\mathfrak{o} \end{bmatrix}$, so we always take $\mathfrak{A}$ to be 
$\mathfrak{M}$ or $\mathfrak{J}$.

Write $e_\mathfrak{A}=1$ if $\mathfrak{A} = \mathfrak{M}$ and
$e_\mathfrak{A}=2$ if $\mathfrak{A} = \mathfrak{J}$. For more
intrinsic definitions, see~\cite{BH}.
Let $\mathfrak{P}=\mathrm{rad} \, \mathfrak{A}$, the Jacobson radical of
$\mathfrak{A}$. There is an element $\Pi \in \GL_2(F)$ such that $\mathfrak{P}=\Pi \mathfrak{A}$,
and one has
\begin{equation*}
 \mathrm{rad} \, \mathfrak{M} = \varpi \mathfrak{M}, \quad \mathrm{rad} \,
\mathfrak{J}=\begin{bmatrix} & 1 \\  \varpi & \end{bmatrix} \mathfrak{J}. 
\end{equation*}
Let $\mathfrak{P}^n = \Pi^n \mathfrak{A}$ for $n \in \mathbb{Z}$. Let
$U^0_\mathfrak{A}=U_\mathfrak{A}:=\mathfrak{A}^\times$, 
$U^n_\mathfrak{A}:=1+\mathfrak{P}^n$ for $n\geq 1$, and $K_\mathfrak{A}=\{ g \in \GL_2(F)
: g \mathfrak{A} g^{-1}=\mathfrak{A} \}$. Then
\begin{equation*}
 K_\mathfrak{A}=
\begin{cases}
Z(F) \GL_2(\mathfrak{o}) & \text{if } \mathfrak{A}=\mathfrak{M},\\
\left< \begin{bmatrix} & 1\\ \varpi & \end{bmatrix} \right> \ltimes
\mathfrak{J}^\times & \text{if } \mathfrak{A}=\mathfrak{J}.
\end{cases}
\end{equation*}
We fix a character $\psi_1: F
\rightarrow \mathbb{C}^\times$ so that the conductor of $\psi_1$ is $\p$. For
$\alpha \in \mathrm{M}_2(F)$, define a function of $U_\mathfrak{A}$ by
$\psi_\alpha (x)=\psi_1(\mathrm{Tr} \, \alpha(x-1))$. Then  for $1 \leq m \leq n \leq 2m$,
there is an isomorphism
\begin{align*}
 \mathfrak{P}^{-n} / \mathfrak{P}^{-m} &\rightarrow (U^{m+1}_\mathfrak{A} /
U^{n+1}_\mathfrak{A})^{\wedge},\\
\alpha+\mathfrak{P}^{-m} &\mapsto \psi_\alpha.
\end{align*}
The normalized level of $\pi$ is defined to be 
\begin{equation*}
 \ell(\pi) = \min \{ n/e_\mathfrak{A} : \pi|_{U_\mathfrak{A}^{n+1}} \, \,
\text{contains the trivial character}\}.
\end{equation*}

A stratum in $\mathrm{M}_2(F)$ is a triple $(\mathfrak{A}, n, \alpha)$ where
$\mathfrak{A}$ is a chain order in $\mathrm{M}_2(F)$ with radical
$\mathfrak{P}$, $n$ is an integer, and $\alpha \in \mathfrak{P}^{-n}$. For $n
\geq 1$ one associates to a stratum the character $\psi_\alpha$ of
$U_\mathfrak{A}^n$ which is trivial on $U_\mathfrak{A}^{n+1}$.

We say that a smooth representation $\pi$ contains the stratum  $(\mathfrak{A},
n, \alpha)$ if $\pi|_{U^{n}_\mathfrak{A}}$ contains $\psi_\alpha$. 
A fundamental stratum is one such that $\alpha + \mathfrak{P}^{1-n}$ contains no
nilpotent elements. If an irreducible smooth representation $\pi$ of $\GL_2(F)$
contains a stratum $(\mathfrak{A}, n, \alpha)$, then  $(\mathfrak{A}, n,
\alpha)$ is fundamental if and only if $\ell(\pi) = n/e_\mathfrak{A}$
~\cite{BH}*{12.9 Theorem}. 

Suppose that  $(\mathfrak{A}, n, \alpha)$ is a fundamental stratum with
$e_\mathfrak{A}=1$. Write $\alpha=\varpi^{-n} \alpha_0$ for $\alpha_0 \in
\mathfrak{A}$. Let $f_\alpha(t) \in \OF[t]$ be the characteristic polynomial of
$\alpha_0$, and let $\tilde{f}_\alpha \in \mathbf{k}[t]$ be its reduction modulo
$\p$. Here $\mathbf{k}$ is the residue class field. 
If $\tilde{f}_\alpha$ has two solutions in $\mathbf{k}$, then $(\mathfrak{A}, n,
\alpha)$ is said to be a split fundamental stratum.
If $\tilde{f}_\alpha$ is irreducible, then the stratum  $(\mathfrak{A}, n,
\alpha)$ is said to be unramified simple. 
On the other hand, if  $(\mathfrak{A}, n, \alpha)$ is a fundamental stratum with
$e_\mathfrak{A}=2$, and $n$ odd, then  $(\mathfrak{A}, n, \alpha)$ is said to be
ramified simple.
A simple stratum is either a simple unramified stratum or a simple ramified
stratum. Suppose that $(\mathfrak{A}, n, \alpha)$ is a simple stratum with $\alpha_0$ as above and let $E=F[\alpha_0]$. Bushnell-Henniart define what it means for $\alpha$ to be minimal (\cite{BH}*{13.4 Definition}), and when this is the case $\OF_E=\OF[\alpha_0]$ (\cite{BH}*{13.4 Lemma}).  If $(\mathfrak{A}, n, \alpha)$ is a simple stratum with $\mathfrak{A}=\mathfrak{M}$, then $\alpha_0 \in \mathfrak{M}$ but $\alpha_0 \notin \mathfrak{P}$.

Define $\pi$ to be \emph{minimal} if, for all characters $\chi$ of $F^\times$, 
$\ell(\pi \otimes \chi) \geq \ell(\pi)$. Every irreducible supercuspidal
representation of $\GL_2(F)$ is either minimal, or isomorphic to the twist of a
minimal irreducible supercuspidal representation.
Every minimal irreducble smooth representation of $\GL_2(F)$ contains exactly one of
the following: a ramified simple stratum, an unramified simple stratum, or a
split fundamental stratum~\cite{BH}*{13.3 Corollary}.
 If $\pi$ contains a split fundamental stratum, then $\pi$ is not
supercuspidal. 

\subsection{Construction of minimal supercuspidals}
\label{construction}

In this section we review the construction of minimal irreducible supercuspidal
representations. See \cite{BH}*{Section 19} for more details. In each case we
describe a distinguished vector $v_0$ in the inducing representation. This
vector $v_0$ will be used to construct a test vector for $\pi$.

We remark that if a representation $\pi$ contains a simple stratum
$(\mathfrak{A}, n, \alpha)$, then it contains all $\GL_2(F)$ conjugates of
$(\mathfrak{A}, n, \alpha)$. Therefore, we may always take $\mathfrak{A}$ to be
either $\mathfrak{M}$ or $\mathfrak{J}$. Since $K_\mathfrak{A}$ normalizes
$U_\mathfrak{A}$, we may also consider $\alpha$ up to $K_\mathfrak{A}$
conjugacy.

For the rest of Section~\ref{sc-tv-sec} we assume that all supercuspidal representations are irreducible.
\subsubsection{$\mathfrak{A}=\mathfrak{M}$, $\ell(\pi)=2r+1$} \label{oddM}
Suppose that $\pi$ is a minimal supercuspidal representation containing the
simple stratum given by ${( \mathfrak{M}, 2r+1, \alpha)}$. Then $E=F[\alpha]$ is an
unramified quadratic extension of $F$, and $\pi \cong c \mbox{--}
\mathrm{Ind}_{J_\alpha}^{\GL_2(F)} \lambda$, where $J_\alpha=E^\times
U_\mathfrak{M}^{r+1}$ and $\lambda$ is a character.

We have that $\lambda|_{U_{\mathfrak{M}}^{r+1}} =\psi_\alpha$ with $\alpha \in
\mathfrak{P}_\mathfrak{M}^{-2r-1}$ and $\alpha$ is minimal.
One may take $\alpha_0= \varpi^{2r+1} \alpha$ to be in rational canonical form,
i.e.
\begin{equation}\label{rationalcanonical}
 \alpha_0= \begin{bmatrix}0 & 1\\ a_0 & a_1 \end{bmatrix},
\end{equation}
for $a_i \in \OF$, $i=0,1$.
Then
 $$ 1+ \begin{bmatrix} \p^{r+1} & \p^{2r+2} \\ \p^{2r+2} & \p^{2r+2}
\end{bmatrix}  \subseteq \ker \psi_\alpha .$$

\subsubsection{$\mathfrak{A}=\mathfrak{J}$, $\ell(\pi)=\frac{2r+1}{2}$}
\label{oddJ}
Suppose that $\pi$ is a minimal supercuspidal representation containing the
simple stratum $( \mathfrak{J}, 2r+1, \alpha)$, and let $E=F[\alpha]$.
In this case $E/F$ is a ramified extension, and  $\ell(\pi) e(E/F)=2r+1$. Then
$\pi=c \mbox{--}\mathrm{Ind}_{J_\alpha}^{\GL_2(F)}\lambda$ where
$J_\alpha=E^\times U_\mathfrak{J}^{r+1}$ and $\lambda$ is a character.
Observe
\[ U_\mathfrak{J}^{r+1} = 1 + {\mathfrak P}^{r+1} =
\begin{cases} 
1+\bmx {\mathfrak p}^{r/2+1} & {\mathfrak p}^{r/2} \\ {\mathfrak p}^{r/2+1} & {\mathfrak p}^{r/2+1} \emx & \text{if } r \text{ is even} \\
1+\bmx {\mathfrak p}^{(r+1)/2} & {\mathfrak p}^{(r+1)/2} \\ {\mathfrak p}^{(r+3)/2} & {\mathfrak p}^{(r+1)/2} \emx & \text{if } r \text{ is odd}. 
\end{cases} \]

Note that 
\begin{equation*}
 U_\mathfrak{J}^{2r+2}=1+ \begin{bmatrix} \p^{r+1} & \p^{r+1}\\ \p^{r+2}
& \p^{r+1} \end{bmatrix}  \subseteq \ker \lambda.
\end{equation*}
Let $\alpha_0=\varpi^{r+1} \alpha$ be of the form \eqref{rationalcanonical},
where now $a_0 \in \varpi \OF^\times$ and $a_1 \in \p$, and $k:=\left[\frac{r}{2}\right]+1$.  
Then
\begin{equation*}
 1+ \begin{bmatrix} \p^{k} & \p^{r+1}\\ \p^{r+2} & \p^{r+1}
\end{bmatrix}  \subseteq \ker \lambda.
\end{equation*}

\subsubsection{$\mathfrak{A}=\mathfrak{M}$, $\ell(\pi)=2r > 0$}\label{evenlevel}
Now suppose $\pi$ contains an unramified simple stratum $(\mathfrak{M}, 2r,
\alpha)$ for some $\alpha \in \mathfrak{P}^{-2r}$ so that $ \ell(\pi)=2r>0$ and $e(E/F)=1$,
where as before $E=F[\alpha]$.   Continue to assume that $\alpha_0 = \varpi^{2r}\alpha$ has the form \eqref{rationalcanonical}. In
this case, $\pi$ is not induced from a character, and $E$ is a
unramified quadratic extension of $F$. We will describe a representation $\rho$
of $J_\alpha=E^\times U_\mathfrak{M}^{r}$ so that $\pi$ is compactly induced
from $\rho$, and we follow Kutzko \cite{Kutzko2}*{\S 1} since his construction is more convenient for our applications. 

Write $U_E^1=U_\mathfrak{M}^1 \cap E^\times$. Since $\alpha_0 \in \mathfrak{M} \smallsetminus \mathfrak{P}$ and $\OF_E = \OF[\alpha_0]$ (see Section~\ref{chainorders}), a simple argument shows $U_E^1 \subset 1+\p_E$. The opposite inclusion is obvious; therefore, $U_E^1 =1+\p_E$.  We similarly note that $E^\times \cap U_\mathfrak{M}=:U_E \cong \OF_E^\times$.
There is a character $\chi$ of $E^\times$ such that $\chi(1+x)=\psi_1 \circ
\mathrm{Tr}_{E/F}(\alpha x)$ for all $x \in \p_E^{r+1}$. Define a character
$$\lambda: H_\alpha^1:=U_E^1 U_\mathfrak{M}^{r+1} \rightarrow \C^\times$$ by
$\lambda(ux)=\chi(u)\psi_\alpha(x)$ for $u \in U_E^1$ and $x \in
U_\mathfrak{M}^{r+1}$. 

Let $A=\left\{ \begin{bmatrix} x & \\ & 1 \end{bmatrix} : x \in F^\times \right\}$, and set $A^n = A \cap U_\mathfrak{M}^n$ for $n \geq 0$.
The character $\lambda$ can be extended to a character $\tilde{\lambda}$ of $A^r
H_\alpha^1$ by $\tilde{\lambda}(yx)=\lambda(x)$ for $y \in A^r
U_\mathfrak{M}^{2r+1}$ and $x \in H_\alpha^1$ (\cite{Kutzko2}*{Definition 1.8}).
 
Let $J_\alpha^1=U_E^1 U_\mathfrak{M}^r$. Define $\eta= \mathrm{Ind}_{A^r
H^1_\alpha}^{J_\alpha^1} \tilde{\lambda}$. Then $\eta$ is an irreducible
representation of $J_\alpha^1$ of dimension $q$. There is an irreducible
representation $\rho$ of $J_\alpha$ such that $\pi \cong
\mathrm{Ind}_{J_\alpha}^{\GL_2(F)} \rho$, and $\rho|_{J_\alpha^1}  \cong
\eta$~\cite{Kutzko2}*{Lemma 1.10 and Proposition 1.15}.
Note that $U_\mathfrak{M}^{2r+1} \subset \ker \rho$. We must compute
$\rho|_{A^r} \cong \eta|_{A^r}$. We have $[J^1_\alpha : A^r H^1_\alpha]=q$, and an
irredundant set of coset representatives is given by $\left\{ \begin{bmatrix} 1
& \\a & 1 \end{bmatrix} : a \in \p^r / \p^{r+1} \right\}$. 

It is a simple computation to show that $\eta|_{A^r} = \mathrm{Ind}_{A^r H^1_\alpha}^{J^1_\alpha}
\tilde{\lambda}|_{A^r}$ is isomorphic to the regular representation of
$A^r/A^{r+1}$. In particular it contains the trivial character with multiplicity
one, so there is vector $v_0 \in \rho$ that is unique up to scalars with the
property that it is fixed by $1+\begin{bmatrix} \p^r & \p^{2r+1} \\
\p^{2r+1} & \p^{2r+1} \end{bmatrix}$. 

Sometimes it will be convenient to consider the corresponding vector $f_0 \in
\eta$ given by
\begin{equation}\label{f0}
f_0(k)=\begin{cases}\tilde{\lambda}(k) & \text{if } k \in A^r H^1_\alpha, \\
0 & \text{otherwise}.
\end{cases}
\end{equation}

\subsubsection{Depth zero supercuspidals}\label{depthzero}
Now, consider a depth zero supercuspidal representation, i.e. $\ell(\pi)=0$.
Then $\pi$ is induced from a representation $\rho$ of $K_\mathfrak{M}$ that is
inflated from a cuspidal representation $\tilde{\rho}$ of $\GL_2(\OF / \p)$, i.e.\
$\rho$ is trivial on $U_\mathfrak{M}^1=1+ \p \mathrm{M}_2(\OF)$ and it factors
through $\tilde{\rho}$. The cuspidal representations $\tilde{\rho}$ are
parameterized by Galois conjugacy classes of regular characters
$\theta: \mathbb{F}_{q^2}^\times \rightarrow
\mathbb{C}^1$.
 Such a character $\theta$ can also be regarded as a character of $\OF_E^\times$
that is trivial on $1+\p_E$, where $E/F$ is the unique unramified quadratic
extension. Embed $\OF_E^\times$ in $\GL_2(\OF)$, and identify $\mathbb{F}_{q^2}^\times$ with the image of $\OF_E^\times$ under the reduction
map modulo $\p$. The following
proposition gives the character table for $\tilde{\rho}$ which is a well-known
result (see, e.g., \cite{BH}*{6.4.1}).
 \begin{proposition}\label{char-table-depth-zero}
 The character table of $\tilde{\rho}$ is given by
\begin{align*}
 \mathrm{Tr} \, \tilde{\rho}(z)  = & (q-1) \theta(z), \,  z \in Z;\nonumber\\
 \mathrm{Tr} \, \tilde{\rho}(zu)  =& -\theta(z), \,  z\in Z, \, u\in N, \, u\neq
1;\nonumber\\
 \mathrm{Tr} \, \tilde{\rho}(y)  = & - ( \theta(y)+ \theta^q(y) ),  \, y \in
\mathbb{F}_{q^2}^\times \smallsetminus Z. 
\end{align*}
If $g$ is not conjugate to an element of $\mathbb{F}_{q^2}^\times \cup ZN$, then
$\mathrm{Tr} \, \tilde{\rho} (g)=0$. 
\end{proposition}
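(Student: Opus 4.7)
The statement is the classical character table of the cuspidal representations of $\GL_2(\mathbb{F}_q)$, so my plan would be to assemble the four character values from an explicit realization of $\tilde\rho_\theta$ rather than invoke the general Deligne--Lusztig formalism. The most convenient starting point is to build $\tilde\rho_\theta$ as a virtual character difference. Extend $\theta$ to a character of $\mathbb{F}_{q^2}^\times$ and also let $\chi_\theta=\theta|_{\mathbb{F}_q^\times}$. Consider the virtual representation
\[
 V := \mathrm{Ind}_{B}^{\GL_2(\mathbb{F}_q)}(\chi_\theta \otimes \chi_\theta) \;-\; (\chi_\theta \circ \det)\,\otimes\,(\text{Steinberg}\oplus \mathbf 1),
\]
or, equivalently, $-R_T^{\theta}$ for the non-split torus $T\cong \mathbb{F}_{q^2}^\times$. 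Standard intertwining arguments (or the irreducibility criterion for $R_T^\theta$ when $\theta\neq \theta^q$, which is the regularity hypothesis) show that this virtual character is an honest irreducible representation of dimension $q-1$ and that two choices of $\theta$ give the same representation iff they are $\Gal(\mathbb{F}_{q^2}/\mathbb{F}_q)$-conjugate.

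With that in hand, I would compute the four character values in order. The central value is immediate: by Schur's lemma $\tilde\rho(z)$ acts by $\theta(z)$, so the trace is $(q-1)\theta(z)$. For the unipotent-times-central element $zu$, I would use the character formula for the induced piece $\mathrm{Ind}_B^G(\chi_\theta\otimes\chi_\theta)$---which evaluates to $\chi_\theta(z)^2 = \theta(z)$ on such elements after the usual Mackey bookkeeping---and subtract the known values $q\cdot\chi_\theta(\det)$ and $\chi_\theta(\det)$ of Steinberg and trivial twisted by $\chi_\theta\circ\det$, obtaining $-\theta(z)$. For $y$ a non-split regular semisimple element (embedded via $\mathbb{F}_{q^2}^\times\hookrightarrow \GL_2(\mathbb{F}_q)$), the Deligne--Lusztig character formula on semisimple elements gives $R_T^\theta(y)=\theta(y)+\theta(y)^q$ (the Weyl-group average over the non-split torus), so $\tilde\rho(y)=-(\theta(y)+\theta^q(y))$.

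Finally, for $g$ not conjugate to an element of $\mathbb{F}_{q^2}^\times\cup ZN$, i.e.\ split semisimple with distinct eigenvalues, I would argue vanishing either (i) directly from the Deligne--Lusztig character formula, which involves a sum over $G$-conjugates into the non-split torus and is empty on split-regular elements; or (ii) by Frobenius reciprocity: the induced piece and the Steinberg piece each evaluate (on $\mathrm{diag}(a,b)$, $a\ne b$) to the sum $\chi_\theta(a)\chi_\theta(b)$, so their difference in $V$ is zero. A brief character-norm check $\langle\tilde\rho,\tilde\rho\rangle=1$ then confirms irreducibility and pins down signs.

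The one genuinely delicate step is the non-split semisimple value; the factor $-(\theta(y)+\theta^q(y))$ has to be derived from something---either from a Green-function style counting argument or from the Deligne--Lusztig formula---and getting the sign right (as opposed to $+$) is where I would be most careful. Everything else is essentially linear algebra with induced characters, and I would cross-check by verifying $\sum_g |\mathrm{Tr}\,\tilde\rho(g)|^2 = |\GL_2(\mathbb{F}_q)|$ using the conjugacy class sizes.
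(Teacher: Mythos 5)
The paper gives no proof of this proposition at all---it is stated as well known with a citation to Bushnell--Henniart, 6.4.1. So you are supplying an argument where the paper relies on a reference; that is fine in principle, but your argument has a genuine error at the very first step.

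The virtual representation you write down,
\[
 V := \mathrm{Ind}_{B}^{\GL_2(\mathbb{F}_q)}(\chi_\theta \otimes \chi_\theta) \;-\; (\chi_\theta \circ \det)\otimes(\mathrm{St}\oplus \mathbf 1),
\]
is identically zero: the reducible principal series $\mathrm{Ind}_B^G(\chi\otimes\chi)$ \emph{is} $(\chi\circ\det)\otimes(\mathbf 1\oplus\mathrm{St})$ as a representation, so the two terms cancel. A dimension count confirms this immediately ($(q+1)-(q+1)=0$, whereas $-R_T^\theta$ has dimension $q-1$), so $V$ cannot be ``equivalently $-R_T^\theta$.'' The error then propagates into your computation of $\mathrm{Tr}\,\tilde\rho(zu)$: you assert that $(\chi_\theta\circ\det)\otimes\mathrm{St}$ contributes $q\cdot\chi_\theta(\det)$ at $zu$, but the Steinberg character vanishes on non-central unipotent-times-central elements ($\mathrm{St}(zu)=0$, since $zu$ fixes exactly one point of $\mathbb P^1(\mathbb F_q)$ and $\mathrm{Ind}_B^G\mathbf 1 = \mathbf 1\oplus\mathrm{St}$). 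With the correct value the bookkeeping you describe yields $\chi_\theta(z)^2 - 0 - \chi_\theta(z)^2 = 0$ rather than the required $-\theta(z)$; with your stated value it yields $-q\,\theta(z)$. Either way you do not obtain $-\theta(z)$.

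The parts of your argument that appeal directly to Deligne--Lusztig---the central value from Schur's lemma and $\dim\tilde\rho = q-1$, the value $-(\theta(y)+\theta^q(y))$ on elliptic regular semisimple elements, and the vanishing on split regular semisimple elements---are correct, but they hinge on taking $\tilde\rho=-R_T^\theta$ and citing the DL character formula, which is as much an appeal to authority as the paper's citation of \cite{BH}*{6.4.1}. If you want a genuinely self-contained virtual-character construction, the principal series must be replaced by an induction from a larger subgroup (for example the mirabolic or $ZN$ with a Whittaker-type character, as in the Gelfand--Graev/Kirillov approach, or $\mathrm{Ind}_{T}^G\theta$ from the anisotropic torus $T\cong\mathbb F_{q^2}^\times$); the principal series $\mathrm{Ind}_B^G(\chi\otimes\chi)$ cannot appear since it already decomposes into non-cuspidal pieces and carries no cuspidal constituent.
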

From the character table one sees that the restriction of $\rho$ to $A^0$ is isomorphic to the regular representation of $A^0/A^1$. In particular there is a vector $v_0 \in
\rho$ such that, for $a \in A^0$, $\rho(a)v_0=v_0$.

\subsection{Remarks on minimal supercuspidals}
\label{sec:minsc-rem}

We consider a minimal
supercuspidal representation $\pi=c \mbox{--}\mathrm{Ind}_{J_\alpha}^{\GL_2(F)}
\rho$, where $E=F[\alpha]$ for $\alpha \in \mathfrak{P}^{-n}$ such that $\pi$
contains the simple stratum $( \mathfrak{A}, n, \alpha)$. In the case when
$e(E/F) \ell(\pi)$ is odd, then $\rho=\lambda$ is a character  which restricts
to $\psi_\alpha$. When $e(E/F) \ell(\pi)$ is even, then $\rho$ is not a
character, and if additionally $\ell(\pi) > 0$ we will sometimes identify $\rho|_{J_\alpha^1}$ with $\eta$ as
described above.

From the discussion in the previous section, we may always take $J=J_\alpha$ of
the form $E^\times(1+ \mathfrak P^{[\frac{e_{\mathfrak A} \ell(\pi)+1}2]} )^\times$, where we
take $E=F$ if $\ell(\pi) = 0$.  In all cases, we have $E^\times \subset K_{\mathfrak A}$ so
$J \subset K_{\mathfrak A}$.

\begin{definition} \label{v0def}
Suppose $\pi=c\mbox{--}\mathrm{Ind}_J^{\GL_2(F)} \rho$ is an irreducible minimal
supercuspidal representation. 

\begin{enumerate}
 \item If $\rho=\lambda$ is a character, define $v_0$ to be the unique vector up to scalar multiple in $\rho$. That is $\rho(k)v_0=\lambda(k)$ for $k
\in J$. Then according to the constructions in Sections~\ref{oddM} and
\ref{oddJ}, for $a \in A \cap J$, $\rho(a)v_0=\lambda(a)=1$.
 \item Suppose $\dim_\C \rho >1$. Define $v_0 \in \rho$ to be the vector
described in Sections~\ref{evenlevel} and \ref{depthzero} such that, for
$a \in A \cap J$, $\rho(a)v_0=v_0$. 
\end{enumerate}
\end{definition}
Let $N= \left\{ \begin{bmatrix} 1 & x \\ & 1\end{bmatrix} \right\} \subset
\GL_2(F)$,  $\overline{N}= \left\{ \begin{bmatrix} 1 &  \\x & 1\end{bmatrix}
\right\} \subset \GL_2(F)$, and $\overline{N}^r = \overline{N} \cap U_\mathfrak{M}^r$ for $r \geq 0$.

\begin{lemma} \label{restrictionlemma}
 Suppose $\pi$ is a minimal supercuspidal representation and $\ell(\pi)=2r$.  Write
$\pi= c \mbox{--} \mathrm{Ind}_{J}^{\GL_2(F)} \rho$ where $\rho$ is not a
character. 
 \begin{enumerate}
  \item If $\ell(\pi)=0$, then $\rho|_{\overline{N}^0}= \bigoplus
\limits_{i=1}^{q-1} {\psi_i}$ where $\psi_i$ runs over all the nontrivial
characters of $\overline{N}^0/\overline{N}^1$.

 \item If $\ell(\pi)>0$ and $J=J_\alpha$, then $\rho|_{\overline{N} \cap
J_\alpha}= \bigoplus \limits_j {\psi_j}$, where the sum runs over all
characters $\psi_j$ of $\overline{N} \cap J_\alpha$ so that
$\psi_j|_{\overline{N} \cap H_\alpha}  =\psi_\alpha|_{\overline{N} \cap
H_\alpha}$, and $H_\alpha:=E^\times U_\mathfrak{M}^{r+1}$.
 \end{enumerate}
\end{lemma}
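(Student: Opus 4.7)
\vspace{1ex}
\noindent The plan is to handle the two parts separately: part (i) by a direct character computation, and part (ii) by a Mackey decomposition of $\eta$. For (i), since $\rho$ is inflated from the cuspidal representation $\tilde\rho$ of $\GL_2(\OF/\p)$ and is trivial on $U^1_\mathfrak M$, the restriction $\rho|_{\overline N^0}$ factors through the quotient $\overline N^0/\overline N^1 \cong \OF/\p$. I would read off the character values from Proposition~\ref{char-table-depth-zero}: one has $\mathrm{Tr}\,\rho(1) = q-1$ and $\mathrm{Tr}\,\rho(u) = -1$ for every $u\in\overline N^0\setminus\overline N^1$, using that $\overline N\setminus\{1\}$ is $\GL_2(\OF/\p)$-conjugate to $N\setminus\{1\}$. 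Taking the inner product against any character $\psi$ of $\overline N^0/\overline N^1$ then yields multiplicity $0$ when $\psi$ is trivial and $1$ when $\psi$ is nontrivial, which accounts for all $q-1$ dimensions of $\rho$.

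For (ii), I first identify the relevant intersections: from $E^\times\cap\overline N = \{1\}$ (immediate from the rational canonical form of $\alpha_0$, noting $a_0\in\OF^\times$ since $\tilde f_\alpha$ is irreducible) and a comparison of matrix entries modulo the appropriate powers of $\p$ in $J_\alpha = E^\times U^r_\mathfrak M$ and $H_\alpha = E^\times U^{r+1}_\mathfrak M$, one obtains $\overline N\cap J_\alpha = \overline N^r$ and $\overline N\cap H_\alpha = \overline N^{r+1}$. In particular $\overline N^r\subset J^1_\alpha$, so $\rho|_{\overline N^r} = \eta|_{\overline N^r}$. The key structural step, and the main obstacle, is to verify that $\overline N^r$ provides a set of coset representatives for $A^r H^1_\alpha\backslash J^1_\alpha$, i.e., $\overline N^r\cdot A^r H^1_\alpha = J^1_\alpha$ and $\overline N^r\cap A^r H^1_\alpha = \overline N^{r+1}$. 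The plan is to pass through the natural isomorphism $U^r_\mathfrak M/U^{r+1}_\mathfrak M \xrightarrow{\sim} \M_2(\OF/\p)$ sending $1+\varpi^r x$ to $x\bmod\p$: under this, $U^r_E U^{r+1}_\mathfrak M/U^{r+1}_\mathfrak M$ becomes the two-dimensional subspace spanned by $I$ and the reduction of $\alpha_0$, while $A^r/A^{r+1}$ and $\overline N^r/\overline N^{r+1}$ become the lines spanned by the elementary matrices $E_{11}$ and $E_{21}$. A direct linear-algebra check using the shape of $\alpha_0$ shows these three subspaces are in direct sum and together span $\M_2(\OF/\p)$, which simultaneously forces both the trivial intersection and (by cardinality) the product factorization.

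With this in hand, Mackey applied to $\eta = \Ind_{A^r H^1_\alpha}^{J^1_\alpha}\tilde\lambda$ has a single $(A^r H^1_\alpha, \overline N^r)$-double coset and collapses to
\[
  \rho|_{\overline N^r} \;=\; \Ind_{\overline N^{r+1}}^{\overline N^r}\bigl(\tilde\lambda|_{\overline N^{r+1}}\bigr) \;=\; \Ind_{\overline N^{r+1}}^{\overline N^r}\bigl(\psi_\alpha|_{\overline N^{r+1}}\bigr),
\]
using $\tilde\lambda|_{U^{r+1}_\mathfrak M} = \psi_\alpha$. Since $\overline N^r$ is abelian with $[\overline N^r:\overline N^{r+1}] = q$, this induced representation decomposes as the direct sum of the $q$ characters of $\overline N^r$ extending $\psi_\alpha|_{\overline N^{r+1}}$, which is exactly the claimed decomposition.
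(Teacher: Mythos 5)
Your argument is correct, and it takes the same overall route as the paper (reduce to computing $\rho|_{\overline N^r}$ via the coset decomposition of $A^r H^1_\alpha\backslash J^1_\alpha$, with the depth-zero case read off from Proposition~\ref{char-table-depth-zero}), but it differs in two places that are worth noting. First, the paper simply asserts that $\left\{\begin{bmatrix}1 & \\ a & 1\end{bmatrix} : a\in\p^r/\p^{r+1}\right\}$ is an irredundant set of coset representatives for $A^r H^1_\alpha\backslash J^1_\alpha$, whereas you actually verify this by passing to $U^r_\mathfrak M/U^{r+1}_\mathfrak M\cong\M_2(\OF/\p)$ and checking that $I$, $\bar\alpha_0$, $E_{11}$, $E_{21}$ are linearly independent (using $\bar a_0\neq 0$ from the irreducibility of $\tilde f_\alpha$); this is a genuine addition, and the linear-independence check simultaneously yields both the product decomposition $J^1_\alpha=\overline N^r\cdot A^r H^1_\alpha$ (so there is a single Mackey double coset) and the intersection $\overline N^r\cap A^r H^1_\alpha=\overline N^{r+1}$. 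Second, the paper finishes by explicitly building an eigenvector $f_{\psi'}=\sum_a f_a$ for each extension $\psi'$ of $\psi_\alpha|_{\overline N^{r+1}}$ and reading off multiplicity one from the explicit basis $\{f_a\}$; you instead invoke Mackey's restriction formula directly, obtaining $\rho|_{\overline N^r}=\Ind_{\overline N^{r+1}}^{\overline N^r}(\psi_\alpha|_{\overline N^{r+1}})$ and decomposing this induced representation of an abelian group. Your Mackey argument is cleaner and more conceptual; the paper's explicit eigenvectors $f_{\psi'}$, however, are needed later (in the proof of Proposition~\ref{min-sup-hom-prop} they are used to write down the functional $\ell_0$ concretely), so the paper's more hands-on treatment is not gratuitous. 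One small point to keep in mind when applying Mackey: for the formula to collapse to a single term you are implicitly using that $H^1_\alpha$ is normal in $J^1_\alpha$ (so that $a'n'H^1_\alpha=a'H^1_\alpha n'$), which is true since $[U^r_\mathfrak M, U^1_E]\subset U^{r+1}_\mathfrak M$ for $r\geq1$, but it is worth saying explicitly.
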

\begin{proof}
The first part may be deduced from Proposition~\ref{char-table-depth-zero}. Now, suppose that $\ell(\pi)>0$. Since $J_\alpha= E^\times U_\mathfrak{M}^r$,
we have $\overline{N} \cap
J_\alpha=\overline{N}^r$ and similarly $\overline{N} \cap H_\alpha=\overline{N}^{r+1}$.
Also, recall that $\rho|_{J_\alpha^1} \cong \eta= \mathrm{Ind}_{A^r
H_\alpha^1}^{J_\alpha^1} \tilde{\lambda}$ where $\tilde{\lambda}$ is obtained from
$\lambda$ by extending it trivially to $A^r$. 

A set of irredundant coset representatives for $A^r H_\alpha^1 \backslash
J_\alpha^1$ is given by  $\left\{ \begin{bmatrix} 1 &  \\ a & 1\end{bmatrix} | a
\in  \p^r / \p^{r+1} \right\}$.
Let $\psi'$ be one of the characters $\psi_j$ of $\overline{N}^r$ as in ii).
For $a \in \p^r / \p^{r+1}$, define
\begin{equation}
 f_a(y)= \begin{cases}
 \psi'(\bmx 1 & \\ a & 1 \emx ) \tilde{\lambda}(x)  & \text{if } y=x \begin{bmatrix}1 & \\a &
1\end{bmatrix}, \quad x\in A^r H_\alpha^1, \\
0 & \text{otherwise.}
\end{cases} \label{fa}
\end{equation}
This is well defined since $\psi'$ and $\tilde{\lambda}$ agree on
$\overline{N}^{r+1}$. Note the $f_a$ span $\eta$, and when $a=0$, \eqref{fa} agrees with
\eqref{f0}. Define $f_{\psi'}= \sum
\limits_{a \in \p^r/\p^{r+1} } f_a$.  Then we have
\begin{equation*}
 \eta ( \begin{bmatrix} 1 &  \\x & 1 \end{bmatrix} ) f_{\psi'} =
\psi'(x) f_{\psi'}.
\end{equation*}
From the explicit basis we computed for $\eta$ one sees that each of these
characters appear with multiplicity one.
This proves the lemma. 
\end{proof}

Later, it will be useful to have a case-by-case description of the kernel of $\rho$.
We summarize what we know about the kernel from the previous section
in the following table.

\begin{center}
\begin{tabular}{c|c|c|c|c}
$\ell(\pi)$ & $J$ & $\subset \ker \rho$ & $[\frac{\ell(\pi)+3/2}2]$ & $[\ell(\pi)+3/2] - [\frac{\ell(\pi)}2] - 1$ \\
\hline
$2r + 1$ & $E^\times (1 + {\mathfrak P}^{r+1})$ & $1 + 
\begin{bmatrix} 
{\mathfrak p}^{r+1} & {\mathfrak p}^{2r+2} \\
{\mathfrak p}^{2r+2} & {\mathfrak p}^{2r+2} \end{bmatrix}$ 
& $r+1$ & $r+1$\\
$2r > 0$ & $E^\times (1 + {\mathfrak P}^{r})$ & $1 + 
\begin{bmatrix} 
{\mathfrak p}^{r+1} & {\mathfrak p}^{2r+1} \\
{\mathfrak p}^{2r+1} & {\mathfrak p}^{2r+1} \end{bmatrix}$ 
& $r$ & $r$ \\
$0$ & $Z(F) \GL_2(\mathfrak o)$ & $1 + \mathfrak P$ 
& $0$ & $0$ \\
$\frac{2r+1}2$ & $E^\times (1 + {\mathfrak P}^{r+1})$ & $1 + 
\begin{bmatrix} 
{\mathfrak p}^{[r/2]+1} & {\mathfrak p}^{r+1} \\
{\mathfrak p}^{r+2} & {\mathfrak p}^{r+1} \end{bmatrix}$ 
& $[ \frac r2] + 1$ & $r-[\frac r2] +1$\\
\end{tabular}
\end{center}
The quantities in the latter two columns will be denoted $i$ and $i'$ 
in Propositions \ref{nonzeroprop}
and \ref{twist}, and are included here for the later convenience of the reader.

\subsection{Mackey theory} \label{mackey} 

In this section we will describe the strategy to obtain the desired test vector for $\pi$. Consider a minimal supercuspidal
 representation $\pi$ of $\GL_2(F)$.  There is an open subgroup $J$ of $\GL_2(F)$ that
contains the center $Z(F)$, is compact modulo $Z(F)$, and has an irreducible
representation $\rho$ of $J$ such that $\pi \cong
c\mbox{--}\mathrm{Ind}_{J}^{\GL_2(F)}\, \rho$. 
As before, let $\Omega : T(F)
\rightarrow \C^\times$ be a character such that  $\Omega|_{Z(F)} = \omega_\pi$.

Consider the space
\begin{align}
 \mathrm{Hom}_{T(F)} ( \pi , \Omega) \cong \mathrm{Hom}_{\GL_2(F)} \left(
c\mbox{--} \mathrm{Ind}_{J}^{\GL_2(F)} \rho \, , \,
\mathcal{B}(\Omega) \right). \label{besselhom}
\end{align}
See Section \ref{wald-model} for definition of $\mathcal{B}(\Omega)$ and details of the above isomorphism. Following the proof of Proposition 1.6 of Bushnell--Henniart~\cite{BHwhittaker}
and Kutzko~\cite{Kutzko}, 
define $\mathscr{H}( \GL_2(F), \, \rho , \, \Omega)$ to
be the space of functions 
\begin{equation*}   f: \GL_2(F) \rightarrow \mathrm{Hom}_\C( \rho, \C)
\end{equation*}
satisfying
\begin{equation*}
  f(tgk)=\Omega(t) f(g) \circ \rho(k), \, t \in T(F), \, g \in \GL_2(F), \, k
\in J .
\end{equation*}
Then for $\varphi \in c \mbox{--}\mathrm{Ind}_{J}^{\GL_2(F)} \, \rho$, and $f\in
\mathscr{H}(\GL_2(F), \, \rho, \, \Omega)$ the convolution $f*\varphi$ defined
as
\begin{equation*}
 f*\varphi (g) =\int \limits_{\GL_2(F) / Z(F) } f(x) \varphi(x^{-1} g) d\bar{x},
\qquad g \in \GL_2(F)
\end{equation*}
gives a function in the space $\mathcal{B}(\Omega)$.
Furthermore, $\GL_2(F)$ acts on $\mathscr{H}( \GL_2(F), \, \rho , \, \Omega)$
through the convolution by $(g \cdot f) *\varphi=f*(g \cdot \varphi)$, and there
is a $\GL_2(F)$ homomorphism
\begin{align*}
 \mathscr{H}(\GL_2(F), \rho, \Omega) &\rightarrow \mathrm{Hom}_{\GL_2(F)} (
c\mbox{--}\mathrm{Ind}_{J}^{\GL_2(F)} \, \rho \, , \,
\mathcal{B}(\Omega))\\
f & \mapsto ( \varphi \mapsto f*\varphi).
\end{align*}
This is in fact an isomorphism. By \cite{wald},  $\mathrm{dim}_\C \mathrm{Hom}_{\GL_2(F)} (
c\mbox{--}\mathrm{Ind}_{J}^{\GL_2(F)} \, \rho \, , \,
\mathcal{B}(\Omega)) \leq 1$.  Hence, there is at most one double
coset $T(F) h_0 J$ which has non-trivial intersection with the support of any $f \in \mathscr{H}(\GL_2(F), \, \rho,
\, \Omega)$, and each $f$ is uniquely determined by its
value at $h_0$ (see 1.8 of~\cite{BHwhittaker}). Suppose $f \in
\mathscr{H}(\GL_2(F), \, \rho, \, \Omega)$ has support in a double coset
$T(F) h_0 J$, and $f(h_0)=\ell_0 \in \mathrm{Hom}(\rho, \C)$. For $k \in J \cap
h_0^{-1} T(F) h_0$, define $\Omega^{h_0}(k)=\Omega( h_0 k h_0^{-1})$.
Then $\ell_0$ has the property that for $k \in J \cap h_0^{-1} T(F) h_0$,
\begin{equation*}\ell_0(\rho(k) v) =\Omega^{h_0}(k)\, \ell_0(v).
\end{equation*} Therefore,
\begin{equation}
 \ell_0 \in \mathrm{Hom}_{J \cap h_0^{-1} T(F) h_0} ( \rho , \, \Omega^{h_0}).
\label{schom}
\end{equation}
When the Hom space in \eqref{schom} is not $0$, we say that $\pi$ and $\Omega$
\emph{intertwine} on $h_0$. If this is the case, then the double coset $T(F)h_0
J$ supports a nonzero function in $\mathscr{H}(\GL_2(F), \, \rho, \, \Omega)$,
and the $\mathrm{Hom}$ space in \eqref{besselhom} is not zero.

\subsection{Test vectors for minimal supercuspidal
representations}\label{sec-test-vec-minimal}
By \cite{Henniart}*{A.3}, if $\pi$ is a minimal representation with level $\ell(\pi)$, then $c(\pi)=2\ell(\pi)+2$. Let $\pi=c\mbox{--}\mathrm{Ind}_J^{\GL_2(F)} \rho$ as described above. Let $v_0
\in \rho$ be the vector described in Definition~\ref{v0def}.  Assume that $c(\Omega) \geq c(\pi)$. Set 
\begin{equation} 
m_0 = [\ell(\pi) + 3/2] - c(\Omega) - v(\mathbf a). \label{m0}
\end{equation}

In the next proposition, we determine a double coset representative $h_0$ of $T(F) \backslash \GL_2(F)/J$ such that $\mathrm{Hom}_{J \cap h_0^{-1} T(F) h_0} ( \rho , \, \Omega^{h_0}) \neq 0$.  We remark that this result depends on our choice of inducing subgroup $J$, and in particular the quadratic extension $E=F[\alpha]=F[\alpha_0]$ where $\alpha_0$ is always assumed to be of the form \eqref{rationalcanonical}. For $m \in \Z$ and $z \in \OF^\times$ we define $g(m,z):=
\begin{bmatrix} z \varpi^{m} & \\ & 1 \end{bmatrix}$.

\begin{lemma}\label{intersectionTJ}
 For $z \in \OF^\times$, $T(F) \cap g(m_0,z) J g(m_0,z)^{-1}=F^\times
\left(1+ \mathfrak{P}_L^{c(\Omega)-[\ell(\pi)/2]-1}\right)$.
\end{lemma}
\begin{proof}
We give the details when $\mathfrak{A}=\mathfrak{M}$.
First, suppose $\ell(\pi) = 0$. In this case $J = Z(F) \GL_2(\OF)$ and $m_0 = 1 - c(\Omega) - v(\mathbf{a})$. Furthermore, for $z \in \OF^\times$, 
$g(m_0, z ) J g(m_0, z)^{-1} = g(m_0, 1) J g(m_0, 1)^{-1}$.
Let $t^\prime \in T(F) \cap g(m_0, 1) J g(m_0, 1)^{-1}$. Since $J = Z(F) \GL_2(\OF)$, there is a unique integer $k$ so that $\varpi^k t^\prime \in T(F) \cap g(m_0, 1) \GL_2(\OF) g(m_0, 1)^{-1}$. 
Let $t = \varpi^k t^\prime = \begin{bmatrix} x+\mathbf{b}y & \mathbf{c}y \\ \mathbf{a} y & x \end{bmatrix}$.
Then 
\begin{equation*}
 g(m_0,1)^{-1} t g(m_0,1)= \begin{bmatrix} x+\mathbf{b}y & \mathbf{c}y \varpi^{-m_0} \\ \mathbf{a} y \varpi^{m_0} & x \end{bmatrix} \in \GL_2(\OF).
\end{equation*}
Therefore, $y \in \p^{-m_0 -v(\mathbf{a}) } = \p^{c(\Omega) - 1} \subset \p$. Since $g(m_0,1)^{-1} t g(m_0,1) \in \GL_2(\OF)$ , then $x \in \OF^\times$. This proves that $T(F) \cap g( m_0, z) J g(m_0, z)^{-1} \subseteq F^\times \left( 1 + \mathfrak{P}_L^{c(\Omega) - 1}\right)$. The other inclusion is straightforward. This completes the proof for $\ell(\pi) = 0$.

Now, assume $\ell(\pi) >0$. 
Note that $t \in T(F)\cap g(m_0, z) J g(m_0, z)^{-1}$ if and only if for all $w \in Z(F)$, $wt \in T(F) \cap g(m_0, z) J g(m_0, z)^{-1}$. 

Suppose $t^\prime \in T(F) \cap g(m_0, z) J g(m_0, z)^{-1}$, $t^\prime= \begin{bmatrix} x^\prime +\mathbf{b} y^\prime & \mathbf{c}y^\prime \\ - \mathbf{a}y^\prime & x^\prime \end{bmatrix}$. Let $k=\max \{ -v(x^\prime), -v(y^\prime)-m_0-v(\mathbf{a})\}$, $x=x^\prime \varpi^k$, $y=y^\prime \varpi^k$, and $t=\varpi^k t^\prime$. Then 
\begin{equation*}
 g(m_0,z)^{-1} t g(m_0,z)= \begin{bmatrix} x+\mathbf{b}y & z^{-1}\mathbf{c}y \varpi^{-m_0} \\ -z \mathbf{a} y \varpi^{m_0} & x \end{bmatrix}.
\end{equation*}
Let $i=[(\ell(\pi)+1)/2]$, so $J=E^\times U_\mathfrak{M}^i$. There is a $u \in U_\mathfrak{M}^i$ so that $g(m_0, z)^{-1} t g(m_0, z) u  \in E^\times$. Since $g(m_0, z)^{-1} t g(m_0, z) \in \Mat_2(\OF)$ and $u \in U_\mathfrak{M}^i$, this implies that 
 $a_0 z^{-1} \mathbf{c}y \varpi^{-m_0} \equiv -z \mathbf{a} y \varpi^{m_0} \mod \p^i$ (see (\ref{rationalcanonical}) for $a_0$). Since $y \varpi^{-m_0} \in \p^{-2m_0 -v(\mathbf{a})}$, then $y \in \p^{i-m_0-v(\mathbf{a})}=\p^{c(\Omega)-[\ell(\pi)/2]-1}$. But this means that $v(\mathbf{a} y \varpi^{m_0})>0$, and hence $v(x)=0$ by our choice of $k$. Therefore, $t \in \OF^\times (1+ \mathfrak{P}_L^{c(\Omega)-[\ell(\pi)/2]-1})$. The discussion above shows that $t^\prime \in F^\times (1+ \mathfrak{P}_L^{c(\Omega)-[\ell(\pi)/2]-1})$. The inclusion  $T(F) \cap g(m_0,z) J g(m_0,z)^{-1} \supseteq F^\times
(1+ \mathfrak{P}_L^{c(\Omega)-[\ell(\pi)/2]-1})$ is straightforward.

\end{proof}

\begin{proposition} \label{nonzeroprop}
Let $i=[(\ell(\pi)+3/2)/2]$. There is a unique $z_0 \in \OF^\times / (1+\p^{i})$ such that for $g(m_0,z_0):=
\begin{bmatrix} z_0 \varpi^{m_0} & \\ & 1 \end{bmatrix}$, we have 
 \begin{equation*}
  \mathrm{Hom}_{J \cap g(m_0,z_0)^{-1} T(F) g(m_0,z_0)}( \rho \, , \Omega^{g(m_0,z_0)})\neq
0.
 \end{equation*}
\end{proposition}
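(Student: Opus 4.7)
The plan is to apply the Mackey-theoretic framework of Section~\ref{mackey}: for each $z \in \OF^\times/(1+\p^i)$, set $g_z := g(m_0,z)$, and study $\mathrm{Hom}_{H_z}(\rho, \Omega^{g_z})$, where $H_z := J \cap g_z^{-1} T(F)\,g_z$. The double coset $T(F)\,g_z\,J$ carries a nonzero element of $\mathscr{H}(\GL_2(F),\rho,\Omega)$ iff this space is nonzero, and by \cite{wald} at most one $z$ can work.

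The first step is the explicit conjugation $g_z^{-1} t(x,y)\,g_z$ using \eqref{TFgeq}. The choice of $m_0$ in \eqref{m0} is calibrated so that, for the conjugate to lie in $J \subset K_\mathfrak{A}$, the parameter $y$ is forced into a deep power of $\p$ and the diagonal part lands in $Z(F) U_\mathfrak{M}^{c(\pi)}$, hence acts trivially modulo $Z(F)\ker\rho$. This produces the Iwahori-type identification
\[
H_z\,\ker\rho \,/\, Z(F)\ker\rho \ \cong\ (J\cap\overline{N})\,/\,(\ker\rho \cap \overline{N}),
\]
under which $\Omega^{g_z}$ pulls back to the additive character $\mat{1}{}{u}{1} \mapsto \Omega(1+\gamma(z)\,u\,\beta)$ for an explicit unit $\gamma(z)$ depending on $z$, $\mathbf a$, and $\mathbf c$. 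The hypothesis $c(\Omega) \ge c(\pi)$ guarantees that this character has conductor exactly matching the depth at which the characters $\psi_j$ of Lemma~\ref{restrictionlemma} become trivial, so it agrees automatically with $\psi_\alpha$ on $\overline{N}\cap H_\alpha$.

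Using Lemma~\ref{restrictionlemma} to write $\rho|_{J\cap\overline{N}}$ as a multiplicity-free sum of characters $\psi_j$ differing from $\psi_\alpha$ by a character of $\overline{N}^r/\overline{N}^{r+1}$ (or, in the depth-zero case, by a nontrivial character of $\overline{N}^0/\overline{N}^1$), the question reduces to matching the pulled-back $\Omega^{g_z}$ against one of the $\psi_j$ on this finite quotient. This is a single equation comparing $\alpha_0$ modulo $\p$, encoded via the trace pairing $\psi_1 \circ \mathrm{Tr}_{E/F}(\alpha\,\cdot)$ in the basis $\{1,\beta\}$, with the leading coefficient of $u \mapsto \Omega(1+\gamma(z)u\beta)$, which is unit-linear in $z$. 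The main obstacle is to verify uniformly across the four construction cases of Sections~\ref{oddM}--\ref{depthzero} that this equation admits a unique solution $z_0 \in \OF^\times/(1+\p^i)$: existence is surjectivity of $z\mapsto \psi_{j(z)}$ onto the index set of $\psi_j$'s, uniqueness its injectivity, and both boil down to a cardinality match between $|\OF^\times/(1+\p^i)|$ and the number of $\psi_j$. The tailored values $i = [(\ell(\pi)+1)/2]$ or $[(r+2)/2]$ are designed precisely for this; the ramified case $\mathfrak A = \mathfrak J$ requires extra bookkeeping with the uniformizer $\Pi$ of $\p_E$, accounting for the different formula.
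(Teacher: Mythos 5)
Your proposal follows essentially the same route the paper takes: conjugate the torus by $g(m_0,z)$ into $J$, identify the resulting intersection with a $\overline N$-quotient (which is exactly the paper's isomorphism $\p^{c(\Omega)-[\ell(\pi)/2]-1}/\p^{c(\Omega)} \to \overline{N}^{i-v(\mathbf a)}U_\mathfrak{M}^{\ell(\pi)+1}/U_\mathfrak{M}^{\ell(\pi)+1}$), recognize $y\mapsto\Omega(1+y\beta)$ as an additive character, feed it into Lemma~\ref{restrictionlemma}, and finish by counting. That is precisely the paper's argument in the case $\mathfrak A = \mathfrak M$, $\ell(\pi)>0$, with the other cases ``following similarly''.

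Where I would push back is on the final step. You assert that existence and uniqueness ``boil down to a cardinality match between $|\OF^\times/(1+\p^i)|$ and the number of $\psi_j$''. This is not what happens, and taken literally it fails: when $\ell(\pi)$ is odd, $\rho$ is a character, so there is exactly one $\psi_j$, yet $|\OF^\times/(1+\p^i)| = (q-1)q^{i-1}$. The paper's actual argument is asymmetric. First, $z \mapsto \Omega^{g(m_0,z)}$ is a bijection from $\OF^\times/(1+\p^{\ell(\pi)+1-i})$ onto the characters of $\overline N^{i-v(\mathbf a)}/\overline N^{\ell(\pi)+1}$ (of the relevant full conductor). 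Then: in the odd case $\ell(\pi)+1-i=i$, so $z$ ranges over exactly $\OF^\times/(1+\p^i)$, and one pins down the unique $z_0$ hitting the single constituent $\psi'$ of $\rho$; in the even case $\ell(\pi)+1-i=i+1$, and the $q$ values of $z$ in a single coset of $(1+\p^i)/(1+\p^{i+1})$ sweep exactly the $q$ characters in $\rho|_{\overline N^{i-v(\mathbf a)}}$, which forces a unique $z_0$ at level $i$. So the ``match'' is between $z$'s and full-conductor additive characters, not between $z$'s and the $\psi_j$'s; the equality or near-equality of $i$ and $\ell(\pi)+1-i$ is what makes the two notions of uniqueness reconcile. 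You should also be aware that your appeal to \cite{wald} for the ``at most one $z$'' claim implicitly requires that the double cosets $T(F)g(m_0,z)J$ are pairwise disjoint; the paper verifies that separately.
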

\begin{proof}
We give the details when $\ell(\pi)>0$ and $\mathfrak{A}=\mathfrak{M}$.
In this case $m_0={\ell(\pi) +1-c(\Omega)-v(\mathbf{a})}$ and $i=[\frac{\ell(\pi)+1}{2}]$. By Lemma~\ref{intersectionTJ}, $T(F) \cap g(m_0,z) J g(m_0,z)^{-1} = F^\times (1+ \mathfrak{P}_L^{c(\Omega)-[\ell(\pi)/2]-1})$.  
Since $\Omega|_{F^\times} = \omega_\pi$, intertwining only depends on $\Omega|_{1+
\mathfrak{P}_L^{c(\Omega)-[\ell(\pi)/2]-1}}$. Recall the definition of $\xi$
from Section~\ref{besselsubgroupsec}. The function $y
\mapsto \Omega(1+y(\xi-\mathbf{b}/2))$ is an additive character of
$\p^{c(\Omega)-[\ell(\pi)/2]-1} / \p^{c(\Omega)}$.

On the other hand, we have an isomorphism
\begin{align*}
 \p^{c(\Omega)-[\ell(\pi)/2]-1} / \p^{c(\Omega)} \longrightarrow \overline{N}^{i}U_\mathfrak{M}^{\ell(\pi)+1} / U_\mathfrak{M}^{\ell(\pi)+1}\\
 y \mapsto g(m_0, z)^{-1}(1+y(\xi-\mathbf{b}/2))g(m_0, z)
\end{align*}
Therefore, $\Omega^{g(m_0, z)}$ determines a character of $\overline{N}^{i} U_\mathfrak{M}^{\ell(\pi)+1} /
U_\mathfrak{M}^{\ell(\pi)+1} \cong \overline{N}^{i}
/\overline{N}^{\ell(\pi)+1}$. 
As $z$ runs over $\OF^\times/
(1+\p^{\ell(\pi)+1-i})$, the character determined by $\Omega^{g(m_0, z)}$ runs over all characters of
$\overline{N}^{i} /\overline{N}^{\ell(\pi)+1}$ that are non-trivial on $\overline{N}^{\ell(\pi)}$. In an abuse of notation we also refer to the character of $\overline{N}^{i} /\overline{N}^{\ell(\pi)+1}$ by $\Omega^{g(m_0, z)}$.

If $\ell(\pi)$ is odd, then
$\rho$ is a character of $J$.  Then
 $\rho|_{\overline{N}^{i} }  = {\psi^\prime}$,
i.e.\ $\rho$ restricts to a single character, and $\ell(\pi)+1-i=i$, giving the
proposition in this case. 

Otherwise $\ell(\pi)>0$ is even and, according to
Lemma~\ref{restrictionlemma}, $\rho|_{\overline{N}^{i} } $ is
the direct sum of characters all of which restrict to the same character $\psi'$
on $\overline{N}^{i+1}$. In this case there is a unique $z_0 \in
\OF^\times / (1+\p^{i})$ such that
\begin{equation*}
 \rho|_{\overline{N}^{i} } \cong \bigoplus\limits_{\substack{z \in \OF^\times / (1+\p^{i+1}) \\ z
\equiv z_0 \pmod{\p^i}}}
{\Omega^{g(m_0, z)}},
\end{equation*}
proving the proposition. The other
cases follow similarly.
\end{proof}

\begin{remark}
Let us comment on the choice of the $m_0$ in \eqref{m0}.  
Put $g_m = g(m,1)$.
One can exhibit the following double coset decomposition: 
$$\GL_2(F) = \begin{cases} \bigsqcup\limits_{m \geq v({\bf a})} T(F) g_{-m} K_{\mathfrak{M}} & \text{ if } \mathfrak{A} = \mathfrak{M}; \\
\bigsqcup\limits_{m \geq 0} T(F) g_{-m} K_{\mathfrak{J}} & \text{ if } \mathfrak{A} = \mathfrak{J}, \Big(\frac L{\p}\Big) = -1 \\
\bigsqcup \limits_{m\geq v(\mathbf{a})} T(F) g_{-m} 
K_\mathfrak{J} \sqcup T(F) \begin{bmatrix} 1 & \\ u_0 & 1 \end{bmatrix}
K_\mathfrak{J} & \text{if } \mathfrak{A} = \mathfrak{J}, \left( \frac{L}{\p} \right)=0.
\end{cases}
$$
Then, still assuming $c(\Omega) \geq c(\pi)$, one can prove that if $f \in \mathscr{H}(\GL_2(F), \rho, \Omega)$ is nonzero and is supported on the double coset $T(F) g_{-m} K_{\mathfrak{A}}$, one must have $-m = m_0$.  Thus it makes sense to look for intertwining on an element of $T(F)g_{-m} K_{\mathfrak A}$.
The decomposition above involves negative powers of the uniformizer in the double coset representatives whereas \eqref{toricdecompositioneq} uses positive exponents in the representatives. The difference in the indices occurs because for $m \geq v( {\bf a} )$, $g_{-m}$ and $g_{m-v({\bf a})}$ represent the same double coset.
\end{remark}

Next, we will define a vector in $\pi$ which will be a test vector for a $\Omega$-Waldspurger functional and will have the desired right-invariance.
Recall $v_0$ from Definition \ref{v0def}.
 Define $\varphi_0 \in \pi$ by
\begin{equation}
 \varphi_0(g)= \begin{cases} \rho(k_1)v_0 & \text{if } g=k_1 g_{m_0}^{-1} k_2, \quad k_1\in J, \quad k_2\in
K_1^{(m_0+[\ell(\pi)+1])}(\p^{2 \ell(\pi)+2}),  \\
0 & \text{ otherwise}. \end{cases}
\label{newvector}
\end{equation}
 
See (\ref{K_1-conj}) for the definition of $K^{(s)}_1(\p^n)$.  The vector $\varphi_0$ is well defined because of the inclusion $J \cap g_{m_0}^{-1}
K_1^{(m_0+[\ell(\pi)+1])}(\p^{2 \ell(\pi) +2}) g_{m_0} \subseteq Stab(v_0)$.
Since $\varphi_0$ is a translate of the newform in $\pi$, we see that $\varphi_0$ is the unique (up to scalar) $K_1^{(m_0+[\ell(\pi)+1])}(\p^{c(\pi)})$ fixed vector in $\pi$.

For $z \in \OF^\times$, define 
\begin{equation}
 \varphi_z(g)= \begin{cases} \rho(k) v_0  & \text{if } g=k g(m_0,z)^{-1}, \quad k\in J,  \\
0 & \text{otherwise.}
\end{cases}
\label{phiz} \end{equation}

\begin{proposition} Suppose $\pi$ is a minimal supercuspidal representation. Let $i$ and $z_0$
 be as in Proposition \ref{nonzeroprop}.  Then
\begin{enumerate}
\item
$$\varphi_0= \sum \limits_{z \in \OF^\times / (1+\p^{i})} \varphi_z; \qquad \text{and}$$

\item
$\ell(\varphi_0) = \ell(\varphi_{z_0})$ for $\ell \in \mathrm{Hom}_{T(F)}( \pi \, , \Omega)$.
\end{enumerate}
\end{proposition}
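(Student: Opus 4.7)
Both parts reduce to a support decomposition of $\varphi_0$ into $J$-cosets. Set $s = m_0 + [\ell(\pi)+1]$, so that $\varphi_0$ is supported on $J g_{m_0}^{-1} K_1^{(s)}(\p^{c(\pi)})$, while each $\varphi_z$ is supported on the single coset $J g(m_0,z)^{-1}$. The key initial observation is that $g(m_0,z)^{-1} = g_{m_0}^{-1}\,\mathrm{diag}(z^{-1},1)$ and $\mathrm{diag}(z^{-1},1) \in K_1^{(s)}(\p^{c(\pi)})$ for every $z \in \OF^\times$, so each coset $J g(m_0,z)^{-1}$ is already contained in the support of $\varphi_0$.

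For part i), the plan is to establish the disjoint decomposition
\begin{equation*}
J g_{m_0}^{-1} K_1^{(s)}(\p^{c(\pi)}) = \bigsqcup_{z \in \OF^\times/(1+\p^i)} J g(m_0,z)^{-1}
\end{equation*}
and then verify that $\varphi_0$ and $\sum_z \varphi_z$ agree on this common support. Disjointness reduces to the equivalence $J g(m_0,z_1)^{-1} = J g(m_0,z_2)^{-1} \iff \mathrm{diag}(z_1/z_2,1) \in J$, combined with the characterization $\mathrm{diag}(u,1) \in J \iff u \in 1+\p^i$; the latter is a direct computation using the description $J = E^\times U_\mathfrak{A}^{\ast}$ from Sections \ref{chainorders}--\ref{construction} together with the rational canonical form \eqref{rationalcanonical} of $\alpha_0$. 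For covering I would carry out the index calculation $[K_1^{(s)}(\p^{c(\pi)}) : K_1^{(s)}(\p^{c(\pi)}) \cap g_{m_0} J g_{m_0}^{-1}] = (q-1)q^{i-1} = |\OF^\times/(1+\p^i)|$, which pins down the number of $J$-cosets in the support as exactly the number of disjoint cosets already exhibited. Once the support decomposition is in hand, equality of values is immediate: at $g(m_0,z)^{-1}$ both $\varphi_0$ and $\varphi_z$ evaluate to $v_0$, and $J$-equivariance on the left extends the equality across $J g(m_0,z)^{-1}$.

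For part ii), I would combine part i) with the Mackey-theoretic framework of Section \ref{mackey}: any $\ell \in \mathrm{Hom}_{T(F)}(\pi,\Omega)$ corresponds under convolution to $f \in \mathscr{H}(\GL_2(F),\rho,\Omega)$ with support a single $T(F) \backslash \GL_2(F)/J$ double coset, which by the uniqueness clause of Proposition \ref{nonzeroprop} must be $T(F) g(m_0,z_0) J$. From the double-coset analysis underlying Proposition \ref{nonzeroprop}, the double cosets $T(F) g(m_0,z) J$ are pairwise distinct as $z$ runs over $\OF^\times/(1+\p^i)$. Thus $f$ vanishes on $g(m_0,z)J$ for $z \neq z_0$, and the integrand of $\ell(\varphi_z) = (f*\varphi_z)(1) = \int f(x)\varphi_z(x^{-1})\,d\bar x$ is supported on $g(m_0,z)J \cap T(F) g(m_0,z_0) J$, which is empty unless $z = z_0$. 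Combining with part i) yields $\ell(\varphi_0) = \sum_z \ell(\varphi_z) = \ell(\varphi_{z_0})$.

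The main obstacle is the case-by-case execution of the support decomposition in part i), since both the structure of $J$ and the value of $i$ change with the type of supercuspidal ($\mathfrak{A} \in \{\mathfrak{M},\mathfrak{J}\}$; $\ell(\pi)$ odd, positive even, or zero). In each case, however, $J$ is $E^\times$ times a congruence subgroup of a chain order, and the analysis reduces to determining which diagonal units $\mathrm{diag}(u,1)$ can be absorbed into $E^\times$ modulo the relevant congruence subgroup, so the structural work should be routine.
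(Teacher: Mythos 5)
Your proposal is correct and follows essentially the same line of reasoning as the paper: the paper's part i) is proved by exhibiting $\{g(0,z) : z \in \OF^\times/(1+\p^i)\}$ as an irredundant set of coset representatives for $(g_{m_0} J g_{m_0}^{-1} \cap K_1^{(s)}(\p^{c(\pi)}))\backslash K_1^{(s)}(\p^{c(\pi)})$, which is exactly the $J$-coset decomposition of the support that you describe (your disjointness criterion $\mathrm{diag}(u,1) \in J \iff u \in 1+\p^i$ combined with the index count is the content of that claim), and part ii) in the paper likewise invokes disjointness of the double cosets $T(F) g(m_0,z)J$ together with the uniqueness of the intertwining coset from Proposition \ref{nonzeroprop} and the Mackey-theoretic correspondence $\ell \leftrightarrow f \in \mathscr{H}(\GL_2(F),\rho,\Omega)$, just as you outline.
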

\begin{proof}
The space $ \left( g_{m_0} J  g_{m_0}^{-1} \cap
K_1^{(m_0+[\ell(\pi)+1])}(\p^{2 \ell(\pi)+2}) \right)  \backslash
K_1^{(m_0+[\ell(\pi)+1])}(\p^{2 \ell(\pi)+2}) $ has an irredundant set of coset
representatives given by $\left\{ g(0,z) | z
\in \OF^\times/ (1+\p^{i}) \right\}$. This shows i). It is a straightforward computation to show that the double cosets $T(F) g(m_0,z) J, z \in \OF^\times /(1+\p^i)$ are disjoint. Hence, $z_0$ is the unique element in $\OF^\times/ (1+\p^{i})$ such that the double coset  $T(F) g(m_0, z_0) J$ is in the support of a nonzero
$f \in \mathscr{H}(\GL_2(F), \, \rho,\, \Omega)$.
By the discussion in Section \ref{mackey}, this gives ii).
\end{proof}

\begin{proposition}\label{min-sup-hom-prop}
 Let $\pi$ be a minimal supercuspidal representation. There is a nonzero $\ell \in
\mathrm{Hom}_{T(F)}( \pi \, , \Omega)$ satisfying $\ell(\varphi_0)\neq 0$.
\end{proposition}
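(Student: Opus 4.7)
The plan is to construct $\ell$ via the Mackey theoretic framework of Section \ref{mackey} and then reduce the verification $\ell(\varphi_0) \neq 0$ to a nonvanishing statement about $v_0$. Proposition \ref{nonzeroprop} supplies a nonzero
$$\ell_0 \in \Hom_{J \cap g(m_0, z_0)^{-1} T(F) g(m_0, z_0)}\bigl(\rho,\, \Omega^{g(m_0, z_0)}\bigr).$$
Via the isomorphism $\Hom_{T(F)}(\pi, \Omega) \cong \mathscr H(\GL_2(F), \rho, \Omega)$, define $f \in \mathscr H(\GL_2(F), \rho, \Omega)$ to be the unique function supported on the double coset $T(F) g(m_0, z_0) J$ with $f(g(m_0, z_0)) = \ell_0$, and set $\ell(\varphi) := (f * \varphi)(1)$. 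This produces a nonzero element $\ell \in \Hom_{T(F)}(\pi, \Omega)$.

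By the preceding proposition, $\ell(\varphi_0) = \ell(\varphi_{z_0})$. Since $\varphi_{z_0}$ is supported on $J g(m_0, z_0)^{-1}$, substituting $x = g(m_0, z_0) k$ in the convolution integral $(f*\varphi_{z_0})(1)$ and invoking the covariance $f(gk) = f(g)\circ\rho(k)$ together with $\varphi_{z_0}(kg(m_0,z_0)^{-1})=\rho(k)v_0$ yields
$$\ell(\varphi_0) = \ell(\varphi_{z_0}) = \vol\bigl(J / Z(F)\bigr) \cdot \ell_0(v_0),$$
so it remains to show $\ell_0(v_0) \neq 0$.

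When $\rho = \lambda$ is one-dimensional (the odd level cases of Sections \ref{oddM} and \ref{oddJ}), $v_0$ spans $\rho$ and the nonvanishing is immediate from $\ell_0 \neq 0$. In the remaining cases ($\ell(\pi) = 2r$, $r \geq 0$), we argue via the restriction to $\overline N \cap J$. By Lemma \ref{restrictionlemma} this restriction is a multiplicity-free direct sum of characters, and by the proof of Proposition \ref{nonzeroprop} the character $\Omega^{g(m_0, z_0)}|_{\overline N \cap J}$ is one of those appearing; let $V_{\psi'}$ denote the corresponding one-dimensional isotypic line. Since $\ell_0$ factors through $V_{\psi'}$, we are reduced to checking that $v_0$ has nonzero projection onto $V_{\psi'}$. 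In the even positive level case, working with the explicit basis $\{f_a : a \in \p^r/\p^{r+1}\}$ of $\eta = \Ind_{A^r H_\alpha^1}^{J_\alpha^1} \tilde\lambda$ from the proof of Lemma \ref{restrictionlemma}, one identifies $v_0 = f_0$, computes $\eta(\bar n_x) f_0 = f_x$, and obtains the projection $q^{-1}\sum_a \psi'(\bar n_a)^{-1} f_a \neq 0$. The depth zero case is analogous via the character table of Proposition \ref{char-table-depth-zero}.

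The main obstacle is the final step: verifying in the higher-dimensional cases that the projection of $v_0$ onto the specific isotypic line cut out by $\ell_0$ is nonzero. This is a direct but case-dependent computation with the explicit model of $\rho$, and care is required to match the character $\Omega^{g(m_0, z_0)}|_{\overline N \cap J}$ produced by Proposition \ref{nonzeroprop} with one of the characters appearing in Lemma \ref{restrictionlemma}.
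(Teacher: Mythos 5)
Your proof follows the paper's own strategy: set up $\ell$ via a Hecke-algebra element supported on $T(F)g(m_0,z_0)J$, reduce to $\ell_0(v_0) \neq 0$, and split by $\dim\rho$. For the even positive level case, your multiplicity-one reduction to the projection $P_{\psi'}v_0 \neq 0$ is sound; the paper short-circuits the same computation by \emph{defining} $\ell_0(\sum c_a f_a) = \sum c_a$, verifying this lies in the required Hom space, and reading off $\ell_0(f_0) = 1$. One small slip: $\eta(\bar n_x)f_0 = \psi'(\bar n_x)^{-1}f_x$, not $f_x$, so the projection is $q^{-1}\sum_a \psi'(\bar n_a)^{-2}f_a$; this does not affect nonvanishing.

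The one place you underestimate the work is the depth zero case. Dismissing it as ``analogous via the character table'' mischaracterizes what is actually required: for a depth zero supercuspidal, $\rho$ is not induced from a character, and there is no $\{f_a\}$ basis on which to run the same projection formula. The paper instead exploits the $A^0$-invariance of $v_0$: writing $v_0 = \sum_a c_a v_a$ in terms of the $\overline N^0$-eigenbasis, one uses $\rho(g(0,b))v_0 = v_0$ together with $\rho(g(0,b))v_a = v_{ba}$ to conclude $c_a = c_{ba}$ for all $b$, hence all $c_a$ are equal and nonzero, so $v_0$ projects nontrivially onto \emph{every} $\psi_a$-isotypic line. This permutation argument, not a direct projection computation from the character table, is what closes the depth zero case, and you should spell it out rather than wave at Proposition \ref{char-table-depth-zero}. (Also, your factor $\vol(J/Z(F))$ in $\ell(\varphi_{z_0})$ is fine---the paper suppresses this normalization constant since it is positive and irrelevant for nonvanishing.)
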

\begin{proof}
Let $z_0 \in \OF^\times /(1+\p^i)$ be as in Proposition \ref{nonzeroprop} and $\ell_0$ be a nonzero element of the space $\mathrm{Hom}_{J \cap g(m_0,z_0)^{-1} T(F) g(m_0,z_0)  }( \rho \, , \Omega^{g(m_0,z_0)})$. Define $\xi = 1_{T(F) g(m_0, z_0) J} \otimes \ell_0 \in
\mathscr{H}( \GL_2(F), \, \rho , \, \Omega)$. 
As in Section~\ref{mackey}, define
$\ell(\varphi)= \xi * \varphi(1) \in
\mathrm{Hom}_{T(F)}( \pi \, , \Omega)$. After appropriate normalization of measures, $\ell(\varphi_0) = \ell(\varphi_{z_0})=\ell_0(v_0)$.

When $\rho$ is a character, it follows immediately that $\ell(\varphi_{0})
\neq 0$. However, when $\rho$ is not a character, it must be shown that $v_0
\notin \ker \ell_0$. 

Suppose that $\ell(\pi)=2r>0$ and write $J=J_\alpha$.
Recall that under the identification $\rho|_{J_\alpha^1} \cong \eta=
\mathrm{Ind}_{A^r H_\alpha^1}^{J_\alpha^1} \tilde{\lambda}$ the vector $v_0$ is
identified with $f_0$ defined by \eqref{f0}. Consider any character $\psi'$ which
is a summand of $\rho|_{\overline{N}^r} $, and the vectors $f_a \in \eta$
defined by \eqref{fa} with respect to $\psi'$.  We may take $\ell_0$ to be given by
\begin{equation*}
 \ell_0 ( \sum \limits_{a \in \p^r / \p^{r+1} } c_a f_{a} ) := \sum
\limits_{a \in \p^r / \p^{r+1}} c_a.
\end{equation*}
Indeed, with this definition, note that for $f \in \eta$ and $x \in \p^r$,
\[ \ell_0 ( \eta (
\begin{bmatrix} 1 & \\ x & 1\end{bmatrix} )  f ) =\psi'(\bmx 1 & \\ x & 1 \emx) \ell_0 (f) 
=\Omega^{g(m_0,z)} (\bmx 1 & \\ x & 1 \emx) \ell_0 (f), \]
viewing $\Omega^{g(m_0,z)}$ as a character of $\overline{N}^{r}$ for some choice of $z \in \OF^\times /( 1 + \p^{i+1})$ corresponding to $\psi^\prime$ as in the proof of Proposition \ref{nonzeroprop}.
Hence $\ell_0(v_0)=\ell_0(f_0)=1 \neq 0$.

Finally, suppose that $\ell(\pi)=0$, so $c(\pi)=2$ and
$s=1-c(\Omega)-v(\mathbf{a})<0$. Let $h=g_s$. The linear functional $\ell_0$ is
the projection onto one of the irreducible summands of $\rho|_{\overline{N}^0}$.
Let $a \in \OF^\times$ and denote by $\psi_a$ the character of $\overline{N}^0$
given by $\psi_a ( \begin{bmatrix} 1 &  \\u & 1 \end{bmatrix} ) =
\psi_1(a u)$.
Denote by $v_a$ the vector in $\rho$ such that $\rho ( \begin{bmatrix} 1 & 
\\u & 1 \end{bmatrix} ) v_a = \psi_a (u) v_a$.

Now, write $v_0= \sum  c_a v_a$, where $c_a
\in \mathbb{C}$ and $a$ runs over $\OF^\times / (1+\p)$. For $b \in \OF^\times$,  we have $\rho (g(0,b)) v_0 = v_0$. However, $\rho ( g(0,b) ) v_{a} = v_{ba}$. Therefore, $c_a=c_{ba}$ for all
$b \in \OF^\times$. Therefore, $v_0$ has a nonzero component in each summand of
$\rho|_{\overline{N}^0}$.
\end{proof}

\noindent
{\bf Proof of Theorem~\ref{intro-tv-inert} for minimal supercuspidal representations:} 
Since $c(\Omega) \geq c(\pi)$, we can apply Lemma \ref{toric-decomp-lemma} together with Proposition \ref{min-sup-hom-prop} and the definition \eqref{newvector} to see that $\mathrm{Hom}_{T(F)}( \pi \, , \Omega) \neq 0$ and that $\varphi_0$ is a test vector with the required properties.

\subsection{Non-minimal representations}
In this section we consider a non-minimal supercuspidal representation $\tau$ and let $\Omega$ be a character of $T(F)$ such that
$\Omega|_{Z(F)} = \omega_\tau$ and $c(\Omega) \ge c(\tau)$. There exists a minimal supercuspidal
representation $\pi$ and a (ramified) character $\chi$ of $F^\times$ so that
$\tau \cong \pi \otimes \chi$. Identify $\tau=\pi \otimes \chi$. Since $\pi$ is minimal and $\tau$ is not, Proposition 3.4 of \cite{Tunnell1978} tells us 
$$c(\tau) = 2 c(\chi) > c(\pi).$$
Then $c(\Omega \otimes \chi^{-1}) \geq c(\Omega) > c(\pi)$. 

Observe that the considerations of the previous section guarantee the existence of a vector in $\pi$ that is a test vector for a $(\Omega \otimes \chi^{-1})$-Waldspurger functional and is the unique vector (up to scalars)  in $\pi$ that is right invariant under the corresponding conjugate of $K_1(\p^{c(\pi)})$. To get the desired test vector for $\tau$, we actually need a vector in $\pi$ with respect to $\Omega \otimes \chi^{-1}$, but which transforms according $\chi^{-1} \circ \det$ under right translation by a conjugate of $K_1(\p^{c(\tau)})$. In the next proposition, we obtain a vector $\varphi_\chi$ with the correct right-transformation property, and then will show that it is a test vector for the appropriate linear functional.

\begin{proposition}\label{twist}
 Suppose $\pi = c\mbox{--}\mathrm{Ind}_J^{\GL_2(F)} \rho$. Let
$s=2c(\chi)-c(\Omega)-v(\mathbf{a})$, $b=[\ell(\pi)+3/2]-c(\chi)$, $m_0 = [\ell(\pi)+3/2]-c(\Omega) - v(\mathbf a)$, $i=[(\ell(\pi)+3/2)/2]$, and $i' = [\ell(\pi)+3/2] - [\ell(\pi)/2]-1$.
\begin{enumerate}
 \item There is a unique $u \in \OF^\times/((1+\p^i) \cap \OF^\times)$,
and $v_\chi \in \rho$ depending on $u$, which is unique up to scaling,
such that for all 
$x \in \p^{i'}$,
\begin{equation}
\rho \big( \begin{bmatrix}1+x\varpi^{-b} & 0 \\xu^{-1}  & 1 \end{bmatrix} \big) v_\chi =
\chi^{-1}(1+ x \varpi^{-b})v_\chi. \label{varphi10}
\end{equation} 
 \item Suppose $u$ and $v_\chi$ satisfy \eqref{varphi10}. Let
 \begin{equation*}
 \varphi_\chi(g)=\left\{ \begin{array}{ll} (\chi^{-1} \circ \det)(k_1) \rho(k_2)
v_\chi & \text{if } g=k_2 g_\chi k_1, \, k_2 \in J, \, k_1 \in
K_1^{(s)}(\p^{2c(\chi)}) \\
0 & \text{otherwise,} \end{array} \right.
\end{equation*} where $g_\chi = \begin{bmatrix} \varpi^{-m_0} & 0\\u^{-1}
\varpi^{c(\chi)-s} & 1 \end{bmatrix}$. Then $\varphi_\chi$ is well defined, and
is the unique vector (up to scalars) in $\pi$ such that, for $k \in
K_1^{(s)}(\p^{2c(\chi)})$, $\pi(k) \varphi_\chi = (\chi^{-1}\circ \det) (k)
\varphi_\chi$.
\end{enumerate}
\end{proposition}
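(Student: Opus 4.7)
The plan is to reverse--engineer the structure of the newform of $\tau=\pi\otimes\chi$. Since $c(\tau)=2c(\chi)$, the newform of $\tau$ is right invariant under $K_1(\p^{2c(\chi)})$, so a suitable translate by $g_\chi^{-1}$ is right invariant under $g_\chi K_1(\p^{2c(\chi)}) g_\chi^{-1}$. Viewing this translated newform back inside $\pi$ via the identification $\tau=\pi\otimes\chi$, we need a vector $\varphi_\chi\in\pi$ that transforms by $\chi^{-1}\circ\det$ under $K_1^{(s)}(\p^{2c(\chi)})$ after the conjugation by $g_\chi$. The parameters $s,b,d,i$ are chosen precisely so that the arithmetic in the conjugation lines up: $s=2c(\chi)-c(\Omega)-v(\mathbf{a})$ matches the $m_0$ of the minimal case shifted by the twist, while $b=[\ell(\pi)+3/2]-c(\chi)$ measures the new cut-off level inside $J$.

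For part (i), I would first rewrite the matrix $\mat{1+x\varpi^{-b}}{0}{xu^{-1}}{1}$ via $y=x\varpi^{-b}$ as the product of a diagonal matrix $\mat{1+y}{0}{0}{1}$ and a lower unipotent $\mat{1}{0}{y u^{-1}\varpi^{b}/(1+y)}{1}$, so that $\rho$ decomposes on this one--parameter subgroup into its diagonal part (governed by the central character $\omega_\pi$) and its $\overline{N}$--part. Using Lemma \ref{restrictionlemma} to describe $\rho|_{\overline{N}\cap J}$ as a sum of characters pinned down by $\psi_\alpha$, the condition \eqref{varphi10} becomes that $\psi_\alpha$ on the relevant level agrees with $\chi^{-1}(1+y)/\omega_\pi(1+y)$, translated by $u^{-1}\varpi^b$. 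An analysis entirely parallel to that in Proposition \ref{nonzeroprop} (tracking how varying $u$ modulo $1+\p^i$ runs through all possible characters of $\overline{N}^{?}/\overline{N}^{?}$) then singles out a unique $u$ and a unique eigenline $\C v_\chi$ in $\rho$.

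For part (ii), the content is that $\varphi_\chi$ is well defined and that the right-transformation law extends from $v_\chi$ (on the small subgroup of part (i)) to all of $K_1^{(s)}(\p^{2c(\chi)})$ after conjugation by $g_\chi$. The key calculation is to show
\[
J\cap g_\chi K_1^{(s)}(\p^{2c(\chi)}) g_\chi^{-1}= Z(\OF)\cdot \Bigl\{\mat{1+x\varpi^{-b}}{0}{xu^{-1}}{1}:x\in\p^{d}\Bigr\}\cdot \mathrm{Stab}(v_\chi),
\]
so that consistency of the definition $\varphi_\chi(k_2 g_\chi k_1)=\chi^{-1}\circ\det(k_1)\,\rho(k_2)v_\chi$ follows immediately from \eqref{varphi10} together with $\omega_\pi = \omega_\tau\chi^{-2}$. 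Uniqueness up to scalars then follows from the uniqueness of the newform in $\tau=\pi\otimes\chi$: any vector in $\pi$ with the stated transformation, twisted by $\chi$ and translated by $g_\chi^{-1}$, yields a $K_1(\p^{2c(\chi)})$--invariant vector in $\tau$, necessarily a scalar multiple of the newform.

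The main obstacle is the index bookkeeping in the displayed equality above: showing that after conjugation by the slightly off-diagonal $g_\chi=\mat{\varpi^{-m_0}}{0}{u^{-1}\varpi^{c(\chi)-s}}{1}$, every element of $K_1^{(s)}(\p^{2c(\chi)})$ lying in $J$ factors as claimed. Here the off-diagonal entry $u^{-1}\varpi^{c(\chi)-s}$ is crucial: it produces the precise cross-term $xu^{-1}$ in the lower-left corner that is then matched by the $u$ chosen in part (i), so the apparent rigidity of the formula is forced by this compatibility. Once this factorization is established (via a direct, if somewhat tedious, matrix multiplication modulo $\p^{2c(\chi)}$ and $\p^{c(\pi)}$), both parts of the proposition fall out together.
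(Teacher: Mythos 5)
Your high‑level strategy matches the paper's: well‑definedness of $\varphi_\chi$ reduces to the eigenvector condition \eqref{varphi10} via a matrix congruence for $g_\chi k g_\chi^{-1}$ (the paper's display \eqref{df-gchi-conj}), and uniqueness follows because $\varphi_\chi\otimes\chi$ is a translate of the newform of $\tau$.

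There is, however, a concrete error in your treatment of part (i). You factor $\mat{1+x\varpi^{-b}}{0}{xu^{-1}}{1}$ into a diagonal factor $\mat{1+y}{0}{0}{1}$ and a lower‑unipotent factor, and then assert that the diagonal factor's action on $\rho$ is ``governed by the central character $\omega_\pi$''. That is false: $\mat{1+y}{0}{0}{1}$ lies in the split diagonal torus $A$, not in the center $Z$, so its action on $\rho$ is not via $\omega_\pi$. In fact $\rho|_{A^r}$ in the positive‑even‑level case is the \emph{regular} representation of $A^r/A^{r+1}$, which is nothing like a character. The paper disposes of the diagonal factor by a case analysis you have omitted: when $\rho=\lambda$ is a character, $\lambda$ is trivial on $A\cap J$ by construction (Definition \ref{v0def}~i); when $\rho$ is not a character, non‑minimality forces $b<0$, and one checks that the diagonal factor (and its commutator with the unipotent factor) lands in $\ker\rho$. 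Without one of these inputs your reduction of \eqref{varphi10} to a pure $\overline{N}$‑condition does not hold.

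Relatedly, your argument for existence and uniqueness of the pair $(u,v_\chi)$ is too coarse: it needs to be split along the same lines. For $\rho$ a character, $v_\chi$ is forced (one‑dimensional inducing data) and one compares two nontrivial additive characters of $\p^d/\p^{[\ell(\pi)+3/2]}$, with $u$ ranging over $\OF^\times/(1+\p^i)$ hitting each exactly once. For $\ell(\pi)=0$ one takes $u=1$ and appeals to the depth‑zero character table (Proposition \ref{char-table-depth-zero}). For $\ell(\pi)=2r>0$ and $\rho$ not a character, Lemma \ref{restrictionlemma} shows $\rho|_{\overline N\cap H_\alpha}$ is isotypic for $\psi_\alpha$, pinning down a unique $u$ mod $1+\p^i$, and then the same lemma gives a unique eigenline $\C v_\chi$. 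Your write‑up gestures at Lemma \ref{restrictionlemma} but treats the three cases as one, so the counting argument that yields the exact index $\OF^\times/(1+\p^i)$ does not actually go through as stated.
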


Note that $i=i'$ when $\ell(\pi) \in \Z$ or $\ell(\pi) = \frac{2r+1}2$ with $r$ even; otherwise they are 
off by 1 (cf.\ table in Section \ref{sec:minsc-rem}).

\begin{proof}
Observe that for $\begin{bmatrix} a_{11} & a_{12} \\ a_{21} & a_{22} \end{bmatrix} \in K_1^{(s)}(\p^{2c(\chi)})$, we have 
\begin{equation}
 g_\chi \begin{bmatrix} a_{11} & a_{12} \\ a_{21} & a_{22} \end{bmatrix} g_\chi^{-1} \in \begin{bmatrix} a_{11} & 0 \\ (a_{11}-1) u^{-1}\varpi^b & 1 \end{bmatrix} + \mathfrak{P}^{\ell(\pi)e_\mathfrak{A}+1}.
 \label{df-gchi-conj}
\end{equation}
We remark $b \le 0$.
If $g_\chi \begin{bmatrix} a_{11} & a_{12} \\ a_{21} & a_{22} \end{bmatrix} g_\chi^{-1} \in J$, then $a_{11} \equiv 1 \mod \p^{i'-b}$.
 To show that $\varphi_\chi$ is well defined, we must check that $\rho^{g_\chi}$ and $\chi$ agree on $g_\chi^{-1} J g_\chi \cap K_1^{(s)}(\p^{2c(\chi)})$. This is precisely the condition \eqref{varphi10}. Once this is established, uniqueness then follows since $\varphi_\chi \otimes \chi$ is a translate of the newform for $\tau$. Therefore, part ii) of the proposition will follow from part i).

First, suppose $\rho = \lambda$ is a character.
As in Section~\ref{construction} we have
\begin{equation*}
 \rho \big( \begin{bmatrix}1+x\varpi^{-b} & 0 \\xu^{-1}  & 1 \end{bmatrix} \big) v_\chi = \lambda \big( \begin{bmatrix}1 & 0 \\xu^{-1}  & 1 \end{bmatrix} \big).
\end{equation*}
As a function of $x$, both sides of \eqref{varphi10} are non-trivial characters of $\p^{i'} / \p^{[\ell(\pi)+3/2]}$ (cf.\ table at end of Section~\ref{sec:minsc-rem}). Therefore, there is a unique $u \in \OF^\times / (1+\p^i)$ so that \eqref{varphi10} holds. This proves part i) when $\rho$ is a character.

If $\rho$ is not a character, then $b<0$. By Section~\ref{construction},
\begin{equation*}
 \rho \big( \begin{bmatrix}1+x\varpi^{-b} & 0 \\xu^{-1}  & 1 \end{bmatrix} \big) v_\chi = \rho \big( \begin{bmatrix}1 & 0 \\xu^{-1}  & 1 \end{bmatrix} \big)v_\chi.
\end{equation*}
Suppose that $\pi$ is a depth zero supercuspidal representation. Let
$u=1$. By Lemma~\ref{restrictionlemma} there is a unique up to scalar
$v_\chi \in \rho$ so that, for $x \in \OF$,
\begin{equation*}
\rho \big( \begin{bmatrix}1 & 0 \\ x  & 1 \end{bmatrix} \big) v_\chi =
\chi^{-1}(1+x \varpi^{c(\chi)-1}) v_\chi.
\end{equation*}

Finally, suppose that $\ell(\pi)=2r>0$, and $\pi=c
\mbox{--}\mathrm{Ind}_{J}^{\GL_2(F)} \rho$ where $\rho$ is not a
character of $J$.  By Lemma~\ref{restrictionlemma}, $\rho|_{\overline{N}
\cap H_\alpha}$ is a multiple of $\psi_\alpha|_{\overline{N} \cap H_\alpha}$.
There exists a unique $u \in \OF^\times / (1+\p^{i})$
so that
\eqref{varphi10} holds for $x \in \p^{i'}$.
 By Lemma~\ref{restrictionlemma}, with this choice of $u$ there is a
unique up to scalar multiple $v_\chi \in \rho$ so that \eqref{varphi10} holds. This completes the proof of i) of the proposition.
\end{proof}

The next lemma gives a double coset decomposition of the support of $\varphi_\chi$.
\begin{lemma}\label{support-lemma}
Let $g_\chi = \begin{bmatrix} \varpi^{-m_0} & 0\\u^{-1}
\varpi^{c(\chi)-s} & 1 \end{bmatrix}$ as in Proposition \ref{twist}. Then
$$Jg_\chi K_1^{(s)}(\p^{2c(\chi)}) = \bigsqcup_{z \in \OF^\times / (1+\p^{c(\chi)-[\ell(\pi)/2]-1})} J g_\chi \bmx z & \\ & 1 \emx.$$
\end{lemma}
\begin{proof} Recall
$$K_1^{(s)}(\p^{2c(\chi)}) = \mat{1}{\p^s}{\p^{c(\Omega)+v({\bf a})}}{1+\p^{2c(\chi)}} \mat{\OF^\times}{}{}{1}.$$
We have the following, for $z \in \OF^\times$,
\begin{align}
g_\chi \mat{1}{\p^s}{\p^{c(\Omega)+v({\bf a})}}{1+\p^{2c(\chi)}} g_\chi^{-1} &= \mat{1+\p^{c(\chi)}}{\p^{2c(\chi)-[\ell(\pi)+3/2]}}{\p^{[\ell(\pi)+3/2]}}{1+\p^{c(\chi)}} \label{eq11}\\
g_\chi \mat{z}{}{}{1} g_\chi^{-1} &= \mat{z}{}{u^{-1}\varpi^{c(\chi)-s+m_0}(z-1)}{1} \label{eq22}
\end{align}
Using the description of $J$ in the table at the end of Section \ref{sec:minsc-rem}, we see that the right hand side of \eqref{eq11} lies in $J$. Also, the right hand side of \eqref{eq22} lies in $J$ if and only if $z \in 1+\p^{c(\chi)-[\ell(\pi)/2]-1}$. This completes the proof of the lemma.
\end{proof}

The next two lemmas give a useful decomposition of $g_\chi$. Fix $g_\chi$ and $u$ to be as in Proposition~\ref{twist}, and set $y_0=-u^{-1}\mathbf{a}^{-1} \varpi^{c(\Omega)+v(\mathbf{a})-c(\chi)}$.  For $y \in F$, define
$t_y = t(1+\mathbf b y/2, y) = \begin{bmatrix}1 + \mathbf b y & \mathbf c y \\ - \mathbf a y  & 1 \end{bmatrix}$.
 
\begin{lemma} \label{gchi_decomp}
We have $g_\chi=k_0
g_{m_0}^{-1} t_{y_0}$, where $k_0 \in U_\mathfrak{A}^{\ell(\pi)e_\mathfrak{A}+1}$.
 \end{lemma}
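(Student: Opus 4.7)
The plan is to verify the identity directly: define $k_0 := g_\chi\, t_{y_0}^{-1} g_{m_0}$ and show that $k_0 \in U_\mathfrak{A}^{\ell(\pi) e_\mathfrak{A}+1}$ by a valuation estimate. The choice of $y_0$ is precisely arranged so that $-\mathbf{a}\,y_0 = u^{-1}\varpi^{c(\Omega)+v(\mathbf a)-c(\chi)} = u^{-1}\varpi^{c(\chi)-s}$, making the $(2,1)$-entries of $g_\chi$ and $g_{m_0}^{-1}t_{y_0}$ agree. This forces $k_0$ to be upper triangular with lower-right entry $1$; solving the two remaining linear equations gives the explicit formula
\[
k_0 = \begin{bmatrix} \det(t_{y_0})^{-1} & -\det(t_{y_0})^{-1}\varpi^{-m_0}\mathbf{c}\,y_0 \\ 0 & 1 \end{bmatrix}, \qquad \det(t_{y_0}) = 1+\mathbf{b}\,y_0 + \mathbf{a}\mathbf{c}\,y_0^{2}.
\]

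The rest is a valuation calculation. One has $v(y_0) = c(\Omega) - c(\chi)$, and the standing hypothesis $c(\Omega) \geq c(\tau) = 2c(\chi)$ combined with $c(\tau) > c(\pi) = 2\ell(\pi)+2$ (recalled just above the lemma) gives $v(y_0) \geq c(\chi) \geq [\ell(\pi)+3/2]$. In particular $\det(t_{y_0}) \in 1+\p$ is a unit, so its inverse does not affect any subsequent valuations. The $(1,1)$-entry of $k_0 - I$ has valuation at least $v(y_0)$, and the $(1,2)$-entry has valuation $v(y_0) - m_0 = 2c(\Omega) - c(\chi) - [\ell(\pi)+3/2] + v(\mathbf a)$; applying $c(\Omega) \geq 2c(\chi)$ once more shows both quantities are comfortably larger than the thresholds imposed by $\mathfrak{P}^{\ell(\pi) e_\mathfrak{A}+1}$.

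Making this precise splits into the cases $\mathfrak{A} = \mathfrak{M}$, where $\mathfrak{P}^{\ell(\pi)+1} = \varpi^{\ell(\pi)+1}\mathfrak{M}$ and $\ell(\pi)$ is an integer, and $\mathfrak{A} = \mathfrak{J}$, where $\mathfrak{P}^{2r+2} = \varpi^{r+1}\mathfrak{J}$ for $\ell(\pi)=(2r+1)/2$ and the asymmetric shape of $\mathfrak{J}$ yields slightly different bounds on the two nonzero entries of $k_0 - I$. Together with the two sub-cases $v(\mathbf a) \in \{0,1\}$ according as $L/F$ is unramified or ramified, this is the only mildly fiddly part of the proof; it is not a genuine obstacle, only bookkeeping, once the explicit formula for $k_0$ is in hand.
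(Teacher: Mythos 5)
Your proposal is correct and follows essentially the same route as the paper: you define $k_0$ (the paper works with $k_0^{-1}$), find it upper unitriangular by the same direct matrix computation, and reach the same explicit formula; your spelled-out valuation bounds using $c(\Omega)\ge c(\tau)=2c(\chi)>c(\pi)=2\ell(\pi)+2$ are exactly the check the paper leaves implicit.
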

\begin{proof} 
Write $g_\chi= g_{m_0}^{-1} g$ where $g = \begin{bmatrix} 1 & \\ -\mathbf{a}y_0 & 1 \end{bmatrix}$. Let
$k_0^{-1}= g_{m_0}^{-1} t_{y_0} g^{-1} g_{m_0}.$ 
We see that
 \begin{equation*}
  k_0^{-1}= \begin{bmatrix}1+\mathbf{b}y_0 + \mathbf{a}\mathbf{c} y_0^2
& \mathbf{c}y_0 \varpi^{-m_0}  \\
0 & 1 \end{bmatrix}.
 \end{equation*}
So $k_0 \in U_\mathfrak{A}^{\ell(\pi) e_\mathfrak{A}+1}$  and $g_\chi=k_0 g_{m_0}^{-1} t_{y_0}$.
\end{proof}

\begin{lemma}\label{gchi_decomp2}
For each $z\in \OF^\times$, there exists $k_z \in U_\mathfrak{A}^{\ell(\pi)e_\mathfrak{A}+1}$ such that
 \begin{equation}
   g_{m_0}^{-1} t_{y_0}
\begin{bmatrix} z & \\ & 1 \end{bmatrix} = k_z g_{m_0}^{-1} \begin{bmatrix}
z & \\ & 1\end{bmatrix} t_{zy_0}.
 \end{equation}
\end{lemma}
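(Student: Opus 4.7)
The plan is to recognize both sides of the claimed identity as instances of Lemma~\ref{gchi_decomp} applied respectively at $y = y_0$ and $y = zy_0$, thereby isolating $k_z$ and putting it in a form whose membership in $U_\mathfrak{A}^{\ell(\pi)e_\mathfrak{A}+1}$ is transparent.

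First I would observe that the proof of Lemma~\ref{gchi_decomp} is really a matrix computation valid for any $y \in F$, yielding
\begin{equation*}
g_{m_0}^{-1} t_y = k_0(y)^{-1}\, g_{m_0}^{-1} g_y, \qquad g_y := \mat{1}{0}{-\mathbf{a}y}{1}, \qquad k_0(y)^{-1} = \mat{1+\mathbf{b}y+\mathbf{a}\mathbf{c}y^2}{\varpi^{-m_0}\mathbf{c}y}{0}{1}.
\end{equation*}
The membership $k_0(y_0) \in U_\mathfrak{A}^{\ell(\pi)e_\mathfrak{A}+1}$ established there depends only on $v(y_0)$, and since $v(zy_0) = v(y_0)$ for $z \in \OF^\times$, the identical argument gives $k_0(zy_0) \in U_\mathfrak{A}^{\ell(\pi)e_\mathfrak{A}+1}$. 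A direct multiplication also yields $g_{y_0}\mat{z}{}{}{1} = \mat{z}{}{}{1}g_{zy_0}$, and of course $g_{m_0}$ commutes with $\mat{z}{}{}{1}$.

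Write $z_{\mathrm{diag}} := \mat{z}{}{}{1}$. Combining these ingredients, I would rewrite the two sides of the desired identity as
\begin{align*}
g_{m_0}^{-1} t_{y_0} z_{\mathrm{diag}} &= k_0(y_0)^{-1}\, g_{m_0}^{-1} g_{y_0} z_{\mathrm{diag}} = k_0(y_0)^{-1}\, z_{\mathrm{diag}}\, g_{m_0}^{-1} g_{zy_0}, \\
k_z\, g_{m_0}^{-1} z_{\mathrm{diag}} t_{zy_0} &= k_z\, z_{\mathrm{diag}}\, k_0(zy_0)^{-1}\, g_{m_0}^{-1} g_{zy_0}.
\end{align*}
Equating these and cancelling the common right factor $g_{m_0}^{-1} g_{zy_0}$ forces
\begin{equation*}
k_z = k_0(y_0)^{-1}\, z_{\mathrm{diag}}\, k_0(zy_0)\, z_{\mathrm{diag}}^{-1}.
\end{equation*}
This formula then serves as a definition of $k_z$, and the task reduces to checking that it lies in the claimed pro-unipotent group.

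For the last step I would use that $z_{\mathrm{diag}} \in K_\mathfrak{A}$: this is clear for $\mathfrak{A} = \mathfrak{M}$, and for $\mathfrak{A} = \mathfrak{J}$ both $z_{\mathrm{diag}}$ and its inverse visibly lie in $\mathfrak{J}$, so $z_{\mathrm{diag}} \in \mathfrak{J}^\times \subset K_\mathfrak{J}$. Since $K_\mathfrak{A}$ normalises the radical $\mathfrak{P}$ and hence each $U_\mathfrak{A}^N$, the conjugate $z_{\mathrm{diag}}\, k_0(zy_0)\, z_{\mathrm{diag}}^{-1}$ remains in $U_\mathfrak{A}^{\ell(\pi)e_\mathfrak{A}+1}$, and $k_z$ is then a product of two elements of this group. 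The only step requiring any real thought is the extension of Lemma~\ref{gchi_decomp} to arbitrary $y$ with $v(y)=v(y_0)$, which is immediate from the explicit shape of $k_0(y)^{-1}$; everything else is just a reorganization of that lemma.
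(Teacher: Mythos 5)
Your proof is correct and takes essentially the same route as the paper: both proofs repackage the identity $g_{m_0}^{-1}t_y = k_0(y)^{-1}g_{m_0}^{-1}g_y$ from Lemma~\ref{gchi_decomp}, commute $\mat{z}{}{}{1}$ past $g_{m_0}$ and $g_{y_0}$, apply the analogous decomposition at $zy_0$, and then invoke the fact that diagonal units normalize $U_{\mathfrak A}^{\ell(\pi)e_{\mathfrak A}+1}$. The paper phrases the last step via normalization by $A^0$ rather than by $K_{\mathfrak A}$, and works left-to-right from the left-hand side instead of equating both sides and cancelling, but these are cosmetic differences.
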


\begin{proof}
Write $g_\chi= k_0 g_{m_0}^{-1} t_{y_0}$ as in Lemma~\ref{gchi_decomp}. Then 
\begin{align*}
 g_{m_0}^{-1}  t_{y_0} \begin{bmatrix} z & \\ & 1 \end{bmatrix}
 &= k_0^{-1} g_\chi \begin{bmatrix}z & \\ & 1 \end{bmatrix}
 = k_0^{-1} g_{m_0}^{-1} \begin{bmatrix} 1 & \\ -\mathbf{a} y_0 & 1 \end{bmatrix} \begin{bmatrix} z & \\ & 1 \end{bmatrix}\\
 &= k_0^{-1}\begin{bmatrix} z & \\ & 1 \end{bmatrix} g_{m_0}^{-1} \begin{bmatrix} 1 & \\ -\mathbf{a} z y_0 & 1 \end{bmatrix} 
 = k_0^{-1}\begin{bmatrix} z & \\ & 1 \end{bmatrix} k_0' g_{m_0}^{-1} t_{zy_0} 
 = k_z \begin{bmatrix} z & \\ & 1\end{bmatrix} g_{m_0}^{-1} t_{zy_0}.
\end{align*}
For the second to last equality we have used a decomposition similar to Lemma~\ref{gchi_decomp}, and $k_0'$ is the corresponding element of $U_\mathfrak{A}^{\ell(\pi) e_\mathfrak{A}+1}$. For the last equality we use the fact that the subgroup $U_\mathfrak{A}^{\ell(\pi) e_\mathfrak{A}+1}$ is normalized by $A^0$.
\end{proof}

Let us remark here that $U_\mathfrak{A}^{\ell(\pi)e_\mathfrak{A}+1}$ lies in the kernel of $\rho$ (see the table at the end of Section \ref{sec:minsc-rem} for details). For any $g \in \GL_2(F)$ and $v \in \rho$, define 
$$\varphi_{g, v}(h)= \left\{ \begin{array}{ll} \rho(k) v & \text{if } h=k
g, \, k \in J, \\
0 & \text{otherwise.} \end{array} \right.$$
Note that, for any $z \in \OF^\times$, the support of $\pi(\mat{z^{-1}}{}{}{1})\varphi_{g_{\chi}, v_\chi}$ is exactly $Jg_\chi \mat{z}{}{}{1}$. 
Then
\begin{align}
\varphi_\chi &= \sum \limits_{z \in \OF^\times / (1+\p^{c(\chi)-[\ell(\pi)/2]-1})} \chi^{-1}(z)
\, \pi (  \begin{bmatrix} z^{-1} & \\ & 1 \end{bmatrix} ) \varphi_{g_{\chi},
v_\chi} \nonumber \\
 &= q^{-[\ell(\pi)/2]-1} \sum \limits_{z \in \OF^\times / (1+\p^{c(\chi)})} \chi(z)
\, \pi (  \begin{bmatrix} z & \\ & 1 \end{bmatrix} ) \varphi_{g_{\chi},
v_\chi} \nonumber \\
&= q^{-[\ell(\pi)/2]-1} \sum \limits_{z \in \OF^\times / (1+\p^{c(\chi)})} \chi(z) \, \pi( t_{z^{-1} y_0}^{-1} )
\varphi_{g(m_0,z)^{-1}, v_\chi}. \label{ellvarphichi}
\end{align}
The first equality follows from the double coset decomposition in Lemma \ref{support-lemma}. To get the second equality, note that for $z \in 
1+\p^{c(\chi)-[\ell(\pi)/2]-1}$, we can apply (\ref{varphi10}) to the right hand side of (\ref{eq22}). Finally, the third equality follows from Lemmas \ref{gchi_decomp} and \ref{gchi_decomp2}.

Write $c_0=[ \frac{c(\chi)+1}{2}]$.  For $x \in \p^{c_0}$, $x \mapsto \chi(1+x)$
is an additive character of $\p^{c_0}/\p^{c(\chi)}$. 

\begin{proposition} \label{non-minimal-prop-2}
 Suppose $\ell \in \mathrm{Hom}_{T(F)}( \pi , \Omega \otimes \chi^{-1})$.  
 There is a unique $w_0 \in \OF^\times / (1+\p^{c(\chi)-c_0})$ satisfying the following conditions:
 \begin{enumerate}
 \item If \begin{equation*}\sum \limits_{z \in (1+\p^{c_0}) / (1+\p^{c(\chi)})}
\chi(z w) \, \ell\left( \pi\left( t_{(zw)^{-1}y_0}^{-1} \right)
\varphi_{g(m_0, zw)^{-1}, v_\chi} \right) \neq 0,\end{equation*} then $w \equiv
w_0 \mod \p^{c(\chi)-c_0}$.
 \item $\ell(\varphi_{g(m_0, zw_0)^{-1}, v_\chi}) \neq 0$.
 \end{enumerate}
\end{proposition}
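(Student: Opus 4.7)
The plan is to expand the inner sum over $z \in (1+\p^{c_0})/(1+\p^{c(\chi)})$ using the $T(F)$-equivariance of $\ell$, producing a Gauss-sum-type expression in additive characters of $\p^{c_0}/\p^{c(\chi)}$ whose cancellation condition singles out $w_0$.

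First I would use $\ell(\pi(t)v) = (\Omega \otimes \chi^{-1})(t)\ell(v)$ to pull $\pi(t_{(zw)^{-1}y_0}^{-1})$ out of $\ell$, rewriting each summand as
\[ \chi(zw)\, (\Omega \otimes \chi^{-1})(t_{(zw)^{-1}y_0})^{-1}\, \ell(\varphi_{g(m_0, zw)^{-1}, v_\chi}). \]
Writing $z = 1+x$ with $x \in \p^{c_0}/\p^{c(\chi)}$, I would expand each of the three $z$-dependent factors into its $z=1$ value times an additive character of $x$ parametrised by $w$, using: (a) the fact that $\chi|_{1+\p^{c_0}}$ is a nontrivial additive character of $\p^{c_0}/\p^{c(\chi)}$ since $2c_0 \ge c(\chi)$; (b) the identification $t_y \leftrightarrow 1 + \mathbf{c}y\beta \in L^\times$; and (c) the decomposition $g(m_0, zw)^{-1} = \mathrm{diag}(z^{-1},1)\, g(m_0, w)^{-1}$, the membership $\mathrm{diag}(z^{-1},1) \in U_\mathfrak{M}^{c_0} \subseteq J$ for $z \in 1+\p^{c_0}$, and the defining transformation property \eqref{varphi10} of $v_\chi$ together with Lemma~\ref{restrictionlemma} when $\rho$ is not one-dimensional.

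The inner sum then becomes $\ell(\varphi_{g(m_0, w)^{-1}, v_\chi})$ times a $w$-dependent scalar times a character sum $\sum_x \psi_w(x)$ over $\p^{c_0}/\p^{c(\chi)}$, where $\psi_w$ is the product of the three additive characters. By orthogonality, this sum is nonzero if and only if $\psi_w$ is trivial, a linear condition that fixes the residue of $w$ modulo $\p^{c(\chi)-c_0}$ to a single coset $w_0$, proving (i). For (ii), the surviving factor for this $w_0$ is $\ell(\varphi_{g(m_0, w_0)^{-1}, v_\chi})$, which I would identify with the nonzero intertwining datum produced by Proposition~\ref{nonzeroprop} applied to the character $\Omega \otimes \chi^{-1}$ on $T(F)$, whose conductor is $c(\Omega)$ under our hypotheses so that the same $m_0$ applies; the nonzero component of $v_\chi$ along the relevant summand of $\rho|_{\overline{N} \cap J_\alpha}$ is forced by \eqref{varphi10}.

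The main obstacle is the careful identification of the three additive characters assembling into $\psi_w$. When $\rho$ is one-dimensional this is immediate from \eqref{varphi10}, but when $\rho$ is not a character, computing $\rho(\mathrm{diag}(z^{-1},1))v_\chi$ requires decomposing $\mathrm{diag}(1+x', 1)$ as a product of a matrix of the form appearing in \eqref{varphi10} and a lower-unipotent element whose $\rho$-action on $v_\chi$ is governed by Lemma~\ref{restrictionlemma}, so that the character one extracts correctly parametrises the $w$-dependence.
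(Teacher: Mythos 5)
Your strategy for part (i) — expand the inner sum, extract additive characters, invoke orthogonality to isolate $w_0$ — is essentially the paper's proof, and your recognition that the $\ell$-factor pulls out as $\ell(\varphi_{g(m_0,w)^{-1},v_\chi})$ is the key observation. However, the ``main obstacle'' you flag at the end is a phantom. You do not need to decompose $\mathrm{diag}(1+x',1)$ as a product of a \eqref{varphi10}-type matrix and a lower-unipotent element and then chase characters through Lemma~\ref{restrictionlemma}. For $z\in 1+\p^{c_0}$ one has $\mathrm{diag}(z^{-1},1)\in A^{c_0}$, and $\rho|_{A^{c_0}}$ is trivial in every case ($\rho$ a character forces $\lambda|_{A\cap J}=1$; depth zero forces $\rho|_{U^1_{\mathfrak M}}=1$ with $c_0\ge 1$; and for $\ell(\pi)=2r>0$, $\eta|_{A^{r+1}}$ is trivial while $c(\chi)\ge 2r+2$ gives $c_0\ge r+1$). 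Hence $\rho(\mathrm{diag}(z^{-1},1))v_\chi = v_\chi$ outright and $\varphi_{g(m_0,zw)^{-1},v_\chi}=\varphi_{g(m_0,w)^{-1},v_\chi}$, with no character to track from this factor at all. This is the shortcut the paper takes, and it makes the inner sum an honest two-factor Gauss sum $\sum_z \chi^{-1}(1+z)\psi_\Omega^{-1}(zw)$.

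For part (ii), your appeal to Proposition~\ref{nonzeroprop} applied to $\Omega\otimes\chi^{-1}$ is a legitimate alternative to what the paper does (it constructs $\ell_0$ directly rather than citing~\ref{nonzeroprop}), and your checks that $c(\Omega\otimes\chi^{-1})=c(\Omega)$ and that the same $m_0$ results are correct. But the phrase ``the nonzero component of $v_\chi$ along the relevant summand \ldots is forced by \eqref{varphi10}'' is not sufficient as stated. Equation \eqref{varphi10} by itself constrains $\rho$-transformation on a range where the action is through $\psi_\alpha$ and hence mono-isotypic; what actually pins $v_\chi$ to the eigenspace of $\rho|_{\overline N\cap J_\alpha}$ detected by $\ell_0$ at $g(m_0,w_0)$ is the character identity $\Omega(t_{xy_0}) = \chi^{-1}(1+w_0^{-1}x)$, which is the cancellation condition defining $w_0$ in part (i). The paper explicitly feeds this identity back into the intertwining condition alongside \eqref{varphi10}; without it, you only know that \emph{some} intertwining coset exists by Proposition~\ref{nonzeroprop}, not that $v_\chi$ pairs nonvanishingly with the generator of the Hom space at $w_0$. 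You should make this link from the Gauss-sum condition to the intertwining condition explicit rather than packaging it inside ``forced by \eqref{varphi10}.''
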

\begin{proof} Recall $y_0=-u^{-1}\mathbf{a}^{-1} \varpi^{c(\Omega)+v(\mathbf{a})-c(\chi)}$. First, we note that $\chi \circ \det$ is trivial on elements $t_{zy_0}$ for $z \in
\OF^\times$. We can define an additive character of $\OF$ by $\psi_{\Omega}(x):=\Omega
\left( t_{xy_0}
\right)$. 
By \eqref{ellvarphichi}, we have 
\begin{equation}
 \ell(\varphi_\chi)=q^{-[\ell(\pi)/2]-1} \sum \limits_{w \in \OF^\times / (1+\p^{c_0})} \sum
\limits_{z \in (1+\p^{c_0})/(1+\p^{c(\chi)})} \chi^{-1}(wz) \, \psi_{\Omega}^{-1} (wz) \,
\ell(\varphi_{g(-m_0, zw), v_\chi}) \label{ellvarphichi2}
\end{equation} 
If $z \in 1+\p^{c_0}$, then $\rho(g(0,z^{-1})) v_\chi = v_\chi$, and
$\varphi_{g(-m_0, zw), v_\chi}=\varphi_{g(-m_0 , w), v_\chi}$.  Consider the inner sum of \eqref{ellvarphichi2}:
 \begin{align}
  \sum \limits_{z \in (1+\p^{c_0})/(1+\p^{c(\chi)})} \chi^{-1}(wz) \, \psi_\Omega^{-1} (wz) 
  =\chi^{-1}(w) \, \psi_\Omega^{-1} ( w) \sum \limits_{x \in \p^{c_0}/ \p^{c(\chi)} }
\chi^{-1}(1+x) \, \psi_\Omega^{-1} (wx). \label{innersum}
 \end{align}
This sum does not equal zero if and only if $\chi^{-1}(1+x)= \psi_\Omega (xw)$ for
all $x \in \p^{c_0}$, and this occurs for exactly one element $w=w_0 \in \OF^\times / (1+\p^{c(\chi)-c_0})$.
This proves the first part. 

Consider an element $t \in T(F)  \cap g_{m_0}J g_{m_0}^{-1}= F^\times(1+\mathfrak{P}_L^{c(\Omega)-[\ell(\pi)/2]-1})$ (cf.\ Lemma \ref{intersectionTJ}). 
Since $2 c(\chi) > c(\pi) = 2\ell(\pi) + 2$, we see 
\begin{align*}
( c( \Omega ) -[ \ell(\pi)/2 ]-1 ) - (c(\Omega) - c(\chi) ) = c(\chi) - [\ell(\pi)/2 ] - 1 > c_0.
\end{align*}
Therefore, there exists $x \in \p^{c_0}$ and $z \in F^\times$ such that $t = z t_{xy_0}$, and, by the remarks after \eqref{innersum}, 
\begin{equation*}
\chi^{-1}(1+w_0^{-1}x)= \psi_\Omega (x) = \Omega( t_{xy_0}) = \Omega(z)^{-1} \Omega(t).
\end{equation*}
By \eqref{varphi10} we have
\begin{equation*}
 \rho ( \begin{bmatrix} 1 &  \\ u^{-1} x & 1 \end{bmatrix} ) v_\chi =
\chi^{-1}( 1 + x \varpi^{-b} ) v_\chi.
\end{equation*}
Therefore, for all $t \in T(F)  \cap g_{m_0}J g_{m_0}^{-1} = T(F)  \cap g_{m_0, w_0}J g_{m_0, w_0}^{-1}$,
\begin{equation*}
\rho( g_{m_0, w_0}^{-1}\, t \, g_{m_0, w_0} ) v_\chi = \Omega( t ) v_\chi =\Omega( t ) (\chi^{-1}\circ \det)(t) v_\chi.
\end{equation*}
This implies that there is $\ell_0 \in \mathrm{Hom}_{g(m_0, w_0)^{-1} T(F)g(m_0, w_0) \cap
J}(\rho, (\Omega\otimes\chi^{-1})^{g(m_0, w_0)}) $ such that $\ell_0 ( v_\chi) \neq
0$, and, from the discussion in Section~\ref{mackey}, after a normalization 
$\ell(\varphi_{g(m_0, w_0)^{-1}, v_\chi})=\ell_0(v_\chi)$.
\end{proof}

\noindent
{\bf Proof of Theorem~\ref{intro-tv-inert} for non-minimal supercuspidal representations:}
By Proposition~\ref{non-minimal-prop-2} we compute 
\begin{align*}
 \ell(\varphi_\chi) &=q^{-[\ell(\pi)/2]-1} \ell(\varphi_{g(m_0 , w_0)^{-1}, v_\chi}) \sum \chi^{-1}(w
w_0^{-1}) \psi_\Omega^{-1}( w w_0^{-1})\\
 &= q^{-[\ell(\pi)/2]-1-\left[ \frac{c(\chi)}{2}\right]} \psi_\Omega^{-1}(w_0^{-1}) G(
\chi, \psi_\Omega^{-1}) \ell(\varphi_{g(m_0 , w_0)^{-1}, v_\chi})\neq 0,
\end{align*}
where the sum is over $w \in (1+\p^{c(\chi)-c_0})/(1+\p^{c_0})$, $w_0$ is the
unique element of $\OF^\times / (1+\p^{c(\chi)-c_0})$ such that $\chi^{-1}(1+z)=
\psi_\Omega(z w_0^{-1})$ for all $z \in \p^{c_0}$, and $G(\chi,
\psi_\Omega^{-1})$ is Gauss sum for the pair $\chi$, $\psi_\Omega^{-1}$.
For the last equality see \cite{BH}*{23.6 Proposition}. This shows that $\mathrm{Hom}_{T(F)}( \tau \, , \Omega) \neq 0$. The one-dimensionality follows from \cite{wald}. Since $c(\Omega) \geq c(\tau)$, we can apply Lemma \ref{toric-decomp-lemma} to obtain the test vector with required properties. The uniqueness of the test vector follows from the uniqueness of the newform in $\pi$.

\section{Local spectral distributions} \label{klm-sd-sec}

Now we return to the setting where $F$ is a $p$-adic field and $L$ is a quadratic separable extension as in Section \ref{L-def-section}.  Let
$\pi$ be an infinite-dimensional, irreducible, admissible
representation of $\GL_2(F)$, and $\Omega$ a character of $L^\times$ such that
$\Omega|_{F^\times} =
\omega_\pi$.    In this section, we will
calculate certain local
spectral distributions $\~J_\pi(f)$ defined by Jacquet--Chen \cite{JC} for
certain test functions 
$f \in C_c^\infty(\GL_2(F))$.  These are used in Section
\ref{klm-lval-sec} to generalize the global $L$-value formula previously
obtained by the second author
and Whitehouse in \cite{MW}.  For simplicity, we will prove this global
$L$-value formula when the
central character of our automorphic representation is trivial, so we may as
well assume $\omega_\pi=1$ in
this section also. We will also assume that $\pi$ and $\Omega$ are unitary, since the global objects in the following sections are unitary as well.

The calculation of $\~J_\pi(f)$
is contained in \cite{MW} in the cases when $F$ is archimedean, $L/F$ is split,
or $\pi$ and
 $\Omega$ have
disjoint ramification.  Hence, we will assume
 $L/F$ is a quadratic extension of nonarchimedean fields and $c(\pi), c(\Omega)>0$.  In particular, either $L(s,\pi)=1$ or $\pi = \chi \St_{\GL_2}$, where, for the
rest of this section, $\chi$ denotes
an unramified quadratic character.  Further, we assume $c(\Omega) \ge c(\pi)$ to use our
determination of test vectors.

Write $L^\times = F(\xi)^\times$ where $\xi = \frac{\sqrt{\mathbf d}}2$.  Let
$T=T(F)$ be the torus in $\GL_2(F)$ isomorphic to $L^\times$ defined in \eqref{TFdefeq}.  Here
it is
convenient to take a slightly different parameterization for $T$ than the one given by $t(x,y)$ 
in \eqref{TFgeq}.  Namely, we map
\begin{equation} \label{klm-T-param}
 x+y\xi_0\mapsto \bmx x & \mathbf cy \\ - \mathbf ay & x-\mathbf b y\emx,
\end{equation}
where $\xi_0 = \xi - \mathbf b/2 = (\sqrt {\mathbf d} - \mathbf b)/2$.
By \cite{tunnell} and \cite{saito} or Theorem~\ref{intro-tv-inert}, 
the assumption $c(\Omega) \ge c(\pi)$ implies $\dim_{\C} 
\mathrm{Hom}_T(\pi,\Omega) = 1$. 
Fix a nonzero linear functional 
$\ell \in \mathrm{Hom}_T(\pi,\Omega)$.

Consider the Kirillov model for $\pi$ and the inner product on $\pi$ given by
\[ (\phi_1, \phi_2) = \int_{F^\times} \phi_1(a) \overline{\phi_2(a)} \, d^\times
a, \]
where $d^\times a$ is the Haar measure giving $\vol(\mathfrak o^\times) = 1$. 
This inner product is $\GL_2(F)$-invariant.
Let $e$ be the unique (up to scalars) vector in $\pi$ such that $\pi(t)e =
\Omega(t)e$ for $t \in T$, which
we normalize so that $(e,e)=1$.  
Let $dg$ denote the local Tamagawa measure on $\GL_2(F)$.
Then the local distribution we are interested in is defined in \cite{JC} by
\begin{equation} \label{klm-jc-def}
 \~ J_\pi(f) =(\pi(f)e,e) =  \int_{\GL_2(F)} f(g) ( \pi(g)e, e ) \, dg, \qquad f
\in C_c^\infty(\GL_2(F)).
\end{equation}
Put $s= c(\Omega)-c(\pi)$, $h =  \bmx \varpi^s & \\ & 1 \emx w$, and
\begin{equation} \label{klm-subgp-def}
 K' = h K_1(\mathfrak p^{c(\pi)}) h^{-1} = \bmx 1+\mathfrak
p^{c(\pi)} & \mathfrak p^{c(\Omega)} \\
\mathfrak p^{c(\pi)-c(\Omega)} & \mathfrak o^\times \emx.
\end{equation}
Then Theorem~\ref{intro-tv-inert} says
there is a unique-up-to-scalars test vector $\phi \in \pi$ which is
right invariant by $K'$
 such that $\ell(\phi) \ne 0$. 
Let $\phi_0$ be the newvector in $\pi$ normalized so that $\phi_0(1)=1$.  Then
we may take $\phi = \pi(h) \phi_0$.

Observe that $\Omega$ is trivial on $T \cap Z K'$, where $Z=Z(T)$, since $\phi$ is fixed by
$ZK'$.
Consider the vector $e' \in \pi$ given by
\begin{equation}
 e' = \sum_{t \in T/(T \cap Z K')} \Omega^{-1}(t) \pi(t) \phi.
\end{equation}
Note the index set for the sum is finite so $e'$ is well defined.
Then for any $t \in T$, we have
$ \pi(t)e' = \Omega(t)e'$, and $\ell(e') \neq 0$. In other words, we may assume
\[ e=\frac{e'}{(e',e')^{1/2}}. \]
We will take for our test function $f = 1_{K'}/\vol(K')$, so our calculations do not in fact depend on the normalization of $dg$ in \eqref{klm-jc-def}.
Then
\[ \~ J_\pi(f) =  \vol(K')^{-1} \int_{K'} ( \pi(k) e, e ) \, dk
= \vol(K')^{-1}  \int_{K'} \frac { ( \pi(k) e', e' )} { ( e', e' )} \, dk.  \]
Note, using the $\GL_2(F)$-invariance of the inner product, we get 
\[  ( e', e' ) = \sum_{t \in T/(T \cap ZK')} \Omega^{-1}(t) (  \phi, \pi(t^{-1})
e' )
= |T/(T \cap ZK')| (\phi, e' ). \]
Since $\pi(f)$ is simply orthogonal projection onto $\left< \phi \right> =
\pi^{K'}$,
\[ \vol(K')^{-1} \int_{K'} ( \pi(k) e', e' ) \, dk = ( \pi(f) e', e' ) =  \frac{(
e', \phi )(\phi, e' )}{( \phi, \phi )}. \]
Hence
\begin{equation} \label{klm-jpif2}
 \~J_\pi(f) = \frac 1{|T/(T \cap ZK')|} \frac{(e',\phi)}{(\phi,\phi)}.
\end{equation}
Note that
\begin{equation} \label{klm-ipcalc}
(\phi, \phi) = (\phi_0, \phi_0) = \begin{cases}
L(2,1_F) & \text{if } \pi = \chi \St_{\GL_2}, \\
1 & \text{if } L(s,\pi)=1,
\end{cases}
\end{equation}
so it remains to compute $|T/(T \cap ZK')|$ and $(e',\phi)$.  
(Recall $\chi$ denotes an unramified character.)
 Only the latter
computation is involved.  This requires
knowing some facts about values of the Whittaker newform and determining a set
of representatives for $T/(T \cap ZK')$.
We first tackle these two tasks, and then compute $(e',\phi)$, and hence
$\~J_\pi(f)$, under our above assumptions.

\subsection*{Whittaker values}

Assume $\pi$ has trivial central character and 
let $\psi$ be a nontrivial additive character of $F$ of conductor $\OF$. 
 Let $\mathcal W(\pi,\psi)$ be the Whittaker
model for $\pi$ with respect to $\psi$. Let $W_0$ be the newform normalized so that $W_0(1) = 1$, and therefore $\phi_0(a) =
W_0 ( \bmx a & \\ & 1 \emx )$.  

We will be interested in certain values of the Whittaker newform when the local
$L$-factor of $\pi$ has
degree 1 or 0.  For this, we recall (cf.\ Table \ref{whit-table} in Section \ref{split-tv-sec})
that 
$\phi_0(a) = \chi(a) |a| 1_{\mathfrak o}(a)$ when $\pi = \chi \St_{\GL_2}$ and
$\phi_0(a) = 1_{\mathfrak o^\times}(a)$
when $L(s,\pi)=1$.
From this, one obtains the following result on Whittaker newform values.

\begin{lemma} \label{klm-whit-val}
\begin{enumerate}
\item \label{klm-whit-val1}
If $u, v \in \mathfrak o^\times$, then
\[ W_0 ( g \bmx u & \\ & v \emx w ) = W_0 ( gw ). \]

\item \label{klm-whit-val2}
If $\pi = \chi \St_{\GL_2}$ with $\chi$ unramified, then for $j \in \Z$,
\[ W_0 ( \bmx \varpi^j & \\ & 1 \emx w ) = 
\begin{cases}
- \chi(\varpi)^{j} q^{-j-1} & j \ge-1, \\
0 & \text{else}. 
\end{cases}  \]

If $L(s,\pi)=1$, then for any $j \in \Z$,
\[ W_0 ( \bmx \varpi^j & \\ & 1 \emx w ) = 
\begin{cases}
\epsilon(1/2,\pi) & j=-c(\pi), \\
0 & \text{else}. 
\end{cases}  \]

\item \label{klm-whit-val4}
If $\pi=\chi\St_{\GL_2}$ with $\chi$ unramified, for $j \ge 0 \ge k$, we have
\[ \int_{\mathfrak o^\times}  W_0 ( \bmx \varpi^j u & \\ & 1 \emx \bmx 1 &
\\ \varpi^k & 1 \emx ) \, d^\times u= 
-q^{-1} (\chi(\varpi)q^{-1})^{j-2k}.  \]

If $L(s,\pi)=1$ then for all $j,k \in \Z$, 
\[ \int_{\mathfrak o^\times}  W_0 ( \bmx \varpi^j u & \\ & 1 \emx \bmx 1 &
\\ \varpi^k & 1 \emx ) \, d^\times u 
= \begin{cases}
1 & j=0 \text{ and } k \ge c(\pi), \\
(1-q)^{-1} & j=0 \text{ and } k=c(\pi)-1, \\
0 & \text{else}.
\end{cases} \]
\end{enumerate}
\end{lemma}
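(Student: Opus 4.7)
For part (i), the key identity is the conjugation $\bmx u & \\ & v \emx w = w \bmx v & \\ & u \emx$, which follows from $w \bmx a & \\ & b \emx w^{-1} = \bmx b & \\ & a \emx$ together with $w^2 = -I$. Thus $W_0(g \bmx u & \\ & v \emx w) = W_0(gw \bmx v & \\ & u \emx)$. Using $\omega_\pi = 1$, I would successively pull out scalar matrices to rewrite $\bmx v & \\ & u \emx$ as a scalar matrix times $\bmx v/u & \\ & 1 \emx$. The remaining matrix $\bmx v/u & \\ & 1 \emx$ lies in $K_1(\p^n)$ for every $n \geq 0$, so right-invariance of the newform yields $W_0(gw)$.

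For part (ii), I would apply the local functional equation \eqref{local-fnal-eqn} at the trivial character $\mu = 1$. The standard normalization (consistent with Table \ref{whit-table}) gives $Z(s, W_0, 1) = L(s, \pi)$, so the functional equation becomes $Z(1-s, \pi(w) W_0, 1) = \varepsilon(s, \pi, \psi)\, L(1-s, \tilde\pi)$. The left-hand side expands, via part (i) applied to eliminate dependence on units, as $\sum_{j \in \Z} W_0(\bmx \varpi^j & \\ & 1 \emx w)\, q^{j(s-1/2)}$. On the right, $\varepsilon(s, \pi, \psi) = \varepsilon(1/2, \pi)\, q^{-c(\pi)(s-1/2)}$, while $L(1-s, \tilde\pi)$ is either $1$ (in the $L(s, \pi) = 1$ case) or $(1 - \chi(\varpi) q^{s-3/2})^{-1}$ (in the twisted Steinberg case, where $\tilde\pi = \pi$ because $\chi$ is quadratic). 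Expanding as a Laurent series in $q^{s-1/2}$ and matching coefficients delivers both formulas.

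For part (iii), I would use the identity $\bmx 1 & \\ \varpi^k & 1 \emx = w^{-1} \bmx 1 & -\varpi^k \\ & 1 \emx w$, which (using $\pi(w^{-1}) = \pi(w)$ when $\omega_\pi = 1$) gives $\pi(g_k) W_0 = \pi(w)\, \pi(\bmx 1 & -\varpi^k \\ & 1 \emx)\, \pi(w) W_0$. The zeta integral of $\pi(g_k) W_0$ at the trivial character equals $\sum_j I_{j,k}\, q^{-j(s-1/2)}$, and the functional equation converts this to
\[ \sum_j I_{j,k}\, q^{-j(s-1/2)} = \varepsilon(s, \pi, \psi)^{-1}\, \frac{L(s, \pi)}{L(1-s, \tilde\pi)}\, Z(1-s, \pi(wg_k) W_0, 1). \]
Since $wg_k = \bmx 1 & -\varpi^k \\ & 1 \emx w$, the Kirillov function of $\pi(wg_k) W_0$ at $x$ is $\psi(-x\varpi^k)\, W_0(\bmx x & \\ & 1 \emx w)$, with the second factor explicitly determined by part (ii). Substituting the explicit $\phi_w(x) := W_0(\bmx x & \\ & 1 \emx w)$ reduces the right-hand zeta integral to a short sum of Gauss-type integrals $\int_{\OF^\times} \psi(\varpi^m u)\, d^\times u$, whose values (with $\vol(\OF^\times) = 1$) are $1$ for $m \geq 0$, $-1/(q-1)$ for $m = -1$, and $0$ for $m \leq -2$. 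Reading off the coefficient of $q^{-j(s-1/2)}$ on each side yields $I_{j,k}$.

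The main obstacle is bookkeeping of signs and powers of $q$ as they propagate through the functional-equation rearrangements; once $\phi_w$ is in hand from part (ii), both the twisted Steinberg identity (for $j \geq 0 \geq k$) and every case of the $L(s,\pi) = 1$ identity drop out from direct coefficient comparison against the explicit Gauss sum values.
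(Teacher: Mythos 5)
Your proof is correct and follows essentially the same route as the paper, which simply remarks that part (i) follows from $K_1(\p^{c(\pi)})$-invariance of $W_0$ and triviality of $\omega_\pi$, and that parts (ii) and (iii) follow from the local functional equation \eqref{local-fnal-eqn} at $\mu=1$ by comparing coefficients of $q^s$. Your write-up fills in the details accurately: the conjugation identities $\bmx u&\\&v\emx w = w\bmx v&\\&u\emx$ and $wg_k = \bmx 1&-\varpi^k\\&1\emx w$, the observation that $\tilde\pi=\pi$ since $\chi$ is quadratic (forced by $\omega_\pi=1$), the Kirillov function $\psi(-x\varpi^k)\,W_0(\bmx x&\\&1\emx w)$ of $\pi(wg_k)W_0$, and the values of $\int_{\OF^\times}\psi(\varpi^m u)\,d^\times u$ under the normalization $\vol(\OF^\times)=1$ used in Section \ref{klm-sd-sec} — all of which reproduce the paper's intended argument.
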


\begin{proof} \rref{klm-whit-val1} follows simply from the facts that $W_0$ is
right invariant by $K_1(\mathfrak p^{c(\pi)})$ and $\omega_\pi=1$. The proof of \rref{klm-whit-val2} and \rref{klm-whit-val4} follows from the functional equation (\ref{local-fnal-eqn}) with $\mu = 1$ by comparing coefficients of $q^s$.
\end{proof}

\subsection*{The toric quotient}

We identify $t=x+y\xi_0 \in L^\times$ with its image in $T$ via \eqref{klm-T-param}.
Since we have assumed that $c(\Omega) \geq c(\pi)$, we have $t=x+y \xi_0 \in K'$ if and only if $x \in 1 + \mathfrak p^{c(\pi)}$ and
$y \in \mathfrak p^{c(\Omega)}$. 

\begin{lemma} \label{klm-rep-lem}
We have
\begin{equation} \label{klm-repcount}
|T/(T \cap ZK') | = \begin{cases} q^{c(\Omega)}(1+q^{-1}) & L/F \text{ unramified,} \\
2q^{c(\Omega)} & L/F \text{ ramified}.
\end{cases}
\end{equation}
Furthermore, if $L/F$ is unramified or $v(\mathbf a)=1$, then
a complete set of representatives of $T/(T \cap Z K')$ is given by 
\begin{equation} \label{klm-repset-unr+va1}
 \{ 1+y\xi_0 : y \in \mathfrak o/\mathfrak p^{c(\Omega)} \} \cup 
\{ x+\xi_0 : x \in \mathfrak p/\mathfrak p^{c(\Omega)+v(\mathbf a)} \}.
\end{equation}
If $L/F$ is ramified and $v(\mathbf a)=0$, then 
a complete set of
representatives of $T/(T \cap Z K')$ is given by 
\begin{equation} \label{klm-repset-ram-va0}
 \{ 1+y\xi_0 : y \in \mathfrak o/\mathfrak p^{c(\Omega)}, \, y \not \equiv u_0' \mod
\mathfrak p \} \cup
 \{ 1+(u_0'+y)\xi_0 : y \in \mathfrak p/\mathfrak p^{c(\Omega)+1}\} \cup 
\{ x+\xi_0 : x \in \mathfrak p/\mathfrak p^{c(\Omega)} \} 
\end{equation}
where $u_0' = -u_0/{\bf a} \in \mathfrak o^\times$ with $u_0$ as in
\eqref{u0defeq}.
\end{lemma}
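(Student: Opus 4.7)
The plan is to identify $T \cap ZK'$ explicitly inside $T$, enumerate cosets of $T/(T \cap ZK')$ using two charts on $T/Z \cong L^\times/F^\times$, and read off the cardinality. Since $Z \subset T$, any $t \in T \cap ZK'$ can be written $t = zk$ with $z \in Z$ and $k = z^{-1}t \in T \cap K'$, so $T \cap ZK' = Z(T \cap K')$. Combined with the description of $T \cap K'$ given just before the statement of the lemma, scaling by $F^\times$ yields the clean criterion
\[ t(x,y) \in T \cap ZK' \iff x \neq 0 \text{ and } v(y) \geq v(x) + c(\Omega). \]

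Next I would cover $T/Z$ by Chart~A of representatives $\{1 + y\xi_0 : y \in \mathfrak{o}\}$ and Chart~B of representatives $\{x + \xi_0 : x \in \mathfrak{p}\}$ (the latter including the ``point at infinity'' $\xi_0$, corresponding to $x = 0$). For each chart, computing the ratio of two representatives in $L^\times$ using $\xi_0^2 = -\mathbf{b}\xi_0 - \mathbf{a}\mathbf{c}$ gives an $(x,y)$-expression for the ratio, to which I apply the above criterion. In Chart~A, equivalence $1 + y_1\xi_0 \sim 1 + y_2\xi_0$ reduces to $v(y_2 - y_1) \geq c(\Omega) + v(1 - \mathbf{b}y_1 + \mathbf{a}\mathbf{c}y_1 y_2)$, and the denominator is a unit for all $y_1 \in \mathfrak{o}$ in the unramified and ramified $v(\mathbf{a})=1$ cases (using Lemma~\ref{val-lemma}), yielding the $q^{c(\Omega)}$ classes $\{1 + y\xi_0 : y \in \mathfrak{o}/\mathfrak{p}^{c(\Omega)}\}$. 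The parallel computation for Chart~B gives the denominator $x_1 x_2 - \mathbf{b}x_2 + \mathbf{a}\mathbf{c}$ of valuation $v(\mathbf{a})$, producing $q^{c(\Omega)-1+v(\mathbf{a})}$ classes $\{x + \xi_0 : x \in \mathfrak{p}/\mathfrak{p}^{c(\Omega)+v(\mathbf{a})}\}$. A short valuation argument shows no Chart~A class coincides with any Chart~B class, giving \eqref{klm-repset-unr+va1} and the cardinality in \eqref{klm-repcount} in these two cases.

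The delicate case is ramified with $v(\mathbf{a})=0$. Here $N(1+y\xi_0) = 1 - \mathbf{b}y + \mathbf{a}\mathbf{c}y^2$ lies in $\mathfrak{p}$ precisely when $y \equiv u_0' \pmod{\mathfrak{p}}$: substituting $y = -1/(\mathbf{c}u_0)$ and using the relation $\mathbf{c}u_0^2 \equiv \mathbf{a} \pmod{\mathfrak{p}}$ (a consequence of $\mathbf{c}u_0^2 + \mathbf{b}u_0 + \mathbf{a} \in \mathfrak{p}$ together with $\mathbf{b} + 2\mathbf{c}u_0 \in \mathfrak{p}$) identifies this root with $u_0' = -u_0/\mathbf{a}$. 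Completing the square, using that $\mathbf{d} \in \mathfrak{p} \setminus \mathfrak{p}^2$, shows $v(N(1+u_0'\xi_0)) = 1$ exactly. For $y_1 \not\equiv u_0' \pmod{\mathfrak{p}}$ the earlier analysis still applies and gives $q^{c(\Omega)} - q^{c(\Omega)-1}$ classes; for $y_1 \equiv u_0' \pmod{\mathfrak{p}}$, the denominator $N(1 + y_1\xi_0) + \mathbf{a}\mathbf{c}y_1(y_2 - y_1)$ has valuation $1$, sharpening the equivalence to $y_2 \equiv y_1 \pmod{\mathfrak{p}^{c(\Omega)+1}}$ and producing $q^{c(\Omega)}$ extra classes $\{1 + (u_0' + y)\xi_0 : y \in \mathfrak{p}/\mathfrak{p}^{c(\Omega)+1}\}$. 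Together with the $q^{c(\Omega)-1}$ Chart~B classes, this gives \eqref{klm-repset-ram-va0} and the count $2q^{c(\Omega)}$.

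The main obstacle is this last case: the non-unit behaviour of $N(1+y\xi_0)$ at the exceptional residue $u_0'$ forces a refinement of the Chart~A classes on that residue, and one must carefully verify the valuation drop is exactly $1$ (rather than higher) so the refinement is by $\mathfrak{p}^{c(\Omega)+1}$-cosets and the count comes out to $2q^{c(\Omega)}$ as claimed.
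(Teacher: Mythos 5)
Your proof is correct and follows essentially the same strategy as the paper's: cover $T/Z$ by the two charts $\{1+y\xi_0\}$ and $\{x+\xi_0\}$, derive a valuation criterion for equivalence by dividing in $L^\times$, and count. The main value you add is filling in the ramified $v(\mathbf a)=0$ case, which the paper dismisses with ``the rest of the cases are computed similarly'': your identification of the exceptional residue $u_0'$ via $\mathbf c u_0^2 \equiv \mathbf a \pmod{\mathfrak p}$ (itself derived correctly from $\mathbf c u_0^2 + \mathbf b u_0 + \mathbf a \in \mathfrak p$ together with $\mathbf b + 2\mathbf c u_0 \in \mathfrak p$) and the completing-the-square argument showing $v(N(1+u_0'\xi_0)) = 1$ are exactly the right points to nail down. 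The clean reformulation $T \cap ZK' = Z(T\cap K')$, giving the criterion $v(y) \ge v(x) + c(\Omega)$, is also a nice simplification over the paper's equation-chasing with $z(x_1+\xi_0)=(u+r\xi_0)(x_2+\xi_0)$; the two are equivalent but yours is more transparent.

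Two small points worth tightening. First, your stated Chart~A criterion $v(y_2-y_1) \ge c(\Omega) + v(1 - \mathbf b y_1 + \mathbf{ac} y_1 y_2)$ has $y_2$ appearing inside the valuation, which looks circular; the cleaner form obtained by solving the equation $z(u+r\xi_0)(1+y_1\xi_0) = 1+y_2\xi_0$ directly is $v(y_2-y_1) \ge c(\Omega) + v(N(1+y_1\xi_0))$, depending only on $y_1$. These are equivalent (since $1-\mathbf b y_1 + \mathbf{ac}y_1 y_2 = N(1+y_1\xi_0) + \mathbf{ac}y_1(y_2-y_1)$ and the condition forces $v(y_2-y_1) \ge 1$), but stating the criterion with $N(1+y_1\xi_0)$ avoids the apparent circularity and makes the exceptional-residue analysis cleaner. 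Second, the completing-the-square argument requires $v(\mathbf d) = 1$, i.e.\ tame ramification; this is fine for the applications in the paper (where the relevant places are odd) but is an implicit hypothesis you should flag, since $\mathbf d$ generating the discriminant gives $v(\mathbf d) \ge 2$ when $q$ is even and $L/F$ ramified.
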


\begin{proof}
We will obtain the set of representatives of $T/(T \cap Z K')$, from which \eqref{klm-repcount} follows.
Given an arbitrary $t=x+y\xi_0
\in T$, we may multiply $t$ by an element of $Z$ to assume that
both $x, y \in \mathfrak o$ and either $x=1$ or $y=1$.  Further, if $x$ and $y$
are both units, we may
assume $x=1$.  So we may consider a set of representatives of the form $x +
\xi_0$ and $1+y \xi_0$
where $x \in \mathfrak p$ and $y \in \mathfrak o$.
Observe $\xi_0^2 = - \mathbf{ac} -\mathbf b \xi_0$.  For $t, t' \in T$, write $t \sim t'$ if $t = t_0 t'$ for some $t_0 \in T \cap
ZK'$.  

First we observe that $x+\xi_0 \sim 1+y \xi_0$, where $x \in \mathfrak p$
and $y \in \mathfrak o$, is not possible.  If it were, there would exist $u \in
1+\mathfrak p^{c(\pi)}$, 
$r \in \mathfrak \p^{c(\Omega)}$ and $z \in F^\times$ such that
\[ zx+z\xi_0 = (u+r\xi_0)(1+y\xi_0) = u-\mathbf{ac}ry + (uy + r -\mathbf b ry)
\xi_0. \]
Since $u-\mathbf{ac}ry \in \mathfrak o^\times$, we see $v(z) < 0$, but $z=uy + r
-\mathbf b ry \in \mathfrak o$,
a contradiction.  

Now consider $x_1 + \xi_0 \sim x_2 + \xi_0$ for $x_1, x_2 \in \mathfrak p$. 
Then, for some 
$u \in 1+\mathfrak p^{c(\pi)}$,  $r \in \p^{c(\Omega)}$ and $z \in F^\times$, we have
\[ zx_1+z\xi_0 = (u+r\xi_0)(x_2+\xi_0) = ux_2-\mathbf{a c} r + (u+rx_2-\mathbf b
r)\xi_0. \]
Hence $z=u+rx_2-\mathbf b r \in 1 + \mathfrak p^{c(\pi)}$ and
\[ zx_1 =ux_1+rx_1x_2-\mathbf b rx_1 = ux_2-\mathbf{a c} r, \]
which implies
\[ u(x_2-x_1) = r(\mathbf{ac} - \mathbf b x_1 + x_1 x_2). \]
In particular, we must have $x_1 \equiv x_2 \mod \mathfrak p^{c(\Omega)+v(\mathbf a)}$.
 
In fact, if $v(\mathbf a)=0$, we have $x_1 + \xi_0 \sim x_2 + \xi_0$ iff $x_1
\equiv x_2 \mod \mathfrak p^{c(\Omega)}$.
Similarly, if $v(\mathbf a)=1$, then $v(\mathbf b) > 0$ and
$x_1 + \xi_0 \sim x_2 + \xi_0$ iff $x_1 \equiv x_2 \mod \mathfrak p^{c(\Omega)+1}$. The rest of the cases are computed similarly.
\end{proof}

Let us remark that the coset representatives in the previous lemma depend only on $c(\Omega)$ since we are in the case $c(\Omega) \geq c(\pi)$. 

\subsection*{Projection onto the test vector}

Put $e(L/F)=1$ if $L/F$ is unramified and $e(L/F)=2$ if $L/F$ is ramified.
Denote by $\eta$ the quadratic character of $F^\times$ associated to $L/F$. 
\begin{proposition} \label{klm-Jpif-prop}
If $c(\pi) \ge 2$, then
\begin{equation}
 \~J_\pi(f)  = q^{-c(\Omega)} \frac{L(1,1_F)L(1,\eta)}{e(L/F)}.
\end{equation}
If $c(\pi)=1$, then
\begin{equation}
 \~J_\pi(f)  = q^{-c(\Omega)} \frac{L(1,1_F)L(1,\eta)}{e(L/F)L(2,1_F)}.
\end{equation}
\end{proposition}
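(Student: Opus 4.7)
By \eqref{klm-jpif2} and \eqref{klm-ipcalc}, together with the index $|T/(T\cap ZK')|$ from Lemma \ref{klm-rep-lem} and the elementary identities $L(1,1_F)L(1,\eta) = L(2,1_F)$ when $L/F$ is unramified and $L(1,\eta) = 1$ when $L/F$ is ramified, one checks case-by-case that both claimed formulas for $\~J_\pi(f)$ are equivalent to the single identity
\[ (e',\phi) = L(1,1_F) = (1-q^{-1})^{-1}. \]
Thus the entire proof reduces to evaluating this inner product.

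To that end, I would expand
\[ (e',\phi) = \sum_{t \in T/(T\cap ZK')} \Omega^{-1}(t)\,(\pi(h^{-1}th)\phi_0,\phi_0), \]
summing over the explicit representatives from Lemma \ref{klm-rep-lem}, and compute each inner product via the Kirillov--Whittaker formula
\[ (\pi(g)\phi_0,\phi_0) = \int_{F^\times} W_0\!\left(\mat{a}{}{}{1}g\right) \overline{W_0\!\left(\mat{a}{}{}{1}\right)}\, d^\times a. \]
For $h = \mat{\varpi^s}{}{}{1} w$ with $s = c(\Omega)-c(\pi)$, a direct matrix calculation gives
\[ h^{-1}th = \mat{1-\mathbf b y}{\varpi^s \mathbf a y}{-\varpi^{-s}\mathbf c y}{1} \quad \text{for } t = 1+y\xi_0, \]
and an analogous matrix for $t = x+\xi_0$. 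I would then Iwasawa-decompose these as $b\kappa$ with $\kappa$ lying in either $K_1(\mathfrak p^{c(\pi)})$ or in $w\cdot K_1(\mathfrak p^{c(\pi)})$ (depending on which entry of the bottom row dominates), reducing each integral to a finite combination of the Whittaker newform values tabulated in Lemma \ref{klm-whit-val}\rref{klm-whit-val2}--\rref{klm-whit-val4}.

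Grouping the representatives $1+y\xi_0$ by $n = v(y)$ and writing $y = \varpi^n u$, the resulting inner sum over $u \in \OF^\times$ weighted by $\Omega^{-1}(1+\varpi^n u\xi_0)$ vanishes by orthogonality of the resulting character of $\OF$ except for a single pivot value of $n$ pinned down by $s$ and by the conductor of the relevant character derived from $\Omega$. The same dichotomy applies to the second family $t = x+\xi_0$, and the few surviving terms collapse to a short (geometric) sum evaluating to $L(1,1_F)$. The main obstacle is the combinatorial case analysis: the shape of the representatives in Lemma \ref{klm-rep-lem} changes depending on whether $L/F$ is unramified or ramified (and, in the ramified case, on $v(\mathbf a)\in\{0,1\}$ and on the exceptional coset near $u_0'$ in \eqref{klm-repset-ram-va0}), while Lemma \ref{klm-whit-val}\rref{klm-whit-val2}--\rref{klm-whit-val4} produces qualitatively different values in the Steinberg case $c(\pi)=1$ versus the case $L(s,\pi)=1$. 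Tracking all powers of $q$ so that the factor $e(L/F)$ in the denominator and the $L$-factor $L(1,\eta)$ in the numerator emerge with the correct constants is the technical heart of the argument.
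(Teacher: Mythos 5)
Your proposal correctly reduces the proposition, via \eqref{klm-jpif2}, \eqref{klm-ipcalc}, and Lemma \ref{klm-rep-lem}, to the single identity $(e',\phi) = L(1,1_F)$, and then evaluates the toric sum by computing and decomposing $h^{-1}th$ and applying the Whittaker newform values of Lemma \ref{klm-whit-val}, which is exactly the paper's argument. The one imprecision is attributing the vanishing of most terms to character orthogonality: in the paper the support constraints from Lemma \ref{klm-whit-val} kill all but the $y = 0$ and $v(y) = c(\Omega)-1$ terms (and the entire $t = x+\xi_0$ family) before any character sum is taken, with orthogonality of the $\Omega$-sum only collapsing what remains, but this does not affect the soundness of the approach.
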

\begin{proof}
By \eqref{klm-jpif2}, \eqref{klm-ipcalc}, and \eqref{klm-repcount}, this
proposition is equivalent to the
statement that
$$
(e',\phi) = L(1,1_F).
$$
To show this, first observe
$$(e',\phi) = \sum_{t \in T/(T \cap ZK')} \Omega^{-1}(t)
(\pi(t)\phi,\phi)
= \sum_{t \in T/(T \cap ZK')}  \Omega^{-1}(t) (\pi(h^{-1}th)\phi_0,\phi_0).
$$
Recall that $h = \mat{\varpi^s}{}{}{1} w$ with $s = c(\Omega)-c(\pi)$.  
We will give the details of the case $c(\pi) \geq 2$ here. The other case is computed similarly. Hence, assume that $c(\pi) \geq 2$ so $L(s,\pi)=1$.  Then, for $g \in \GL(2)$,
$$  (\pi(g)\phi_0, \phi_0) =  \int_{\mathfrak o^\times} W_0 ( \bmx  u & \\ & 1
\emx g ) \, d^\times u.
$$
First  suppose $t=x+\xi_0$ where $x \in \mathfrak p$ and $v(x) \leq c(\Omega)+v(\mathbf
a)$.  
Note
$$ h^{-1} t h = \bmx x-\mathbf b & \varpi^s \mathbf a \\ -\varpi^{-s} \mathbf c &
x \emx 
= \varpi^{-s}\mathbf c \bmx 1 & (\mathbf b-x)\varpi^s/\mathbf c \\ & 1 \emx
\bmx \det(t)\varpi^{2s}/\mathbf c^2 & \\ & 1 \emx w
\bmx 1 & -\varpi^{s} x/\mathbf c \\ & 1 \emx.
$$
Since the rightmost matrix lies in $K_1(\mathfrak p^{c(\pi)})$, we have
\[ (\pi(h^{-1}th)\phi_0,\phi_0) = W_0 ( \bmx \det(t)\varpi^{2s}/\mathbf c^2 & \\
& 1 \emx w ) = 0, \]
where the last equality follows from Lemma~\ref{klm-whit-val} \ref{klm-whit-val2}).
Now suppose $t=1+y\xi_0$ where $y \in \mathfrak o$.  If $y=0$, then 
$$  (\pi(h^{-1}th)\phi_0,\phi_0) = (\phi_0,\phi_0) = 1.
$$
Otherwise, assume $v(y) < c(\Omega)$ and write
$$ h^{-1} t h = \bmx 1 & \varpi^{s} \mathbf a y \\ & 1 \emx \bmx \det(t) & \\ & 1
\emx
\bmx 1 & \\ -\varpi^{-s} \mathbf c y & 1 \emx.
$$
Then, by Lemma~\ref{klm-whit-val} \ref{klm-whit-val4}),
\[ 
  (\pi(h^{-1}th)\phi_0,\phi_0) = \nonumber
   \int_{\mathfrak o^\times} W_0 ( \bmx \det(t) u & \\ & 1 \emx
\bmx 1 & \\ -\varpi^{-s} \mathbf c y & 1 \emx ) \, d^\times u \\
= \begin{cases}
(1-q)^{-1} & \text{if } v(y)=c(\Omega)-1, \\
0 & \text{else}.
\end{cases} \]
Observe that $\int_{1+\varpi^k \mathfrak o_L}
\Omega^{-1}(u) \, d^\times u = 0$ for
$0 < k < c(\Omega)$, together with $\Omega^{-1}|_{\mathfrak o^\times} = 1$, implies 
\[ \sum_{y \in \mathfrak o/\mathfrak p^{c(\Omega)} : v(y) \ge k} \Omega^{-1}(1+y\xi_0) = 0.
\]
Hence, for $0 < k \le c(\Omega)$, we have
\[ \sum_{y \in \mathfrak o/\mathfrak p^{c(\Omega)} : v(y)=k} \Omega^{-1}(1+y\xi_0) = 
\begin{cases}
0 & \text{if }  0 < k < c(\Omega)-1,  \\
-1 &  \text{if }k=c(\Omega)-1 \text{ and } c(\Omega) > 1,\\
1 &  \text{if } k = c(\Omega).
\end{cases} \]
Summing up gives the desired calculation
$$(e',\phi) = 1+ (1-q)^{-1} \sum_{y \in \mathfrak o/\mathfrak p^{c(\Omega)} : v(y) = c(\Omega)-1}
\Omega(1+y\xi_0)^{-1} = 
\frac 1{1-q^{-1}},
$$
since here $c(\Omega) \ge 2$.
\end{proof}
\section{A central-value formula}
\label{klm-lval-sec}

In this section we will work globally.  Specifically, let $L/F$ be a quadratic
extension of number fields,
$\A$ the ad\`eles of $F$ and $\A_L$ the ad\`eles of $L$.  
Let $\Delta$ and $\Delta_L$ be the absolute values of the discriminants of
$F$ and $L$, and let $\eta=\eta_{L/F}$ be the quadratic id\`ele class character
associated to $L/F$ via class field theory.

Let $G=\GL(2)/F$, $\pi$ a cuspidal automorphic representation of $G(\A)$ with
trivial central character, and $\Omega$
a unitary character of $\A_L^\times/L^\times \A^\times$. 
Assume the sign of the functional equation $\epsilon(1/2, \pi_L \otimes
\Omega)=1$, where $\pi_L$ is the base change of $\pi$
to $L$.  Then by work of Waldspurger \cite{wald}, Tunnell \cite{tunnell}, and
Saito \cite{saito}, one knows that there is a unique
quaternion algebra (possibly the split matrix algebra) $D/F$ in which $L$
embeds, such that $\pi$ has
a Jacquet--Langlands transfer to a representation
$\pi'$ of $D^\times(\A)$ and the local Hom spaces
\[ \mathrm{Hom}_{L_v^\times}(\pi_v', \Omega_v) \ne 0 \]
for all places $v$, and in fact have dimension 1.  Fix this $D$ and $\pi'$, and
write $G'$ for $D^\times$, regarded as an algebraic group over $F$.
 Let $T$ be a torus in
$G'$ whose $F$-points
are isomorphic to $L^\times$, and view $\Omega$ as a character of
$T(\A)/Z(\A)$, where $Z$ is the center of $G'$.

Let $\psi$ be the standard additive character on $\A/F$, i.e., the composition
of the trace map
with the standard additive character on $\A_{\Q}$.
Let $S$ be a finite set of places of $F$ containing all archimedean places, such
that, for all $v \not \in S$,
$\psi$, $\pi$ and $\Omega$ are unramified  and $L$ is not ramified at or above $v$.  

Put on $G'(\A)$ the product of the local Tamagawa measures times
$L^S(2,1_F)$, i.e., take the
local Tamagawa measure $dg_v$ for $v \in S$ and $dg_v$ normalized so that 
$G(\mathfrak o_v) \cong G'(\mathfrak o_v)$ has volume 1 if $v \not \in S$ (see,
e.g., 
\cite{JC}*{Section 2} for the definition of local Tamagawa measures).  Note we
will renormalize our measure on $G'(\A)$ later in Section
\ref{klm-finalform-sec}.

In \cite{JC}, Jacquet and Chen prove a formula for a distribution appearing on
the spectral side of the relative trace formula,
\begin{equation}
 J_{\pi'}(f) = \sum_{\phi} \int_{T(\A)/Z(\A)T(F)} \pi'(f) \phi(t) \Omega(t)^{-1}
\, dt  
\int_{T(\A)/Z(\A)T(F)} \overline{ \phi(t) \Omega(t)^{-1}} \, dt,
\end{equation}
where $\phi$ runs over an orthonormal basis for the space of $\pi'$.  Here
$T(\A)$ and $Z(\A)$ are given the product of local Tamagawa measures,
$T(F)$ has the counting measure, and $dt$ is the quotient measure.

Let $S_\mathrm{inert}$ be the set of finite places $v$ in $S$ such that
$L_v/F_v$ is inert
(ramified or unramified).
For $v \in S_\mathrm{inert}$, as in \eqref{klm-jc-def}, define 
\[ \~J_{\pi'_v}(f_v) =  \int_{G'(F_v)} f_v(g) (\pi'_v(g)e_v', e_v') \, dg_v, \]
where  $e_v'$ is a norm 1 vector such that $\pi_v'(t)e_v' = \Omega_v(t) e_v'$
for all $t \in T(F_v)$.
For $v \in S-S_\mathrm{inert}$, set
\[ \~J_{\pi'_v}(f_v) = \sum_W \int\limits_{F_v^\times} \pi_v'(f_v) W  ( \bmx a &
\\ & 1  \emx ) \Omega ( \bmx a & \\ & 1 \emx )^{-1} \, d^\times
a
\overline{ \int\limits_{F_v^\times} W ( \bmx a & \\ & 1  \emx ) \Omega
( \bmx a & \\ & 1 \emx )^{-1}  \, d^\times a}, \]
where $d^\times a$ is the local Tamagawa measure and
$W$ runs over an orthonormal basis for the local Whittaker model $\mathcal
W(\pi_v,\psi_v)$.

With the above normalizations, the formula of Jacquet and Chen is

\begin{theorem}[\cite{JC}]  \label{JC-thm}
Let $S$ be a set of places containing all infinite places and all places at
which $L$, $\pi$ or
$\Omega$ is ramified.  Let $f=\prod f_v \in C_c^\infty(G'(\mathbb A_F))$ with
$f_v$ the unit element of the Hecke algebra for $v \not \in S$.  Then
\[ J_{\pi'}(f) = \frac 12 \prod_{S} \~ J_{\pi'_{v}}(f_{v}) \prod_{v \in
S_\mathrm{inert}} 2\epsilon(1,\eta_{v},\psi_{v}) L(0,\eta_{v})
\times \frac{L_{S}(1,\eta) L^S(1/2,\pi_L \otimes \Omega)}{L^{S}(1,\pi,Ad)}. \]
\end{theorem}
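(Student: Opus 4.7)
The plan is to derive this formula as a careful reformulation of the spectral identity established by Jacquet and Chen in \cite{JC}, with explicit bookkeeping of measure normalizations and local factors. The starting point is the relative trace formula identity from \cite{JC}, which expresses $J_{\pi'}(f)$ in terms of a global central $L$-value together with local factors arising from the Waldspurger functionals $\phi \mapsto \int_{T(F)Z(\A)\backslash T(\A)} \phi(t)\Omega^{-1}(t)\,dt$ and their complex conjugates. The bulk of the work is to separate the product of these local factors into three pieces: unramified contributions (giving partial $L$-functions), ramified contributions inside $S$ but at split torus places (giving the Whittaker-model distributions), and ramified contributions inside $S$ at inert places (giving the $\~J_{\pi'_v}$ defined via the one-dimensional $\Omega_v$-isotypic line).

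First I would handle the unramified places $v \notin S$. At such places, one computes the local toric period on the spherical vector using the Macdonald formula for the spherical Whittaker function, which produces the local factor $L(1/2,\pi_{L,v}\otimes\Omega_v)L(1,\eta_v)/L(1,\pi_v,\mathrm{Ad})\zeta_v(2)$. After collecting these across $v\notin S$, one obtains the ratio $L_S(1,\eta)L^S(1/2,\pi_L\otimes\Omega)/L^S(1,\pi,\mathrm{Ad})$, together with a $\zeta^S(2)$ factor that precisely absorbs the normalizing scalar $L^S(2,1_F)$ put on the measure on $G'(\A)$. Next, at $v\in S - S_{\mathrm{inert}}$, the torus $T(F_v)$ is split and can be conjugated to the diagonal torus, so the toric integral unfolds against the Whittaker model to give the expression for $\~J_{\pi'_v}(f_v)$ via an integral over $F_v^\times$ against $\Omega_v$, as in the definition. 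Finally, at $v\in S_{\mathrm{inert}}$, the space $\mathrm{Hom}_{T(F_v)}(\pi'_v,\Omega_v)$ is one-dimensional and spanned by a line $e'_v$; the local integral $\~J_{\pi'_v}(f_v)$ is the matrix coefficient expression \eqref{klm-jc-def}. The factor $2\epsilon(1,\eta_v,\psi_v)L(0,\eta_v)$ at these inert places comes from the Tamagawa-measure discrepancy between $T(F_v)$ and its image under the Waldspurger transfer, together with the local functional equation for $\eta_v$; concretely, the global Tamagawa factor for the torus $T=L^\times/F^\times$ splits as $\Delta_L^{-1/2}\Delta^{1/2}\prod_v \epsilon\cdot L(0,\eta_v)$ at ramified/inert places.

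The factor of $\tfrac12$ in front is the order of the kernel of the natural map from the set of characters of $\mathrm{Pic}(L)$ to that of $\mathrm{Pic}(F)$ arising in the global unfolding of the trace formula; equivalently, it accounts for the quotient $Z(\A)\backslash T(\A) \to T(\A)/Z(\A)T(F)$ identifying a period with its conjugate via the nontrivial element of $\mathrm{Gal}(L/F)$. I would verify this by tracking the $|T(F)\backslash T(\A)/Z(\A)T(\hat{\mathfrak o})|$ count in the truncation of the global integral, exactly as in \cite{MW}*{Section 3} or the corresponding calculation in \cite{JC}.

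The main obstacle is the matching of normalizations: \cite{JC} uses its own conventions for Tamagawa measures on $T$, $Z$ and $G'$, whereas here we have added a twist by $L^S(2,1_F)$ on the measure on $G'(\A)$ to make the unramified Hecke measure give volume $1$ to hyperspecial maximals. This means I would need to redo the computation of the relative Tamagawa number for $T$ in order to isolate the precise constant $\tfrac12$ and to confirm that the $\zeta_v(2)$ factors from the unramified spherical computation cancel the $L^S(2,1_F)$ scaling. Once these normalization matters are settled, the theorem follows by combining the local-global factorization of the trace-formula identity with the local unramified calculation and the definitions of $\~J_{\pi'_v}(f_v)$.
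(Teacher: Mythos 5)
The paper does not prove this theorem: it is cited verbatim from Jacquet--Chen \cite{JC}, with the sentence ``With the above normalizations, the formula of Jacquet and Chen is'' making clear that the only content the paper supplies is the translation of Jacquet--Chen's result into the measure conventions adopted in Section \ref{klm-lval-sec}. Your proposal, by contrast, sets out to \emph{re-derive} the Jacquet--Chen identity from scratch (spherical Macdonald computations at unramified places, Whittaker unfolding at split places, isotypic-line matrix coefficients at inert places, Tamagawa-number bookkeeping for the factor $\tfrac12$). That is a genuinely different and much larger task than what the paper does, and it is also where your argument has its real gap: the sketch twice defers the hard normalization matching (``I would verify this by tracking\ldots,'' ``I would need to redo the computation of the relative Tamagawa number\ldots''), which is precisely the one piece of content the theorem, as stated in this paper, actually requires. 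Without that reconciliation, you have a plausible roadmap to the Jacquet--Chen formula in \emph{some} normalization, but not a proof of the displayed identity in \emph{this} normalization.

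Two further points. First, your explanation of the factor $\tfrac12$ as ``the order of the kernel of the natural map from characters of $\mathrm{Pic}(L)$ to those of $\mathrm{Pic}(F)$'' is not the standard account and I do not think it is correct as stated; the $\tfrac12$ arises from the Tamagawa measure of the quotient torus $T/Z \cong L^\times/F^\times$ (whose Tamagawa number is $2$, reflected in $\mathrm{vol}(T(F)Z(\A)\backslash T(\A)) = 2 L(1,\eta)$), and the interaction of that volume with the factor $L_S(1,\eta)$ already isolated in the formula. Second, your claim that the $\zeta_v(2)^{-1}$ factors from the unramified spherical computation ``precisely absorb'' the scaling of the $G'(\A)$-measure by $L^S(2,1_F)$ is the right heuristic but is left unchecked; this is exactly the kind of cancellation that must be verified explicitly when converting \cite{JC}'s statement into the one given here. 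In short, the honest proof of this theorem in the context of the paper is: cite \cite{JC}, write down their formula in their normalization, and carry out the finite amount of measure bookkeeping to match the two sides. Your proposal gestures at this bookkeeping but does not do it, and the parts it does spell out (re-proving Jacquet--Chen's spectral identity) are not what the paper relies on.
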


Note that if $\pi'(f)$ is orthogonal projection onto a 1-dimensional subspace
$\langle \phi \rangle$, then
\begin{equation} \label{klm-Jpif}
 J_{\pi'}(f) = \frac{\left|  \int_{ T(\A)/Z(\A)T(F)} \phi(t) \Omega(t)^{-1} \,
dt \right|^2}{(\phi,\phi)}.
\end{equation}
We wrote this expression so that it is invariant under replacing $\phi$ by a
scalar multiple.
 
\subsection{Choice of test vector} \label{klm-testvec}

To obtain an explicit $L$-value formula, 
we will choose $f = \prod f_v$ so that it picks out a global test vector $\phi =
\otimes \phi_v$ as follows.  

\medskip
First suppose $v$ is a finite place of $F$.  We denote by $\mathfrak o_v$, $\mathfrak o_{L_v}$,
 $\mathfrak p_v$, and $\mathfrak \varpi_v$ what was denoted in previous sections
by these 
 symbols without the subscript $v$ for the local field $F_v$.  
 Since we have assumed that the central
character is trivial, we may work with the
 congruence subgroups
 \[ K_{0,v}(\mathfrak p_v^n) = \left\{ \bmx a & b \\ c & d \emx \in G(\mathfrak
o_v) : c \in \mathfrak p_v^n \right\}. \]

We assume that at any finite $v \in S_{\mathrm{inert}}$ such
that $c(\Omega_v) > 0$, we have $c(\Omega_v) \ge c(\pi_v)$. 
Recall that, if $L_v/F_v$ is split or $0 \leq c(\pi_v) \leq c(\Omega_v)$, then we
can identify $G'(F_v) = G(F_v)$.

For $v \not \in S$, let $f_v$ be the characteristic function of $G(\mathfrak
o_v)$.  Then $\pi_v \cong \pi'_v$,
and $\pi'_v(f_v)$ is orthogonal projection onto the local newvector $\phi_v$.   

Let $v \in S-S_{\mathrm{inert}}$.  Take $g_v \in G(F_v)$ such that $g_v^{-1}
T(F_v) g_v$ is the diagonal 
subgroup of $G(F_v)$.  Let $f_v$ be the characteristic function of the subgroup
of $G(F_v)$ given by
\[ g_v^{-1} \bmx 1 & -\varpi_v^{-c(\Omega_v)} \\ & 1 \emx K_{0,v}(\mathfrak
p_v^{c(\pi_v)})
 \bmx 1 & \varpi_v^{-c(\Omega_v)} \\ & 1 \emx  g_v \]
divided by its volume.  Then $\phi_v$ is the unique (up to scalar multiples) vector in $\pi_v$ fixed by this subgroup.

Consider $v \in S_{\mathrm{inert}}$. 

Suppose $c(\pi_v)=0$ or $c(\Omega_v)=0$. Let $R(\pi'_v)$ be an order in $D(F_v)$ of
reduced discriminant $\mathfrak p_v^{c(\pi_v)}$ such that 
$R(\pi'_v) \cap L_v = \mathfrak o_v + \varpi_v^{c(\Omega_v)} \mathfrak o_{L_v}$
(cf.\ \cite{Gross}*{Proposition 3.4}).  Note $R(\pi'_v)$  is  unique up to
$T(F_v)$-conjugacy.
In this case, we take $f_v$ to be the characteristic function of
$R(\pi_v')^\times$ divided by its volume.  Then $\pi'_v(f_v)$
acts as orthogonal projection onto the local Gross-Prasad test vector $\phi_v$
\cite{GP}, 
except in the case that $c(\pi_v) \ge 2$
and $L_v/F_v$ is ramified.  (Note \cite{GP} also assumes $F_v$ has odd residual
characteristic if $\pi_v$
is supercuspidal because of this restriction in \cite{tunnell}, but this
hypothesis is no longer needed due to
\cite{saito}.)
  When $c(\Omega_v)=0$, $c(\pi_v) \ge 2$, and $L_v/F_v$
is ramified, $\pi'_v(f_v)$ acts as orthogonal projection onto a two-dimensional
space containing a vector $\phi_v$
which satisfies $\pi'_v(t_v)\phi_v=\Omega_v(t_v)\phi_v$ for all $t_v \in T(F_v)$
\cite{GP}*{Remark 2.7}; hence on this space any linear form in
$\Hom(\pi_v, \Omega_v)$ is simply a multiple of the map $\phi_v' \mapsto (\phi_v', \phi_v)$.

If $0 < c(\pi_v) \le c(\Omega_v)$, take $g_v$ so that $g_v^{-1} T(F_v)g_v$ is of the form 
\eqref{TFdefeq}, and let $K_v$ be such that $g_v K_v g_v^{-1}$ is
 the subgroup in \eqref{klm-subgp-def}. 
Let $f_v$ be the characteristic function of $K_v$ divided by its
volume, so
$\pi_v(f_v)$ acts as orthogonal projection onto the line generated by $\phi_v$,
the unique (up to scalar multiples) vector in $\pi_v$
fixed by $K_v$.

\medskip
Lastly, suppose $v$ is an infinite place of $F$.  
Let $K_v$ be a maximal compact subgroup of $G'(F_v)$ whose restriction to
$T(F_v)$
remains maximal compact.  Let $\phi_v$ be a vector of minimal weight such that
$\pi'_v(t_v)\phi_v = \Omega_v(t_v)\phi_v$
for $t_v \in K_v \cap T(F_v)$.  Choose $f_v$ so that $\pi'_v(f_v)$ is orthogonal
projection onto $\langle \phi_v \rangle$.

\medskip
Take $f=\prod f_v$ and $\phi = \otimes \phi_v$, so $\pi(f)$ acts as orthogonal
projection onto a finite-dimensional space $V$ containing $\phi$. 
Local considerations show the toric period integral is vanishing on the
orthogonal complement of $\langle \phi \rangle$ in $V$,
and hence one has \eqref{klm-Jpif}.

\subsection{Archimedean factors}
\label{klm-arch-fact}

Here we recall certain archimedean constants $C_v(L,\pi,\Omega)$ from
\cite{MW}. 
Let $v$ be an infinite place of $F$.  By assumption, $\Omega_v$ is a unitary
character of
$L_v$.

First suppose $F_v=\R$ and $L_v = \R \oplus \R$. Write
\[ \Omega_v(x_1,x_2) = \left|\frac{x_1}{x_2}\right|^{it} \sgn^{m_v}\left(\frac{x_1}{x_2}\right), \]
where $t \in \R$ and $m_v$ is 0 or 1.  If $\pi_v=\mu_v \times \mu_v^{-1}$ is
principal series with Laplacian eigenvalue $\lambda_v$ let $\epsilon_v \in \{ 0,
1 \}$
such that $\mu_v \Omega_v = | \cdot |^r \sgn^{\epsilon_v}$ for some $r$.  Then
we put
\[ C_v(L,\pi,\Omega) = \left( \frac{8\pi^2}{\lambda_v} \right)^{\epsilon_v}. \]
If $\pi$ is discrete series of weight $k_v$, put
\[ C_v(L,\pi,\Omega) = 2^{k_v}. \]

Now suppose $F_v=\R$ and $L_v=\C$.  Write $\Omega_v(z) = (z/\bar z)^{\pm m_v}$
where $m_v \in \frac 12 \Z_{\ge 0}$.
If $\pi_v= \mu_v \times \mu_v^{-1}$ is a principal series where $\mu_v$ is of the
form $|\cdot|^{r_v} \sgn^{\epsilon_v}$, then
\[ C_v(L,\pi,\Omega) =   (2\pi)^{2m_v} \prod_{j=0}^{m_v-1} (\lambda_v +
j(j+1))^{-1}, \]
where $\lambda_v=\frac 14 - r_v^2$.  If $\pi_v$ is discrete series of weight
$k_v$, then
\[ C_v(L,\pi,\Omega) = \frac{1}{\pi B(k_v/2+m_v, k_v/2-m_v)} \]
if $m_v < \frac{k_v-1}2$ and
\[ C_v(L,\pi,\Omega) = \frac{(2\pi)^{2m_v-k_v}k_v!}{m_v!
B(k_v/2+m_v,1-k_v/2+m_v)} \]
if $m_v \ge \frac{k_v-1}2$.  Here $B(x,y)$ denotes the beta function.

Lastly suppose $F_v=\C$ so $L_v = \C \oplus \C$.  Write $\Omega_v$ in the form
\[ \Omega_v(z_1,z_2) =  (z_1 \bar z_1)^{it} \left( \frac{z_1}{\bar z_1}
\right)^{m_v} (z_2 \bar z_2)^{-it} \left( \frac{z_2}{\bar z_2} \right)^{-m_v},  \]
where $t \in \R$ and $m_v \in \frac 12 \Z_{\ge 0}$.  Then $\pi_v$ is principal
series.  Let $k_v$ be its weight, $\lambda_v$ the Laplacian eigenvalue and
$\ell_v = \max(k_v,m_v)$.  Then
\[ C_v(L,\pi,\Omega) = \left( \frac 12 + \ell_v \right) \begin{pmatrix} 2\ell_v
\\ |k_v-m_v| \end{pmatrix} \prod_{j=k_v+1}^{\ell_v} \frac{4\pi^2}{4\lambda_v +
j^2-1}. \]

\subsection{Proof of Theorem \ref{intro-lvalthm}}
\label{klm-finalform-sec}

We consider a measure on $G'(\A)$ which is the product of local
Tamagawa measures.
Write $\Delta = \Delta_{\mathrm{inert}} \Delta_{\mathrm{split}}$, where
$\Delta_{\mathrm{inert}}$ is the part of $\Delta$ coprime to
every place over which $L/F$ splits.
Then note that
\[  \prod_{v \in S_\mathrm{inert}} 2\epsilon(1,\eta_{v},\psi_{v}) L(0,\eta_{v})
= \frac 1{\sqrt{c(\eta)c(\psi)}} \prod_{v \in S_\mathrm{inert}} e(L_v/F_v) =
\sqrt{ \frac{\Delta_{\mathrm{inert}}}{\Delta_L } } \prod_{v \in
S_\mathrm{inert}} e(L_v/F_v). \]

Let $v \in S$ be finite.  The calculations of $\~J_{\pi_v'}(f_v)$ below for when
$L_v/F_v$ is split,
$v$ is infinite, or at most one of $\pi_v$ and $\Omega_v$ is ramified are taken
from \cite{MW}.

Suppose $L_v = F_v \oplus F_v$.  Then
\[ \~J_{\pi'_v}(f_v) = \begin{cases}
q_v^{-c(\Omega_v)} \frac{L(1/2,\pi_{L_v} \otimes
\Omega_v)}{(W_{\pi_v},W_{\pi_v})} & \text{if $\Omega_v$ is unramified}, \\
q_v^{-c(\Omega_v)} \frac{L(1,1_{F_v})^2}{(W_{\pi_v},W_{\pi_v})} & \text{if
$\Omega_v$ is ramified}, 
\end{cases}\]
where $W_{\pi_v}$ is the normalized Whittaker newvector.  Further,
$\vol(\mathfrak o_v^\times) (W_{\pi_v},W_{\pi_v})$ is  equal to
$L(1,\pi_v,Ad)L(1,1_{F_v})/L(2,1_{F_v})$ if $\pi_v$ is unramified,
$L(1,\pi_v,Ad)=L(2,1_{F_v})$ if $c(\pi_v)=1$, and $1$ if $c(\pi_v) > 1$.
Since we are using local Tamagawa measures, the product over all such $v$
of $\vol(\mathfrak o_v^\times)$ is $\sqrt{\Delta_{\mathrm{split}}}$.

Suppose now $L_v/F_v$ is inert.  If $\pi_v'$ is unramified, then
$\~J_{\pi'_v}(f_v)$ is
\[ \frac{q_v^{-c(\Omega_v)}}{e(L_v/F_v)} \frac{L(1/2,\pi_{L_v} \otimes
\Omega_v)L(2,1_{F_v})}{L(1,\pi_v,Ad)} L(1,\eta_v)^{\delta_v}, \] 
where $\delta_v = -1$ if $\Omega_v$ is unramified and $\delta_v = 1$ if
$\Omega_v$ is ramified.
If $\pi_v$ is ramified and $\Omega_v$ is unramified, then $\~J_{\pi'_v}(f_v)=1$.
When both $\pi_v$ and $\Omega_v$ are ramified, $\~J_{\pi'_v}(f_v)$ is calculated
in Proposition
 \ref{klm-Jpif-prop}.

Summing up, if $\pi_v$ is unramified, then, up to factors of the form
$\vol(\mathfrak o_v^\times)$ and $e(L_v/F_v)$, $\~J_{\pi'_v}(f_v)$ is 
\[ q^{-c(\Omega_v)} \frac{L(1/2,\pi_{L_v} \otimes \Omega_v)
L(2,1_{F_v})}{L(1,\pi_v,Ad)} L(1,\eta_v)^{\delta_v}. \]

If $c(\pi_v)=1$, then, up to factors of the form $\vol(\mathfrak o_v^\times)$
and $e(L_v/F_v)$,
 $\~J_{\pi'_v}(f_v)$ is 
 \[ \frac{L(1/2,\pi_{L_v} \otimes \Omega_v)}{L(1, \pi_v,Ad)}  \]
  if $\Omega_v$ is unramified and $L_v/F_v$ is split or unramified;
 $1$ if $\Omega_v$ is unramified and $L_v/F_v$ is ramified;  and
  \[ q^{-c(\Omega_v)} \frac{L(1/2,\pi_{L_v} \otimes \Omega_v)}{L(1,\pi_v,Ad)}
L(1,1_{F_v}) L(1,\eta_v) \]  
  if $\Omega_v$ is ramified.

If $c(\pi_v) \ge 2$, then, up to factors of the form $\vol(\mathfrak
o_v^\times)$ and $e(L_v/F_v)$,
 $\~J_{\pi'_v}(f_v)$ is $1$ if $\Omega_v$ is unramified and
$q^{-c(\Omega_v)}L(1,1_{F_v})L(1,\eta_{v})$ if $\Omega_v$ is ramified.

Now suppose $v|\infty$.  Then from \cite{MW} one has
\[ \~J_{\pi'_v}(f_v) = \frac{C_v(L,\pi,\Omega)}{e(L_v/F_v)} \frac{L(1/2,\pi_{L_v}
\otimes \Omega_v) L(2,1_{F_v})}{L(1,\pi_v,Ad) L(1,\eta_v)}. \]

Combining the above calculations completes the proof of Theorem \ref{intro-lvalthm}.

\begin{remark}
When $S(\pi) \cap S(\Omega) = \emptyset$, Theorem \ref{intro-lvalthm} is exactly the main theorem of
\cite{MW},
though the choice of measure on $G'(\A)$ is slightly different in \cite{MW}.  Our set
$S_0(\pi)$ is denoted by $S'(\pi)$ in that paper.  

As in \cite{MW}, one can  rewrite this formula using the Petersson norm 
$(\phi_\pi,\phi_\pi)$ of the new vector $\phi_\pi \in \pi$ instead of
$L(1,\pi,Ad)$.  The formula in
\cite{MW} is also valid when $\omega_\pi = \eta$, and one could treat that case
here similarly.
The restriction that $\omega_\pi \in \{ 1, \eta \}$ is not inherent in the
method, but is due to
this assumption in \cite{JC}.
\end{remark} 

\begin{remark} For many applications, one would like a formula for the 
 {\em complete} ratio of $L$-values $L(1/2, \pi_L \otimes \Omega)/L(1,\pi, Ad)$. Theorem \ref{intro-lvalthm} of course gives this
when $S_0 = \emptyset$ (e.g., if the conductor $c(\pi)$ of $\pi$ is squarefree
and $\pi$ and $L/F$ have disjoint ramification).  In general, one can of course
multiply both sides
by the appropriate local factors, but then the rest of the formula will depend
on more than just the
ramification of $\pi$ and $\Omega$ together with their infinity types. 
Specifically, 
for $v \in S_1(\pi)$ and $\pi_v = \chi_v \St_v$,
the local factor $L(1/2, \pi_{L_v} \otimes \Omega_v)$ depends on the sign of
$\chi_v$ when
$L_v/F_v$ is ramified.  Similarly, for $v \in S_2(\pi)$, the local factor $L(1,
\pi_v, Ad)$ depends on 
more than just the ramification of $\pi_v$. 
\end{remark}

\section{An average-value formula}
\label{klm-avg-sec}

In this section, we will prove Theorems \ref{intro-avgval-thm}, \ref{non-van-thm} and \ref{avgval-modp}.
Fix notation as in the first paragraph of Theorem \ref{intro-avgval-thm}.

\subsection{The trace formula} \label{sec71}

Let $D/F$ be the quaternion algebra which is ramified precisely at the infinite
primes and the primes dividing $\mathfrak N_0$.  
Set $G' = D^\times$, $G=\GL(2)/F$, and let $Z$ denote the center of either of
these.  Let $\epsilon$ be an element of the normalizer of $T(F)$ inside $G'(F)$
which does not lie 
in $T(F)$,
so $\epsilon^2 \in Z(F)$ and $D(F)= L \oplus \epsilon L$.  Then we may write an
element of $G'(F)$ in the form
\[ \bmx \alpha & \beta \epsilon \\ \bar \beta & \bar \alpha \emx, \quad \alpha,
\beta \in L. \]
With this representation,
\[ T = \{ \bmx \alpha & 0 \\ 0 & \bar \alpha \emx \}. \]

As in Section \ref{klm-lval-sec}, let $\psi$ be the standard additive character of $\A/F$, and
take the product of the local Tamagawa measures on $T(\A)$, $G'(\A)$, $G(\A)$
and $Z(\A)$.
For a cuspidal automorphic representation $\pi'$ of $G'(\A)$, let JL$(\pi')$
denote its Jacquet--Langlands transfer to $G(\A)$.
Denote by $\mathcal F'(\mathfrak N,2 \mathbf k)$ the set of cuspidal automorphic
representations
$\pi'$ of $G'(\A)$ such that JL$(\pi') \in \mathcal F(\mathfrak N,2\mathbf k)$. 
We call
$\mathfrak N$ the conductor of $\pi'$ and write $c(\pi') = \mathfrak N$.
Subject to assumption \eqref{tameramcond}, we note that our choice of $D$ guarantees
$\mathrm{Hom}_T(\pi',\Omega) \ne 0$ for all $\pi' \in \mathcal F'(\mathfrak N,
2\mathbf k)$.

We now recall Jacquet's relative trace formula for $G'$ from \cite{Jac}.
This is an identity of the form
\begin{equation} \label{avgval-rtf}
 I(f) = J(f),
\end{equation}
where $I(f)$ is a certain geometric distribution, and $J(f)$ is a certain spectral distribution. 
Specifically, let $f = \prod f_v \in C_c^\infty(G'(\A))$.
 The geometric (relative) orbital integrals of
$f$ are defined by
\[ I(0, f) = \int_{T(\A)} f(t) \Omega(t) \, dt \]
\[ I(\infty, f) = \int_{T(\A)} f (t \bmx 0 & \epsilon \\ 1 & 0 \emx ) \Omega(t)
\, dt \]
and
\[ I(b, f) = \int_{T(\A)/Z(\A)} \int_{T(\A)} f ( s \bmx 1 & \epsilon \beta \\
\bar \beta & 1 \emx t ) \Omega(st) \, ds \, dt, \]
where $b = \epsilon N(\beta)$ for $\beta \in L^\times$.  Note this  latter
integral only depends on $b$ and not the choice of a specific $\beta$.
Then the left hand (geometric) side of \eqref{avgval-rtf} is 
\begin{equation}
 I(f) = \vol(T(\A)/Z(\A)T(F)) ( I(0,f) + \delta(\Omega^2) I(\infty, f) ) +
\sum_{b \in \epsilon N(L^\times)} I(b,f),
\end{equation}
where $\delta(\chi)=1$ if $\chi$ is trivial and $\delta(\chi) = 0$ otherwise.  

We now describe $J(f)$, but for simplicity only in the situation that is
relevant for us.  
Namely, for each $v|\infty$, fix an embedding $\iota_v: G'(F_v)
\hookrightarrow \GL_2(\C)$ and let $\pi'_{2k_v}$ be the irreducible
$(2k_v-1)$-dimensional representation of $G'(F_v)$ given by 
$\pi'_{2k_v} = (\mathrm{Sym}^{2k_v-2} \otimes \det^{1-k_v}) \circ \iota_v $. 
Hence
JL$(\pi'_{2k_v})$ is the holomorphic discrete series of weight $2k_v$ on
$G(F_v)$.
The assumption that $|m_v| < k_v$ implies that there is a 1-dimensional subspace of 
$\pi'_{2k_v}$ consisting of vectors $w_v$ such that $\pi'_{2k_v}(t) w_v =
\Omega_v(t) w_v$ for all
$t \in T(F_v)$.  Fix such a vector $w_v \in \pi'_{2k_v}$ which satisfies $(w_v, w_v)
= 1$.  
For all $v| \infty$, we may take $f_v \in C_c^\infty(G'(\R))$ as in Section
\ref{klm-testvec}, so that
\[ \int_{Z(F_v)} f_v(zg) \, dz =\frac{2k_v-1}{\vol(G'(F_v)/Z(F_v))}
\overline{(\pi'_{2k_v}(g)w_v, w_v)} \]
(cf.\ \cite{FW}*{Lemma 3.4}).

For a cuspidal automorphic representation $\pi'$ of $G'(\A)/Z(\A)$, we consider
the spectral
distribution
\[ J_{\pi'}(f) = \sum_{\phi} P_D(\pi'(f)\phi) \overline{P_D(\phi)}, \]
where $\phi$ runs over an orthonormal basis for $\pi'$ and $P_D$ is defined as
in \eqref{klm-PDdef}.
In general, the spectral side $J(f)$ of \eqref{avgval-rtf} is a sum over all $\pi'$ of
$J_{\pi'}(f)$ plus a non-cuspidal contribution.  However, things simplify greatly for our choice
of $f$.

We already specified $f_v$ for $v | \infty$.  Now let $v < \infty$ and put $m_v
= c(\Omega_v)$.
  For such a $v$, as in 
Section \ref{klm-testvec}, we will take $f_v$ to be the characteristic function
of $R_v^\times$
divided by its volume, for an order $R_v$ of $G'(F_v)$ chosen as follows.   
If $v \nmid \mathfrak N$, then $G'(F_v) \cong G(F_v)$ and we take $R_v$
to be a maximal order optimally containing $\mathfrak o_v + \varpi_v^{m_v} \mathfrak o_{L_v}$.
If $v | \mathfrak N_0$, then $G'(F_v)$ is not split and we take $R_v$ to be a
maximal order
containing $\mathfrak o_{L_v}$.  If $v | \mathfrak N_1$, then $G'(F_v) \cong
G(F_v)$ and, 
at least when $v$ is odd, we can take
\begin{equation}\label{Rv-defn}
R_v = \{ \bmx \alpha & \beta \epsilon_v \\ \bar \beta & \bar \alpha \emx : 
tr(\alpha), tr(\beta) \in \mathfrak o_v, \,
\alpha, \beta \in \mathfrak p_v^{1-m_v} \mathfrak o_{L_v},\,
\alpha - \beta \in \mathfrak o_v+ \mathfrak p_v^{m_v} \mathfrak o_{L_v}  \}. 
\end{equation}
Note that for each $v \nmid \mathfrak N_0$, this agrees with our choice of test
functions in
Section \ref{klm-testvec}.  
The difference of the present choice of $f_v$ for 
$v | \mathfrak N_0$ 
is simply out of convenience so we can directly apply local calculations from
\cite{FW}.
What is important is that one still has $\pi_v(f_v)$ being orthogonal projection
onto
our local test vector for $v | \mathfrak N_0$ (cf. \cite{FW}*{Lemma 3.3}).

Consequently, for this $f$, assuming $k_v > 1$ for some $v | \infty$,
the spectral side of \eqref{avgval-rtf} is given by
\begin{equation} \label{avgval-rtfspec}
 J(f) = \sum_{\mathfrak N'} \sum_{\pi' \in \mathcal F'(\mathfrak N', 2\mathbf k)}
J_{\pi'}(f),
\end{equation} 
where $\mathfrak N'$ runs over ideals which divide 
$\mathfrak N$ and are divisible by $\mathfrak N_0$.  This is because, for our
choice of $f'$,
$\pi'(f')$ is zero unless $\pi'$ is of weight $2\mathbf k$ and has conductor
dividing $\mathfrak N$.
Furthermore, by our choice of $D$, $J_{\pi'}(f)$ vanishes for local reasons if
the conductor of 
$\pi'$ is not divisible by $\mathfrak N_0$ (cf.\ \cite{FW}*{Lemmas 3.6 and 3.7}).
(The avoidance of the case $k_v=1$ for all $v | \infty$ is purely for
simplicity, 
for in this case there is also contribution from the residual spectrum, which
one would treat as in
\cite{FW}.)

\subsection{Spectral calculations}
Here we compute the spectral expansion \eqref{avgval-rtfspec}. For $\pi' \in \mathcal F'(\mathfrak N,2 \mathbf k)$,
 we see that  $J_{\pi'}(f)=|P_D(\phi)|^2/(\phi,\phi)$.
Hence Theorem
\ref{intro-lvalthm} implies
\begin{align} \label{avgval-Jf}
J_{\pi'}(f) &= \frac 12 \sqrt{\frac{\Delta}{c(\Omega) \Delta_L}}
 L^{S(\mathfrak N_0)}(2,1_F) L_{S(\mathfrak N)}(1,\eta) L_{S(\mathfrak C_0)}(1,\eta)^2 \nonumber \\
 & \qquad \qquad 
\prod_{v | \infty}  \frac{2k_v-1}{\pi} \begin{pmatrix} 2k_v-2 \\ k_v-m_v-1
\end{pmatrix}
 \frac{ L(1/2,\pi_L \otimes \Omega) }{L(1,\pi,Ad) }.
\end{align}

We now need to extend this equality to general $\pi' \in \mathcal F'(\mathfrak
N', 2
\mathbf k)$ where $\mathfrak N'$ divides $\mathfrak N$ and is divisible by
$\mathfrak N_0$.  
For $v |  (\mathfrak N')^{-1} \mathfrak N$,
let $R_v'$ be the maximal order of $G'(F_v) \cong G(F_v)$ which contains $R_v$
given by (\ref{Rv-defn}). Let $f' = \prod f'_v$, where $f'_v = f_v$ if 
$v \nmid (\mathfrak N')^{-1} \mathfrak N$,
and $f'_v$ is the characteristic function of $(R_v')^\times$ divided by its
volume
if $v |  (\mathfrak N')^{-1} \mathfrak N$.
Now, $f'$ agrees with our choice of test function for $\pi'$ in Section
\ref{klm-testvec}, and Theorem \ref{intro-lvalthm} gives
\begin{multline} \label{avgval-Jfprime}
J_{\pi'}(f') = \frac 12 \sqrt{\frac{\Delta}{c(\Omega) \Delta_L}}
 L^{S(\mathfrak N_0)}(2,1_F) L_{S(\mathfrak N)}(1,\eta) L_{S(\mathfrak C_0)}(1,\eta)^2 \\
\times \prod_{v | (\mathfrak N')^{-1} \mathfrak N } L(1, \eta_v)
 \prod_{v | \infty}  \frac{2k_v-1}{\pi} \begin{pmatrix} 2k_v-2 \\
k_v-m_v-1 \end{pmatrix}
 \frac{ L(1/2,\pi_L \otimes \Omega) }{L(1,\pi,Ad) }.
\end{multline}
From Theorem \ref{JC-thm}, we see that
\[ J_{\pi'}(f) = J_{\pi'}(f') \prod_{v |  (\mathfrak N')^{-1} \mathfrak N} 
\frac{\~ J_{\pi'_v}(f_v)}{\~ J_{\pi'_v}(f'_v)}. \]
From \cite{MW}*{Section 2.2.4}, we know 
\[ \~ J_{\pi'_v}(f'_v) = q_v^{-m_v} L(2,1_{F_v}) L(1,\eta_v) \frac
1{L(1,\pi_v,Ad)}, \]
so it remains to compute $\~ J_{\pi'_v}(f_v)$.  Here $v |  (\mathfrak N')^{-1}
\mathfrak N \supset \mathfrak N_1$,
so $\pi'_v = \pi_v$ is unramified and $m_v = 1$.  
We may write $\pi_v = \chi \times \chi^{-1}$ where 
$\chi=\chi_v$ is an unramified (unitary) character of $F_v^\times$.

Note $\pi_v(f_v)$ is orthogonal projection onto $\pi_v^{R_v^\times}$.  
Embedding $L_v$ in $M_2(F_v)$ as in \eqref{TFdefeq}, we may write 
\[ R_v^\times = K_v:= \bmx \mathfrak o_v^\times & \mathfrak p_v^{m_v} \\
 \mathfrak p_v^{1-m_v} & \mathfrak o_v^\times \emx = 
 \bmx \mathfrak o_v^\times & \mathfrak p_v \\
 \mathfrak o_v & \mathfrak o_v^\times \emx. \]
Note
\[ K_v = 
h_v\GL_2(\mathfrak o_v) h_v^{-1} \cap h_v \bmx \varpi_v^{-1} & \\ & 1 \emx  
\GL_2(\mathfrak o_v) \bmx \varpi_v & \\ & 1 \emx  h_v^{-1}, \quad \text{where } h_v = \bmx \varpi_v & \\ & 1 \emx. \]
 So if we put $\phi_0$ to be a newvector in $\pi_v$ and
$\phi_0' = \pi_v (h_v^{-1}) \phi_0$, then
\[ \pi_v^{K_v} = \langle \pi_v(h_v) \phi_0, \, \pi_v(h_v) \phi_0' \rangle. \]
Normalize $\phi_0$ so that $(\phi_0, \phi_0) = 1$.

\begin{lemma} We have
\begin{equation} \label{klm-avg-ip}
 (\phi_0, \phi_0') = (\phi_0', \phi_0) 
 = \frac {q_v^{-1/2}}{1+q_v^{-1}} \left( \chi(\varpi_v) + \chi(\varpi_v)^{-1}
\right).
\end{equation}
\end{lemma}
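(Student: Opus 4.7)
The plan is to identify $(\phi_0, \phi_0')$ with a value of a zonal spherical function. Since $\phi_0$ is the normalized bi-$\GL_2(\mathfrak{o}_v)$-spherical vector in the unramified unitary principal series $\pi_v = \chi \times \chi^{-1}$, and $\pi_v$ is unitary,
\[ (\phi_0, \phi_0') = (\phi_0, \pi_v(h_v^{-1})\phi_0) = (\pi_v(h_v)\phi_0, \phi_0), \]
and the matrix coefficient $g \mapsto (\pi_v(g)\phi_0, \phi_0)$ coincides, using $(\phi_0, \phi_0)=1$, with the Macdonald zonal spherical function $\varphi_{\pi_v}$ normalized by $\varphi_{\pi_v}(1)=1$. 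The symmetry $(\phi_0', \phi_0) = \overline{(\phi_0, \phi_0')}$, together with the reality of $\chi(\varpi_v) + \chi(\varpi_v)^{-1}$ (as $\chi$ is unitary), then gives the second equality for free, so I only need to evaluate $\varphi_{\pi_v}(h_v)$.

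The second step is to invoke Macdonald's formula. With Satake parameter $\alpha := \chi(\varpi_v)$,
\[ \varphi_{\pi_v}(h_v) = \frac{q_v^{-1/2}}{1+q_v^{-1}}\left[\frac{1-q_v^{-1}\alpha^{-2}}{1-\alpha^{-2}}\alpha + \frac{1-q_v^{-1}\alpha^2}{1-\alpha^2}\alpha^{-1}\right]. \]
Placing the bracketed terms over the common denominator $(1-\alpha^{-2})(1-\alpha^2)$, the $q_v^{-1}$ contributions cancel and the remaining numerator factors so that the bracket simplifies to $\alpha + \alpha^{-1}$, producing the claimed value.

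An equally viable route, perhaps more in keeping with Sections~\ref{split-tv-sec} and \ref{klm-sd-sec}, is a direct Whittaker-model computation: realize $\phi_0$ and $\phi_0'$ in the Kirillov model via $\phi_0(a) = W_0(\mat{a}{}{}{1})$ and $\phi_0'(a) = \phi_0(a\varpi_v^{-1})$, use the closed form $W_0(\mat{\varpi_v^n}{}{}{1}) = q_v^{-n/2}(\alpha^{n+1}-\alpha^{-(n+1)})/(\alpha - \alpha^{-1})$ from Table~\ref{whit-table}, and sum the resulting geometric series in $q_v^{-1}$. The only obstacle either way is bookkeeping: in the Whittaker approach the measure on $F_v^\times$ must be chosen so that $(\phi_0, \phi_0)=1$ (equivalently, scaled against $L(1,\pi_v,Ad)/L(2,1_{F_v})$), whereas the spherical-function route sidesteps this entirely and is therefore preferable.
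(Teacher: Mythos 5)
Your proof is correct, and the Macdonald-formula route is a genuinely different (and, as you note, cleaner) argument from the one the paper sketches. The paper computes $(\phi_0',\phi_0)$ directly in the induced model as $\int_{\GL_2(\mathfrak o_v)} \phi_0(kh_v)\,dk$ and then evaluates the integral by decomposing $\GL_2(\mathfrak o_v)$ into $K_v$-cosets (representatives $\bigl[\begin{smallmatrix}1&u\\&1\end{smallmatrix}\bigr]$ for $u\in\mathfrak o_v/\mathfrak p_v$ and $w$), using that $\phi_0(\,\cdot\,h_v)$ is constant on each left $K_v$-coset because $h_v^{-1}K_v h_v\subset\GL_2(\mathfrak o_v)$; one then gets $q$ cosets contributing $q_v^{-1/2}\chi(\varpi_v)$ and one contributing $q_v^{1/2}\chi(\varpi_v)^{-1}$, each weighted by $\vol(K_v)=1/(q_v+1)$. (The paper's parenthetical appeal to ``$K_v$ is normalized by $h_v$'' is imprecise --- $h_v K_v h_v^{-1}\ne K_v$ --- but the containment $h_v^{-1}K_v h_v\subset\GL_2(\mathfrak o_v)$, equivalently $K_v=\GL_2(\mathfrak o_v)\cap h_v\GL_2(\mathfrak o_v)h_v^{-1}$, is what makes the coset computation go through.) Your approach instead recognizes $(\phi_0,\phi_0')=(\pi_v(h_v)\phi_0,\phi_0)$ as the value at $h_v$ of the normalized zonal spherical function, quotes Macdonald's closed formula, and simplifies; your algebraic simplification of the bracket to $\alpha+\alpha^{-1}$ checks out, and your observation that unitarity of $\chi$ makes the answer real, forcing $(\phi_0,\phi_0')=(\phi_0',\phi_0)$, is a clean way to dispatch the first equality. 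The trade-off is that you import a nontrivial external formula where the paper does an elementary but more opaque calculation; both are valid, and your route has the advantage of being manifestly independent of the choice of $\GL_2(F_v)$-invariant inner product, which is exactly the normalization pitfall you flag in your Kirillov-model alternative.
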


\begin{proof}
In the induced model for $\pi_v$, we have
\[ (\phi_0', \phi_0) = (\pi (h_v) \phi_0, \phi_0)
 =  \int_{\GL_2(\mathfrak o_v)} \phi_0 ( k h_v ) \, dk.
\]
Use the fact that the subgroup $K_v$ of $\GL_2(\OF_v)$ is normalized by $h_v$  to get the lemma.
\end{proof}

\begin{lemma} For $v | (\mathfrak N')^{-1} \mathfrak N$, 
so $\pi_v$ is unramified and $m_v=1$, we have
$\~J_{\pi_v}(f_v) =  q_v^{-1}.$
\end{lemma}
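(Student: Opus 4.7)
The plan is to identify $\~J_{\pi_v}(f_v) = (\pi_v(f_v)e, e)$ with $(P_V e, e)$, where $P_V$ is orthogonal projection onto the two-dimensional space $V = \pi_v^{K_v} = \langle \phi_0, \pi_v(h_v)\phi_0\rangle$ and $e$ is the unit $\Omega$-equivariant vector, and then evaluate this via a Gram-matrix/linear-algebra formula using the inner product computation in the preceding lemma. Writing $\ell(v) := (v, e)$ (the Waldspurger functional normalized so $\ell(e) = 1$), one has from the standard formula for projection onto a two-dimensional subspace
\[
\~J_{\pi_v}(f_v) \;=\; \bar\ell^T G^{-1} \ell \;=\; \frac{|\ell_1|^2 + |\ell_2|^2 - 2c\,\mathrm{Re}(\overline{\ell_1}\,\ell_2)}{1 - c^2},
\]
where $\ell_1 = \ell(\phi_0)$, $\ell_2 = \ell(\pi_v(h_v)\phi_0)$, $c = (\phi_0, \pi_v(h_v)\phi_0) \in \R$, and $G = \bigl(\begin{smallmatrix}1 & c \\ c & 1\end{smallmatrix}\bigr)$. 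The value of $c$ is furnished by the preceding lemma, and a direct algebraic check gives the identity
\[
(1+q_v^{-1})^2(1 - c^2) \;=\; (1 - \chi(\varpi_v)^2 q_v^{-1})(1 - \chi(\varpi_v)^{-2} q_v^{-1}),
\]
which matches $L(1,\pi_v,\mathrm{Ad})^{-1} L(1,1_{F_v}) L(2,1_{F_v})^{-1}$ up to the $L(1,\eta_v)$-factor encoded in $(1+q_v^{-1})^{-1}$.

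Next, I would compute the two Waldspurger values $\ell_1$ and $\ell_2$ by passing to the Bessel (Waldspurger) model $B_{\phi_0}(g) := \ell(\pi_v(g)\phi_0)$, which is right $\GL_2(\OF_v)$-invariant (since $\phi_0$ is spherical) and left $\Omega$-equivariant. The double-coset decomposition $\GL_2(F_v) = \bigsqcup_{r \geq 0} T(F_v) \bigl(\begin{smallmatrix}\varpi_v^r & \\ & 1\end{smallmatrix}\bigr) \GL_2(\OF_v)$ from Lemma 2--4 of Sugano (cited in Section~\ref{ps-tv-sec}) reduces the task to computing $\ell_1 = B_{\phi_0}(1)$ and $\ell_2 = B_{\phi_0}\bigl(\bigl(\begin{smallmatrix}\varpi_v & \\ & 1\end{smallmatrix}\bigr)\bigr)$. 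The value $|\ell_1|^2$ is obtained directly by comparison with the Gross--Prasad local computation $\~J_{\pi_v}(f'_v) = q_v^{-c(\Omega_v)} L(2, 1_{F_v}) L(1, \eta_v)/L(1, \pi_v, \mathrm{Ad})$ from \cite{MW}*{Sec 2.2.4}, which gives precisely $|\ell_1|^2 = q_v^{-1}(1 - c^2)$ after the identity above.

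The value $\ell_2$ is then extracted from the Hecke-eigenvalue condition $T_{\varpi_v}\phi_0 = (\chi(\varpi_v) + \chi(\varpi_v)^{-1})\phi_0$: applying $\ell$ to both sides and using the explicit Hecke support $K\bigl(\begin{smallmatrix}\varpi_v & \\ & 1\end{smallmatrix}\bigr)K = \bigsqcup_{u} \bigl(\begin{smallmatrix}\varpi_v & u \\ & 1\end{smallmatrix}\bigr) \GL_2(\OF_v) \sqcup \bigl(\begin{smallmatrix}1 & \\ & \varpi_v\end{smallmatrix}\bigr)\GL_2(\OF_v)$ together with the $T$-invariance constraint and the torus double-coset bookkeeping, yields a linear relation that pins down $B_{\phi_0}\bigl(\bigl(\begin{smallmatrix}\varpi_v & \\ & 1\end{smallmatrix}\bigr)\bigr)$ in terms of $B_{\phi_0}(1)$ and the Satake parameters. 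Substituting these back into the projection formula and invoking the $L$-factor identity from the first paragraph produces cancellations that collapse the expression to $q_v^{-1}$.

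The main obstacle is the Hecke-recursion bookkeeping used to evaluate $\ell_2$: one must carefully track which Hecke translates $\bigl(\begin{smallmatrix}\varpi_v & u \\ & 1\end{smallmatrix}\bigr)$ and $\bigl(\begin{smallmatrix}1 & \\ & \varpi_v\end{smallmatrix}\bigr)$ lie in which $T(F_v)$-coset of the decomposition, and extract $\Omega$-character sums that must evaluate, thanks to the ramification hypothesis $c(\Omega_v) = 1$, to the precise combination of Satake data needed for the final collapse. Once the Hecke relation is established the resulting algebraic manipulation is elementary; the answer is independent of $\chi$, reflecting the fact that $f_v$ has been chosen precisely so that $\pi_v(f_v)$ captures the entire Iwahori-fixed subspace rather than only the Gross--Prasad line.
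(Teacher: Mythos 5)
Your projection/Gram-matrix framework is essentially the same as the paper's (they orthonormalize $\pi_v^{K_v}$ and apply $\pi_v(f_v)$ as orthogonal projection), and your algebraic identity $(1+q_v^{-1})^2(1-c^2)=(1-\chi(\varpi_v)^2q_v^{-1})(1-\chi(\varpi_v)^{-2}q_v^{-1})$ is correct. However, there are two genuine errors that together derail the argument.

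First, the attribution to the Gross--Prasad/Martin--Whitehouse computation is backwards. The Gross--Prasad test vector is the unique line fixed by $(R'_v)^\times$, where $R'_v$ is the maximal order optimally containing $\OF_v+\varpi_v\OF_{L_v}$; since $\xi_0$ has integral matrix entries (so $\OF_{L_v}\subset M_2(\OF_v)$, i.e.\ $c(M_2(\OF_v))=0$), that maximal order is $h_v M_2(\OF_v)h_v^{-1}$ and the GP vector is $\pi_v(h_v)\phi_0$, not $\phi_0$. Hence $\~J_{\pi_v}(f'_v)=|\ell(\pi_v(h_v)\phi_0)|^2=|\ell_2|^2$, so what \cite{MW}*{Sec 2.2.4} furnishes is $|\ell_2|^2=q_v^{-1}(1-c^2)$, not $|\ell_1|^2$.

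Second, and more seriously, the Hecke-recursion step cannot pin down the remaining Waldspurger value. Expanding $\ell(T_{\varpi_v}\phi_0)=a_v\ell(\phi_0)$ by the toric double-coset decomposition, the element $\bmx\varpi_v & b\\ & 1\emx$ lies in $T(F_v)\,g(1)\,\GL_2(\OF_v)$ with toric part $1+y_b\xi_0$ where $y_b\equiv b/\mathbf c$, and $\bmx 1 & \\ & \varpi_v\emx$ lies in $T(F_v)g(1)\GL_2(\OF_v)$ with toric part $\xi_0$; one gets
\[
a_v\,\ell_1 \;=\; \Bigl(\sum_{y\in\OF_v/\p_v}\Omega_v(1+y\xi_0)\;+\;\Omega_v(\xi_0)\Bigr)\,\ell_2.
\]
The right-hand parenthesis is a complete character sum over $\OF_{L_v}^\times/\OF_v^\times(1+\p_{L_v})$ and \emph{vanishes} precisely because $c(\Omega_v)=1>0$. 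So the relation reads $a_v\ell_1=0$ and supplies no information about $\ell_2$ whatsoever (and even the conclusion $\ell_1=0$ requires $a_v\ne 0$, which is not automatic for a unitary unramified $\pi_v$). Your plan to ``pin down $B_{\phi_0}(h_v)$ in terms of $B_{\phi_0}(1)$ and the Satake parameters'' therefore fails.

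The observation that replaces both of these and collapses the lemma to one line is the vanishing $\ell_1=\ell(\phi_0)=0$: the spherical vector $\phi_0$ is fixed by $\GL_2(\OF_v)\supset T(\OF_v)$, while $\Omega_v|_{T(\OF_v)}$ is nontrivial because $c(\Omega_v)>0$, so $\Omega_v(t)\ell(\phi_0)=\ell(\phi_0)$ for all $t\in T(\OF_v)$ forces $\ell(\phi_0)=0$. With $\ell_1=0$ your projection formula gives $\~J_{\pi_v}(f_v)=|\ell_2|^2/(1-c^2)=q_v^{-1}$ directly. The paper realizes this by computing $(\phi_0,e')$ explicitly as the same vanishing character sum $\Omega_v^{-1}(\xi_{0,v})+\sum_y\Omega_v^{-1}(1+y\xi_{0,v})=0$ over a full set of representatives for $T_v/(T_v\cap Z_vK_v)$, rather than invoking a Hecke recursion.
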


\begin{proof}
Write $\phi_1 =\pi_v(h_v) \phi_0$,  and 
\[ \phi_2 =  \frac{ \phi_0 - (\phi_0,\phi_1) \phi_1 } {(1-(\phi_0,
\phi_0')^2)^{1/2} }
= \left( \frac{L(1,\pi_v,Ad)(1+q_v^{-1})}{L(2,1_{F_v})} \right)^{1/2}  \left(
\phi_0 - (\phi_0,\phi_1) \phi_1 \right), \]
so that $\{ \phi_1, \phi_2 \}$ forms an orthonormal
basis for $\pi_v^{K_v}$.  As in Section \ref{klm-sd-sec}, put 
\[ e' = \sum_{t \in T_v/(T_v \cap Z_v K_v)} \Omega_v^{-1}(t) \pi_v(t) \phi_1, \]
so
\[ \~J_{\pi_v}(f_v) = \vol(K_v)^{-1} \int_{K_v} \frac{(\pi_v(k)e',e')}{(e',e')}
\, dk
= \frac{1}{|T_v/(T_v \cap Z_v K_v)|(\phi_1,e')} (\pi_v(f_v)e',e').  \]
Since $\pi_v(f_v)e' = (e',\phi_1)\phi_1 + (e',\phi_2)\phi_2$, we have
\[ \~J_{\pi_v}(f_v) = \frac{1}{|T_v/(T_v \cap Z_v K_v)|(\phi_1,e')} 
\left( (e',\phi_1)(\phi_1,e') + (e',\phi_2)(\phi_2,e') \right). \]
From \cite{MW}*{Section 2.2.4}, where $(\phi_1,e')$
is denoted $\frac{\langle v_0, e''_{T} \rangle}{\langle v_0, v_0 \rangle}$, we know $(\phi_1,e') =
\frac{L(2,1_{F_v})}{L(1,\pi_v,Ad)}$. Thus
\begin{equation}\label{klm-73a}
 \~J_{\pi_v}(f_v) = \frac{1}{|T_v/(T_v \cap Z_v K_v)|} \left(
\frac{L(2,1_{F_v})}{L(1,\pi_v,Ad)}  + 
 \frac{L(1,\pi_v,Ad)}{L(2,1_{F_v})}  |(\phi_2,e')|^2 \right).
\end{equation}
Note
\begin{equation}
 (\phi_2,e') 
= \left( \frac{L(1,\pi_v,Ad)(1+q_v^{-1})}{L(2,1_{F_v})} \right)^{1/2}
((\phi_0,e') - (\phi_0',\phi_0) \frac{L(2,1_{F_v})}{L(1,\pi_v,Ad)} ). 
\label{klm-73b}
\end{equation}
Hence it suffices to compute
\[ (\phi_0,e') = \sum_{t \in T_v/(T_v \cap Z_v K_v)} \Omega_v(t) (\phi_0,
\pi_v(t) \phi_1)
=  \sum_{t \in T_v/(T_v \cap Z_v K_v)} \Omega_v^{-1}(t) (\pi_v(h_v^{-1}t
h_v)\phi_0',  \phi_0). \]
Using the set of representatives for $T_v/(T_v \cap Z_v K_v)$  given in Lemma
\ref{klm-rep-lem}, we see
\begin{multline} \label{klm-avg-phi2e}
 (\phi_0,e') = \sum_{x \in \mathfrak p_v/\mathfrak p_v }
\Omega_v^{-1}(x+\xi_{0,v}) 
 ( \pi_v ( \bmx \varpi_v^{-1} x & \mathbf c \varpi_v^{-1} \\
 -\mathbf a   & x- \mathbf b \emx ) \phi_0, \phi_0 ) \\ 
+  \sum_{y \in \mathfrak o_v/\mathfrak p_v} \Omega_v^{-1}(1+y\xi_{0,v}) 
( \pi_v (  \bmx \varpi_v^{-1}  & \mathbf c \varpi_v^{-1} y \\
 -\mathbf a y & 1- \mathbf by \emx ) \phi_0, \phi_0 ).
\end{multline}

Let $\phi$ (resp.\ $\phi'$) be the realization of the unit newvector $\phi_0$
(resp.\ $\phi_0'$) in the Kirillov model for $\pi_v$ with respect to
an unramified $\psi_v$.  Recall $\phi(z) = 0$ unless $z \in \mathfrak o_v$.
Recall also the action of the standard Borel on the Kirillov model is given by
\[ (\pi_v \bmx a & x \\ & d \emx \phi)(z) = \psi_v(xz/d) \phi(az/d). \]

Since $v$ is odd and unramified, we may assume $\mathbf b = 0$ and $\mathbf a$
is a unit.
For the $x=0$ term in \eqref{klm-avg-phi2e}, note
\[ \pi_v \bmx & \mathbf c \varpi_v^{-1} \\ -\mathbf a  &  \emx \phi(z)
= \pi_v \bmx \mathbf c \varpi_v^{-1} & \\ & \mathbf a \emx \phi(z)
= \phi(\varpi_v^{-1} z) = \phi'(z). \]
For $y \in \mathfrak o_v$, we have
\begin{align*}
  \pi_v (  \bmx \varpi_v^{-1}  & \mathbf c \varpi_v^{-1} y \\
 -\mathbf a  y & 1 \emx ) \phi(z) 
 &= \pi_v ( \bmx \varpi^{-1}(1+ \mathbf a \mathbf c y^2) & \mathbf c
\varpi_v^{-1} y \\ & 1 \emx 
 \bmx 1 & \\ -\mathbf a  y & 1 \emx) \phi(z) \\
 &= \psi_v(\mathbf c \varpi_v^{-1} y z) \phi(\varpi_v^{-1} z) =
\phi(\varpi_v^{-1} z) = \phi'(z).
\end{align*}
In the last line, we used the facts that $1+ \mathbf a \mathbf c y^2 \in
\mathfrak o_v^\times$,
 $\psi$ is unramified, and $\phi$ vanishes outside of $\mathfrak o_v$.  
 Hence \eqref{klm-avg-phi2e} becomes
\begin{equation}
(\phi_0, e') =  \left( \Omega_v^{-1}(\xi_{0,v})  + 
\sum_{y \in \mathfrak o_v/\mathfrak p_v} \Omega_v^{-1}(1+y \xi_{0,v}) \right)
(\phi_0',\phi_0) = 0,
\end{equation}
as this character sum is zero.
Combining \eqref{klm-avg-ip}, \eqref{klm-73a} and \eqref{klm-73b} gives
\begin{align*}
 \~J_{\pi_v}(f_v) &= \frac{1}{|T_v/(T_v \cap Z_v K_v)|} \left(
\frac{L(2,1_{F_v})}{L(1,\pi_v,Ad)} + \frac{q_v^{-1}}{1+q_v^{-1}}
(\chi(\varpi_v)+\chi(\varpi_v)^{-1})^2 \right) \\
  &=  \frac{1+q_v^{-1}}{|T_v/(T_v \cap Z_v K_v)|}.
\end{align*}
This, with Lemma \ref{klm-rep-lem}, gives the result.
\end{proof}

Hence 
\begin{equation}
\frac{\~ J_{\pi'_v}(f_v)}{\~ J_{\pi'_v}(f'_v)} =
\frac{L(1,\pi_v,Ad)}{L(2,1_{F_v})L(1,\eta_v)},
\end{equation}
for $v | (\mathfrak N')^{-1} \mathfrak N$, which yields
\begin{multline*} 
J(f) = \frac 12 \sqrt{\frac{\Delta}{c(\Omega) \Delta_L}}
 L^{S(\mathfrak N_0)}(2,1_F) L_{S(\mathfrak N)}(1,\eta) L_{S(\mathfrak C_0)}(1,\eta)^2
 \prod_{v | \infty}  \frac{2k_v-1}{\pi} \begin{pmatrix} 2k_v-2 \\ k_v-m_v-1 \end{pmatrix} 
 \\
\times \sum_{\mathfrak N'} \sum_{\pi \in \mathcal F(\mathfrak N', 2\mathbf k)}
\left( \prod_{v | (\mathfrak N')^{-1} \mathfrak N}
\frac{L(1,\pi_v,Ad)}{L(2,1_{F_v})} \right)
 \frac{ L(1/2,\pi_L \otimes \Omega) }{L(1,\pi,Ad) }.
\end{multline*}
Here $\mathfrak N'$ runs over all divisors of $\mathfrak N$ which are divisible
by $\mathfrak N_0$.
Writing
\[  \prod_{v | (\mathfrak N')^{-1} \mathfrak N}
\frac{L(1,\pi_v,Ad)}{L(2, 1_{F_v})} \cdot
\frac 1{L(1,\pi,Ad)} = \prod_{v | \mathfrak N'}
\frac{L(2, 1_{F_v})}{L(1,\pi_v,Ad)} \cdot 
\frac 1{L_{S(\mathfrak N)}(2,1_F) L^{S(\mathfrak N)}(1,\pi,Ad)} \]
and observing $L(1,\pi_v,Ad) = L(2,1_{F_v})$ for $v |
\mathfrak N'$ and 
$\pi \in \mathcal F(\mathfrak N',\mathbf k)$ gives
\begin{multline} 
J(f) = \frac 12 \sqrt{\frac{\Delta}{c(\Omega) \Delta_L}}
\frac{L^{S(\mathfrak N_0)}(2, 1_F)}{L_{S(\mathfrak N)}(1,1_F)} L_{S(\mathfrak C_0)}(1,\eta)^2
\prod_{v | \infty}  \frac{2k_v-1}{\pi} \begin{pmatrix} 2k_v-2 \\ k_v-m_v-1
\end{pmatrix}
 \\
\times \sum_{\mathfrak N'} 
 \sum_{\pi \in \mathcal F(\mathfrak N',2 \mathbf k)} 
 \frac{ L(1/2,\pi_L \otimes \Omega) }{L^{S(\mathfrak N)}(1,\pi,Ad) }.
\end{multline}

\subsection{Geometric calculations}

We will now obtain our average value formula from the trace formula
\eqref{avgval-rtf}
and spectral calculation \eqref{avgval-Jf} by computing the geometric side
$I(f)$.  Most of 
the calculations we need are done in \cite{FW}, with the proviso that our choice
of test functions 
$f_v$ ($v \nmid \mathfrak N_1$) are essentially constant multiples of those
therein (the
test functions in \cite{FW} also come ``pre-integrated over the center'').  

\begin{lemma} Let $b \in \epsilon N(L^\times)$.
We have the following vanishing of local orbital integrals.
\begin{enumerate}
\item If $v | \mathfrak N_0$, then $I(\infty, f_v)=0$.

\item If $v | \mathfrak N_0$ and $b \not \in \mathfrak p_v$, then $I(b,f_v)
= 0$.

\item If $v \nmid \mathfrak N$ is finite and 
$v(1-b) > v(\mathfrak d_{L/F} c(\Omega))$, then $I(b,f_v) = 0$.

\item If $v | \mathfrak N_1$ and $v(1-b) > v(c(\Omega))-2$, then $I(b,f_v) =
0$.
\end{enumerate}
\end{lemma}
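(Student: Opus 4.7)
My plan is to verify each of the four vanishing statements by examining the support of the integrand $f_v$, which is a scaled characteristic function of $R_v^\times$ for an explicitly prescribed order $R_v$.

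For parts (i) and (ii), at $v\mid\mathfrak N_0$ the algebra $D(F_v)$ is the quaternion division algebra, and $R_v$ is the unique maximal order, so that $R_v^\times$ consists exactly of those $x\in D(F_v)^\times$ whose reduced norm lies in $\mathfrak o_v^\times$. For (i), the integrand is supported on those $t\in T(F_v)=L_v^\times$ with $t\bmx 0 & \epsilon \\ 1 & 0\emx\in R_v^\times$, which has reduced norm $-\epsilon^2\,N_{L/F}(t)$. Since $\bmx 0&\epsilon\\1&0\emx$ represents a uniformizer of $D_v$ at a ramified place, $v(-\epsilon^2)$ is odd, whereas $v(N_{L/F}(t))$ has the wrong parity (even in the unramified subcase, and never compensating in the ramified subcase); thus the support is empty. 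For (ii), the same reduced-norm computation applied to $s\bmx 1&\epsilon\beta\\\bar\beta&1\emx t$ gives norm $N_{L/F}(s)N_{L/F}(t)(1-\epsilon N(\beta))=N_{L/F}(st)(1-\xi)$, and the assumption $\xi\notin\mathfrak p_v$ forces $v(1-\xi)=0$ when $\xi\ne 1$, so membership in $R_v^\times$ is impossible unless $v(N_{L/F}(st))=0$, at which point a finer analysis of the off-diagonal entries together with division-algebra constraints rules out support.

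For part (iii), at a finite $v\nmid\mathfrak N$ the order $R_v$ is maximal and optimally contains $\varpi_v^{m_v}\mathfrak o_{L_v}$, so after conjugating to the standard embedding \eqref{TFdefeq} we can describe $R_v^\times$ as a conjugate of $K_{0,v}(\mathfrak p_v^{m_v})$. Writing $\xi=\epsilon N(\beta)$, the hypothesis $v(1-\xi)>v(\mathfrak d_{L/F}c(\Omega))$ translates into $\beta$ being $v$-adically close to $\epsilon^{-1/2}$ with error deeper than $\mathfrak p_v^{m_v}\mathfrak d_{L/F}^{1/2}\mathfrak o_{L_v}$. Under this closeness, the matrix $\bmx 1&\epsilon\beta\\\bar\beta&1\emx$ can be normalized by left- and right-multiplication by elements of $T(F_v)$ to lie in a set of the form $t_0(1+\mathfrak P_v^{m_v+1})$. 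The inner integral over $T(F_v)$ then becomes a translate of an integral of $\Omega_v$ over a subgroup of $T(\mathfrak o_v)/Z(\mathfrak o_v)$ on which $\Omega_v$ is nontrivial, and vanishes by orthogonality.

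For part (iv), at $v\mid\mathfrak N_1$ the order $R_v$ is the non-Eichler order described in \eqref{Rv-defn}, with unit group intersecting $T(F_v)$ in $F_v^\times(1+\mathfrak p_v^{m_v}\mathfrak o_{L_v})$. The argument parallels (iii): membership of $s\bmx 1&\epsilon\beta\\\bar\beta&1\emx t$ in $R_v^\times$ can again be analyzed coordinate-wise using the conditions defining $R_v$, and $v(1-\xi)>v(c(\Omega))-2$ provides enough slack (the shift by $2$ reflects the two-sided width of $\mathfrak p_v^{1-m_v}$ in the definition of $R_v$) to force the supporting region to be a translate of a subgroup on which $\Omega_v$ is nontrivial, so the orbital integral vanishes. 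The main technical obstacle throughout will be part (iv), where the non-standard form of $R_v$ requires careful bookkeeping of the two conditions $\alpha,\beta\in\mathfrak p_v^{1-m_v}\mathfrak o_{L_v}$ and $\alpha-\beta\in\mathfrak o_v+\mathfrak p_v^{m_v}\mathfrak o_{L_v}$; I expect this to follow the local calculations of \cite{FW}*{\S 4} with only minor changes due to our differing normalization of $f_v$.
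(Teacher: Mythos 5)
Your overall plan — analyze the support of $f_v$ by unwinding the conditions on the order $R_v$ — is the right one, and your sketch for part (i) (parity of the reduced norm in the ramified division algebra versus even valuations of norms from the unramified $L_v$) is indeed the standard argument. The paper, however, simply cites \cite{FW}*{Lemmas 4.2, 4.10, 4.11} for parts (i)–(iii) and gives a new argument only for (iv), so that is where the comparison matters most.

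For part (iv) your proposed mechanism is wrong. You suggest that the constraint $v(1-\xi)>v(c(\Omega))-2$ ``forces the supporting region to be a translate of a subgroup on which $\Omega_v$ is nontrivial, so the orbital integral vanishes'' — i.e.\ vanishing by orthogonality of characters over a nonempty support. In fact the support is \emph{empty}: writing out $t_1 \bmx 1 & \epsilon \beta \\ \bar\beta & 1 \emx t_2 \in R_v^\times$ in the form $\bmx \alpha & * \\ * & \bar\alpha \emx$ with $\alpha \in L_v^\times$, the defining conditions of $R_v$ in \eqref{Rv-defn} force simultaneously $N(\alpha)(1-\xi) \in \mathfrak o_v^\times$ and $\alpha \in \mathfrak p_v^{1-m_v}\mathfrak o_{L_v}$. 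Since $L_v/F_v$ is unramified, the latter gives $v(N(\alpha)) \geq 2(1-m_v)$, and combined with the former this forces $v(1-\xi) = -v(N(\alpha)) \leq 2m_v - 2 = v(c(\Omega)) - 2$ (recall $c(\Omega)$ is the \emph{absolute norm} of the conductor, so its $v$-valuation is $2m_v$, not $m_v$). Thus if $v(1-\xi) > v(c(\Omega))-2$ there is nothing to integrate; no cancellation argument is needed or available. Your heuristic that ``the shift by $2$ reflects the two-sided width of $\mathfrak p_v^{1-m_v}$'' misattributes the $2$: one factor comes from the residue degree $2$ in $v(c(\Omega)) = 2m_v$, and the bound on $v(N(\alpha))$ is one-sided. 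You should replace the orthogonality step in (iv) by this direct valuation computation.

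A smaller point: your claim $R_v^\times \cap T(F_v) = F_v^\times(1+\mathfrak p_v^{m_v}\mathfrak o_{L_v})$ is off; the correct intersection is $\mathfrak o_v^\times(1+\mathfrak p_v^{m_v}\mathfrak o_{L_v})$, since $R_v^\times$ is compact. This does not affect the vanishing argument but would matter in a nonvanishing computation.
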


\begin{proof}
The first three results are directly from \cite{FW}*{Lemmas 4.2, 4.10 and 4.11}.
 So suppose
$v | \mathfrak N_1$ is odd and write $b = \epsilon N(\beta)$ for some $\beta \in
L^\times$.
For $I(b,f_v)$ to be nonzero we need that, for some
$\alpha \in L_v^\times$ and $u \in L_v^1$,
\[ \bmx \alpha & \\ & \bar \alpha \emx  \bmx 1 & \epsilon \beta u \\  \overline{\beta u} &
1 \emx \in
R_v^\times, \]
i.e., 
\[  N(\alpha) (1-b) \in \mathfrak o_v^\times, \quad tr(\alpha) \in \mathfrak
o_v^\times,
 \quad \alpha \in \mathfrak p_v^{1-m_v} \mathfrak o_{L_v}, \quad \alpha(1-\beta u)
\in \mathfrak o_v + \mathfrak p_v^{m_v} \mathfrak o_{L_v}. \]
Note this implies $v(1-b) = - v(N(\alpha)) \le 2m_v-2$.  Hence if $v(1-b)
\ge 2m_v-1$, then $I(b,f_v)=0$.
\end{proof}

\begin{proposition} If $|\mathfrak N_0 |
> d_{L/F} ( |\mathfrak C|/|\mathfrak N_1|)^{h_F}$, then $I(f) = 2L(1,\eta) I(0,
f)$ and
\[ I(0, f) = \frac{\Delta^2 |\mathfrak N|}{\sqrt{c(\Omega)\Delta_L}} 
\frac{L_{S(\mathfrak C_0)}(1,\eta)}{L_{S(\mathfrak N_0)}(1,1_F)} L(2,1_F) 
\prod_{v | \infty} \frac{2k_v-1}{2\pi}. \]
\end{proposition}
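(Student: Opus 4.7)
\medskip\noindent\textbf{Proof proposal.}
The plan is to exploit the decomposition
\[
I(f) = \vol(T(\A)/Z(\A)T(F))\bigl(I(0,f) + \delta(\Omega^2)I(\infty,f)\bigr) + \sum_{\xi \in \epsilon N(L^\times)} I(\xi,f)
\]
and show that under the numerical hypothesis only the $I(0,f)$ term contributes, with $\vol(T(\A)/Z(\A)T(F)) = 2L(1,\eta)$. Combined with the product decomposition $I(0,f) = \prod_v I(0,f_v)$, this will yield the stated formula.

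First I would dispose of the $I(\infty,f)$ contribution. The hypothesis $|\mathfrak N_0| > d_{L/F}(|\mathfrak C|/|\mathfrak N_1|)^{h_F}$ forces $|\mathfrak N_0| > 1$, so at least one prime $v\mid \mathfrak N_0$ exists. By part i) of the preceding lemma, $I(\infty,f_v)=0$, and hence the entire infinite-orbit term vanishes regardless of $\delta(\Omega^2)$.

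Next I would show $\sum_\xi I(\xi,f)=0$. The vanishing lemma forces any $\xi \in \epsilon N(L^\times)$ contributing nontrivially to satisfy $v(\xi)\ge 1$ at every $v\mid \mathfrak N_0$, $v(1-\xi)\le v(\mathfrak d_{L/F}c(\Omega))$ at finite $v\nmid \mathfrak N$, and $v(1-\xi)\le v(c(\Omega))-2$ at $v\mid \mathfrak N_1$. Writing $\xi=\epsilon N(\beta)$ and running the class-number / divisor argument of \cite{FW}*{Prop.~4.12} (which bounds the number of such $\xi$ by a quantity of shape $h_F$ times $|\mathfrak N_0|^{-1}|\mathfrak d_{L/F}|\cdot|\mathfrak C|/|\mathfrak N_1|$), the numerical hypothesis produces a bound strictly less than $1$, ruling out any nontrivial $\xi$. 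This is the step I expect to be the main obstacle, as one has to track the local conditions on $1-\xi$ simultaneously with the requirement that $\xi$ be a norm from $L$; the extra subtlety here, beyond \cite{FW}, is that primes in $\mathfrak N_1$ contribute an additional (but weaker) constraint that must be incorporated without destroying the bound.

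Third, the toric quotient is identified as $\A_L^\times/L^\times\A^\times$ and, with our product-of-Tamagawa-measures normalization, has volume $2L(1,\eta)$ by the standard Tamagawa-number computation.

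Finally, I would compute $I(0,f)=\prod_v I(0,f_v)$ place by place. For $v\mid\infty$, our choice of $f_v$ (a matrix coefficient of $\pi'_{2k_v}$) combined with the fact that $w_v$ is a $T(F_v)$-eigenvector gives $I(0,f_v)=(2k_v-1)/\vol(G'(F_v)/Z(F_v))$, accounting for the $\prod_{v\mid\infty}\tfrac{2k_v-1}{2\pi}$ in the answer together with the archimedean Tamagawa factors contributing to $\Delta^2$. For finite $v$ one has $I(0,f_v) = \vol(T(F_v)\cap R_v^\times)/\vol(R_v^\times)$: the cases $v\nmid \mathfrak N$ and $v\mid\mathfrak N_0$ are computed in \cite{FW}*{\S4}, yielding the factors $L_{S(\mathfrak C_0)}(1,\eta)/L_{S(\mathfrak N_0)}(1,1_F)$, the power of $|\mathfrak N_0|$, the $\sqrt{c(\Omega)\Delta_L}$ denominator, and the factor $L(2,1_F)$. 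The novel input is at $v\mid\mathfrak N_1$: using the explicit order $R_v$ in \eqref{Rv-defn} one computes $\vol(T(F_v)\cap R_v^\times)/\vol(R_v^\times)$ directly, and checks that the answer is exactly the factor of $|\mathfrak N_1|$ needed to complete $|\mathfrak N|=|\mathfrak N_0||\mathfrak N_1|$ in the stated formula. Assembling all local contributions gives the claimed value of $I(0,f)$, and with Steps 1--3 this proves the proposition.
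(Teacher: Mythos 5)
Your proposal follows the same route as the paper's proof: kill $I(\infty,f)$ via the local vanishing at a prime of $\mathfrak N_0$, kill the $I(\xi,f)$ terms by the local support constraints plus a counting/class-number argument in the style of Feigon--Whitehouse (the paper invokes the argument of \cite{FW}*{Lemma 4.21} rather than Prop.\ 4.12, and in fact establishes the vanishing under the slightly weaker hypothesis $|\mathfrak N_0| > d_{L/F}|\mathfrak N_1^{-2}\mathfrak C|^{h_F}$, which the stated hypothesis implies), identify $\vol(T(\A)/Z(\A)T(F)) = 2L(1,\eta)$, and compute $I(0,f)=\prod_v I(0,f_v)$ place by place with the only new local computation at $v\mid\mathfrak N_1$ using the explicit order $R_v$ of \eqref{Rv-defn}. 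This is essentially the paper's argument, including the observation that the computations for $v\nmid\mathfrak N_1$ can be imported from \cite{FW}*{\S4} after adjusting for the different test-function normalization.
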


\begin{proof}
This argument is adapted from the proof of \cite{FW}*{Lemma 4.21}.
By the first part of the previous lemma, we know the global orbital integral
$I(\infty,f) = 0$. Arguing as in the proof of \cite{FW}*{Lemma 4.21}, we see that, if $|\mathfrak N_0| > d_{L/F} | \mathfrak N_1^{-2}
\mathfrak C |^{h_F}$, then $I(b, f) = 0$ for all $b$.

Next we compute $I(0,f)$. For $v \nmid \mathfrak N_1$,
we recall the following calculations from \cite{FW}*{Section 4.1} (cf.\
\cite{JC}*{Section 2},
\cite{FW}*{Section 2.1} and \cite{FW}*{Proof of Proposition 4.20} for necessary
facts about
local Tamagawa measures).  Due to the difference in our definition of test functions, 
our local orbital integrals $I(0,f_v)$ (for $v \nmid \mathfrak N_1$) 
 will be $\vol(Z_v \cap R_v^\times) / \vol(R_v^\times)$ 
 (resp.\ $(2k_v-1)/\vol(G'(F_v)/Z(F_v))$) times those in \cite{FW} for
 finite (resp.\ infinite) $v$.
  
  For $v | \mathfrak N_0$,
\[ I(0,f_v) = \vol(\mathfrak o_{L_v}^\times/\mathfrak o_v^\times )
\vol(Z_v \cap R_v^\times)/\vol(R_v^\times)
= (q_v-1)L(2,1_{F_v})  \vol(\mathfrak o_{L_v}^\times)/\vol(\mathfrak
o_v^\times)^4, \]
since $\vol(R_v^\times) = L(2,1_{F_v})^{-1}(q_v-1)^{-1} \vol(\mathfrak
o_v^\times)^4$
and $R_v^\times \cap Z_v = \mathfrak o_v^\times$.

For a finite $v \nmid \mathfrak N$,
we have $\vol(R_v^\times) = L(2,1_{F_v})^{-1} \vol(\mathfrak o_v^\times)^4$ and
\[ I(0,f_v) = 
\begin{cases}
\vol(\mathfrak o_{L_v}^\times/\mathfrak o_v^\times)
\vol(Z_v \cap R_v^\times)/\vol(R_v^\times) = L(2,1_{F_v}) \vol(\mathfrak
o_{L_v}^\times)/
\vol(\mathfrak o_v^\times)^4 & m_v = 0, \\
q^{-m_v} L(1,\eta_v) \vol(\mathfrak o_{L_v}^\times)/\vol(R_v^\times)
= q^{-m_v} L(1,\eta_v) L(2,1_{F_v}) \vol(\mathfrak
o_{L_v}^\times)/\vol(\mathfrak o_v^\times)^4 & m_v > 0.
\end{cases} \]

For $v | \infty$,
\[ I(0,f_v) = \vol(F_v^\times \bs L_v^\times) 
\frac{2k_v-1}{\vol(G'(F_v)/Z(F_v))}=
\frac{2k_v-1}{2\pi^2}. \]

Now, for $v | \mathfrak N_1$, our description of $R_v$ readily implies
\[ I(0,f_v) = \vol(\mathfrak o_v^\times(1+\mathfrak p_v^{m_v} \mathfrak
o_{L_v}))
/\vol(R_v^\times)
= q^{-m_v} L(1,\eta_v) \vol(\mathfrak o_{L_v}^\times) / \vol(R_v)^\times. \]
A simple calculation gives $\vol(R_v^\times) = q_v^{-1}\vol(\mathfrak
o^\times)^4 / 
L(1,1_{F_v})$.  Hence when $v | \mathfrak N_1$, we have
\[ I(0,f_v) =  q_v^{1-m_v} L(2,1_{F_v}) \vol(\mathfrak
o_L^\times)/\vol(\mathfrak o^\times)^4. \]
Putting together the nonarchimedean calculations gives
\[ \prod_{v < \infty} I(0,f_v) = \frac{|\mathfrak N|}{\sqrt{c(\Omega)}}
L_{\mathrm{fin}}(2,1_F) 
\prod_{v | \mathfrak N_0} L(1,1_{F_v})^{-1} \prod_{v | \mathfrak C_0}
L(1,\eta_v)
\prod_{v < \infty} \frac{\vol(\mathfrak o_{L_v}^\times)}{\vol(\mathfrak
o^\times)^4}. \]
Noting that $\prod_{v < \infty} \vol(\mathfrak o^\times) = \Delta^{-1/2}$ (and
similarly
over $L$), we have
\[ \prod_{v < \infty} I(0,f_v) = \frac{\Delta^2 |\mathfrak
N|}{\sqrt{c(\Omega)\Delta_L}} 
L_{\mathrm{fin}}(2,1_F) 
\prod_{v | \mathfrak N_0} L(1,1_{F_v})^{-1} \prod_{v | \mathfrak C_0}
L(1,\eta_v). \]
Recalling that $L(2,1_F) = L_{\mathrm{fin}}(2,1_F) /\pi^d$, we see
\[ I(0, f) = \frac{\Delta^2 |\mathfrak N|}{\sqrt{c(\Omega)\Delta_L}} L(2,1_F) 
\prod_{v | \mathfrak N_0} L(1,1_{F_v})^{-1} \prod_{v | \mathfrak C_0}
L(1,\eta_v)
\prod_{v | \infty} \frac{2k_v-1}{2\pi}. \]
\end{proof}

\subsection{Proofs}

{\bf Proof of Theorem \ref{intro-avgval-thm}:} The result immediately follows from our above calculations of
both sides of the equality $J(f) = I(f) = 2L(1,\eta) I(0,f)$. \vskip 0.1in

\noindent {\bf Proof of Theorem \ref{non-van-thm}:} Suppose $\mathfrak N_1$ contains exactly one prime
$\mathfrak p$.
Put
\[ \Sigma_{\mathfrak N}(\mathfrak N') = 
 \prod_{v | \infty}  \begin{pmatrix} 2k_v-2 \\ k_v-m_v-1 \end{pmatrix}
\sum_{\pi \in \mathcal F(\mathfrak N', 2\mathbf k)} 
 \frac{ L(1/2,\pi_L \otimes \Omega) }{L^{S(\mathfrak N)}(1,\pi,Ad) }. \]
Then Theorem \ref{intro-avgval-thm} reads
\begin{equation} 
 \Sigma_{\mathfrak N}(\mathfrak N_0) + \Sigma_{\mathfrak N}(\mathfrak N)
=  2^{2-d} \Delta^{3/2} |\mathfrak N| 
L(1,1_{F_{\mathfrak p}}) L_{S(\mathfrak N_0)}(2, 1_F) L^{S(\mathfrak C_0)}(1,\eta) .
\end{equation}
Applying our average value formula when $\mathfrak N = \mathfrak N_0$, we also see
\begin{equation} 
\Sigma_{\mathfrak N_0}(\mathfrak N_0)
=  2^{2-d} \Delta^{3/2} |\mathfrak N_0|  L_{S(\mathfrak N_0)}(2, 1_F)
L^{S(\mathfrak C_0)}(1,\eta)
\end{equation}
if $|\mathfrak N_0| > d_{L/F} |\mathfrak C|^{h_F}$.  (This is precisely
\cite{FW}*{Theorem 1.1}.)
For $\pi_{\mathfrak p}$ unramified, we have
\[ L(1,1_{F_{\mathfrak p}}) \frac 1{1+2q_{\mathfrak p}^{-1}+q_{\mathfrak
p}^{-2}} \le 
L(1,\pi_{\mathfrak p},Ad) \le 
L(1,1_{F_{\mathfrak p}}) \frac 1{1-2q_{\mathfrak p}^{-1}+q_{\mathfrak p}^{-2}},
\]
which implies
\begin{equation} L(1,1_{F_{\mathfrak p}}) \frac 1{(1+q_{\mathfrak p}^{-1})^2}
\Sigma_{\mathfrak N_0}(\mathfrak N_0) 
\le \Sigma_{\mathfrak N}(\mathfrak N_0) \le 
L(1,1_{F_{\mathfrak p}}) \frac 1{(1-q_{\mathfrak p}^{-1})^2} \Sigma_{\mathfrak
N_0}(\mathfrak N_0).
\end{equation}
Combining the (in)equalities above gives
\[ \Sigma_{\mathfrak N}(\mathfrak N) \le 2^{2-d} \Delta^{3/2} |\mathfrak N_0| 
L(1,1_{F_{\mathfrak p}}) L_{S(\mathfrak N_0)}(2, 1_F) L^{S(\mathfrak C_0)}(1,\eta)
\left( |\mathfrak p| - \frac 1{1+2|\mathfrak p|^{-1}+|\mathfrak p|^{-2}}
\right), \]
and a similar lower bound, which are precisely the bounds asserted in Theorem
\ref{non-van-thm}.
 
To get an asymptotic, we use a special case of \cite{FW}*{Theorem 1.2}, which is an
asymptotic
for
\[ \sum_{\pi \in \mathcal F(\mathfrak N_0, 2 \mathbf k)} \frac{L^{\mathfrak p}(1/2, \pi_L
\otimes \Omega)}{L^{\mathfrak p}(1,\pi,Ad)} = 
 \prod_{v | \infty}  \begin{pmatrix} 2k_v-2 \\ k_v-m_v-1 \end{pmatrix}^{-1}
\frac{\Sigma_{\mathfrak N}(\mathfrak N_0)}{L_{S(\mathfrak N_0)}(2, 1_F)}\]
as $|\mathfrak N_0| \to \infty$.
(Note $L^{\mathfrak p}(1/2, \pi_L \otimes \Omega) = L(1/2, \pi_L \otimes \Omega)$ since
$\Omega$ is ramified at $\mathfrak p$.)
Specifically, \cite{FW}*{Theorem 1.2} tells us
\begin{equation} 
\Sigma_{\mathfrak N}(\mathfrak N_0) \sim
2^{2-d} \Delta^{3/2} |\mathfrak N_0| L(1,1_{F_{\mathfrak p}}) 
L_{S(\mathfrak N_0)}(2, 1_F) L^{S(\mathfrak C_0)}(1,\eta) 
\end{equation}
as $|\mathfrak N_0| \to \infty$ along a sequence of squarefree ideals $\mathfrak
N_0$ coprime to 
$\mathfrak C$ satisfying our parity and ramification assumptions.
Consequently, we have
\[ \Sigma_{\mathfrak N}(\mathfrak N) \sim 2^{2-d} \Delta^{3/2} |\mathfrak N_0| 
L(1,1_{F_{\mathfrak p}}) L_{S(\mathfrak N_0)}(1,\eta) 
L^{S(\mathfrak C_0)}(1,\eta) \left( |\mathfrak p| - 1 \right). \]
This gives the asymptotic asserted in the theorem.

\subsection{Nonvanishing mod $p$}
Let
$\pi \in \mathcal F(\mathfrak N, 2 \mathbf k)$ and $\mathbf f$ be the
corresponding normalized
Hilbert modular newform of weight $2\mathbf k$ and level $\mathfrak N$ over $F$.
As before, $\Omega$ is a unitary character of
$\A_L^\times/L^\times \A_F^\times$ such that, for all $v | \infty$, $\Omega_v(z)
= (z/\bar z)^{\pm m_v}$  with
$0 \le m_v < k_v$.  Put $\mathbf m = (m_1, \ldots, m_d)$.  Then $\Omega$ gives
rise to a 
Hilbert modular form 
$\bf g$ over $F$ of weight $\mathbf m+1 = (m_1+1, \ldots, m_d+1)$ (cf.\
\cite{shimura}*{Sec. 5}).
Assume $m_1 \equiv m_2 \equiv \cdots \equiv m_d \mod 2$.  This implies
$\Omega$ is algebraic, so that the field of rationality $\mathbb Q(\mathbf g) \subset \bar {\mathbb Q}$
\cite{shimura}*{Prop 2.8}.  

Put $k_0 = \max_{v | \infty} k_v$ and $m_0 = \max_{v | \infty} m_v$. 
  Then Shimura \cite{shimura}*{Thm
4.1}
proved
\[ \frac{D(s_0, \mathbf f, \mathbf g)}{\sqrt{\Delta}\pi^{2|\mathbf k|} ( \mathbf
f, \mathbf f )}
 \in \bar{ \mathbb Q}(\mathbf g) = \bar {\mathbb Q} \]
for any $s_0 \in \mathbb Z$ such that $\frac{2k_0+m_0-1}2  < s_0 <
\frac{2k_0+m_0 + 2k_v - m_v}2$
for all $v | \infty$.  Here $D(s, \mathbf f, \mathbf g)$ is the Dirichlet series
defined in \cite{shimura},
$( \mathbf f, \mathbf f)$ is the Petersson norm defined as in \cite{Hida},
and
$| \mathbf k| = \sum_{v | \infty} k_v$.
Assume that $m_0 \equiv 0 \mod 2$.  Then, for $s_0 = \frac{2k_0 + m_0}2$, this
means
\begin{equation} \label{lvalalg-def}
 L^{\alg}(1/2, \pi_L \otimes \Omega) := \frac 1{L(1,\eta)} \frac{L_{\fin}(1/2,
\pi_L \otimes \Omega)}{\sqrt{\Delta}\pi^{2|\mathbf k|} ( \mathbf f, \mathbf f )}
\in \bar{\mathbb Q}.
 \end{equation}
(Note we normalize the algebraic part of the $L$-value in a different way than
other authors.)
Recall the archimedean $L$-factors are given by
\[ L_v(1/2, \pi_L \otimes \Omega) =
(2\pi)^{-2k_v}{4\Gamma(k_v+m_v)\Gamma(k_v-m_v)},
\quad v| \infty. \]
 From \cite{HidT}*{Theorem 7.1} and \cite{Hida}*{(7.2 c)} (cf.\ \cite{GG}*{Theorem 5.16}),
we have
 \begin{equation} \label{ad-pet-eq}
  L(1, \pi, Ad) = \frac{2^{2|\mathbf k|-1}}{ \Delta^2 h_F |\mathfrak N|}
(\mathbf f, \mathbf f). 
 \end{equation} Thus
 \begin{equation*}
  \frac{L(1/2, \pi_L \otimes \Omega)}{L(1,\pi,Ad)} = 
2^{2d+1-4|\mathbf k|} \Delta^{5/2} h_F |\mathfrak N|  L(1,\eta)
 L^{\alg}(1/2, \pi_L \otimes \Omega) 
  \prod_{v | \infty} \Gamma(k_v+m_v)\Gamma(k_v-m_v). 
 \end{equation*}
Hence we can rewrite the average value formula from Theorem \ref{intro-avgval-thm} as
 \begin{equation}
 2^{3d-4|\mathbf k|-1} \Delta h_F \prod_v(2k_v-2)! \sum_{\pi \in \mathcal
F(\mathfrak N, 2\mathbf k)}
 L^{\alg}(1/2, \pi_L \otimes \Omega) =
\frac 1{L_{S(\Omega)}(1,\eta)}.
\end{equation}
This immediately implies Theorem \ref{avgval-modp}.

\end{document}